\newtheorem{theorem}{Theorem}[section]
\newtheorem{lemma}[theorem]{Lemma}
\newtheorem{proposition}[theorem]{Proposition}
\newtheorem{corollary}[theorem]{Corollary}
\theoremstyle{definition}
\newtheorem{definition}[theorem]{Definition}
\numberwithin{equation}{section}
\newtheorem{remark}[theorem]{Remark}
\title[Almost duality for Saito structure]
{Almost duality for Saito structure and complex reflection groups}
\author{Yukiko Konishi}
\address{ 
Department of Mathematics, 
Kyoto University,
Kyoto 606-8502, Japan 
}
\email{konishi@math.kyoto-u.ac.jp}
\author{Satoshi Minabe}
\address{Department of Mathematics, 
Tokyo Denki University, 120-8551 Tokyo,  Japan}
\email{minabe@mail.dendai.ac.jp}
\author{Yuuki Shiraishi}
\address{ 
Department of Mathematics, 
Kyoto University,
Kyoto 606-8502, Japan 
}
\email{yshiraishi@math.kyoto-u.ac.jp}
\keywords{Frobenius structures, Saito structures, Complex reflection groups}
\subjclass[2010]{Primary 53D45;  Secondary 20F55}
\begin{document}
\maketitle
\begin{abstract}
We reformulate Dubrovin's almost duality of the Frobenius structure
to the setting of the Saito structures without metric. 
Then we formulate and study
the existence and uniqueness
problem of the natural Saito structure on the orbit spaces of
finite complex reflection groups 
from the viewpoint of the almost duality.
We give a complete answer 
to the problem for the irreducible groups.
\end{abstract}

\section{Introduction}
\subsection{Almost duality for Frobenius structures}
Dubrovin introduced the Frobenius structure in \cite{Dubrovin1993}. 
Later he defined the almost Frobenius structure 
in \cite{Dubrovin2004}.
These are both structures on the tangent bundles of manifolds
consisting of flat metrics (i.e. nondegenerate bilinear forms 
whose Levi--Civita connections are flat), 
multiplications
and nonzero vector fields. 
However,  they must satisfy slightly different conditions.
Dubrovin showed that given a Frobenius structure,
one can construct an almost Frobenius structure
and vice versa.
He called it the almost duality in the title of the article 
\cite{Dubrovin2004}.
Later,  Arsie and Lorenzoni introduced the notion of bi-flat F-manifold
\cite{Arsie-Lorenzoni2013}. 
They showed, in semisimple case, that it can be regarded 
as an extension of the almost duality to the Frobenius structure without metric \cite[\S 4]{Arsie-Lorenzoni2016}.  

\subsection{Complex reflection groups}
Let $V$ be a complex vector space of finite dimension $n$.
A finite complex reflection group (or a unitary reflection group) 
$G\subset GL(V)$ is 
a finite group generated by pseudo-reflections on $V$.
Reducible finite complex reflection groups are 
direct products of irreducible ones,
and irreducible finite complex reflection groups were 
classified by Shephard and Todd in 1954 \cite{ShephardTodd}.
The number of minimal generators for
the irreducible groups are either $n$ or $n+1$
and 
those with $n$ minimal generators are said to be well-generated.
They are also called duality groups because 
there is a duality between the degrees and the codegrees of such 
groups.
The duality groups include 
the finite Coxeter groups.
See \cite{LehrerTaylor} and \cite{OrlikTerao} for 
finite complex reflection groups.

\subsection{Frobenius structures on the orbit spaces of Coxeter groups}
In 1979, Saito \cite{Saito1993} showed the existence of flat coordinates on the orbit
spaces of finite Coxeter groups.  
See also Saito--Yano--Sekiguchi \cite{SaitoSekiguchiYano}.
In 1993, Dubrovin \cite{Dubrovin1993} constructed Frobenius structures on them. 
See also \cite{Dubrovin1998}. 
Later Dubrovin  \cite{Dubrovin2004} generalized the construction to the 
Shephard groups which include the Coxeter groups and 
form a proper subclass of the duality groups. 
These constructions may be regarded as applications
of the almost duality.

\subsection{Saito structures on the orbit spaces of the duality groups}
The Saito structure without metric introduced by Sabbah \cite{Sabbah}
is a structure weaker than the Frobenius structure,
consisting of a torsion-free flat connection and a multiplication
on the tangent bundle
together with a nonzero vector field.
We call it  
Saito structure for short throughout this paper. 
If a Saito structure admits a compatible metric,
it makes a Frobenius structure.
Recently,  Kato, Mano and Sekiguchi showed that 
there exist  certain polynomial Saito structures
on the orbit spaces of the duality groups\footnote{
In the case of Shephard groups which are not Coxeter groups, 
the natural Saito structures do not coincide  in general with the underlying
Saito structures of Dubrovin's Frobenius structures.}
\cite{KatoManoSekiguchi2015}. See also \cite{KatoManoSekiguchi2014}.
In \cite[\S 5]{Arsie-Lorenzoni2016},
Arsie and Lorenzoni studied the same polynomial
Saito structures based on their theory of bi-flat $F$-manifold
\cite{Arsie-Lorenzoni2013} and
computed many examples.

\subsection{Aim and results of the paper}
The aim of this article is to formulate and study
the existence and uniqueness
problem of the ``natural'' Saito structure 
for finite complex reflection groups 
from the viewpoint of the almost duality.
We give a complete answer 
to the problem for the irreducible groups.
Especially, it contains another proof of
Kato--Mano--Sekiguchi's result (i.e. the case of duality groups).

The paper consists of two parts.
The first part is devoted to formulating the almost duality
for the Saito structure.
We introduce the almost Saito structure in \S \ref{sec:ASS}
and show the duality between the Saito structure
and the almost Saito structure in \S \ref{sec:duality}
(Theorem \ref{thm:duality}).
In \S \ref{relationship-1}, 
we recall the definition of Frobenius manifold
and Dubrovin's almost duality
and explain the relationship with \S \ref{sec:ASS}
and \S \ref{sec:duality}.
In \S \ref{relationship-2},
we recall the definition of 
Arsie--Lorenzoni's bi-flat $F$-manifold 
\cite{Arsie-Lorenzoni2013} 
and show that 
the almost duality for the Saito structure
is a notion equivalent to
the bi-flat $F$-manifold.
\S \ref{matrix-rep} is devoted to matrix representations
of (almost) Saito structures.

In the second part,  we apply the formulation to
the orbit spaces of finite complex reflection groups.
In \S \ref{orbit-space}
we characterize,   using the almost duality, 
the Saito structures we are looking for.
We call it the natural Saito structure
because it comes from the trivial connection on $TV$.
In \S \ref{monic-degreen}, we show that unique natural Saito structures
exist for 
a certain class of finite complex reflection groups 
including the duality groups 
(Theorem \ref{main-theorem}, Corollary \ref{main-corollary}).
For the duality groups,
they coincide with
the Saito structures
studied by Kato--Mano--Sekiguchi 
\cite{KatoManoSekiguchi2015}
and
by Arsie--Lorenzoni \cite[\S 5]{Arsie-Lorenzoni2016}.
We also show that four irreducible groups
do not admit natural Saito structures (\S \ref{example-nonexistence}).
For the remaining irreducible groups, 
it turns out that 
all natural Saito structures are those induced from 
branched covering maps from the orbit spaces
of some duality groups (\S \ref{covering-case}, \S \ref{example-covering1}, \S \ref{example-covering2}).

In appendix \S \ref{appendix:tables}, the natural Saito structure for the rank two cases are listed. 
\S \ref{appendix:proofs} and \S \ref{appendix-C} contain  
proofs of some technical results.

\subsection{Conventions}
Throughout this paper, 
a manifold means a complex manifold. 
For a manifold $M$,
$TM$ denotes the holomorphic tangent bundle and
$\mathcal{T}_M$ its sheaf of local sections.
We write $x\in \mathcal{T}_M$ to mean that
$x$ is a local section of $TM$. 
A multiplication on $TM$ means an $\mathcal{O}_M$-bilinear 
map $\mathcal{T}_M\times \mathcal{T}_M \to \mathcal{T}_M$.

\subsection{Acknowledgements}
Y. K. and S. M. thank 
Jiro Sekiguchi for sending  the manuscripts of their articles,
and for helpful discussions.
The authors also thank
Hiroshi Iritani, Ikuo Satake and Atsushi Takahashi
for valuable discussion and comments.
We are grateful to anonymous referees for many 
useful comments, especially for pointing out wrong statements
in Proposition \ref{uniqueness1} and Lemma \ref{uniqueness2}.    

The work of Y.K. is supported in part by Grant-in-Aid for Challenging Exploratory Research 26610008 
and JSPS KAKENHI Kiban-S 16H06337.
The work of S.M. is supported in part by Grant for Basic Science Research
Projects from The Sumitomo Foundation and JSPS KAKENHI Grand number JP17K05228.
Y.S. is supported by
Research Fellowships of Japan Society
for the Promotion for Young Scientists.

\section{The Almost  Saito structure}\label{sec:ASS}
In this section, we first recall the definition of 
the Saito structure in \S \ref{sec:SS}
and  give a definition of the almost Saito structure in
\S \ref{subsec:ASS}.
In \S \ref{subsec:regularity}, we introduce 
the regular almost Saito structure
which is characterized by the property that
the multiplication is completely determined by 
the connection and the vector field $e$.
The regularity will play an important role
in application to the finite complex reflection groups.
In \S \ref{sec:two-par}, we explain that 
an almost Saito structure is always accompanied with 
a two-parameter family of almost Saito structures.
Roughly speaking,
one parameter $\lambda$ corresponds to 
the ``twist'' of the multiplication
and 
another parameter $\nu$ corresponds to the shift of 
the parameter $r$.
\subsection{Saito structure}\label{sec:SS}
The following notion was introduced by Sabbah in \cite{Sabbah}.
\begin{definition}
\label{def-Saito-structure}
A Saito structure (without a metric) on a manifold $M$ 
consists of 
\begin{itemize}
\item a torsion-free flat connection $\nabla$ on $TM$,
\item an associative commutative  multiplication $\ast$ 
on $TM$ with a unit $e\in \Gamma(M,\mathcal{T}_M)$,
\item a vector field $E\in \Gamma(M,\mathcal{T}_M)$ 
called {\it the Euler vector field},  
\end{itemize}
satisfying the following conditions.
\begin{eqnarray}
\nonumber (\bf{SS1})
&&\nabla_x (y\ast z)-y\ast \nabla_x \,z-\nabla_y(x\ast z)+x\ast \nabla_y \,z=[x,y]\ast z
\quad (x,y,z\in \mathcal{T}_M)~,
\\
\nonumber (\bf{SS2})
&&[E,x\ast y]-[E,x]\ast y-x\ast[E,y]=x\ast y\quad(x,y\in\mathcal{T}_M)~,
\\
\nonumber (\bf{SS3})
&&\nabla e=0~,
\\
\nonumber (\bf{SS4})
&&
\nabla_x\nabla_y E-\nabla_{\nabla_x y}E=0\quad (x,y\in \mathcal{T}_M)~.
\end{eqnarray}
\end{definition} 

\begin{remark}\label{E-shift}
If $(\nabla,\ast,E)$ is a Saito structure with a unit $e$,
then $(\nabla,\ast ,E-\lambda e)$ ($\lambda\in \mathbb{C}$)
is also a Saito structure.
Therefore we may be able to think that 
a Saito structure is always accompanied with a one-parameter family.
\end{remark}

We list a few formulas which will be useful later.
\begin{lemma} \label{tec1}
Let $(\nabla,\ast, E) $ be a Saito structure on $M$
with a unit $e$. Then the following holds.
For $x,y,z\in \mathcal{T}_M$, 
\begin{eqnarray}
\label{SS1}
&&[e,y\ast z]=y\ast [e,z]+[e,y]\ast z~,
\\
\label{SS2}
&& [E,e]=-e,\quad [e,E\ast x]=E\ast[e,x]+x~,\\
&& 
\label{SS3}
\nabla_{y\ast z} \,E-y\ast \nabla_z\, E-\nabla_y(E\ast z)
+E\ast \nabla_y\,z+y\ast z=0~,
\\
&&\label{SS4}
\big(E\ast \nabla_x(y\ast z)-y\ast \nabla_x (E\ast z)
+y\ast \nabla_{x\ast z}\,E
\big)
-(x\leftrightarrow y)=E\ast[x,y]\ast z~.
\end{eqnarray}
\end{lemma}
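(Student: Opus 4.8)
The plan is to derive each of the four formulas directly from the defining conditions (SS1)--(SS4), exploiting the commutativity and associativity of $\ast$ and the torsion-freeness of $\nabla$ (so that $\nabla_x y - \nabla_y x = [x,y]$). The first identity \eqref{SS1} is the quickest: I would simply specialize (SS1) by setting $x = e$. Since $\nabla e = 0$ by (SS3), the two terms $\nabla_e(y \ast z)$ and $\nabla_e y \ast z$ collapse appropriately, and since $e$ is the unit the term $\nabla_y(e \ast z) = \nabla_y z$ pairs with $e \ast \nabla_y z = \nabla_y z$ to cancel, leaving exactly $[e,y \ast z] = y \ast [e,z] + [e,y] \ast z$ after identifying $[e,y] = \nabla_e y - \nabla_y e = -\nabla_y e$-type terms via torsion-freeness. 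I expect this to be a short bookkeeping computation.

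For \eqref{SS2}, the first relation $[E,e] = -e$ comes from (SS2) by setting $x = y = e$: the left side becomes $[E, e \ast e] - 2[E,e]\ast e = [E,e] - 2[E,e] = -[E,e]$ using $e \ast e = e$ and that $e$ is the unit, while the right side is $e \ast e = e$, giving $-[E,e] = e$. The second relation $[e, E \ast x] = E \ast [e,x] + x$ then follows by combining \eqref{SS1} (with $y = E$, $z = x$) and the just-proved $[e,E] = e$, since $[e, E \ast x] = E \ast [e,x] + [e,E] \ast x = E \ast [e,x] + e \ast x = E \ast [e,x] + x$.

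The identity \eqref{SS3} is the main obstacle and the technical heart of the lemma. My strategy is to start from (SS2), rewrite the Lie brackets $[E, x \ast y]$ and $[E,x]$ in terms of the connection using torsion-freeness, i.e. $[a,b] = \nabla_a b - \nabla_b a$, and then use (SS1) to trade the mixed covariant derivatives of products for covariant derivatives of $E$. Concretely I would substitute suitable choices into (SS1) and (SS2) and take an appropriate linear combination so that the non-$\nabla E$ terms cancel. The delicate point is keeping careful track of which vector field occupies which slot, since the Leibniz-type failure measured by (SS1) must be reconciled with the $E$-compatibility measured by (SS2); I anticipate needing to apply (SS1) more than once and to use the commutativity $y \ast z = z \ast y$ to align terms. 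The appearance of the inhomogeneous term $y \ast z$ in \eqref{SS3} should trace back to the right-hand side of (SS2).

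Finally, \eqref{SS4} is an antisymmetrized (in $x \leftrightarrow y$) consequence that I would obtain by multiplying \eqref{SS3} by $E$ and combining it with (SS1). Specifically, I would take (SS1) and act by $\ast$-multiplication by $E$, then use \eqref{SS3} to replace the terms of the form $\nabla_{\bullet} E$ and the products $E \ast \nabla_\bullet(\text{product})$; after antisymmetrizing in $x$ and $y$ the symmetric pieces drop out and the torsion term $[x,y]$ emerges on the right, multiplied by $E \ast z$ via associativity. The associativity of $\ast$ will be essential here to regroup $E \ast (y \ast \nabla_{x \ast z} E)$ and similar triple products. I expect the overall difficulty to be organizational rather than conceptual: each step is a substitution from (SS1)--(SS4) together with the algebraic axioms, and the challenge is choosing the substitutions and linear combinations so that the unwanted terms cancel cleanly.
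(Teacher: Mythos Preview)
Your proposal is correct and follows essentially the same route as the paper: \eqref{SS1} comes from setting $x=e$ in (\textbf{SS1}) and using $\nabla e=0$ with torsion-freeness; \eqref{SS2} from $x=y=e$ in (\textbf{SS2}) followed by \eqref{SS1} with $y=E$; \eqref{SS3} from substituting $x=E$ into (\textbf{SS1}) and subtracting (\textbf{SS2}); and \eqref{SS4} by combining $E\ast(\textbf{SS1})$ with \eqref{SS3}. The one imprecision is in your plan for \eqref{SS4}: the paper multiplies \eqref{SS3} by $x$ (and by $y$) and antisymmetrizes---not by $E$---before subtracting $E\ast(\textbf{SS1})$, which is exactly what is needed to produce the terms $y\ast\nabla_{x\ast z}E$ and $y\ast\nabla_x(E\ast z)$; your ``use \eqref{SS3} to replace terms'' implicitly does this, but you should make that substitution explicit.
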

\begin{proof}
\eqref{SS1}:
substituting $x=e$ into ({\bf SS1}),
\begin{equation}\nonumber
\nabla_e(y\ast z)-y\ast \nabla_e \,z=[e,y]\ast z
\end{equation}
Using the torsion freeness of $\nabla$ and ({\bf SS3}),
we obtain
\begin{equation}\nonumber
[e,y\ast z]-y\ast [e,z]=[e,y]\ast z~.
\end{equation}
\eqref{SS2}: 
the first equation immediately follows from ({\bf SS2}) if we substitute $x=y=e$.
Substituting $y=E$ into \eqref{SS1}, 
we obtain the second equation:
\begin{equation}\nonumber
[e, E\ast z]=E\ast [e,z]+[e,E]\ast z=E\ast [e,z]+z.
\end{equation}
\eqref{SS3}: substituting $x=E$ into ({\bf SS1}),
\begin{equation}\nonumber
\nabla_E(y\ast z)-y\ast \nabla_E\,z-\nabla_y (E\ast z)
+E\ast \nabla_y\,z-[E,y]\ast z=0~.
\end{equation}
Changing $(x,y)$ to $(y,z)$ in ({\bf SS2}),
\begin{equation}\nonumber
[E,y\ast z]-[E,y]\ast z-y\ast [E,z]-y\ast z=0~.
\end{equation}
Subtracting, we obtain \eqref{SS3}.
\\
\eqref{SS4}: 
\begin{equation}\label{a1}
\begin{split}
0&=x\ast \eqref{SS3}-(x\leftrightarrow y)
=x\ast\big( \nabla_{y\ast z}\,E-\nabla_y (E\ast z)+E\ast \nabla_y z\big)
-(x\leftrightarrow y)
~.
\end{split}
\end{equation}
Then we have
\begin{equation}\nonumber
\begin{split}
0&=\eqref{SS3}-E\ast ({\bf SS1})
\\
&=\big(x\ast \nabla_{y\ast z}\,E-x \ast \nabla_y (E\ast z)+E\ast \nabla_y(x\ast z) \big)-(x\leftrightarrow y)+E\ast [x,y]\ast z~.
\end{split}
\end{equation}
\end{proof}

Now assume that $(\nabla,\ast,E)$ is a Saito structure on a manifold $M$ 
with a unit $e$.  
 If one chooses a nonzero constant $c\in \mathbb{C}^*$, 
one can construct a new multiplication $\ast'$ by
$x\ast'y =c x\ast y$ ($x,y\in \mathcal{T}_M$). 
Then  it is easy to see that
$(\nabla,\ast', E)$ is also a Saito structure on $M$ with a unit
$c^{-1}e$.
Therefore we introduce the following
equivalence relation.

\begin{definition}\label{def:equivalence-SS}
Two Saito structures $(\nabla,\ast,E)$ and 
$(\nabla',\ast',E')$ on $M$ are said to be equivalent
if $\nabla=\nabla'$, $E=E'$ and if there exists a nonzero constant
$c\in \mathbb{C}^*$ such that
$$
x\ast' y=c x\ast y \quad (x,y\in \mathcal{T}_M)~.
$$
\end{definition}

\subsection{The Almost Saito structure}\label{subsec:ASS}
\begin{definition}
\label{def-ASS}
An almost Saito structure  on a manifold $N$ 
with parameter $r\in \mathbb{C}$
consists of 
\begin{itemize}
\item a torsion-free flat connection $\boldsymbol{\nabla}$ on $TN$,
\item an associative commutative multiplication 
$\star$ 
on $TN$ with a unit $E\in \Gamma(N,\mathcal{T}_N)$, 
\item a nonzero vector field  $e\in \Gamma(N,\mathcal{T}_N)$
\end{itemize}
satisfying the following conditions.
\begin{eqnarray}
\nonumber ({\bf ASS1})
&&
\boldsymbol{\nabla}_x (y\star z)-y\star \boldsymbol{\nabla}_x \,z
-\boldsymbol{\nabla}_y(x\star z)+x\star \boldsymbol{\nabla}_y \,z=[x,y]\star z
\quad (x,y,z\in \mathcal{T}_N)~.
\\
\nonumber ({\bf ASS2})
&& 
[e,x\star y]-[e,x]\star y-x\star[e,y]+e\star x\star y=0\quad(x,y\in\mathcal{T}_N)~.
\\
\nonumber ({\bf ASS3})
&&
\boldsymbol{\nabla}_x E=r x  \quad (x\in \mathcal{T}_N)~.
\\
\nonumber ({\bf ASS4})
&&
\boldsymbol{\nabla}_x\boldsymbol{\nabla}_y \,e
-\boldsymbol{\nabla}_{\boldsymbol{\nabla}_x y}\,e+
\boldsymbol{\nabla}_{x\star y} \,e=0
\quad (x,y\in \mathcal{T}_N)~.
\end{eqnarray}
\end{definition}

Let us list some formulas.
\begin{lemma}\label{tec2}
Let $(\boldsymbol{\nabla},\star,e)$ be 
an almost Saito structure on $N$ with parameter $r$ and a unit 
$E$.
Then the following holds. For $x,y,z\in \mathcal{T}_N$,
\begin{eqnarray}
\label{ASS1}
&& [E,y\star z]=y\star [E,z]+[E,y]\star z~,
\\\label{ASS2}
&& [e,E]=e~,\qquad
[E,e\star z]=e\star[E,z]-e\star z~,
\\\label{ASS3}
&&
\boldsymbol{\nabla}_x(e\star y)
-e\star \boldsymbol{\nabla}_x\,y
-\boldsymbol{\nabla}_{x\star y}\,e
+x\star\boldsymbol{\nabla}_y\,e
+e\star x\star y=0~.
\\\label{ASS4}
&&
\big(
e\star \boldsymbol{\nabla}_x(y\star z)
-y\star \boldsymbol{\nabla}_x (e\star z)+y\star \boldsymbol{\nabla}_{x\star z}\,e
\big)-(x\leftrightarrow y)=e\star [x,y]\star z~.
\end{eqnarray}
\end{lemma}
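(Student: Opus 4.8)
The plan is to mirror the proof of Lemma~\ref{tec1} almost verbatim, exchanging the roles of $e$ and $E$ and replacing the flatness $\nabla e=0$ from (SS3) by the condition $\boldsymbol{\nabla}_x E=rx$ from (ASS3). The only structural tools I will need repeatedly are torsion-freeness in the form $[x,y]=\boldsymbol{\nabla}_x y-\boldsymbol{\nabla}_y x$, together with commutativity and associativity of $\star$ and the fact that $E$ is its unit.

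For \eqref{ASS1} I would substitute $x=E$ into (ASS1). Since $E$ is the unit, the two terms $-\boldsymbol{\nabla}_y(E\star z)+E\star\boldsymbol{\nabla}_y z$ reduce to $-\boldsymbol{\nabla}_y z+\boldsymbol{\nabla}_y z=0$, leaving $\boldsymbol{\nabla}_E(y\star z)-y\star\boldsymbol{\nabla}_E z=[E,y]\star z$. I then rewrite each $\boldsymbol{\nabla}_E$ via torsion-freeness and (ASS3), namely $\boldsymbol{\nabla}_E w=[E,w]+\boldsymbol{\nabla}_w E=[E,w]+rw$; the two $r$-terms cancel and \eqref{ASS1} falls out. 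For \eqref{ASS2}, the identity $[e,E]=e$ comes from setting $x=y=E$ in (ASS2) and using $E\star E=E$ and $e\star E\star E=e$, while the second identity follows by putting $y=e$ in the freshly proved \eqref{ASS1} and inserting $[E,e]=-[e,E]=-e$.

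For \eqref{ASS3} I would substitute $x=e$ into (ASS1) (after relabeling its two slots as $x,y$), use (ASS2) to rewrite the term $[e,x]\star y$, and then collapse the resulting $\boldsymbol{\nabla}_e$-terms through torsion-freeness into $-\boldsymbol{\nabla}_{x\star y}e$ and $x\star\boldsymbol{\nabla}_y e$; the extra cubic term $e\star x\star y$ produced by (ASS2) lands in exactly the required slot. Finally, \eqref{ASS4} is obtained precisely as \eqref{SS4} was: I form $x\star\eqref{ASS3}-(x\leftrightarrow y)$ (reading \eqref{ASS3} in the variables $y,z$), whereupon the symmetric terms, in particular the cubic $e\star x\star y\star z$, cancel under antisymmetrization, and then I subtract $e\star$(ASS1). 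The mixed pieces $e\star x\star\boldsymbol{\nabla}_y z$ match those of $e\star$(ASS1) and cancel, and the remaining six derivative terms reorganize, once more by torsion-freeness, into the left-hand side of \eqref{ASS4} with $e\star[x,y]\star z$ on the right.

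The computations are routine and the argument is a formal mirror of Lemma~\ref{tec1}, so I expect no conceptual obstacle. The step requiring the most care is \eqref{ASS4}, where the bookkeeping of the six antisymmetrized covariant-derivative terms and the cancellation of the cubic terms must be tracked exactly; and the one genuinely new feature, to be checked explicitly, is that in \eqref{ASS1} the unit $E$ is no longer flat, so the $r$-dependent terms introduced by (ASS3) must be verified to cancel.
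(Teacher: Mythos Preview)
Your approach is essentially identical to the paper's: each of \eqref{ASS1}--\eqref{ASS4} is obtained by exactly the substitutions and combinations you describe. One small correction for \eqref{ASS4}: you must \emph{add} $e\star(\textbf{ASS1})$ rather than subtract it---with subtraction the terms $e\star x\star\boldsymbol{\nabla}_y z$ double instead of cancelling; the sign flip relative to the proof of \eqref{SS4} occurs because the $\boldsymbol{\nabla}_x(e\star y)$ and $\boldsymbol{\nabla}_{x\star y}e$ terms in \eqref{ASS3} carry the opposite signs from their counterparts in \eqref{SS3}.
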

\begin{proof}
\eqref{ASS1}: substituting $x=E$ into ({\bf ASS1}),
\begin{equation}\nonumber
\boldsymbol{\nabla}_E(y\star z)-y\star \boldsymbol{\nabla}_E\,z
=[E,y]\star z~.
\end{equation}
Using the torsion freeness of $\boldsymbol{\nabla}$ 
and ({\bf ASS3}), we obtain
\begin{equation}\nonumber
[E,y\star z]-y\star[E,z]=[E,y]\star z~.
\end{equation}
\eqref{ASS2}: 
the first equation immediately follows from ({\bf ASS2})
if we substitute $x=y=E$.
Substituting $y=e$ into \eqref{ASS1}, we obtain the second equation:
\begin{equation}\nonumber
[E,e\star z]=e\star [E,z]+[E,e]\star z=e\star [E,z]-e\star z~.
\end{equation}
\eqref{ASS3}: 
changing $(x,y,z)$ to $(e,x,y)$ in ({\bf ASS1}), we have
\begin{equation}\nonumber
\boldsymbol{\nabla}_e(x\star y)
-x\star \boldsymbol{\nabla}_e\,y
-\boldsymbol{\nabla}_x(e\star y)
+e\star \boldsymbol{\nabla}_x\,y-[e,x]\star y=0~.
\end{equation}
Subtracting this equation from ({\bf ASS2}),  we obtain \eqref{ASS3}.
\\
\eqref{ASS4} is obtained by
\begin{equation}\nonumber
\begin{split}
e\star ({\bf ASS1})
+x\star \text{(change $(x,y)$ to $(y,z)$ in \eqref{ASS3})}
-y\star \text{(change $(x,y)$ to $(x,z)$ in \eqref{ASS3})}~.
\end{split}
\end{equation}
\end{proof}

Let $c\in \mathbb{C}$ be a nonzero constant.
If $(\boldsymbol{\nabla},\star,e)$ is an 
almost Saito structure on a manifold $N$ with parameter $r$,
$(\boldsymbol{\nabla},\star, ce)$ is also an almost  Saito structure on $N$ with parameter $r$.
So we introduce the following
equivalence relation.
\begin{definition}\label{def:equivalence-ASS}
Two almost Saito structures $(\boldsymbol{\nabla},\star,e)$ and 
$(\boldsymbol{\nabla}',\star',e')$ on $N$ are said to be equivalent
if $\boldsymbol{\nabla}=\boldsymbol{\nabla}'$,
$\star=\star'$ and if there exists a nonzero constant
$c\in \mathbb{C}$ such that $e'=ce$.
\end{definition}

\subsection{The Regular almost Saito structure}
\label{subsec:regularity}
Let $N$ be a manifold.
Given a pair  $(\boldsymbol{\nabla},e)$
consisting of a connection $\boldsymbol{\nabla}$
on $TN$ and a vector field $e\in \Gamma(N,\mathcal{T}_N)$,
define
$\mathcal{Q}\in \mathrm{Hom}_{\mathcal{O}_N}(\mathcal{T}_N,\mathcal{T}_N)$ by
\begin{equation}\nonumber
\mathcal{Q}(x)=\boldsymbol{\nabla}_x \,e\quad (x\in \mathcal{T}_N)
\end{equation}
We say that the pair $(\boldsymbol{\nabla},e)$ is {\it regular}
if $\mathcal{Q}$  is an isomorphism.

\begin{lemma}
Let $e$ be a vector field on a manifold $N$ and 
let $\boldsymbol{\nabla}$ be a torsion free, flat connection 
on $TN$. 
If the pair $(\boldsymbol{\nabla},e)$ is regular,
then  a multiplication $\star$ on $TN$ satisfying \rm{({\bf ASS4})}
is unique and it is given by
\begin{equation}\label{mult}
x\star y=-\mathcal{Q}^{-1}(
\boldsymbol{\nabla}_x\boldsymbol{\nabla}_y \,e)
+\boldsymbol{\nabla}_x\,y~.
\end{equation}
\end{lemma}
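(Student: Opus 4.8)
The plan is to treat (\textbf{ASS4}) as a linear equation for the unknown product $x\star y$ and to solve it using the invertibility of $\mathcal{Q}$. First I would record that, since $\mathcal{Q}\in\mathrm{Hom}_{\mathcal{O}_N}(\mathcal{T}_N,\mathcal{T}_N)$ sends any local vector field $w$ to $\boldsymbol{\nabla}_w\,e$, the last two terms of (\textbf{ASS4}) may be rewritten as $\boldsymbol{\nabla}_{\boldsymbol{\nabla}_x y}\,e=\mathcal{Q}(\boldsymbol{\nabla}_x y)$ and $\boldsymbol{\nabla}_{x\star y}\,e=\mathcal{Q}(x\star y)$, using only the $\mathcal{O}_N$-linearity of $\mathcal{Q}$ in its argument. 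Thus (\textbf{ASS4}) reads $\boldsymbol{\nabla}_x\boldsymbol{\nabla}_y\,e-\mathcal{Q}(\boldsymbol{\nabla}_x y)+\mathcal{Q}(x\star y)=0$, which isolates the unknown $x\star y$ inside a single application of $\mathcal{Q}$.

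Next, since the pair $(\boldsymbol{\nabla},e)$ is regular, $\mathcal{Q}$ is an isomorphism of $\mathcal{O}_N$-modules, so $\mathcal{Q}^{-1}$ exists and is again $\mathcal{O}_N$-linear. Applying $\mathcal{Q}^{-1}$ to the rearranged identity $\mathcal{Q}(x\star y)=\mathcal{Q}(\boldsymbol{\nabla}_x y)-\boldsymbol{\nabla}_x\boldsymbol{\nabla}_y\,e$ yields precisely the formula \eqref{mult}. Because the right-hand side of \eqref{mult} is a single, explicitly determined expression built from $\boldsymbol{\nabla}$, $e$ and $\mathcal{Q}^{-1}$, any two multiplications satisfying (\textbf{ASS4}) necessarily coincide; uniqueness and the explicit form therefore emerge together, and no separate uniqueness argument is required.

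The statement only asserts uniqueness, so nothing further is strictly needed; if desired, one can also confirm that \eqref{mult} genuinely defines a multiplication by checking $\mathcal{O}_N$-bilinearity. Linearity in $x$ is immediate from the $\mathcal{O}_N$-linearity of the connection in its direction slot, while linearity in $y$ follows because the Leibniz terms produced by $\boldsymbol{\nabla}_x(fy)$ cancel against those produced by $\boldsymbol{\nabla}_x\boldsymbol{\nabla}_{fy}\,e=\boldsymbol{\nabla}_x\bigl(f\,\mathcal{Q}(y)\bigr)$. I do not expect any serious obstacle here: the entire proof is a direct algebraic inversion, and the only two points demanding attention are the $\mathcal{O}_N$-linearity of $\mathcal{Q}$ (which legitimizes rewriting the last two terms of (\textbf{ASS4}) through $\mathcal{Q}$) and the regularity hypothesis (which supplies the inverse $\mathcal{Q}^{-1}$).
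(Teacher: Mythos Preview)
Your proof is correct and is precisely the intended argument: the paper's own proof reads simply ``Immediate,'' and what you have written is the obvious unpacking of that word --- rewrite the last two terms of ({\bf ASS4}) through $\mathcal{Q}$ and invert. Your additional remark on $\mathcal{O}_N$-bilinearity is a pleasant bonus but not required by the statement.
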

\begin{proof} Immediate.
\end{proof}

\begin{proposition}\label{uniqueness1}
Let $\boldsymbol{\nabla}$ be a torsion free, flat connection on 
$TN$ and let $e\in \Gamma(N,\mathcal{T}_N)$
be a vector field on $N$.
Assume that 
the pair  $(\boldsymbol{\nabla},e)$ is regular.
Define the multiplication $\star\in \mathrm{Hom}_{\mathcal{O}_N}(\mathcal{T}_N\otimes \mathcal{T}_N,\mathcal{T}_N)$ by \eqref{mult}. 
\\
(1) $\star$ is  commutative.
\\
(2) $\star$ is associative if and only if \rm{({\bf ASS1})} holds.
\\
(3) $(\boldsymbol{\nabla},\star,e)$ is an almost Saito structure 
if and only if it satisfies \rm{({\bf ASS1})}, \rm{({\bf ASS2})} and \rm{({\bf ASS3})}.
\end{proposition}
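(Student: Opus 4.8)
The plan is to handle the three claims in order, exploiting throughout that the multiplication \eqref{mult} satisfies (\textbf{ASS4}) automatically. Indeed, applying the isomorphism $\mathcal{Q}$ to \eqref{mult} gives $\boldsymbol{\nabla}_{x\star y}\,e=\mathcal{Q}(x\star y)=\boldsymbol{\nabla}_{\boldsymbol{\nabla}_x y}\,e-\boldsymbol{\nabla}_x\boldsymbol{\nabla}_y\,e$, which is exactly (\textbf{ASS4}); thus I never impose it as a hypothesis and may use it freely in the rearranged form $\boldsymbol{\nabla}_a\boldsymbol{\nabla}_b\,e=\mathcal{Q}(\boldsymbol{\nabla}_a b-a\star b)$. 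Claim (1) is then immediate: from \eqref{mult}, $x\star y-y\star x=-\mathcal{Q}^{-1}\!\big(\boldsymbol{\nabla}_x\boldsymbol{\nabla}_y\,e-\boldsymbol{\nabla}_y\boldsymbol{\nabla}_x\,e\big)+(\boldsymbol{\nabla}_x y-\boldsymbol{\nabla}_y x)$; flatness rewrites the first bracket as $\mathcal{Q}([x,y])$ and torsion-freeness rewrites the second as $[x,y]$, so the two terms cancel and $\star$ is commutative.

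Claim (2) is the heart of the matter. I introduce the trilinear expression $C(x,y,z):=x\star(y\star z)-y\star(x\star z)$. Given commutativity (Claim (1)), $\star$ is associative if and only if $C\equiv0$: associativity and commutativity give $x\star(y\star z)=(x\star y)\star z=(y\star x)\star z=y\star(x\star z)$, while conversely $C\equiv0$ yields $(x\star y)\star z=z\star(x\star y)=x\star(z\star y)=x\star(y\star z)$. I would then establish the key identity
\begin{equation}\nonumber
\mathcal{Q}\big(C(x,y,z)\big)=\boldsymbol{\nabla}_x(y\star z)-y\star\boldsymbol{\nabla}_x z-\boldsymbol{\nabla}_y(x\star z)+x\star\boldsymbol{\nabla}_y z-[x,y]\star z,
\end{equation}
whose right-hand side is the left-hand side of (\textbf{ASS1}). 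Since $\mathcal{Q}$ is an isomorphism, this identity shows at once that $C\equiv0\iff(\textbf{ASS1})$, hence associativity $\iff(\textbf{ASS1})$. The derivation of the identity is a careful bookkeeping computation: apply the rearranged (\textbf{ASS4}) to the pair $(x,\,y\star z)$, expand $\boldsymbol{\nabla}_{y\star z}\,e$ by (\textbf{ASS4}) once more, and repeat with $x$ and $y$ interchanged; the resulting third-order terms collapse via flatness, $\boldsymbol{\nabla}_x\boldsymbol{\nabla}_y\boldsymbol{\nabla}_z\,e-\boldsymbol{\nabla}_y\boldsymbol{\nabla}_x\boldsymbol{\nabla}_z\,e=\boldsymbol{\nabla}_{[x,y]}\boldsymbol{\nabla}_z\,e$, and a final application of (\textbf{ASS4}) to each remaining second-order term, after the $\boldsymbol{\nabla}_{[x,y]}z$ contributions cancel, produces exactly the displayed right-hand side. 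The main obstacle is organizing this computation: (\textbf{ASS4}) must be invoked at three different differentiation orders and flatness twice, and the signs must be tracked with care. Conceptually the identity says that the pencil $\boldsymbol{\nabla}^{\lambda}:=\boldsymbol{\nabla}+\lambda\,\star$ has curvature $\lambda\cdot[\text{(\textbf{ASS1})}]+\lambda^{2}\,C$, and $\mathcal{Q}(C)=[\text{(\textbf{ASS1})}]$ forces the two coefficients to vanish together.

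For Claim (3), the forward implication is trivial, since (\textbf{ASS1})--(\textbf{ASS3}) are among the defining conditions of an almost Saito structure. For the converse I would simply assemble the verified properties: $\boldsymbol{\nabla}$ is torsion-free and flat by hypothesis, $e$ is nonzero because $\mathcal{Q}$ is an isomorphism, $\star$ is commutative by Claim (1), associative by Claim (2) (using the assumed (\textbf{ASS1})), and satisfies (\textbf{ASS4}) by construction; finally (\textbf{ASS2}) is assumed and (\textbf{ASS3}) supplies the unit $E$ together with the condition $\boldsymbol{\nabla}_x E=rx$. Hence $(\boldsymbol{\nabla},\star,e)$ meets every requirement of Definition \ref{def-ASS} and is an almost Saito structure. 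The only point warranting a line of justification is that the field $E$ featured in (\textbf{ASS3}) is indeed the $\star$-unit, which is precisely the content packaged into that hypothesis; no additional verification beyond Claims (1)--(2) and the automatic (\textbf{ASS4}) is required.
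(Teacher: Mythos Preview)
Your proof is correct and follows essentially the same route as the paper. Your key identity $\mathcal{Q}(C(x,y,z))=\text{(\textbf{ASS1})-expression}$ is exactly the identity the paper derives (up to an overall sign coming from the way the associator is written), and your outlined computation---applying (\textbf{ASS4}) at successive orders and collapsing the third-order terms via flatness---matches the paper's explicit two-step calculation. The pencil interpretation $\boldsymbol{\nabla}^{\lambda}=\boldsymbol{\nabla}+\lambda\,\star$ is a nice conceptual gloss that the paper does not include, but it does not change the underlying argument.
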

\begin{proof}
(1) 
The commutativity of $\star$ follows from the flatness 
and the torsion freeness of 
$\boldsymbol{\nabla}$:
\begin{equation}\nonumber
\begin{split}
-\mathcal{Q}(x\star y-y\star x)
&=\boldsymbol{\nabla}_x\boldsymbol{\nabla}_y\,e
-\boldsymbol{\nabla}_y\boldsymbol{\nabla}_x\,e
-\mathcal{Q}(\boldsymbol{\nabla}_x\,y -\boldsymbol{\nabla}_y\,x)
\\
&=\boldsymbol{\nabla}_{[x,y]}\,e
-\mathcal{Q}([x,y])=0~.
\end{split}
\end{equation}
(2)  
By the commutativity of $\star$, we have
\begin{equation}\nonumber
\begin{split}
&\mathcal{Q}(y\star(z\star x)-(y\star z)\star x)\\
&=\mathcal{Q}(y\star(x\star z)-x\star (y\star z))
\\
&=-\boldsymbol{\nabla}_y\boldsymbol{\nabla}_{x\star z}\,e
+\mathcal{Q}(\boldsymbol{\nabla}_y(x\star z))-(x\leftrightarrow y)
\\
&=-\boldsymbol{\nabla}_y(-\boldsymbol{\nabla}_x\boldsymbol{\nabla}_z\,e+\mathcal{Q}(\boldsymbol{\nabla}_x\,z))
+\mathcal{Q}(\boldsymbol{\nabla}_y(x\star z))-(x\leftrightarrow y)
\\
&=-\boldsymbol{\nabla}_{[x,y]}\boldsymbol{\nabla}_z\,e
-\boldsymbol{\nabla}_y\boldsymbol{\nabla}_{\boldsymbol{\nabla}_x\,z}\,e
+\boldsymbol{\nabla}_x\boldsymbol{\nabla}_{\boldsymbol{\nabla}_y\,z}\,e
+\mathcal{Q}(\boldsymbol{\nabla}_y(x\star z)-
\boldsymbol{\nabla}_x(y\star z)
)~.
\end{split}
\end{equation}
In passing to the last line, we used the flatness of $\boldsymbol{\nabla}$. 
Moreover, we have
\begin{equation}\nonumber
\begin{split}
&\mathcal{Q}(-y\star \boldsymbol{\nabla}_x\,z
+x\star \boldsymbol{\nabla}_y\,z
-[x,y]\star z)
\\
&=\boldsymbol{\nabla}_y\boldsymbol{\nabla}_{\boldsymbol{\nabla}_x\,z}\,e
-\boldsymbol{\nabla}_x\boldsymbol{\nabla}_{\boldsymbol{\nabla}_y\,z}\,e
+\boldsymbol{\nabla}_{[x,y]}\boldsymbol{\nabla}_z\,e
+\mathcal{Q}
({\boldsymbol{\nabla}_x\boldsymbol{\nabla}_y\,z}
-{\boldsymbol{\nabla}_y\boldsymbol{\nabla}_x\,z}
-{\boldsymbol{\nabla}_{[x,y]}\,z}
)
\\
&=\boldsymbol{\nabla}_y\boldsymbol{\nabla}_{\boldsymbol{\nabla}_x\,z}\,e
-\boldsymbol{\nabla}_x\boldsymbol{\nabla}_{\boldsymbol{\nabla}_y\,z}\,e
+\boldsymbol{\nabla}_{[x,y]}\boldsymbol{\nabla}_z\,e~.
\end{split}
\end{equation}
Here we used the flatness of $\boldsymbol{\nabla}$. 
Adding the above two equations, we obtain
\begin{equation}\nonumber
\mathcal{Q}(y\star(z\star x)-(y\star z)\star x)
=
\mathcal{Q}(-\boldsymbol{\nabla}_x(y\star z)+y\star \boldsymbol{\nabla}_x\,z-(x\leftrightarrow y)+[x,y]\star z)~.
\end{equation}
Since $\mathcal{Q}$ is an isomorphism, it follows that 
the associativity of $\star$ is equivalent to ({\bf ASS1}).
(3) This follows form (1) and (2).
\end{proof}

\begin{definition}
We say that an almost Saito structure $(\boldsymbol{\nabla},\star,e)$
on $N$ is regular if $(\boldsymbol{\nabla},e)$ is regular.
\end{definition}

For a regular almost Saito structure $(\boldsymbol{\nabla},\star,e)$,
we sometimes omit $\star$, and call
$(\boldsymbol{\nabla},e)$ a regular almost Saito structure.

\begin{remark}
Let $(\boldsymbol{\nabla},\star,e)$ be an almost Saito structure 
on $N$ with parameter $r$ and a unit $E$. 
Then by ({\bf ASS3}) and \eqref{ASS2},
$$
\mathcal{Q}(E)=\boldsymbol{\nabla}_E \,e=
\boldsymbol{\nabla}_e\,E+[E,e]
=(r-1)e~.
$$
Thus,
if an almost Saito structure
$(\boldsymbol{\nabla},\star,e)$ 
has parameter $r=1$, then it is not regular.
\end{remark}

\subsection{Two-parameter family}\label{sec:two-par}
The next proposition shows that if there exists one almost Saito structure,
then there exists a two-parameter family of 
almost Saito structures. 

Let $(\boldsymbol{\nabla},\star,e)$ be an almost Saito structure on 
a manifold $N$ with parameter $r\in \mathbb{C}$
and a unit $E$.
Take $\lambda \in \mathbb{C}$ and define 
$\mathcal{I}_{\lambda}\in \mathrm{Hom}_{\mathcal{O}_N}
(\mathcal{T}_N,\mathcal{T}_N)$ by
$\mathcal{I}_{\lambda}(x)=(E-\lambda e)\star x$.
Assume that 
$$
N^{\lambda}=\{p\in N\mid 
\text{$I_{\lambda}:T_p N\to T_p N$ has rank $\dim N$}\}~
$$
is a nonempty subset of $N$. Here $I_{\lambda}$
is the endomorphism of $TN$ corresponding to $\mathcal{I}_{\lambda}$.
\begin{proposition}\label{shift}
Take $\nu\in \mathbb{C}$ and 
define a new multiplication $\star_{\lambda}$ and 
a new connection $\boldsymbol{\nabla}^{[\lambda,\nu]}$ 
on $TN^{\lambda}$ by
\begin{equation}\nonumber
\begin{split}
x\star_{\lambda} y&=\mathcal{I}_{\lambda}^{-1}(x\star y)~,
\\
\boldsymbol{\nabla}^{[\lambda,\nu]}_x\,y
&=\boldsymbol{\nabla_{x}}\,y+\nu x\star_{\lambda} y
+\lambda\boldsymbol{\nabla}_{x\star_{\lambda}y}\,e~
\quad (x,y\in \mathcal{T}_{N^{\lambda}}).
\end{split}
\end{equation}
Then $(\boldsymbol{\nabla}^{[\lambda,\nu]},\star_{\lambda},e)$ 
is an almost Saito structure  on $N^{\lambda}$
with parameter $r+\nu$  and the unit $E-\lambda e$.
\end{proposition}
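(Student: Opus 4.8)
The plan is to realise the deformation as the composition of two elementary moves and to verify the axioms for each. Writing $w=E-\lambda e$, observe that $\mathcal{I}_{\lambda}$ is $\star$-multiplication by $w$, so on $N^{\lambda}$ its inverse is $\star$-multiplication by the pointwise $\star$-inverse $w^{-1}:=\mathcal{I}_{\lambda}^{-1}(E)$, and hence $x\star_{\lambda}y=w^{-1}\star x\star y$. Commutativity and associativity of $\star_{\lambda}$, together with the fact that $w$ is its unit, then follow from the general algebraic fact that if $(A,\star)$ is a commutative associative algebra with unit and $w$ is invertible, then $a\bullet b:=w^{-1}\star a\star b$ is a commutative associative product with unit $w$. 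Condition \textbf{(ASS3)} I would check directly: since $w$ is the $\star_{\lambda}$-unit, $x\star_{\lambda}w=x$, and using $\boldsymbol{\nabla}_x w=rx-\lambda\mathcal{Q}(x)$ the two terms $\mp\lambda\mathcal{Q}(x)$ cancel, giving $\boldsymbol{\nabla}^{[\lambda,\nu]}_x w=(r+\nu)x$. Torsion-freeness of $\boldsymbol{\nabla}^{[\lambda,\nu]}$ is immediate because the added terms $\nu\,x\star_{\lambda}y$ and $\lambda\,\boldsymbol{\nabla}_{x\star_{\lambda}y}e$ are symmetric in $x,y$.

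For the remaining axioms I would split the connection as $\boldsymbol{\nabla}^{[\lambda,\nu]}=\boldsymbol{\nabla}^{[\lambda,0]}+\nu\,\star_{\lambda}$ and treat the two pieces in turn. The $\nu$-shift is the easy step, which I would isolate as a general lemma: for any almost Saito structure $(\boldsymbol{\nabla},\star,e)$ with parameter $r$ and unit $E$, the triple $(\boldsymbol{\nabla}+\nu\,\star,\star,e)$ is almost Saito with parameter $r+\nu$ and the same unit. Here the difference tensor is $A(x,y)=\nu\,x\star y$; the antisymmetrised covariant-derivative part of the curvature reduces, via \textbf{(ASS1)} and torsion-freeness, to $\nu[x,y]\star z-\nu[x,y]\star z=0$, while the quadratic part vanishes by associativity and commutativity, so flatness is preserved. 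Condition \textbf{(ASS1)} for the new connection follows from \textbf{(ASS1)} for $\boldsymbol{\nabla}$ once the purely $\nu$-terms cancel by commutativity; \textbf{(ASS2)} is untouched since it involves only $\star$ and $e$; and \textbf{(ASS4)} reduces to \textbf{(ASS4)} for $\boldsymbol{\nabla}$ after the surviving $\nu$-terms are recognised, via associativity, as exactly the identity \eqref{ASS3} of Lemma \ref{tec2}.

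The substantive work is the $\lambda$-twist, i.e.\ showing that $(\boldsymbol{\nabla}^{[\lambda,0]},\star_{\lambda},e)$ is an almost Saito structure with parameter $r$ and unit $w$. Condition \textbf{(ASS2)} for $(\star_{\lambda},e)$ is purely algebraic: from \textbf{(ASS2)} for $\star$, the relation $[e,E]=e$ in \eqref{ASS2}, and $[e,w]=e$, one computes the Lie derivative of $w^{-1}$ and then expands $[e,\,w^{-1}\star x\star y]$ to obtain the desired identity. The genuinely hard part, and the main obstacle, is the flatness of $\boldsymbol{\nabla}^{[\lambda,0]}$ together with its compatibility \textbf{(ASS1)} with $\star_{\lambda}$ and condition \textbf{(ASS4)}.

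For these I would use the difference tensor $A(x,y)=\lambda\,\boldsymbol{\nabla}_{x\star_{\lambda}y}e=\lambda\,\mathcal{Q}(x\star_{\lambda}y)$ and the curvature identity
\begin{equation}\nonumber
R^{[\lambda,0]}(x,y)z=(\boldsymbol{\nabla}_xA)(y,z)-(\boldsymbol{\nabla}_yA)(x,z)+A(x,A(y,z))-A(y,A(x,z)),
\end{equation}
valid because $\boldsymbol{\nabla}$ is flat. The two engines for evaluating the covariant derivatives are \textbf{(ASS4)} in the form $\boldsymbol{\nabla}_x\mathcal{Q}(y)=\mathcal{Q}(\boldsymbol{\nabla}_x y)-\mathcal{Q}(x\star y)$ and \textbf{(ASS1)}, while the identities \eqref{ASS3} and \eqref{ASS4} of Lemma \ref{tec2} control every place where $e$ meets the product. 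The crux is to evaluate the antisymmetrised defect $\boldsymbol{\nabla}_x(y\star_{\lambda}z)-\boldsymbol{\nabla}_y(x\star_{\lambda}z)-\dots$ measuring the failure of $\boldsymbol{\nabla}$ to be a derivation of $\star_{\lambda}$, which I would obtain from \textbf{(ASS1)} for $\star$ together with $\boldsymbol{\nabla}_x w=rx-\lambda\mathcal{Q}(x)$; after this the symmetric $\star$-cross terms drop out by commutativity and the quadratic term $A(x,A(y,z))-A(y,A(x,z))$ cancels what remains. I expect the main difficulty to be purely organisational: the twist term couples $\boldsymbol{\nabla}$, $\star$ and $e$ simultaneously, so the cancellations leading to $R^{[\lambda,0]}=0$, to \textbf{(ASS1)}, and to \textbf{(ASS4)} are long and must be bookkept carefully. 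Once the $\lambda$-twist is established with parameter $r$ and unit $w$, applying the $\nu$-shift lemma of the second paragraph to it yields parameter $r+\nu$, the same unit $w=E-\lambda e$, and the multiplication $\star_{\lambda}$, completing the proof.
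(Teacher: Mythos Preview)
Your plan is sound and will go through, but it is organised differently from the paper's proof. You decompose the deformation into a $\lambda$-twist $\boldsymbol{\nabla}^{[\lambda,0]}$ followed by a $\nu$-shift, whereas the paper handles both parameters simultaneously. The paper's device is to introduce auxiliary expressions
\[
\mathcal{C}(x,y,z;\lambda)=\boldsymbol{\nabla}_x(y\star_\lambda z)-y\star_\lambda\boldsymbol{\nabla}_x z-\lambda\,y\star_\lambda\boldsymbol{\nabla}_{x\star_\lambda z}e
\quad\text{and}\quad \mathcal{E}(x,y;\lambda),
\]
and to prove, via the substitution $z'=\mathcal I_\lambda^{-1}(z)$ together with (\textbf{ASS1}) and \eqref{ASS4}, the identities $\mathcal{C}(x,y,z;\lambda)-\mathcal{C}(y,x,z;\lambda)=[x,y]\star_\lambda z$ and $\mathcal{E}(x,y;\lambda)=0$. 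Flatness, (\textbf{ASS1}) and (\textbf{ASS4}) for $\boldsymbol{\nabla}^{[\lambda,\nu]}$ then become one-line consequences, with the operator $(\nu\,\mathrm{Id}+\lambda\mathcal Q)$ acting on these two identities.

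In your curvature computation with $A(x,y)=\lambda\mathcal Q(x\star_\lambda y)$ you will rediscover the first identity: after using (\textbf{ASS4}) to rewrite $\boldsymbol\nabla_x\mathcal Q(\cdot)$, the linear piece $(\boldsymbol\nabla_xA)(y,z)-(\boldsymbol\nabla_yA)(x,z)$ and the quadratic piece $A(x,A(y,z))-A(y,A(x,z))$ do \emph{not} separately vanish; they combine to give $\lambda\mathcal Q$ applied to $\mathcal{C}(x,y,z;\lambda)-\mathcal{C}(y,x,z;\lambda)-[x,y]\star_\lambda z$. So your phrase ``the quadratic term cancels what remains'' slightly understates what actually happens. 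A practical advantage of the paper's substitution $z'=\mathcal I_\lambda^{-1}(z)$ is that it never differentiates $w^{-1}$; your plan to use $\boldsymbol\nabla_x w=rx-\lambda\mathcal Q(x)$ and expand $\boldsymbol\nabla_x(w^{-1}\star y\star z)$ directly is correct but messier. On the other hand, your isolation of the $\nu$-shift as a stand-alone lemma (resting on associativity for flatness and on \eqref{ASS3} for (\textbf{ASS4})) is clean and reusable, at the cost of verifying the $\lambda$-twist axioms from scratch rather than getting them together with the $\nu$-dependence for free.
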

\begin{proof}
In this proof, $x,y,z\in \mathcal{T}_{N^{\lambda}}$.
First we show a technical lemma.  Set
\begin{eqnarray}
\nonumber
\mathcal{C}(x,y,z;\lambda)&=&
\boldsymbol{\nabla}_x(y\star_{\lambda}z)
-y\star_{\lambda}\boldsymbol{\nabla}_x\,z
-\lambda y\star_{\lambda}\boldsymbol{\nabla}_{x\star_{\lambda} z}\,e~,
\\\nonumber
\mathcal{E}(x,y;\lambda)&=&
\boldsymbol{\nabla}_x(e\star_{\lambda} y)
-e\star_{\lambda} \boldsymbol{\nabla}_x\,y
-\boldsymbol{\nabla}_{x\star_{\lambda} y}\,e
+x\star_{\lambda}\boldsymbol{\nabla}_y\,e
+e\star_{\lambda} x\star_{\lambda} y
\\\nonumber
&+&\lambda 
x\star_{\lambda}\boldsymbol{\nabla}_{e\star_{\lambda }y}\,e
-\lambda e\star_{\lambda}\boldsymbol{\nabla}_{x\star_{\lambda}y}\,e~.
\end{eqnarray}
\begin{lemma}
\begin{eqnarray}\label{c1}
&&
\mathcal{C}(x,y,z;\lambda)-\mathcal{C}(y,x,z;\lambda)
=[x,y]\star_{\lambda}z~,
\\
&&\label{e1}
\mathcal{E}(x,y;\lambda)=0~.
\end{eqnarray}
\end{lemma}
\begin{proof} 
\eqref{c1}: 
Put $z'=\mathcal{I}_{\lambda}^{-1}(z)$.
\begin{equation}\nonumber
\begin{split}
\mathcal{C}(x,y,z;\lambda)&=
\mathcal{I}_{\lambda}^{-1}\big(
(E-e\lambda)\star \boldsymbol{\nabla}_x(y\star z')
-y\star \boldsymbol{\nabla}_x((E-\lambda e)\star z')
-\lambda y\star \boldsymbol{\nabla}_{x\star z'}\,e
\big)
\\&=\mathcal{I}_{\lambda}^{-1}\big(
\boldsymbol{\nabla}_x(y\star z')
-y\star \boldsymbol{\nabla}_x\, z'
\\&-\lambda (
e\star \boldsymbol{\nabla}_{x}(y\star z')
-y\star \boldsymbol{\nabla}_x(e\star z')
+y\star \boldsymbol{\nabla}_{x\star z'}\,e
)\big)~.
\end{split}
\end{equation}
So
\begin{equation}\nonumber
\begin{split}
\mathcal{C}(x,y,z;\lambda)&-\mathcal{C}(y,x,z;\lambda)
=\mathcal{I}_{\lambda}^{-1}\big(
\boldsymbol{\nabla}_x(y\star z')
-y\star \boldsymbol{\nabla}_x\,z'
-\boldsymbol{\nabla}_y(x\star z')
+x\star \boldsymbol{\nabla}_y\,z'
\\
&-\lambda(
e\star \boldsymbol{\nabla}_x(y\star z')
-y\star \nabla_x (e\star z')+y\star \boldsymbol{\nabla}_{x\star z'}\,e
-(x\leftrightarrow y)
)
\big)
\\
&=\mathcal{I}_{\lambda}^{-1}([x,y]\star z'-\lambda e\star[x,y]\star z')
=\mathcal{I}_{\lambda}^{-1}([x,y]\star z)=[x,y]\star_{\lambda}z~.
\end{split}
\end{equation}
In the last line, we used ({\bf ASS1}) and \eqref{ASS4}.
\\
\eqref{e1}: 
put $y'=\mathcal{I}_{\lambda}^{-1}(y)$. Then
\begin{equation}\nonumber
\begin{split}
\mathcal{E}(x,y;\lambda)
&=\mathcal{I}_{\lambda}^{-1}
\big(
(E-\lambda e)\star 
\boldsymbol{\nabla}_x(e\star y')
-e \star\boldsymbol{\nabla}_x\,(E-\lambda e)\star y'
-(E-\lambda e)\star\boldsymbol{\nabla}_{x\star y'}\,e
\\&+x\star \boldsymbol{\nabla}_{(E-\lambda e)\star y'}\,e
+e\star x\star y'
+\lambda 
x\star \boldsymbol{\nabla}_{e\star y'}\,e
-\lambda e\star \boldsymbol{\nabla}_{x\star y'}\,e
\big)
\\
&=\mathcal{I}_{\lambda}^{-1}\big(
\boldsymbol{\nabla}_x(e\star  y')
-e\star \boldsymbol{\nabla}_x\,y'
-\boldsymbol{\nabla}_{x\star  y'}\,e
+x\star \boldsymbol{\nabla}_{y'}\,e
+e\star x\star  y'
\big)
\\
&=0~,
\end{split}
\end{equation}
because of \eqref{ASS3}.
\end{proof}
The commutativity and the associativity of $\star_{\lambda}$,
and the torsion freeness of $\boldsymbol{\nabla}^{[\lambda,\nu]}$
follow from those of $\star,\boldsymbol{\nabla}$.
As for the flatness of $\boldsymbol{\nabla}^{[\lambda,\nu]}$,
\begin{equation}\nonumber
\begin{split}
&\boldsymbol{\nabla}^{[\lambda,\nu]}_x
\boldsymbol{\nabla}^{[\lambda,\nu]}_y\,z
-\boldsymbol{\nabla}^{[\lambda,\nu]}_y
\boldsymbol{\nabla}^{[\lambda,\nu]}_x\,z
\\&=\boldsymbol{\nabla}_x
\boldsymbol{\nabla}_y\,z
-\boldsymbol{\nabla}_y
\boldsymbol{\nabla}_x\,z
+
(\nu \mathrm{Id}+\lambda \mathcal{Q})
\big(\mathcal{C}(x,y,z;\lambda)-\mathcal{C}(y,x,z;\lambda)\big)
\\
&=\boldsymbol{\nabla}_{[x,y]}\,z
+(\nu \mathrm{Id}+\lambda \mathcal{Q})
([x,y]\star_{\lambda} z)
=\boldsymbol{\nabla}^{[\lambda,\mu]}_{[x,y]}\,z~.
\end{split}
\end{equation}
Here we used  ({\bf ASS4}), 
the flatness of $\boldsymbol{\nabla}$ and \eqref{c1}.
\\
Next let us check the conditions ({\bf ASS1})--({\bf ASS4}).
({\bf ASS1}) follows from \eqref{c1}:
\begin{equation}\nonumber
\begin{split}
\boldsymbol{\nabla}^{[\lambda,\nu]}_x (y\star_{\lambda} z)
-y\star_{\lambda} \boldsymbol{\nabla}^{[\lambda,\nu]}_x \,z
-(x\leftrightarrow y)
&=\mathcal{C}(x,y,z;\lambda)-\mathcal{C}(y,x,z;\lambda)
=[x,y]\star_{\lambda} z~.
\end{split}
\end{equation}
({\bf ASS2}): put $x'=\mathcal{I}_{\lambda}^{-1}(x)$
and $y'=\mathcal{I}_{\lambda}^{-1}(y)$.
Then by ({\bf ASS2}) and \eqref{ASS2},
\begin{equation}\nonumber
\begin{split}
[e,x]&=[e,(E-\lambda e)\star x']=
(E-\lambda e)\star [e,x']+\lambda e\star e\star x'~.
\end{split}
\end{equation}
Therefore
\begin{equation}\nonumber
\begin{split}
&[e,x\star_{\lambda} y]-[e,x]\star_{\lambda} y-x\star_{\lambda}[e,y]
+e\star_{\lambda} x\star_{\lambda} y
\\&=
(E-\lambda e)\star
\big([e,x'\star y']-[e,x']\star y'-x'\star[e,y']+e\star x'\star y'\big)
=0~.
\end{split}
\end{equation}
({\bf ASS3}):  Given that 
$E-\lambda e$ is a unit of $\star_{\lambda}$,
we have
\begin{equation}\nonumber
\begin{split}
\boldsymbol{\nabla}^{[\lambda,\nu]}_x(E-\lambda e)
=\boldsymbol{\nabla}_x (E-\lambda e)
+\nu x
+\lambda \boldsymbol{\nabla}_x \,e=
(r+\nu)x~.
\end{split}
\end{equation}
({\bf ASS4}): 
\begin{equation}\nonumber
\begin{split}
&\boldsymbol{\nabla}^{[\lambda,\nu]}_x
 \boldsymbol{\nabla}^{[\lambda,\nu]}_y \,e
-\boldsymbol{\nabla}^{[\lambda,\nu]}_{
\boldsymbol{\nabla}^{[\lambda,\nu]}_x y}\,e+
\boldsymbol{\nabla}^{[\lambda,\nu]}_{x\star_{\lambda} y} \,e
=
\boldsymbol{\nabla}_x\boldsymbol{\nabla}_y \,e
-\boldsymbol{\nabla}_{\boldsymbol{\nabla}_x y}\,e+
\boldsymbol{\nabla}_{x\star y}\, e 
\\&+(\nu \mathrm{Id} +\lambda \mathcal{Q})
\big(\mathcal{E}(x,y;\lambda)
\big)
+\mathcal{Q}(
x\star_{\lambda} y-x\star y-\lambda x\star y\star_{\lambda}e
)
\\
&=0~.
\end{split}
\end{equation}
This completes the proof of Proposition \ref{shift}.
\end{proof}

The next lemma shows how local flat coordinates 
of $\boldsymbol{\nabla}$
and the $\boldsymbol{\nabla}^{[0,-1]}$
are related.
\begin{lemma}\label{shift2}
Let $(\boldsymbol{\nabla},\star,e)$ be an almost Saito structure
with parameter $r\in \mathbb{C}$
on a manifold $N$.
Let $u^1,\ldots, u^n$ be local flat coordinates with respect to
$\boldsymbol{\nabla}$ and let $e^i=e(u^i)$. 
If $(\boldsymbol{\nabla},e)$ is regular,
$e^1,\ldots, e^n$  are flat local coordinates 
with respect to 
$\boldsymbol{\nabla}^{[0,-1]}$.
\end{lemma}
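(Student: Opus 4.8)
The plan is to make the connection $\boldsymbol{\nabla}^{[0,-1]}$ explicit, and then prove separately that the functions $e^1,\dots,e^n$ form a coordinate system and that their differentials are $\boldsymbol{\nabla}^{[0,-1]}$-parallel. First I would observe that since $E$ is the unit of $\star$, the operator $\mathcal{I}_0(x)=E\star x$ is the identity, so $\star_0=\star$; taking $\lambda=0$ and $\nu=-1$ in Proposition \ref{shift} then gives $\boldsymbol{\nabla}^{[0,-1]}_x\,y=\boldsymbol{\nabla}_x\,y-x\star y$, with the term $\lambda\boldsymbol{\nabla}_{x\star_{\lambda}y}\,e$ dropping out. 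Moreover $N^{0}=N$, and Proposition \ref{shift} guarantees that $\boldsymbol{\nabla}^{[0,-1]}$ is a torsion-free flat connection. Hence a function is a flat coordinate for $\boldsymbol{\nabla}^{[0,-1]}$ exactly when its differential is parallel, i.e. when its Hessian vanishes, and it suffices to check this for each $e^i$ together with the nondegeneracy of the collection.

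For the coordinate claim, I would write $\partial_i=\partial/\partial u^i$, a $\boldsymbol{\nabla}$-flat frame, and expand $e=\sum_k e^k\partial_k$. Then $\mathcal{Q}(\partial_i)=\boldsymbol{\nabla}_{\partial_i}\,e=\sum_k(\partial_i e^k)\,\partial_k$, so the matrix of $\mathcal{Q}$ in this frame is precisely the Jacobian $(\partial_i e^k)$. Regularity of $(\boldsymbol{\nabla},e)$ means $\mathcal{Q}$ is an isomorphism, hence this Jacobian is nondegenerate, and the inverse function theorem makes $(e^1,\dots,e^n)$ a local coordinate system.

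For the flatness I would compute the Hessian of $e^i$ with respect to $\boldsymbol{\nabla}^{[0,-1]}$. Two identities drive the argument. Since $u^i$ is $\boldsymbol{\nabla}$-flat we have $\boldsymbol{\nabla}\,du^i=0$, which together with $e^i=du^i(e)$ yields $de^i=du^i\circ\mathcal{Q}$. Next, condition $(\mathbf{ASS4})$ is exactly the statement $(\boldsymbol{\nabla}_x\mathcal{Q})(y)=\boldsymbol{\nabla}_x\boldsymbol{\nabla}_y\,e-\boldsymbol{\nabla}_{\boldsymbol{\nabla}_x y}\,e=-\mathcal{Q}(x\star y)$. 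Using $\boldsymbol{\nabla}\,du^i=0$ once more gives $(\boldsymbol{\nabla}_x\,de^i)(y)=du^i\big((\boldsymbol{\nabla}_x\mathcal{Q})(y)\big)=-du^i\big(\mathcal{Q}(x\star y)\big)=-(x\star y)(e^i)$. Finally, from $\boldsymbol{\nabla}^{[0,-1]}_x\,y=\boldsymbol{\nabla}_x\,y-x\star y$ the Hessian is $(\boldsymbol{\nabla}^{[0,-1]}_x\,de^i)(y)=(\boldsymbol{\nabla}_x\,de^i)(y)+(x\star y)(e^i)=0$. Since $\boldsymbol{\nabla}^{[0,-1]}$ is flat and torsion-free, vanishing Hessian means each $de^i$ is parallel, so the $e^i$ are flat coordinates.

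The computation is short and essentially routine; the only genuine content is recognizing $(\mathbf{ASS4})$ as the compact identity $\boldsymbol{\nabla}\mathcal{Q}=-\mathcal{Q}(\,\cdot\star\cdot\,)$ and transporting $de^i$ through $\mathcal{Q}$. The step that requires the most care is the initial identification $\boldsymbol{\nabla}^{[0,-1]}_x\,y=\boldsymbol{\nabla}_x\,y-x\star y$, which depends crucially on $E$ being the $\star$-unit so that $\mathcal{I}_0=\mathrm{Id}$; everything else is bookkeeping with $\boldsymbol{\nabla}\,du^i=0$.
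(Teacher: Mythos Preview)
Your proof is correct and follows essentially the same route as the paper: both establish the coordinate claim by identifying the Jacobian $(\partial_i e^k)$ with the matrix of $\mathcal{Q}$, and both reduce flatness to the rewriting of (\textbf{ASS4}) as $\boldsymbol{\nabla}_x\mathcal{Q}(y)=\mathcal{Q}(\boldsymbol{\nabla}_x y)-\mathcal{Q}(x\star y)$. The only cosmetic difference is that the paper works on the tangent side, showing $\mathcal{Q}(\partial_{e^i})=\partial_{u^i}$ and hence $\boldsymbol{\nabla}_x\partial_{e^i}=x\star\partial_{e^i}$, while you work on the cotangent side via $de^i=du^i\circ\mathcal{Q}$; these are dual formulations of the same computation.
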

\begin{proof}
First let us show that $e^1,\ldots,e^n$ are local coordinates.
By the definition of $e^i$,
the vector field $e$ is written as
$$
e=\sum_{i=1}^n e^i \partial_{u^i}~.
$$
So the matrix representation of $\mathcal{Q}=\boldsymbol{\nabla}\,e$
with respect to $(\partial_{u^1},\ldots,\partial_{u^n})$
is given by
$$
\left(\frac{\partial e^j }{\partial u^k}\right)_{j,k}~.
$$
Then by the regularity of $(\boldsymbol{\nabla},e)$,
this matrix is invertible. This implies that  $e^1,\ldots,e^n$
are local coordinates.

The above matrix representation also implies that 
$
\mathcal{Q}(\partial_{ e^i})={\partial}_{ u^i}~.
$
Therefore by ({\bf ASS4}),
\begin{equation}\nonumber
\boldsymbol{\nabla}_x\,\partial_{ e^i}
=\mathcal{Q}^{-1}(\boldsymbol{\nabla}_x\mathcal{Q}(\partial_{ e^i}))
+x\star \partial_{ e^i}
=\mathcal{Q}^{-1}(\boldsymbol{\nabla}_x \,\partial_{u^i})+
x\star \partial_{e^i}=x\star \partial_{e^i}~\quad
(x\in \mathcal{T}_N)~.
\end{equation}
Thus $\boldsymbol{\nabla}^{[0,-1]}_x \partial_{e^i}=0$.
\end{proof}

\section{Almost duality for the Saito structure}
\label{sec:duality}
In \S \ref{sec:ss2ass}, we show that
one can construct a two-parameter family of almost Saito structures
from a given  Saito structure.
In \S \ref{sec:ass2ss}, we also  show that
one can construct a Saito structure if 
given an almost Saito structure.
In \S \ref{sec:AD-SS}, we explain that these constructions 
can be seen as inverse operations.
We call this phenomenon the almost duality for the Saito structure.

In fact, a Saito structure is always accompanied with 
a one-parameter family (Remark \ref{E-shift}) while
an almost Saito structure is accompanied with 
a two-parameter family (\S \ref{sec:two-par}).
How they correspond via these constructions
is mentioned in Remark \ref{correspondence}.

\subsection{From a  Saito structure to an almost Saito structure}
\label{sec:ss2ass}
Let $(\nabla,\ast,E)$ be a Saito structure
on a manifold $M$
and let $e$ be its unit.
Take $\lambda\in \mathbb{C}$ and 
define $\mathcal{U}_{\lambda}\in \mathrm{Hom}_{\mathcal{O}_M}(\mathcal{T}_M,\mathcal{T}_M)$ by
$
\mathcal{U}_{\lambda}(x)=(E-\lambda e)\ast x.
$
Let 
\begin{equation}\nonumber
M_{\lambda}
=\{ p\in M\mid \text{ ${U}_{\lambda}:T_pM\to T_pM$ 
has rank $\dim M$}\}~.
\end{equation}
Here $U_{\lambda}$ is the endomorphism of $TM$ corresponding
to $\mathcal{U}_{\lambda}$.
We put the assumption that $M_{\lambda}\subset M$ is not empty.

\begin{proposition}\label{ss2ass}
Choose  $r\in \mathbb{C}$ and 
define a multiplication $\star_{\lambda}$ and a connection $\boldsymbol{\nabla}^{(\lambda,r)}$ on $TM_{\lambda}$ by
\begin{equation}\label{eq:ss2ass}
\begin{split}
&x\star_{\lambda} y
=\mathcal{U}_{\lambda}^{-1}(x\ast y)~,
\\
&\boldsymbol{\nabla}^{(\lambda,r)}_x \,y=\nabla_x\,y
+r \,x\star_{\lambda} y-\nabla_{x\star_{\lambda} y} \,E~.
\end{split}
\end{equation}
Then \\
(1) $\star_{\lambda}$ is  commutative and associative with 
the unit $E-\lambda e$.
\\
(2) $\boldsymbol{\nabla}^{(\lambda,r)}$ is torsion free and flat.
\\
(3) 
$(\boldsymbol{\nabla}^{(\lambda,r)},\star_{\lambda}, e)$ is an almost Saito structure on $M_{\lambda}$
with parameter $r$.
\\
(4) Define $\mathcal{P}_{\lambda}\in \mathrm{Hom}_{\mathcal{O}_{M_{\lambda}}}(
\mathcal{T}_{M_{\lambda}},\mathcal{T}_{M_{\lambda}}
)$ by
$\mathcal{P}_{\lambda}(x)=
e\star_{\lambda} x.$
Then 
for $x,y\in \mathcal{T}_{M_{\lambda}}$,
\begin{equation}\nonumber
 \mathcal{P}_{\lambda}^{-1}(x\star_{\lambda}y)
=x\ast y~,
\quad \boldsymbol{\nabla}^{(\lambda,r)}_x\,y
 -\boldsymbol{\nabla}^{(\lambda,r)}_{x\ast y}\,e=
 \nabla_x\,y~.
\end{equation}
\end{proposition}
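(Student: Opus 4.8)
The plan is to exploit that the operator $\mathcal{U}_\lambda$ intertwines the two products. Since $\ast$ is associative and commutative, $\mathcal{U}_\lambda$ commutes with $\ast$-multiplication by any vector field, and hence so does $\mathcal{U}_\lambda^{-1}$; this gives the basic rule $\mathcal{U}_\lambda^{-1}(a\ast b)=\mathcal{U}_\lambda^{-1}(a)\ast b$. From it, part (1) is immediate: commutativity of $\star_\lambda$ is inherited from $\ast$; the computation $(E-\lambda e)\star_\lambda x=\mathcal{U}_\lambda^{-1}\mathcal{U}_\lambda(x)=x$ shows that $E-\lambda e$ is a unit; and associativity follows because both $(x\star_\lambda y)\star_\lambda z$ and $x\star_\lambda(y\star_\lambda z)$ equal $\mathcal{U}_\lambda^{-2}(x\ast y\ast z)$. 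Writing $\mathcal{A}_x y:=r\,x\star_\lambda y-\nabla_{x\star_\lambda y}E$, so that $\boldsymbol{\nabla}^{(\lambda,r)}_x y=\nabla_x y+\mathcal{A}_x y$, the tensor $\mathcal{A}_x y$ is $\mathcal{O}$-bilinear and symmetric in $x,y$ by commutativity of $\star_\lambda$; hence $\boldsymbol{\nabla}^{(\lambda,r)}$ is a connection with the same (vanishing) torsion as $\nabla$, giving the easy half of part (2). Part (4) is then short: $\mathcal{P}_\lambda(x)=e\star_\lambda x=\mathcal{U}_\lambda^{-1}(x)$, so $\mathcal{P}_\lambda^{-1}=\mathcal{U}_\lambda$ and $\mathcal{P}_\lambda^{-1}(x\star_\lambda y)=\mathcal{U}_\lambda\mathcal{U}_\lambda^{-1}(x\ast y)=x\ast y$; moreover, using $(x\ast y)\star_\lambda e=x\star_\lambda y$ together with $\nabla e=0$ from (\textbf{SS3}) one finds $\boldsymbol{\nabla}^{(\lambda,r)}_{x\ast y}e=\mathcal{A}_x y$, whence $\boldsymbol{\nabla}^{(\lambda,r)}_x y-\boldsymbol{\nabla}^{(\lambda,r)}_{x\ast y}e=\nabla_x y$.

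Among the almost Saito axioms needed for part (3), conditions (\textbf{ASS3}) and (\textbf{ASS2}) are comparatively direct. For (\textbf{ASS3}), since $x\star_\lambda(E-\lambda e)=x$ and $\nabla e=0$, one gets $\boldsymbol{\nabla}^{(\lambda,r)}_x(E-\lambda e)=\nabla_x(E-\lambda e)+rx-\nabla_x E=rx$. For (\textbf{ASS2}), the key device is the commutation relation $[e,(E-\lambda e)\ast v]=(E-\lambda e)\ast[e,v]+v$, which follows from \eqref{SS1} together with $[E,e]=-e$ in \eqref{SS2} and the unit property $e\ast v=v$. Setting $x=\mathcal{U}_\lambda(x')$, $y=\mathcal{U}_\lambda(y')$ and substituting, the left-hand side of (\textbf{ASS2}) collapses to $\mathcal{U}_\lambda$ applied to $[e,x'\ast y']-[e,x']\ast y'-x'\ast[e,y']$, which vanishes by \eqref{SS1}; this parallels the verification of (\textbf{ASS2}) in the proof of Proposition \ref{shift}.

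The substantial part is the flatness in (2) together with (\textbf{ASS1}) and (\textbf{ASS4}). For flatness, using that $\nabla$ is flat and torsion-free, the curvature collapses to
\[
R^{\boldsymbol{\nabla}^{(\lambda,r)}}(x,y)z=(\nabla_x\mathcal{A})(y,z)-(\nabla_y\mathcal{A})(x,z)+[\mathcal{A}_x,\mathcal{A}_y]z.
\]
Splitting $\mathcal{A}_x$ into its $r$-linear part $x\star_\lambda(-)$ and the part $-\nabla_{x\star_\lambda(-)}E$, and grading the right-hand side by powers of $r$, the $r^2$-term vanishes by associativity and commutativity of $\star_\lambda$, while the $r^1$- and $r^0$-terms are made to vanish by transporting them to the $(\nabla,\ast,E)$-side via $\mathcal{U}_\lambda$ and invoking the axioms (\textbf{SS1}) and (\textbf{SS4}) together with the identities \eqref{SS3} and \eqref{SS4}. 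Condition (\textbf{ASS1}) reduces, after cancelling the $r$-terms and the symmetric $\nabla_{x\star_\lambda y\star_\lambda z}E$-term, to the companion identity
\[
\nabla_x(y\star_\lambda z)-y\star_\lambda\nabla_x z+y\star_\lambda\nabla_{x\star_\lambda z}E-(x\leftrightarrow y)=[x,y]\star_\lambda z,
\]
which is the exact analogue of \eqref{c1} and is verified by applying $\mathcal{U}_\lambda$ and using (\textbf{SS1}) together with \eqref{SS4}; condition (\textbf{ASS4}) follows in the same spirit from (\textbf{SS4}) and \eqref{SS4}. I expect the bookkeeping in these last reductions to be the main obstacle: the recurring difficulty is that $\nabla_x$ does not commute with $\mathcal{U}_\lambda^{-1}$, and this non-commutativity must be resolved each time by applying the axiom (\textbf{SS1}) to the product $(E-\lambda e)\ast(-)$, which is the single device that makes all three computations close.
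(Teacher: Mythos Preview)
Your proposal is correct and follows essentially the same route as the paper: the paper also proves (1) and (4) by the identity $\mathcal{P}_\lambda=\mathcal{U}_\lambda^{-1}$, checks (\textbf{ASS2}) via the substitution $x=\mathcal{U}_\lambda(x')$, $y=\mathcal{U}_\lambda(y')$ together with \eqref{SS1}--\eqref{SS2}, and reduces flatness and (\textbf{ASS1}) to the companion identity you wrote, which is exactly the paper's \eqref{c2}. Two small remarks. First, your grading-by-$r$ scheme for flatness works but duplicates effort: because $\nabla(\nabla E)=0$ by (\textbf{SS4}), the operator $r\,\mathrm{Id}-\mathcal{W}$ (with $\mathcal{W}(v)=\nabla_vE$) is $\nabla$-parallel, and the full curvature factors as $(r\,\mathrm{Id}-\mathcal{W})$ applied to $\mathcal{C}_2(x,y,z;\lambda)-\mathcal{C}_2(y,x,z;\lambda)-[x,y]\star_\lambda z$; thus your $r^0$-check is automatically $-\mathcal{W}$ of your $r^1$-check and need not be verified separately. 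Second, your attribution for (\textbf{ASS4}) is slightly off: the relevant identity is \eqref{SS3} (packaged in the paper as $\mathcal{E}_2(x,y;\lambda)=0$, equation \eqref{e2}), not (\textbf{SS4}) or \eqref{SS4}; with that correction the argument goes through exactly as you outline.
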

\begin{remark}
The appearance of the two parameters in 
Proposition \ref{ss2ass} is due to the existence
of the two-parameter family explained in \S \ref{sec:two-par}.
If one considers  the almost Saito structure obtained from
$(\boldsymbol{\nabla}^{(0,0)},\star_0,e)$ by
twisting by $\lambda$ and shifting by $\nu$
as in Proposition \ref{shift},
then it is  nothing but
$(\boldsymbol{\nabla}^{(\lambda,\nu)},\star_{\lambda},e)$
in Proposition \ref{ss2ass}.
(However, in this way, 
$(\boldsymbol{\nabla}^{(\lambda,\nu)},\star_{\lambda},e)$ is given on
$M_0\cap (M_0)^{\lambda}$ which may be smaller than 
$M_{\lambda}$.)
\end{remark}

\begin{remark}
The $2$-parameter family of connections $\boldsymbol{\nabla}^{(\lambda, r)}$
had been considered before in the case of the Frobenius manifolds
under the name of the second structure connection \cite[\S 9.2]{Hertling2002}.
The flatness and torsion freeness are proved in Theorem 9.4 of {\it loc.cit.}
and the condition ({\bf ASS3}) can be derived easily from formula (9.12) in {\it loc.cit.}.  
\end{remark}

To prove Proposition \ref{ss2ass},
we first prove a technical lemma.  Set
\begin{eqnarray}
\nonumber
\mathcal{C}_2(x,y,z;\lambda)&=&
{\nabla}_x(y\star_{\lambda}z)
-y\star_{\lambda}{\nabla}_x\,z
+y\star_{\lambda}{\nabla}_{x\star_{\lambda} z}\,E~,
\\\nonumber
\mathcal{E}_2(x,y;\lambda)&=&
{\nabla}_x(e\star_{\lambda} y)
-e\star_{\lambda} {\nabla}_x\,y
-x\star_{\lambda}{\nabla}_{e\star_{\lambda}y}\,E
+e\star_{\lambda} \boldsymbol{\nabla}_{x\star_{\lambda}y}\,E
+e\star_{\lambda} x\star_{\lambda}y~.
\end{eqnarray}
\begin{lemma}
\begin{eqnarray}\label{c2}
&&
\mathcal{C}_2(x,y,z;\lambda)-\mathcal{C}_2(y,x,z;\lambda)
=[x,y]\star_{\lambda}z~,
\\
&&\label{e2}
\mathcal{E}_2(x,y;\lambda)=0~.
\end{eqnarray}
\end{lemma}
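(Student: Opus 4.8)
The plan is to prove both identities by transporting every expression back to the original Saito multiplication $\ast$ through the isomorphism $\mathcal{U}_{\lambda}$, using that $x\star_{\lambda}y=\mathcal{U}_{\lambda}^{-1}(x\ast y)$ and hence $\mathcal{U}_{\lambda}(u\star_{\lambda}v)=u\ast v$. Throughout I would use the substitutions $z=(E-\lambda e)\ast z'$ for \eqref{c2} and $y=(E-\lambda e)\ast y'$ for \eqref{e2}, which turn $y\star_{\lambda}z$ into $y\ast z'$ and $x\star_{\lambda}z$ into $x\ast z'$, and similarly for the $y'$-substitution. The two facts that make the reduction clean are that $e$ is the unit of $\ast$, so $e\ast w=w$ and therefore $\nabla_x(e\ast w)=\nabla_x w=e\ast\nabla_x w$, together with $\mathcal{U}_{\lambda}^{-1}((E-\lambda e)\ast w)=w$.

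For \eqref{c2}, I would first apply $\mathcal{U}_{\lambda}$ and factor it out, which reduces the claim to the $\ast$-identity $A(x,y)-A(y,x)=(E-\lambda e)\ast[x,y]\ast z'$, where $A(x,y)=(E-\lambda e)\ast\nabla_x(y\ast z')-y\ast\nabla_x((E-\lambda e)\ast z')+y\ast\nabla_{x\ast z'}E$, the right-hand side being $\mathcal{U}_{\lambda}([x,y]\star_{\lambda}z)$. Splitting $E-\lambda e$ and antisymmetrizing in $x,y$, the part proportional to $E$ is precisely the left-hand side of \eqref{SS4} with $z$ replaced by $z'$, yielding $E\ast[x,y]\ast z'$; the part proportional to $\lambda e$ becomes, after using $\nabla_x(e\ast z')=\nabla_x z'$, exactly $e\ast$ times condition $(\mathbf{SS1})$, giving $-\lambda\,e\ast[x,y]\ast z'$. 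Adding the two reproduces $(E-\lambda e)\ast[x,y]\ast z'$, and applying $\mathcal{U}_{\lambda}^{-1}$ returns $[x,y]\star_{\lambda}z$.

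For \eqref{e2}, I would again factor out $\mathcal{U}_{\lambda}^{-1}$ after substituting $y=(E-\lambda e)\ast y'$; since $e\star_{\lambda}y=y'$ and $x\star_{\lambda}y=x\ast y'$, the bracket to be shown to vanish is $B=(E-\lambda e)\ast\nabla_x y'-\nabla_x((E-\lambda e)\ast y')-x\ast\nabla_{y'}E+\nabla_{x\ast y'}E+x\ast y'$. Expanding $E-\lambda e$ and using $e\ast y'=y'$, all $\lambda$-dependent terms cancel, leaving $B=E\ast\nabla_x y'-\nabla_x(E\ast y')-x\ast\nabla_{y'}E+\nabla_{x\ast y'}E+x\ast y'$, which is exactly \eqref{SS3} with $(y,z)=(x,y')$; hence $B=0$.

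The main obstacle is the bookkeeping in \eqref{c2}: one must transport the three terms through $\mathcal{U}_{\lambda}$ correctly, perform the antisymmetrization, and recognize that the surviving pieces match \eqref{SS4} and $(\mathbf{SS1})$ respectively, where the $e$-part in particular collapses onto $(\mathbf{SS1})$ only after invoking the unit identities $e\ast w=w$ and $\nabla_x(e\ast w)=\nabla_x w$. By contrast \eqref{e2} is a single clean reduction to \eqref{SS3} once the $\lambda$-terms are seen to cancel. I also note that the symbol $\boldsymbol{\nabla}_{x\star_{\lambda}y}E$ appearing in the definition of $\mathcal{E}_2$ should be read as $\nabla_{x\star_{\lambda}y}E$, since only the Saito connection $\nabla$ is available at this stage.
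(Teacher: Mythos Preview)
Your proposal is correct and follows essentially the same route as the paper: the substitutions $z'=\mathcal{U}_{\lambda}^{-1}(z)$ and $y'=\mathcal{U}_{\lambda}^{-1}(y)$, the splitting of $E-\lambda e$, and the appeals to \eqref{SS4} with ({\bf SS1}) for \eqref{c2} and to \eqref{SS3} for \eqref{e2} are exactly what the paper does. Your observation that the symbol $\boldsymbol{\nabla}_{x\star_{\lambda}y}E$ in the definition of $\mathcal{E}_2$ must be read as $\nabla_{x\star_{\lambda}y}E$ is also correct---it is a typo in the original.
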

\begin{proof}
\eqref{c2}: 
put $z'=\mathcal{U}_{\lambda}^{-1}(z)$. Then
\begin{equation}\begin{split}\nonumber
&\mathcal{C}_2(x,y,z;\lambda)-\mathcal{C}_2(y,x,z;\lambda)
\\&=
\mathcal{U}_{\lambda}^{-1}\big(
(E-\lambda e)\ast \nabla_x(y\ast z')
-y\ast \nabla_x((E-\lambda e)\ast z')+y\ast \nabla_{x\ast z'}\,E
\big)-(x\leftrightarrow y)
\\
&=
\mathcal{U}_{\lambda}^{-1}
\big(
E\ast \nabla_x(y\ast z')-y\ast \nabla_x(E\ast z')+
y\ast \nabla_{x\ast z'}\,E
-\lambda(\nabla_x(y\ast z')-y\ast \nabla_x \,z')
\big)-(x\leftrightarrow y)
\\
&=\mathcal{U}_{\lambda}^{-1} (E\ast[x,y]\ast z'-\lambda[x,y]\ast z')
=[x,y]\star_{\lambda} z~.
\end{split}
\end{equation}
In passing to the last line, we used ({\bf SS1}) and \eqref{SS4}.
\\
\eqref{e2}: put $y'=\mathcal{U}_{\lambda}^{-1}(y)$. Then
\begin{equation}\nonumber
\begin{split}
\mathcal{E}_2(x,y;\lambda)
&=\mathcal{U}_{\lambda}^{-1}
\big(
(E-\lambda e)\ast \nabla_x\, y'-x\ast \nabla_{y'}\,E
-\nabla_x(E-\lambda e)\ast y'+\nabla_{x\ast y'}\,E
+x\ast y'
\big)
\\
&=\mathcal{U}_{\lambda}^{-1}
(E\ast \nabla_x \,y'-x\ast \nabla_{y'}\,E-\nabla_x(E\ast y')
+\nabla_{x\ast y'}\,E+x\ast y')
=0~,
\end{split}
\end{equation}
due to \eqref{SS3}.
\end{proof}

\begin{proof}[Proof of Proposition \ref{ss2ass}]
(1)  The commutativity and associativity of $\ast$ imply the
same properties for $\star_{\lambda}$. 
By definition of $\star_{\lambda}$,
it is clear that $E-\lambda e$ is its unit.
\\
(2)  
The torsion freeness of $\nabla$ 
and the commutativity of $\star_{\lambda}$
imply the torsion freeness of  $\boldsymbol{\nabla}^{(\lambda,r)}$.
As for the flatness, we have
\begin{equation}\nonumber
\begin{split}
&\boldsymbol{\nabla}^{(\lambda,r)}_x
\boldsymbol{\nabla}^{(\lambda,r)}_y\,z
-\boldsymbol{\nabla}^{(\lambda,r)}_y
\boldsymbol{\nabla}^{(\lambda,r)}_x\,z
\\&=
{\nabla}_x{\nabla}_y\,z-{\nabla}_y{\nabla}_x\,z
+(r \mathrm{Id}-\mathcal{W})\big(
\mathcal{C}_2(x,y,z;\lambda)-\mathcal{C}_2(y,x,z;\lambda)
\big)
\\
&=
\nabla_{[x,y]}\,z+(r \mathrm{Id}-\mathcal{W})([x,y]\star_{\lambda}z)
=\boldsymbol{\nabla}^{(\lambda,r)}_{[x,y]}\,z
~.
\end{split}
\end{equation}
Here $\mathcal{W}(x)=\nabla_x E$.
We used ({\bf SS4}), the flatness of $\nabla$
and \eqref{c2}.
\\
(3) Let us check the conditions ({\bf ASS1})--({\bf ASS4}).
\\
Condition ({\bf ASS1}): 
\begin{equation}\nonumber
\begin{split}
&\boldsymbol{\nabla}^{(\lambda,r)}_x (y\star_{\lambda} z)
-y\star_{\lambda} \boldsymbol{\nabla}^{(\lambda,r)}_x \,z
-\boldsymbol{\nabla}^{(\lambda,r)}_y(x\star_{\lambda} z)
+x\star_{\lambda} \boldsymbol{\nabla}^{(\lambda,r)}_y \,z\\
&=\mathcal{C}_2(x,y,z;\lambda)-\mathcal{C}_2(y,x,z;\lambda)
\\
&=[x,y]\star_{\lambda}z~.
\end{split}
\end{equation}
Condition ({\bf ASS2}): 
put $x'=\mathcal{U}_{\lambda}^{-1}(x)$ and
$y'=\mathcal{U}_{\lambda}^{-1}(y)$. 
By \eqref{SS1},
\begin{equation}\nonumber
[e,x]=[e,(E-\lambda e)\ast x']=(E-\lambda e)\ast [e,x']
+[e,E]\ast x'~.
\end{equation}
Therefore
\begin{equation}\nonumber
\begin{split}
&[e,x\star_{\lambda}y]-[e,x]\star_{\lambda}y-x\star_{\lambda}[e,y]
+e\star_{\lambda}x\star_{\lambda}y
\\&=
(E-\lambda e)\ast \big(
[e,x'\ast y']-[e,x']\ast y'-x'\ast[e,y']\big)=0~.
\end{split}
\end{equation}
In the last line, we used \eqref{SS1} again.
\\
Condition ({\bf ASS3}): 
given that $E-\lambda e$ is a unit of $\star_{\lambda}$,
$$
\boldsymbol{\nabla}^{(\lambda,r)}_x\,(E-\lambda e)
=\nabla_x\,E
+rx-\nabla_{x}\,E=rx~.
$$
Condition ({\bf ASS4}): 
\begin{equation}\nonumber
\begin{split}
\boldsymbol{\nabla}^{(\lambda,r)}_x
\boldsymbol{\nabla}^{(\lambda,r)}_y\,e
-
\boldsymbol{\nabla}^{(\lambda,r)}_{
\boldsymbol{\nabla}^{(\lambda,r)}_x\,y} e
+
\boldsymbol{\nabla}^{(\lambda,r)}_{x\star_{\lambda}y}\,e
=
(r \mathrm{Id}-\mathcal{W})(\mathcal{E}_2(x,y;\lambda))
\stackrel{\eqref{e2}}{=}0~.
\end{split}
\end{equation}
\\
(4) Multiplication: notice that 
$\mathcal{P}_{\lambda}=\mathcal{U}_{\lambda}^{-1}$.
Therefore
$$
x\star_{\lambda} \mathcal{P}_{\lambda}^{-1}(y)=x\ast 
(\mathcal{U}_{\lambda}^{-1}\circ\mathcal{P}_{\lambda}^{-1})(y)
=x\ast y~.
$$
Connection:  since 
$x\star_{\lambda} y=x\ast y\star_{\lambda} e$, it is easy to see that
\begin{equation}\nonumber
\begin{split}
\boldsymbol{\nabla}^{(\lambda,r)}_x\,y
-\boldsymbol{\nabla}^{(\lambda,r)}_{x\ast y}\,e
&=\nabla_x\,y+r x\star_{\lambda} y-\nabla_{x\star_{\lambda} y}\,E
-\nabla_{x\ast y}\,e-r x\ast y\star_{\lambda} e+\nabla_{x\ast y\star_{\lambda} e}\,E
\\
&=\nabla_{x}\,y~.
\end{split}
\end{equation}
This completes the proof of Proposition \ref{ss2ass}.
\end{proof}

\begin{definition}\label{def-dualASS}
We call 
$(\boldsymbol{\nabla}^{(0,r)},\star_0,e)$ in Proposition \ref{ss2ass}
 the dual almost Saito structure
  of $(\nabla,\ast ,E)$ with parameter $r\in \mathbb{C}$.
\end{definition}
\begin{remark}
It would be simpler if we define
$(\boldsymbol{\nabla}^{(0,0)},\star_0,e)$
as the dual almost Saito structure of $(\nabla,\ast ,E)$.
However, we adopt Definition \ref{def-dualASS}
for the sake of application to the complex reflection groups.
\end{remark}
\subsection{From an almost  Saito structure to a Saito structure}
\label{sec:ass2ss}
Let $(\boldsymbol{\nabla},\star,e)$ be an almost Saito structure
with parameter $r\in \mathbb{C}$ on a manifold $N$
and let $E$ be its unit.
Define an endomorphism 
$\mathcal{P}\in \mathrm{Hom}_{\mathcal{O}_N} 
(\mathcal{T}_N,\mathcal{ T}_N)$
by 
$\mathcal{P}(x)=e\star x$.
Let
\begin{equation}\nonumber
N_0=\{p\in N\mid \text{$P:T_pN\to T_pN$ 
has rank $\dim N$}\}~.
\end{equation}
Here $P$ is the endomorphism of $TN$ corresponding to $\mathcal{P}$.
We put the assumption that $N_0$ is not empty.
\begin{proposition}\label{ass2ss}
Define a multiplication $\ast$ and a connection $\nabla$ on $TN_0$ by
\begin{equation}\label{eq:ass2ss}
\begin{split}
&x\ast y=x\star \mathcal{P}^{-1}(y)=\mathcal{P}^{-1}(x)\star y~,
\\
&\nabla_x\, y=\boldsymbol{\nabla}_x\, y-\boldsymbol{\nabla}_{x\ast y} \,e~.
\end{split}
\end{equation}
Then
\\
(1) $\ast $ is commutative and associative with the unit $e$.
\\
(2) $\nabla$ is torsion free and flat.
\\
(3)
$(\nabla,\ast,E)$ is a Saito structure on $N_0$.
\\
(4) Define $\mathcal{U}\in \mathrm{Hom}_{\mathcal{O}_{N_0}}(
\mathcal{T}_{N_0},\mathcal{T}_{N_0})$ by $\mathcal{U}(x)=E\ast x$.
Then 
for $x,y\in \mathcal{T}_{N_0}$,
\begin{equation}\nonumber
\begin{split}
&x\ast \mathcal{U}^{-1}(y)=\mathcal{U}^{-1}(x)\ast y=x\star y~,\\ 
&\nabla_x \,y+r x\star y-\nabla_{x\star y} E=\boldsymbol{\nabla}_x\,y~.
\end{split}
\end{equation}
\end{proposition}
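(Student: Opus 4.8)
The plan is to run the computation of Proposition \ref{ss2ass} in reverse, the algebraic backbone being the single identity $\mathcal{U}=\mathcal{P}^{-1}$. Since $E$ is the unit of $\star$, we have $\mathcal{U}(x)=E\ast x=E\star\mathcal{P}^{-1}(x)=\mathcal{P}^{-1}(x)$, so on $N_0$ the two products are related by $x\ast y=\mathcal{P}^{-1}(x\star y)$ and $x\star y=\mathcal{P}(x\ast y)=e\star(x\ast y)$, and $\mathcal{P}^{-1}$ commutes with $\star$-multiplication. Part (1) is then formal: commutativity and associativity of $\ast$ are inherited from $\star$, and $e\ast y=e\star\mathcal{P}^{-1}(y)=\mathcal{P}\mathcal{P}^{-1}(y)=y$ exhibits $e$ as the unit; the first identity of part (4) is immediate from $\mathcal{U}=\mathcal{P}^{-1}$.

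For part (2), torsion-freeness of $\nabla$ follows at once from that of $\boldsymbol{\nabla}$ and the commutativity of $\ast$. I would record two facts for repeated use: $x\ast e=x$ (because $\mathcal{P}^{-1}(e)=E$) and $x\ast E=\mathcal{P}^{-1}(x)$. The second identity of part (4) then drops out by substituting the definition of $\nabla$, ({\bf ASS3}), and $\mathcal{P}^{-1}(x\star y)=x\ast y$; and ({\bf SS3}) is trivial, since $x\ast e=x$ gives $\nabla_x e=\boldsymbol{\nabla}_x e-\boldsymbol{\nabla}_x e=0$. Condition ({\bf SS2}) is a short bracket computation: writing $x=e\star a$, $y=e\star b$ so that $x\ast y=e\star a\star b$, one expands $[E,x\ast y]$, $[E,x]\ast y$ and $x\ast[E,y]$ by means of \eqref{ASS1} and \eqref{ASS2}, and the correction terms telescope to leave exactly $x\ast y$.

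The heart of the argument is one technical lemma playing the role of \eqref{c2}. Setting
\[
\mathcal{C}_3(x,y,z)=\boldsymbol{\nabla}_x(y\ast z)-y\ast\boldsymbol{\nabla}_x z+y\ast\boldsymbol{\nabla}_{x\ast z}\,e,
\]
I claim $\mathcal{C}_3(x,y,z)-\mathcal{C}_3(y,x,z)=[x,y]\ast z$. The proof is to apply $\mathcal{P}$: using $\mathcal{P}(w\ast v)=w\star v$ and putting $z'=\mathcal{P}^{-1}(z)$, the image $\mathcal{P}(\mathcal{C}_3(x,y,z))$ becomes precisely $e\star\boldsymbol{\nabla}_x(y\star z')-y\star\boldsymbol{\nabla}_x(e\star z')+y\star\boldsymbol{\nabla}_{x\star z'}\,e$, so its antisymmetrization is the left-hand side of \eqref{ASS4}, equal to $e\star[x,y]\star z'=\mathcal{P}([x,y]\ast z)$; invertibility of $\mathcal{P}$ on $N_0$ yields the claim. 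This lemma settles both remaining pieces of part (3) except ({\bf SS4}): condition ({\bf SS1}) is literally the antisymmetrization of $\mathcal{C}_3$, and flatness of $\nabla$ follows by expanding $\nabla_x\nabla_y z-\nabla_y\nabla_x z-\nabla_{[x,y]}z$, discarding the pure $\boldsymbol{\nabla}$ curvature by flatness of $\boldsymbol{\nabla}$, rewriting each term of the form $\boldsymbol{\nabla}_x\boldsymbol{\nabla}_{y\ast z}\,e$ via ({\bf ASS4}) as $\boldsymbol{\nabla}_{\bullet}\,e$ applied to a single argument (the cross terms $x\star(y\ast z)$ and $y\star(x\ast z)$ cancelling by commutativity of $\star$), and observing that this argument equals $-(\mathcal{C}_3(x,y,z)-\mathcal{C}_3(y,x,z))+[x,y]\ast z=0$.

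Finally ({\bf SS4}), equivalent to $\nabla(\nabla E)=0$, is handled in the same spirit and is where I expect the bookkeeping to be most delicate, since it nests a covariant derivative inside $\mathcal{P}^{-1}$. Using $\boldsymbol{\nabla}_x E=rx$ and $y\ast E=\mathcal{P}^{-1}(y)$ one finds $\nabla_y E=ry-\boldsymbol{\nabla}_{\mathcal{P}^{-1}(y)}\,e$; expanding $\nabla_x\nabla_y E-\nabla_{\nabla_x y}E$ and applying ({\bf ASS4}) to the surviving second derivative of $e$ collects everything as $\boldsymbol{\nabla}_{\bullet}\,e$ of a single bracket. Applying $\mathcal{P}$ to that bracket and using $x\star\mathcal{P}^{-1}(y)=x\ast y$ together with $e\star\mathcal{P}^{-1}(y)=y$, it becomes exactly the left-hand side of \eqref{ASS3} evaluated at second slot $\mathcal{P}^{-1}(y)$, hence vanishes; invertibility of $\mathcal{P}$ gives the bracket itself, so ({\bf SS4}) holds. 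The main obstacle throughout is organizing these two second-order conditions, but the uniform device---apply $\mathcal{P}$ and land on one of the derived identities \eqref{ASS3} or \eqref{ASS4}---keeps every step finite.
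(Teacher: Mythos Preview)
Your proposal is correct and follows essentially the same approach as the paper: the key technical quantity $\mathcal{C}_3$ and its antisymmetrization via \eqref{ASS4} are identical to the paper's auxiliary lemma, and your treatment of ({\bf SS4}) via \eqref{ASS3} is exactly what the paper packages into the named expression $\mathcal{E}_3(x,y)$ (the paper shows $\nabla_x\nabla_y E-\nabla_{\nabla_x y}E=-\mathcal{Q}(\mathcal{E}_3(x,y))$ and then $\mathcal{E}_3=0$ by \eqref{ASS3}, which is your ``bracket'' computation unwound). The only difference is cosmetic: the paper names $\mathcal{E}_3$ in advance, while you compute it inline.
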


Before proving Proposition \ref{ass2ss},
we show the next auxiliary lemma.
Let $x,y,z\in \mathcal{T}_{N_0}$.
We set
\begin{eqnarray}\nonumber
\mathcal{C}_3(x,y,z)&=&
\boldsymbol{\nabla}_x(y\ast z)
-y\ast \boldsymbol{\nabla}_x\,z
+y\ast\boldsymbol{\nabla}_{x\ast z}\,e~,
\\\nonumber
\mathcal{E}_3(x,y)&=&
\boldsymbol{\nabla}_x(E\ast y)
-E\ast \boldsymbol{\nabla}_x\,y
+E\ast \boldsymbol{\nabla}_{x\ast y}\,e
-x\ast \boldsymbol{\nabla}_{E\ast y}\,e
-x\ast y~.
\end{eqnarray}
\begin{lemma}
\begin{eqnarray}
\label{c3}
\mathcal{C}_3(x,y,z)-\mathcal{C}_3(y,x,z)&=&[x,y]\ast z~,
\\
\label{e3}
\mathcal{E}_3(x,y)&=&0~.
\end{eqnarray}
\end{lemma}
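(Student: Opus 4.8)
The plan is to mimic the proofs of \eqref{c1}--\eqref{e1} and \eqref{c2}--\eqref{e2}: reduce both identities to the already-established formulas of Lemma \ref{tec2} by clearing the operator $\mathcal{P}=e\star(\cdot)$, which is an isomorphism on $N_0$. The workhorse is the elementary identity
\[
e\star(u\ast v)=u\star v\qquad(u,v\in\mathcal{T}_{N_0}),
\]
which follows at once from $u\ast v=u\star\mathcal{P}^{-1}(v)$ and the commutativity and associativity of $\star$, since $e\star(u\star\mathcal{P}^{-1}(v))=u\star(e\star\mathcal{P}^{-1}(v))=u\star v$. Because $\mathcal{P}$ is invertible, it suffices to prove each of \eqref{c3} and \eqref{e3} after applying $e\star(\cdot)$.

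For \eqref{c3}, I would apply $e\star(\cdot)$ to $\mathcal{C}_3(x,y,z)$; the displayed identity turns $y\ast\boldsymbol{\nabla}_x z$ and $y\ast\boldsymbol{\nabla}_{x\ast z}\,e$ into $y\star\boldsymbol{\nabla}_x z$ and $y\star\boldsymbol{\nabla}_{x\star z'}\,e$, where $z':=\mathcal{P}^{-1}(z)$, while on the first term I keep $\boldsymbol{\nabla}_x(y\ast z)=\boldsymbol{\nabla}_x(y\star z')$. Writing $z=e\star z'$ gives
\[
e\star\mathcal{C}_3(x,y,z)=e\star\boldsymbol{\nabla}_x(y\star z')-y\star\boldsymbol{\nabla}_x(e\star z')+y\star\boldsymbol{\nabla}_{x\star z'}\,e,
\]
which is exactly the bracket on the left-hand side of \eqref{ASS4} (with $z$ replaced by $z'$). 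Antisymmetrizing in $x,y$ and invoking \eqref{ASS4} yields $e\star(\mathcal{C}_3(x,y,z)-\mathcal{C}_3(y,x,z))=e\star[x,y]\star z'=e\star([x,y]\ast z)$, and cancelling $\mathcal{P}$ gives \eqref{c3}.

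For \eqref{e3} I would run the same first move: apply $e\star(\cdot)$, use the displayed identity on every $\ast$-term, and substitute $y=e\star y'$ with $y':=\mathcal{P}^{-1}(y)$, so that $e\star\mathcal{E}_3(x,y)$ becomes an expression purely in $\star$ and $\boldsymbol{\nabla}$. Its first three terms then coincide with the left-hand side of \eqref{ASS4} evaluated at $(x;E,y')$, so \eqref{ASS4} rewrites them as the $E$-differentiated half plus $e\star[x,E]\star y'$; after this the term $x\star\boldsymbol{\nabla}_{E\star y'}\,e$ cancels and one is left with
\[
e\star\mathcal{E}_3(x,y)=e\star\boldsymbol{\nabla}_E(x\star y')-x\star\boldsymbol{\nabla}_E(e\star y')+e\star[x,E]\star y'-e\star x\star y'.
\]
The remaining step converts the two $\boldsymbol{\nabla}_E$ terms through torsion-freeness and ({\bf ASS3}), namely $\boldsymbol{\nabla}_E w=[E,w]+rw$, and then expands the resulting brackets via \eqref{ASS1} (for $[E,x\star y']$) and the second relation of \eqref{ASS2} (for $[E,e\star y']$). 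A bookkeeping check shows that the four groups of terms—those proportional to $e\star x\star[E,y']$, to $e\star[E,x]\star y'$, to $r\,e\star x\star y'$, and to $e\star x\star y'$—cancel in pairs, so $e\star\mathcal{E}_3(x,y)=0$, hence \eqref{e3}.

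I expect \eqref{c3} to be essentially immediate once $\mathcal{P}$ is cleared, so the main obstacle is the cancellation in \eqref{e3}: the delicate point is organizing the $\boldsymbol{\nabla}_E$ terms through torsion-freeness and the Euler-field bracket relations \eqref{ASS1} and \eqref{ASS2}, and tracking the signs coming from $[x,E]=-[E,x]$ and from the $-e\star z$ correction in \eqref{ASS2}, so that the $r$-dependent and $r$-independent contributions vanish separately.
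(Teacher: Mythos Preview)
Your argument for \eqref{c3} is correct and is exactly the paper's proof, modulo applying $e\star(\cdot)$ instead of $\mathcal{P}^{-1}$ (which is the same thing).

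For \eqref{e3} your route is also correct, but you take a detour. Once you have computed
\[
e\star\mathcal{E}_3(x,y)=e\star\boldsymbol{\nabla}_x y'-\boldsymbol{\nabla}_x(e\star y')+\boldsymbol{\nabla}_{x\star y'}\,e-x\star\boldsymbol{\nabla}_{y'}\,e-e\star x\star y',
\]
you should recognize this as the \emph{negative} of the left-hand side of \eqref{ASS3} (with $y$ replaced by $y'$), and conclude immediately. This is what the paper does. Instead you match the first three terms to one half of \eqref{ASS4} with $y=E$, swap to the $E$-differentiated half, and then unwind $\boldsymbol{\nabla}_E$ via torsion-freeness, ({\bf ASS3}), \eqref{ASS1} and \eqref{ASS2}. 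This works---the cancellations you describe are all genuine---but it is considerably longer and re-derives \eqref{ASS3} through the back door, since \eqref{ASS4} was itself obtained from \eqref{ASS3} in Lemma~\ref{tec2}. The payoff of the paper's approach is a one-line finish with no sign bookkeeping; your approach gains nothing extra.
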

\begin{proof}
\eqref{e3}: put $z'=\mathcal{P}^{-1}(z)$. Then
\begin{equation}\nonumber
\begin{split}
&\mathcal{C}_3(x,y,z)-\mathcal{C}_3(y,x,z)
\\
&=\mathcal{P}^{-1}\big(
e\star \boldsymbol{\nabla}_x(y\star z')
-y\star \boldsymbol{\nabla}_x(e\star z')
+y\star\boldsymbol{\nabla}_{x\star z'}\,e
\big)-(x\leftrightarrow y)
\\
&\stackrel{\eqref{ASS4}}{=}
\mathcal{P}^{-1}(e\star [x,y]\star z')=[x,y]\ast z
\end{split}
\end{equation}
\eqref{e3}: put $y'=\mathcal{P}^{-1}(y)$. Then, given that $E$
is a unit of $\star$, 
\begin{equation}\nonumber
\begin{split}
\mathcal{E}_3(x,y)
&=
\mathcal{P}^{-1}\big(e\star
\boldsymbol{\nabla}_x\, y'
- \boldsymbol{\nabla}_x (e\star y')
+\boldsymbol{\nabla}_{x\star y'}\,e
-x\star \boldsymbol{\nabla}_{y'}\,e
-e\star x\star y'
\big)
\\
&\stackrel{\eqref{ASS3}}{=}0~.
\end{split}
\end{equation}
\end{proof}

\begin{proof}[Proof of Proposition \ref{ass2ss}]
(1) The commutativity and associativity of $\star$ imply the same 
property for $\ast$. By the definition, it is clear that 
$e$ is a unit of $\ast$.
\\
(2) The torsion freeness of $\boldsymbol{\nabla}$ 
and the commutativity of $\ast$ 
imply the torsion freeness of $\nabla$.
The flatness is shown as follows.
\begin{equation}\nonumber
\begin{split}
\nabla_x\nabla_y\,z-\nabla_y\nabla_x\,z
&=
\boldsymbol{\nabla}_x\boldsymbol{\nabla}_y\,z
-\boldsymbol{\nabla}_y\boldsymbol{\nabla}_x\,z
-\mathcal{Q}\big(\mathcal{C}_3(x,y,z)-\mathcal{C}_3(y,x,z)
\big)
\\
&=\boldsymbol{\nabla}_{[x,y]}\,z
-\mathcal{Q}([x,y]\ast z)
=\nabla_{[x,y]}\,z~.
\end{split}
\end{equation}
Here $\mathcal{Q}(x)=\boldsymbol{\nabla}_x\,e$.
We used ({\bf ASS4}), the flatness of $\boldsymbol{\nabla}$
and \eqref{c3}. 
\\
(3)  Let us check the conditions ({\bf SS1})--({\bf SS4}).
\\
Condition ({\bf SS1}): 
\begin{equation}\nonumber
\begin{split}
&\nabla_x (y\ast z)-y\ast \nabla_x \,z-\nabla_y(x\ast z)+x\ast \nabla_y \,z-[x,y]\ast z
\\
&=\mathcal{C}_3(x,y,z)-\mathcal{C}_3(y,x,z)
-[x,y]\ast z\stackrel{\eqref{c3}}{=}0~.
\end{split}
\end{equation}
Condition ({\bf SS2}):  
putting $x'=\mathcal{P}^{-1}(x)$ and $y'=\mathcal{P}^{-1}(y)$,
we have
\begin{equation}\nonumber
\begin{split}
&[E,x\ast y]-[E,x]\ast y-x\ast[E,y]-x\ast y
\\&=
[E,e\star x'\star y']-[E,e\star x']\star y'-x'\star [E,e\star y']-e\star x'\star y'
\\
&\stackrel{\eqref{ASS2}}{=}
e\star \big(
[E,x'\star y']-[E,x']\star y'-x'\star [E,y']
\big)
\stackrel{\eqref{ASS1}}{=}0 ~.
\end{split}
\end{equation}
Condition ({\bf SS3}): given that $e$ is a unit of $\ast$, 
$$
\nabla_x\,e=\boldsymbol{\nabla}_x\,e-\boldsymbol{\nabla}_{x\ast e}\,e
=0~.
$$
Condition ({\bf SS4}):
\begin{equation}\nonumber
\begin{split}
\nabla_x\nabla_y\,E-\nabla_{\nabla_x \,y}\stackrel{({\bf ASS4})}{=}
-\mathcal{Q}(\mathcal{E}_3(x,y))
\stackrel{\eqref{e3}}{=}0~.
\end{split}
\end{equation}
\\
(4) Multiplication: since 
$\mathcal{U}=\mathcal{P}^{-1}$, 
$$
x\ast \mathcal{U}^{-1}(y)=x\star
( \mathcal{P}^{-1}\circ\mathcal{U}^{-1})(y)=x\star y~.
$$
Connection: since $x\ast y=
x \star \mathcal{P}^{-1}(y)=x\star (E\ast y)=x\star y\ast E$,
it is easy to see that
\begin{equation}\nonumber
\begin{split}
\nabla_x\,y+r x\star y-\nabla_{x\star y}\,E 
&=\boldsymbol{\nabla}_x\,y
-\boldsymbol{\nabla}_{x\ast y}\,e
+rx\star y-
\boldsymbol{\nabla}_{x\star y}\,E+\boldsymbol{\nabla}_{x\star y\ast E}\,e
\\&=\boldsymbol{\nabla}_x\,y+rx\star y-
\boldsymbol{\nabla}_{x\star y}\,E
\stackrel{({\bf ASS3})}{=}
\boldsymbol{\nabla}_x\,y~.
\end{split}
\end{equation}
This completes the proof of Proposition \ref{ass2ss}.
\end{proof}

\begin{definition}\label{def-dualSS}
We call $(\nabla,\ast,E)$ in Proposition \ref{ass2ss} the dual Saito structure of $(\boldsymbol{\nabla},\star,e)$.
\end{definition}

\begin{remark}\label{correspondence}
Let $(\boldsymbol{\nabla},\star,e)$ be an almost Saito structure
and denote
$(\nabla,\ast,E)$ its dual Saito structure.
If one considers a member of the two-parameter family of  almost Saito structures (\S \ref{sec:two-par}),
$(\boldsymbol{\nabla}^{[\lambda,\nu]},\star_{\lambda},e)$,
then its dual Saito structure is given by
$(\nabla,\ast, E-\lambda e)$.

Similarly,
for the almost Saito structure 
$(\boldsymbol{\nabla}^{(\lambda,r)},\star_{\lambda}, e)$
constructed from a Saito structure 
 $(\nabla,\ast,E)$ as in Proposition \ref{ss2ass},
its dual Saito structure is $(\nabla,\ast,E-\lambda e)$.
\end{remark}

\subsection{Almost duality for the Saito structure}
\label{sec:AD-SS}
Propositions \ref{ss2ass}-(4) and \ref{ass2ss}-(4)
imply the duality between the Saito structure 
and the almost Saito structure.
The results in \S \ref{sec:ss2ass} and \S \ref{sec:ass2ss} 
can be summarized as follows.

\begin{theorem}\label{thm:duality} 
(i) Let $(\nabla,\ast,E)$ be a Saito structure with a unit $e$
and let $(\boldsymbol{\nabla}^{(0,r)},\star_{0}, e)$ be
its dual almost Saito structure with parameter $r\in \mathbb{C}$. 
Then  
the dual Saito structure of $(\boldsymbol{\nabla}^{(0,r)},\star_{0}, e)$
is $(\nabla,\ast,E)$.\\
(ii) Let $(\boldsymbol{\nabla},\star,e)$ be an almost Saito structure with 
parameter $r\in \mathbb{C}$ and  a unit $E$, 
and denote $(\nabla,\ast,E )$  its dual Saito structure.
Then the dual almost Saito structure of $(\nabla,\ast,E )$ with
parameter $r$ is $(\boldsymbol{\nabla},\star,e)$.
\end{theorem}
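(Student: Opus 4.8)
The plan is to verify both directions of the duality by directly unwinding the definitions, using the formulas for the multiplications and connections established in Propositions~\ref{ss2ass} and~\ref{ass2ss}, together with parts~(4) of each proposition, which already contain the essential inverse relations. The key observation is that parts~(4) of both propositions provide exactly the formulas relating the two structures in both directions, so the theorem amounts to checking that applying one construction and then the other returns the original data. I would treat (i) and (ii) separately, but they are mirror images of each other.

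For (i), I start with a Saito structure $(\nabla,\ast,E)$ with unit $e$ and form its dual almost Saito structure $(\boldsymbol{\nabla}^{(0,r)},\star_0,e)$ as in Proposition~\ref{ss2ass} (so $\lambda=0$, hence $\mathcal{U}_0(x)=E\ast x$). To compute its dual Saito structure, I apply Proposition~\ref{ass2ss} to $(\boldsymbol{\nabla}^{(0,r)},\star_0,e)$. The endomorphism $\mathcal{P}$ there is $\mathcal{P}(x)=e\star_0 x=\mathcal{U}_0^{-1}(e\ast x)=\mathcal{U}_0^{-1}(x)$, so $\mathcal{P}=\mathcal{U}_0^{-1}=\mathcal{P}_0$, matching Proposition~\ref{ss2ass}-(4). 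The multiplication produced by Proposition~\ref{ass2ss} is $x\ast' y=\mathcal{P}^{-1}(x)\star_0 y=\mathcal{P}_0^{-1}(x\star_0 y)$, which by Proposition~\ref{ss2ass}-(4) equals $x\ast y$; thus the multiplication is recovered. For the connection, Proposition~\ref{ass2ss} gives $\nabla'_x y=\boldsymbol{\nabla}^{(0,r)}_x y-\boldsymbol{\nabla}^{(0,r)}_{x\ast' y}e=\boldsymbol{\nabla}^{(0,r)}_x y-\boldsymbol{\nabla}^{(0,r)}_{x\ast y}e$, which is exactly $\nabla_x y$ by the second identity in Proposition~\ref{ss2ass}-(4). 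Finally the Euler field returned by Proposition~\ref{ass2ss}-(3) is the unit $E$ of $\star_0$, which is $E-0\cdot e=E$. Hence $(\nabla',\ast',E')=(\nabla,\ast,E)$.

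For (ii), the argument runs symmetrically starting from an almost Saito structure $(\boldsymbol{\nabla},\star,e)$ with parameter $r$ and unit $E$. I form its dual Saito structure $(\nabla,\ast,E)$ via Proposition~\ref{ass2ss}, then reconstruct the dual almost Saito structure with parameter $r$ via Proposition~\ref{ss2ass} taking $\lambda=0$. Here the relevant endomorphism is $\mathcal{U}_0(x)=E\ast x=\mathcal{U}(x)=\mathcal{P}^{-1}(x)$ by Proposition~\ref{ass2ss}-(4), so $\mathcal{U}_0=\mathcal{P}^{-1}$. The reconstructed multiplication is $x\star_0' y=\mathcal{U}_0^{-1}(x\ast y)=\mathcal{P}(x\ast y)=x\star y$ by the first identity in Proposition~\ref{ass2ss}-(4), and the reconstructed connection is $\boldsymbol{\nabla}^{(0,r)\prime}_x y=\nabla_x y+r\,x\star_0' y-\nabla_{x\star_0' y}E=\nabla_x y+r\,x\star y-\nabla_{x\star y}E$, which equals $\boldsymbol{\nabla}_x y$ by the second identity in Proposition~\ref{ass2ss}-(4). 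The unit $e$ is unchanged by construction. Thus $(\boldsymbol{\nabla}^{(0,r)\prime},\star_0',e)=(\boldsymbol{\nabla},\star,e)$.

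I do not expect a genuine obstacle here, since the heavy lifting was already done in Propositions~\ref{ss2ass} and~\ref{ass2ss}; the theorem is essentially a bookkeeping statement that parts~(4) of those propositions are precisely the formulas defining the reverse construction. \textbf{The only subtle point} I would be careful about is the matching of domains of definition: the Saito structure lives on $M$ while its dual lives on the open subset $M_0$ (resp.\ $N_0$), and one must check that iterating the two constructions does not shrink the open set further. Concretely, in (i) one checks that the nonemptiness/rank condition defining $N_0$ for $(\boldsymbol{\nabla}^{(0,r)},\star_0,e)$ is automatically satisfied because $\mathcal{P}=\mathcal{U}_0^{-1}$ is an isomorphism wherever $\mathcal{U}_0$ is, so the reconstruction is valid on all of $M_0$; symmetrically in (ii). Once this is noted, the identifications above are immediate and the proof concludes.
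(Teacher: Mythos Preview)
Your proposal is correct and follows exactly the paper's approach: the paper states that Theorem~\ref{thm:duality} follows from Propositions~\ref{ss2ass}-(4) and~\ref{ass2ss}-(4), and you simply spell out in detail how those formulas invert each other, which is precisely what the paper intends. Your additional remark about the domains $M_0$, $N_0$ not shrinking under iteration is a sensible clarification, though the paper itself leaves this implicit.
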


\begin{remark}
In \S \ref{sec:SS}, \S \ref{subsec:ASS},  we defined 
the equivalence relations for the Saito structure 
and the almost Saito structure. 
Notice that the constructions in \S \ref{sec:ss2ass}, 
\S \ref{sec:ass2ss}
preserve those equivalence relations.
\end{remark}

\section{On the almost duality of the Frobenius manifold
 and on the bi-flat $F$-manifold}\label{relationships}
\subsection{Relationship to the almost duality of 
Frobenius manifolds \cite{Dubrovin2004}}
\label{relationship-1}
A Frobenius structure \cite{Dubrovin1993} on a manifold $M$
of charge $D\in \mathbb{C}$ is 
a Saito structure $(\nabla,\ast ,E)$ on $M$
together with a nondegenerate symmetric bilinear form $\eta$ on $TM$
satisfying
\begin{eqnarray}
\label{f1}
&&x(\eta(y,z))=\eta(\nabla_x\,y,z)+\eta(y,\nabla_x\,z)\quad 
(x,y,z\in \mathcal{T}_M)~,
\\\label{f2}
&&
\eta(x\ast y,z)=\eta(x,y\ast z) \quad (x,y,z\in \mathcal{T}_M)~,
\\
\label{f3}
&&E\eta(x,y)-\eta([E,x],y)-\eta(x,[E,y])=(2-D)\eta(x,y)\quad (x,y\in \mathcal{T}_M)~.
\end{eqnarray}

An almost Frobenius manifold \cite[\S 3]{Dubrovin2004} of charge
$D\in \mathbb{C}$ is 
an almost Saito structure $(\boldsymbol{\nabla},\star ,e)$ 
with parameter $r=\frac{1-D}{2}$ on a manifold $N$
together with a nondegenerate symmetric bilinear form $g$ on $TN$
satisfying 
\begin{eqnarray}
\label{af1}
&&
x(g(y,z))=g(\boldsymbol{\nabla}_x\,y,z)+g(y,\boldsymbol{\nabla}_x\,z)\quad 
(x,y,z\in \mathcal{T}_N)~,
\\\label{af2}
&&g(x\star y,z)=g(x,y\star z)\quad (x,y,z\in \mathcal{T}_N)~,
\\
\label{af3}
&&e \,g(x,y)-g([e,x],y)-g(x,[e,y])+g(e\star x,y)=0\quad (x,y\in \mathcal{T}_N)~.
\end{eqnarray}
Here we stated the definition in a slightly different way
from Dubrovin's.
His axiom {\bf AFM3} is equivalent to ({\bf ASS4}) (see Lemma \ref{shift2})
and axiom {\bf AFM2} is equivalent to ({\bf ASS1})--({\bf ASS3}).
(Although he excluded the case $D=1$
in his definition, it is not necessary if stated in this way.)

Notice that in the definition of the Frobenius structure,
$\nabla$ is the Levi--Civita connection of $\eta$
and that in the definition of the almost Frobenius manifold,
$\boldsymbol{\nabla}$ is the Levi--Civita connection of $g$.

Dubrovin showed the following duality between 
the Frobenius structure and the almost Frobenius manifold
\cite{Dubrovin2004}:
if $(\eta,\ast,E)$ is a Frobenius structure on $M$ of charge $D$,
then 
\begin{equation}\nonumber
g(x,y)=\eta(x,\mathcal{U}_0^{-1}(y))~,\quad
x\star y=x\ast \mathcal{U}_0^{-1}(y)~
\end{equation}
and the unit  $e$ of $\ast$ make an almost Frobenius  structure 
of charge $D$ on $M_0$.
Here $\mathcal{U}_0=E\ast $ and $M_0\subset M$ are 
the same as  \S \ref{sec:ss2ass}. 
$g$ is called the intersection form of $(\eta,\ast,E)$.
Dubrovin gave a formula for the Levi--Civita connection of $g$
in \cite[Proposition 2.3]{Dubrovin2004}.
The connection $\boldsymbol{\nabla}^{(\lambda,r)}$ (with $\lambda=0$) given in \S \ref{ss2ass} is nothing but 
this Levi--Civita connection (see 
\eqref{newconnection1}).
He also showed that if $(g,\star,e)$ is an almost  Frobenius manifold structure on $N$ of charge $D$,
then 
\begin{equation}\nonumber
\eta(x,y)=g(x,\mathcal{P}^{-1}(y))~,\quad
x\ast y=x\star \mathcal{P}^{-1}(y)~
\end{equation}
and the unit $E$ of $\star$ make a Frobenius structure on $N_0\subset N$
of charge $D$.
Here $\mathcal{P}=e\star $ and $N_0\subset N$ are the same as 
\S \ref{sec:ass2ss}.
It is not difficult to see that 
the almost duality of the Frobenius manifold
implies the almost duality between
the underlying  Saito structure and the almost Saito structure
in the sense of 
Theorem \ref{thm:duality}.

Dubrovin also noted that 
given a Frobenius structure $(\eta,\ast,E)$ of charge $D$,
then the metric $\eta$ and the intersection form $g$ 
form a flat pencil of metrics $g^*-\lambda \eta^*$ on the cotangent bundle $T^*M_{\lambda}$ ($\lambda\in \mathbb{C}$) 
\cite[Remark 1 in \S 2]{Dubrovin2004}.
(Here ${}^*$ means taking the dual between the 
tangent bundle and the cotangent bundle.)
In \S \ref{ss2ass}, we give a construction of 
a two-parameter family of 
almost Saito structures from a given Saito structure. 
This is
nothing but a generalization of Dubrovin's flat pencil of metrics:
in our notation,
the Levi--Civita connection of the metric $(g^*-\lambda \eta^*)^*$
on $TM_{\lambda}$ is given by
$
\boldsymbol{\nabla}^{(\lambda,r)}
$ in Proposition \ref{ss2ass}, 
with  $r=\frac{1-D}{2}$.

\subsection{Relationship to the bi-flat $F$-manifold}
\label{relationship-2}
In \cite{Arsie-Lorenzoni2013}, 
Arsie and Lorenzoni introduced
the notion of bi-flat $F$-manifold, 
generalizing Dubrovin's flat pencils of  metrics.
\begin{definition}
A bi-flat $F$-manifold 
$(M,\nabla,\boldsymbol{\nabla},\ast,\star,e,E)$
consists of a manifold $M$
and 
\begin{itemize}
\item  torsion-free flat connections $\nabla,\boldsymbol{\nabla}$ on
$TM$,
\item associative commutative multiplications
$\ast$ and $\star$ on $TM$
with the units $e,E\in \Gamma(M,\mathcal{T}_M)$,
\end{itemize}
satisfying 
({\bf SS1}),({\bf SS2}),({\bf SS3}),({\bf ASS1})
and ({\bf ASS3}) with $r=0$, together with the following two conditions.
\begin{itemize}
\item $\mathcal{U}\in \mathrm{Hom}_{\mathcal{O}_M}
(\mathcal{T}_M,\mathcal{T}_M)$ given by
$\mathcal{U}(x)=E\ast x$ is an isomorphism and 
$
E\ast x\star y=x\ast y~.
$
\item ``$\nabla$ and $\boldsymbol{\nabla}$ are almost hydrodynamically
equivalent'', i.e.
\begin{equation}\nonumber
\boldsymbol{\nabla}_x (y\ast z)
-\boldsymbol{\nabla}_y(x\ast z)
=\nabla_x (y\ast z)-\nabla_y(x\ast z)\quad (x,y, z\in \mathcal{T}_M)~.
\end{equation}
\end{itemize}
\end{definition}

The bi-flat $F$-manifold and 
the almost duality for Saito structure are equivalent notions.
Precise statements are given in the next two lemmas.  
\begin{lemma} \label{biflat2ss}
Let $(M,\nabla,\boldsymbol{\nabla},\ast,\star,e,E)$
be a bi-flat $F$-manifold. 
Then
$(\nabla,\ast,E)$ is a Saito structure on $M$
and 
\begin{equation}\label{b1}
\boldsymbol{\nabla}_x\,y=
\nabla_x\,y-\nabla_{x\star y}\,E~.
\end{equation}
\end{lemma}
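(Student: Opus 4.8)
The plan is to first establish the formula \eqref{b1} and then to check that $(\nabla,\ast,E)$ satisfies the four axioms of a Saito structure; since (SS1), (SS2) and (SS3) are built into the definition of a bi-flat $F$-manifold, the only substantive point will be (SS4). Throughout I write $\mathcal{W}(x)=\nabla_x E$ and $\mathcal{U}(x)=E\ast x$, and I use that the compatibility condition $E\ast x\star y=x\ast y$ means $x\star y=\mathcal{U}^{-1}(x\ast y)$ with $\mathcal{U}$ an isomorphism.

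For \eqref{b1} I would introduce the difference tensor $A(x,y)=\boldsymbol{\nabla}_x y-\nabla_x y$, which is $\mathcal{O}_M$-bilinear and, by torsion-freeness of both connections, symmetric. Set $B(x,y)=A(x,y)+\mathcal{W}(x\star y)$, so that proving \eqref{b1} amounts to showing $B\equiv 0$. The tensor $B$ is again symmetric and $\mathcal{O}_M$-bilinear, and $B(x,E)=0$ because $\boldsymbol{\nabla}_x E=0$ (this is (ASS3) with $r=0$), $x\star E=x$ and $\mathcal{W}(x)=\nabla_x E$. The key input is the almost hydrodynamic equivalence, which reads $A(x,y\ast z)=A(y,x\ast z)$; since $x\star(y\ast z)=\mathcal{U}^{-1}(x\ast y\ast z)=y\star(x\ast z)$ by commutativity and associativity of $\ast$, the two $\mathcal{W}$-terms agree and the equivalence collapses to $B(x,y\ast z)=B(y,x\ast z)$, equivalently $B(y\ast z,x)=B(x\ast z,y)$. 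Putting $x=E$ gives $B(E\ast z,y)=B(y\ast z,E)=0$, and since $\mathcal{U}$ is an isomorphism this forces $B\equiv 0$, which is \eqref{b1}.

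It then remains to prove (SS4), i.e. $(\nabla_x\mathcal{W})(y)=\nabla_x\nabla_y E-\nabla_{\nabla_x y}E=0$. Here I would use \eqref{b1} to compute the curvature of $\boldsymbol{\nabla}$, which vanishes since $\boldsymbol{\nabla}$ is flat. Expanding $\boldsymbol{\nabla}_x\boldsymbol{\nabla}_y z$ with $\boldsymbol{\nabla}_x y=\nabla_x y-\mathcal{W}(x\star y)$ and antisymmetrizing, the flatness of $\nabla$ kills the leading term, the purely multiplicative terms cancel by means of the identity \eqref{c2} (whose derivation, like those of \eqref{SS2}--\eqref{SS4}, uses only (SS1)--(SS3) and is therefore available here), and the remaining quadratic $\mathcal{W}$-terms cancel as well. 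What survives is exactly $R^{\boldsymbol{\nabla}}(x,y)z=-\big((\nabla_x\mathcal{W})(y\star z)-(\nabla_y\mathcal{W})(x\star z)\big)$, so flatness of $\boldsymbol{\nabla}$ yields $(\nabla_x\mathcal{W})(y\star z)=(\nabla_y\mathcal{W})(x\star z)$ for all $x,y,z$. This bookkeeping of the curvature expansion, together with tracking which cancellations come from \eqref{c2}, is the step I expect to be the main obstacle.

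Finally I would exploit this identity. Taking $x=E$ (the unit of $\star$) gives $(\nabla_y\mathcal{W})(z)=(\nabla_E\mathcal{W})(y\star z)$, so everything is governed by the single endomorphism $\Omega=\nabla_E\mathcal{W}$. Taking instead $z=e$ and using $y\star e=\mathcal{U}^{-1}(y)$ gives $(\nabla_y\mathcal{W})(e)=\Omega(\mathcal{U}^{-1}(y))$. But $(\nabla_y\mathcal{W})(e)=\nabla_y(\mathcal{W}(e))-\mathcal{W}(\nabla_y e)$ vanishes: from \eqref{SS2} one has $[E,e]=-e$, hence $\mathcal{W}(e)=\nabla_e E=e$ (using $\nabla_E e=0$), and then both terms vanish because $\nabla e=0$. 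Thus $\Omega\circ\mathcal{U}^{-1}=0$, and since $\mathcal{U}$ is an isomorphism $\Omega=0$; feeding this back gives $(\nabla_y\mathcal{W})(z)=\Omega(y\star z)=0$, which is (SS4). This completes the verification that $(\nabla,\ast,E)$ is a Saito structure.
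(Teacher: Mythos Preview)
Your proof is correct and follows essentially the same strategy as the paper: for \eqref{b1} you use the almost hydrodynamic equivalence and then specialize to the unit $E$ (the paper does the same computation, just without packaging it via the difference tensor $B$), and for ({\bf SS4}) you expand the curvature of $\boldsymbol{\nabla}$ through \eqref{b1} and kill what remains using $e$ and $E$. The only organizational difference is that the paper specializes the curvature identity to $E\ast z$ from the outset and then sets $y=e$, whereas you first derive the symmetry $(\nabla_x\mathcal{W})(y\star z)=(\nabla_y\mathcal{W})(x\star z)$ in full generality and then specialize $x=E$, $z=e$; both routes rely on the same inputs (in particular \eqref{c2}, whose proof indeed needs only ({\bf SS1}) and ({\bf SS2}) via \eqref{SS3}, \eqref{SS4}, as you note).
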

In other words, $(\nabla,\ast,E)$ is a Saito structure on $M$
and $(\boldsymbol{\nabla},\star,e)$ is the dual almost Saito structure
on $M$ with parameter $r=0$.
\begin{proof}
We write $E\ast x=\mathcal{U}(x)$ 
and $\nabla_x \,E=\mathcal{W}(x)$ $(x\in \mathcal{T}_M)$.
Notice that the equations 
\eqref{SS1},\eqref{SS2},\eqref{SS3} \eqref{SS4}
and \eqref{a1} hold since they are derived from ({\bf SS1}),({\bf SS2}),({\bf SS3}).

 Let $x,y,z\in \mathcal{T}_M$. 
First we show
\eqref{b1}.
Let us put $z'=\mathcal{U}^{-1}(z)$.
Substituting $z$ by $z'$ in ({\bf SS1}) 
and subtracting it from  
({\bf ASS1}),  we have
\begin{equation}\nonumber
\begin{split}
0&=(\boldsymbol{\nabla}_x-\nabla_x)(y\ast z')
-(\boldsymbol{\nabla}_y-\nabla_y)(x\ast z')
\\&+\mathcal{U}^{-1}\big(
x\ast \boldsymbol{\nabla}_y\,z-y\ast \boldsymbol{\nabla}_y\,z
-x\ast E\ast \nabla_y z'+y\ast E\ast \nabla_x z'
\big)
\\&=
\mathcal{U}^{-1}\big(
x\ast(\boldsymbol{\nabla}_y \,z-\nabla_y\,z+\nabla_{y\star z}\,E)-(x\leftrightarrow y)
\big)~.
\end{split}
\end{equation}
In passing to the last line, 
we applied the almost hydrodynamically equivalent condition
and \eqref{a1}. Since $\mathcal{U}$ is assumed to be 
invertible on $M$,
we obtain
\begin{equation}\label{b2}
x\ast(\boldsymbol{\nabla}_y \,z-\nabla_y\,z+\nabla_{y\star z}\,E)
=y\ast (\boldsymbol{\nabla}_x \,z-\nabla_x\,z+\nabla_{x\star z}\,E)
~.
\end{equation}
Now, if we set $y$ to $E$,  the LHS of \eqref{b2}  becomes
$$
x\ast (\boldsymbol{\nabla}_E\,z-\nabla_E\,z+\nabla_{z}\,E)
=x\ast([E,z]-[E,z])=0~.
$$
Here we used the torsion freeness of $\nabla,\boldsymbol{\nabla}$ 
and $\boldsymbol{\nabla}\,E=0$.
Therefore the RHS of \eqref{b2} is zero when $y=E$. This implies \eqref{b1} since the multiplication $E\ast=\mathcal{U}$ is invertible.
\\
({\bf SS4}): 
To show that $(\nabla,\ast ,E)$ is a Saito structure on $M$,
it is enough to show that ({\bf SS4}) holds.
Expanding the flatness condition for $\boldsymbol{\nabla}$ 
by using \eqref{b1}, we have
\begin{equation}\nonumber
\begin{split}
0=&(\boldsymbol{\nabla}_x\boldsymbol{\nabla}_y-
\boldsymbol{\nabla}_y\boldsymbol{\nabla}_x-\boldsymbol{\nabla}_{[x,y]})(E\ast z)
\\
=&(\nabla_x\nabla_y-\nabla_y\nabla_x-\nabla_{[x,y]})(E\ast z)
\\
&-\nabla_x\nabla_{y\ast z}\,E+\nabla_y\nabla_{x\ast z}\,E
\\
&-\mathcal{W}\circ\mathcal{U}^{-1}
\big(
x\ast \nabla_y(E\ast z)-x\ast \nabla_{y\ast z}\,E
-(x\leftrightarrow y)
\big)
-\mathcal{W}([x,y]\ast z)
\\
\stackrel{\eqref{a1}}{=}&-\nabla_x\nabla_{y\ast z}\,E+\nabla_y\nabla_{x\ast z}\,E
-\mathcal{W}(x\ast \nabla_y\,z-y\ast \nabla_x\,z-[x,y]\ast z)~.
\end{split}
\end{equation}
Here we used the flatness of $\nabla$.
Now if we set $y$ to $e$, this equation becomes
\begin{equation}\label{b3}
0=-\nabla_x\nabla_z \,E
+\nabla_e\nabla_{x\ast z}\,E-
\mathcal{W}(x\ast [e,z]-\nabla_x\,z+[e,x]\ast z)~.
\end{equation}
Here we used the condition $\nabla e=0$ and the torsion freeness of $\nabla$.
Using the flatness of $\nabla$,
we can write the second term in the RHS as follows.
\begin{equation}\nonumber
\nabla_e\nabla_{x\ast z}\,E=\nabla_{x\ast z}
\underbrace{\nabla_e\,E}_{=[e,E]=e}+\mathcal{W}([e,x\ast z])
=\mathcal{W}([e,x\ast z])~.
\end{equation}
Here we used the torsion freeness of $\nabla$,
\eqref{SS2} and $\nabla e=0$. Substituting  this into \eqref{b3},
we obtain
\begin{equation}\nonumber
\begin{split}
0&=-\nabla_x\nabla_z \,E
+
\mathcal{W}([e,x\ast z]-x\ast [e,z]+\nabla_x\,z-[e,x]\ast z)
\\
&\stackrel{\eqref{SS1}}{=}-\nabla_x\nabla_z \,E+\mathcal{W}(\nabla_x\,z)~.
\end{split}
\end{equation}
This equation is equivalent to (\bf {SS4}).  
\end{proof}

The converse of Lemma \ref{biflat2ss} also holds.
\begin{lemma}
Let $(\nabla,\ast,E)$ be a Saito structure on $M$ with a unit $e$.
Then
$$(M_{\lambda},
\nabla,\boldsymbol{\nabla}^{(\lambda,0)},\ast,\star_{\lambda},e,
E-\lambda e) \quad (\lambda\in \mathbb{C})$$
is a bi-flat $F$-manifold,
where 
$M_{\lambda}$, $\boldsymbol{\nabla}^{(\lambda,0)}$,
$\star_{\lambda}$ are the same as in \S \ref{ss2ass}.
\end{lemma}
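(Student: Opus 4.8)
The plan is to observe that essentially every ingredient of the bi-flat $F$-manifold axioms has already been supplied by Proposition \ref{ss2ass} and Remark \ref{E-shift}, so that the proof reduces to matching the data correctly and then verifying only the two conditions that are peculiar to the bi-flat definition. Concretely, I would identify the first connection and multiplication with the given $(\nabla,\ast)$ carrying unit $e$, and the second connection and multiplication with $(\boldsymbol{\nabla}^{(\lambda,0)},\star_{\lambda})$ carrying unit $E-\lambda e$.

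First I would record the purely structural items. The connection $\nabla$ is torsion-free and flat by hypothesis, while $\boldsymbol{\nabla}^{(\lambda,0)}$ is torsion-free and flat by Proposition \ref{ss2ass}-(2); similarly $\ast$ is commutative and associative with unit $e$ by hypothesis, and $\star_{\lambda}$ is commutative and associative with unit $E-\lambda e$ by Proposition \ref{ss2ass}-(1). For the listed axioms, ({\bf SS1}) and ({\bf SS3}) hold for $(\nabla,\ast)$ because they do not involve the Euler field and belong to the given Saito structure, whereas ({\bf SS2}) holds for the Euler field $E-\lambda e$ since $(\nabla,\ast,E-\lambda e)$ is again a Saito structure by Remark \ref{E-shift}. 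Finally ({\bf ASS1}) and ({\bf ASS3}) with $r=0$ hold because $(\boldsymbol{\nabla}^{(\lambda,0)},\star_{\lambda},e)$ is an almost Saito structure with parameter $r=0$ by Proposition \ref{ss2ass}-(3); here ({\bf ASS3}) reads $\boldsymbol{\nabla}^{(\lambda,0)}_x(E-\lambda e)=0$, which is precisely the $r=0$ statement for the unit $E-\lambda e$ of $\star_{\lambda}$.

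Next I would treat the first condition specific to the bi-flat definition. The operator $\mathcal{U}_{\lambda}(x)=(E-\lambda e)\ast x$ is an isomorphism on $M_{\lambda}$ by the very definition of $M_{\lambda}$, and the required compatibility $(E-\lambda e)\ast(x\star_{\lambda}y)=x\ast y$ is immediate from $x\star_{\lambda}y=\mathcal{U}_{\lambda}^{-1}(x\ast y)$.

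The only genuine computation, and the step I expect to require actual (though short) work, is the almost hydrodynamic equivalence of $\nabla$ and $\boldsymbol{\nabla}^{(\lambda,0)}$. Using the $r=0$ case $\boldsymbol{\nabla}^{(\lambda,0)}_x w=\nabla_x w-\nabla_{x\star_{\lambda}w}E$ of the defining formula in Proposition \ref{ss2ass}, the difference $\boldsymbol{\nabla}^{(\lambda,0)}_x(y\ast z)-\boldsymbol{\nabla}^{(\lambda,0)}_y(x\ast z)$ equals $\nabla_x(y\ast z)-\nabla_y(x\ast z)$ plus the correction term $-\nabla_{x\star_{\lambda}(y\ast z)}E+\nabla_{y\star_{\lambda}(x\ast z)}E$. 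Since $w\mapsto\nabla_w E$ is $\mathcal{O}_{M_{\lambda}}$-linear, this correction vanishes once I verify $x\star_{\lambda}(y\ast z)=y\star_{\lambda}(x\ast z)$; and indeed both sides equal $\mathcal{U}_{\lambda}^{-1}(x\ast y\ast z)$ by the commutativity and associativity of $\ast$. This establishes the hydrodynamic equivalence and completes the verification, so the real obstacle is not any single hard estimate but rather the bookkeeping of consistently using the shifted field $E-\lambda e$ as the unit of $\star_{\lambda}$ and as the bi-flat Euler field throughout.
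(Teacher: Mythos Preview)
Your proposal is correct and follows essentially the same approach as the paper: the paper likewise notes that all axioms except the almost hydrodynamic equivalence are immediate from Proposition \ref{ss2ass} (and the Saito-structure hypotheses), and then verifies the hydrodynamic equivalence by exactly your computation, observing that $x\star_{\lambda}(y\ast z)=\mathcal{U}_{\lambda}^{-1}(x\ast y\ast z)=y\star_{\lambda}(x\ast z)$ kills the correction term. Your explicit bookkeeping of the shifted Euler field $E-\lambda e$ via Remark \ref{E-shift} is a nice touch that the paper leaves implicit.
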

\begin{proof}
It is enough to check that the connections $\nabla$ and 
$\boldsymbol{\nabla}^{(\lambda,0)}$
are almost hydrodynamically equivalent. This is immediate from \eqref{eq:ss2ass}:
\begin{equation*}
\begin{split}
\boldsymbol{\nabla}^{(\lambda,0)}_x(y\ast z)-
\boldsymbol{\nabla}^{(\lambda,0)}_y(x\ast z)
&=
\nabla_x(y\ast z) - \nabla_{x\star_{\lambda}(y*z)} -(x\leftrightarrow y)\\
&=
\nabla_x(y\ast z) - \nabla_y(x\ast z) - \nabla_{\mathcal{U}_{\lambda}^{-1} (x\ast y\ast z)}E + \nabla_{\mathcal{U}_{\lambda}^{-1} (y\ast x\ast z)}E\\
&=
\nabla_x(y\ast z) - \nabla_y(x\ast z).
\end{split}
\end{equation*}
\end{proof}
\section{Matrix representations}\label{matrix-rep}
In this section, we describe the conditions for 
a Saito structure and an almost Saito structure
by matrix representations.

Let $x^{\alpha}$ ($1\leq \alpha\leq n$) be local coordinates of $M$
and denote $\partial_{x^{\alpha}}=\partial_{\alpha} \in \mathcal{T}_M$  ($1\leq \alpha\leq n$) the corresponding basis.
The identity matrix and the zero matrix of order $n$ are 
denoted $I_n$ and $O_n$.
For matrices $A,B$ of order $n$, 
$[A,B]=AB-BA$.

\subsection{Saito structure}
Let 
$(\nabla,\ast ,E=\sum_{\mu=1}^n E^{\mu}\partial_{\mu})$ be 
a Saito structure on $M$
with a unit $e$.
Let us write
$$E=\sum_{\mu=1}^n E^{\mu}\partial_{\mu}~,\quad
e=\sum_{\mu=1}^n e^{\mu}\partial_{\mu}~,
\quad
\partial_{\alpha}\ast \partial_{\beta}
=\sum_{\gamma=1}^nC_{\alpha\beta}^{\gamma}\,\partial_{\gamma}~,
\quad
\nabla_{\alpha}(\partial_{\beta})
=\sum_{\gamma=1}^n\Gamma_{\alpha\beta}^{\gamma}\partial_{\gamma}~.
$$
Let $W,Q,C_{\alpha},\Gamma_{\alpha}$ be matrices 
whose entries are given by
\begin{equation}\nonumber
{W^{\mu}}_{\alpha}=\partial_{\alpha} E^{\mu}~,
\quad
Q^{\mu}_{\alpha}=\partial_{\alpha} e^{\mu}~,
\quad 
{(C_{\alpha})^{\gamma}}_{\beta}=C_{\alpha\beta}^{\gamma}~,
\quad 
{(\Gamma_{\alpha})^{\gamma}}_{\beta}=\Gamma_{\alpha\beta}^{\gamma}~.
\end{equation}
Then the commutativity and associativity of $\ast $   are
expressed as follows.
$$
C_{\alpha\beta}^{\gamma}=C_{\beta\alpha}^{\gamma}~,\quad
[C_{\alpha},C_{\beta}]=O_n~.
$$
That $e$ is a unit of $\ast$ is expressed as
$$
\sum_{\alpha=1}^n e^{\alpha}C_{\alpha}=I_n~.
$$
The torsion freeness and the flatness of $\nabla$
are expressed as follows.
$$
\Gamma_{\alpha\beta}^{\gamma}=\Gamma_{\beta\alpha}^{\gamma}~,
\quad
\partial_{\alpha}\Gamma_{\beta}-\partial_{\beta}\Gamma_{\alpha}
+[\Gamma_{\alpha},\Gamma_{\beta}]=O_n~.
$$
The conditions ({\bf SS1})--({\bf SS4}) are expressed as follows.
\begin{eqnarray}
\label{ss1-2}
& \partial_{\alpha} C_{\beta}+
[\Gamma_{\alpha},C_{\beta}]=\partial_{\beta} C_{\alpha}+
[\Gamma_{\beta},C_{\alpha}]~.
\\
&\label{ss2-2}
\partial_{\alpha} (E\cdot C)+[\Gamma_{\alpha},E\cdot C]
=[W+E\cdot \Gamma,C_{\alpha}]+C_{\alpha}~.
\\
\label{ss3-2}
&Q+e\cdot \Gamma=O_n~.
\\
&\label{ss4-2}
\partial_{\alpha}(W+E\cdot \Gamma)+[\Gamma_{\alpha},
W+E\cdot \Gamma]=O_n~.
\end{eqnarray}
Here 
\begin{equation}\nonumber
E\cdot C=\sum_{\mu=1}^nE^{\mu}C_{\mu}~,\quad
E\cdot \Gamma=\sum_{\mu=1}^nE^{\mu}\Gamma_{\mu}~,\quad
e\cdot \Gamma=\sum_{\mu=1}^n e^{\mu}\Gamma_{\mu}~.
\end{equation}
Together with \eqref{ss1-2}, \eqref{ss2-2} can be simplified to 
\begin{equation}\label{ss2-3}
E(C_{\alpha\beta}^{\gamma})
=-\sum_{\mu=1}^n {W^{\mu}}_{\alpha}C_{\mu\beta}^{\gamma}
+{[W,C_{\alpha}]^{\gamma}}_{\beta}+C_{\alpha\beta}^{\gamma}~.
\end{equation}
Together with the flatness of $\nabla$, \eqref{ss4-2}
can be simplified to
\begin{equation}\label{ss4-3}
E(\Gamma_{\alpha\beta}^{\gamma})
=-\sum_{\mu=1}^n{W^{\mu}}_{\beta}\Gamma_{\mu\beta}^{\gamma}
+{[W,\Gamma_{\alpha}]^{\gamma}}_{\beta}
-\partial_{\alpha} {W^{\gamma}}_{\beta}.
\end{equation}

If one constructs an almost Saito structure 
$(\boldsymbol{\nabla}^{(\lambda,r)},\star_{\lambda},e)$ by \eqref{eq:ss2ass},
then the new multiplication $\star_{\lambda}$ is expressed as
\begin{equation}\label{newmultiplication1}
\partial_{\alpha}\star_{\lambda} \partial_{\beta}=
\sum_{\gamma=1}^n B_{\alpha\beta}^{\gamma}(\lambda)\partial_{\gamma}~,
\text{ where }
B_{\alpha}(\lambda)=(B_{\alpha\beta}^{\gamma}(\lambda))
=C_{\alpha}(E\cdot C-\lambda I_n)^{-1}~.
\end{equation}
The new connection $\boldsymbol{\nabla}^{(\lambda,r)}$ is given by
\begin{equation}\label{newconnection1}
\begin{split}
\boldsymbol{\nabla}^{(\lambda,r)}_{\alpha}(\partial_{\beta})
&=\sum_{\gamma=1}^n
{\big(\Gamma_{\alpha}
+(r I_n-W-E\cdot\Gamma)B_{\alpha}(\lambda)\big)^{\gamma}}_{\beta} \partial_{\gamma}~.
\end{split}
\end{equation}

\subsection{Almost Saito structure}
Next consider
an almost Saito structure 
$(\boldsymbol{\nabla},\star,e)$  with a unit $E$.
Let us write 
\begin{equation}\nonumber
e=\sum_{\mu=1}^n e^{\mu}\partial_{\mu}~,
\quad
E=\sum_{\mu=1}^nE^{\mu}\partial_{\mu}
\quad
\partial_{\alpha}\star \partial_{\beta}
=\sum_{\gamma=1}^nB_{\alpha\beta}^{\gamma}\,\partial_{\gamma}~,
\quad
\boldsymbol{\nabla}_{\alpha}(\partial_{\beta})
=\sum_{\gamma=1}^n\Omega_{\alpha\beta}^{\gamma}\partial_{\gamma}~.
\end{equation}
Let $Q,W,B_{\alpha},\Omega_{\alpha}$ be matrices
whose entries are given by
\begin{equation}\nonumber
{Q^{\mu}}_{\alpha}=\partial_{\alpha} e^{\mu}~,
\quad 
{W^{\mu}}_{\alpha}=\partial_{\alpha} E^{\mu}~,
\quad
{(B_{\alpha})^{\gamma}}_{\beta}=B_{\alpha\beta}^{\gamma}~,
\quad 
{(\Omega_{\alpha})^{\gamma}}_{\beta}
=\Omega_{\alpha\beta}^{\gamma}~.
\end{equation}
Then the commutativity and associativity of $\star$  are 
expressed as follows.
$$
B_{\alpha\beta}^{\gamma}=B_{\beta\alpha}^{\gamma}~,\quad
[B_{\alpha},B_{\beta}]=O_n~.
$$
That $E$ is a unit of $\star$ is expressed as
$$
\sum_{\alpha=1}^n E^{\alpha}B_{\alpha}=I_n~.
$$
The torsion freeness and the flatness of $\nabla$
are expressed as follows.
$$
\Omega_{\alpha\beta}^{\gamma}=\Omega_{\beta\alpha}^{\gamma}~,
\quad
\partial_{\alpha}\Omega_{\beta}-\partial_{\beta}\Omega_{\alpha}
+[\Omega_{\alpha},\Omega_{\beta}]=O_n~.
$$
The conditions ({\bf ASS1})--({\bf ASS4}) are expressed as follows.
\begin{eqnarray}
\label{ass1-2}
& \partial_{\alpha} B_{\beta}+[\Omega_{\alpha},B_{\beta}]
=\partial_{\beta} B_{\alpha}+[\Omega_{\beta},B_{\alpha}]~.
\\
\label{ass2-2}
&
\partial_{\alpha} (e\cdot B)+[\Omega_{\alpha},e\cdot B]
=[Q+e\cdot \Omega,B_{\alpha}]-B_{\alpha}(e\cdot B)~.
\\
\label{ass3-2}
&W+E\cdot \Omega=r I_n ~.
\\
\label{ass4-2}
&\partial_{\alpha}(Q+e\cdot \Omega)+[\Omega_{\alpha},
Q+e\cdot \Omega]+(Q+e\cdot \Omega) B_{\alpha}=O_n~.
\end{eqnarray}
Here 
\begin{equation}\nonumber
e\cdot B=\sum_{\mu=1}^ne^{\mu}B_{\mu}~,\quad
e\cdot \Omega=\sum_{\mu=1}^ne^{\mu}\Omega_{\mu}~,\quad
E\cdot \Omega=\sum_{\mu=1}^n E^{\mu}\Omega_{\mu}~.
\end{equation}
Together with \eqref{ass1-2}, \eqref{ass2-2} can be simplified as 
follows.
\begin{equation}\label{ass2-3}
e(B_{\alpha\beta}^{\gamma})=
-\sum_{\mu=1}^n{Q^{\mu}}_{\alpha}B_{\beta\mu}^{\gamma}
+{[Q,B_{\alpha}]^{\gamma}}_{\beta}
-{(B_{\alpha} (e\cdot B))^{\gamma}}_{\beta}~.
\end{equation}
Together with the flatness of $\boldsymbol{\nabla}$,
\eqref{ass4-2} can also be simplified as follows.
\begin{equation}\label{ass4-3}
e(\Omega_{\alpha\beta}^{\gamma})
=-\sum_{\mu=1}^n {Q^{\mu}}_{\alpha}\Omega_{\mu\beta}^{\gamma}
+{[Q,\Omega_{\alpha}]^{\gamma}}_{\beta}
-{\partial_{\alpha}Q^{\gamma}}_{\beta}
-{((Q+e\cdot \Omega)B_{\alpha})^{\gamma}}_{\beta}~.
\end{equation}
If one constructs a Saito structure 
$(\nabla,\ast,E)$ by \eqref{eq:ass2ss},
then the new multiplication $\ast$ is expressed as
$$
\partial_{\alpha}\ast \partial_{\beta}=
\sum_{\gamma=1}^n C_{\alpha\beta}^{\gamma}\partial_{\gamma}~,
\text{ where }
C_{\alpha}=(C_{\alpha\beta}^{\gamma})=B_{\alpha}(e\cdot B)^{-1}~.
$$
The new connection $\nabla$ is given by
\begin{equation}\nonumber
\begin{split}
\nabla_{\alpha}(\partial_{\beta})
&=\sum_{\gamma=1}^n
{\big(\Omega_{\alpha}-(Q+e\cdot\Omega) C_{\alpha}
\big)^{\gamma}}_{\beta} \partial_{\gamma}~.
\end{split}
\end{equation}

\section{The orbit spaces of complex reflection groups}
\label{orbit-space}
\subsection{Orbit spaces of finite complex reflection groups}
\label{CRGs}
Let  $n$ be a positive integer and 
let $V=\mathbb{C}^n$ be equipped with the standard Hermitian metric.
The standard coordinates of $V$ are 
denoted $u^1,\ldots,u^n$ 
and the ring of polynomials on $V$ is denoted $\mathbb{C}[V]=\mathbb{C}[u^1,\ldots,u^n]=\mathbb{C}[u]$.

Let $G\subset GL(V)$ be a (not necessarily irreducible) 
finite complex  reflection group. 
Denote  $\mathbb{C}[V]^G$  the ring of $G$-invariant polynomials
and let $M=\mathrm{Spec}\, \mathbb{C}[V]^G$ be the 
space of $G$-orbits.
It is well-known that $\mathbb{C}[V]^G$ is 
a polynomial ring generated by $n=\mathrm{dim}\,V$ 
homogeneous polynomials. Such generators are called a set of basic invariants.
Fix a set of basic invariants $x^1,\ldots, x^n\in \mathbb{C}[V]^G$.
So $\mathbb{C}[V]^G=\mathbb{C}[x^1,\ldots,x^n]=\mathbb{C}[x]$.
The degrees $d_1,\ldots, d_n$ of $x^1,\ldots, x^n$ are uniquely determined by $G$
and called the degrees of $G$.
Except for \S \ref{sec:monomial-groups},
we always assume that the degrees are in
descending order:
\begin{equation}\nonumber
d_1\geq \ldots\geq d_n\geq 1~,
\end{equation}
for any set of basic invariants 
$x^1,\ldots,x^n$.

We regard $\mathbb{C}[x]$ as a graded ring with 
the grading $\deg\, x^{\alpha}=d_{\alpha}$ $(1\leq \alpha\leq n)$ .
Note that for a homogeneous polynomial 
$f\in \mathbb{C}[u]$ or for a weighted homogeneous polynomial
$f\in \mathbb{C}[x]$, it holds that
\begin{equation}\label{degree}
\sum_{\alpha=1}^n
d_{\alpha}x^{\alpha}\frac{\partial f}{\partial x^{\alpha}}=
\sum_{k=1}^n u^k\frac{\partial f}{\partial u_k}=(\mathrm{deg}\,f) f~.
\end{equation}
We set
\begin{equation}\label{E-deg}
E_{\mathrm{deg}}=\sum_{\alpha=1}^n
d_{\alpha}x^{\alpha}\frac{\partial }{\partial x^{\alpha}}
\end{equation}

Let
$\mathcal{A}$  be the arrangement of reflection hyperplanes of $G$.
For each hyperplane $H\in \mathcal{A}$, 
fix a  linear form $L_H\in V^*$ such that 
$H=\mathrm{ker}\,L_H$.
Let $e_H$ be the order of 
the cyclic subgroup of $G$ fixing $H$ pointwise.
Let us  define polynomials $\Pi, \delta\in \mathbb{C}[V]$ by
\begin{equation}\label{def:delta}
\Pi=\prod_{H\in \mathcal{A}}L_H^{e_H-1} ,\qquad
\delta=\prod_{H\in\mathcal{A}} L_H^{e_H}~.
\end{equation}
Then it is known that
$\Pi$ is skew invariant and $\delta$ is invariant
(i.e. $g(\Pi)=\det(g)\Pi$ and $g(\delta)=\delta$ for any $g\in G$).
Since 
\begin{equation}
\frac{\partial(x^1,\ldots,x^n)}{\partial(u^1,\ldots,u^n)}=\text{const.}\cdot
\Pi ~,
\end{equation}
the discriminant locus of the orbit map $\omega: V\to M$,
$\omega(u^1,\ldots,u^n)=(x^1(u),\ldots, x^n(u))$,
is given by $\{\Delta=0\}\subset M$.
Here  we write $\Delta$ for $\delta$ 
when we regard it as a polynomial in $x$, 
i.e. $\Delta(x(u))=\delta(u)$.
We set
$$
M_0=M\setminus \{\Delta=0\}~.
$$

\subsection{Natural connection on $TM_0$}
The trivial holomorphic connection on $TV$ induces a 
connection $\boldsymbol{\nabla}^V$ on $TM_0$.
Explicitly, it is given as follows.
\begin{equation}\nonumber
\begin{split}
\boldsymbol{\nabla}^V_{\partial_{\alpha}}(\partial_{\beta})
&=\sum_{\gamma=1}^n\Omega_{\alpha\beta}^{\gamma}\,\partial_{\gamma}~,
\\
\Omega_{\alpha\beta}^{\gamma}&=
-\sum_{i,j=1}^n\frac{\partial u^i}{\partial x^{\alpha}}\frac{\partial u^j}{\partial x^{\beta}}
\frac{\partial^2  x^{\gamma}}{\partial u^i \partial u^j}~~\in
\mathbb{C}(x)~,\quad 
\Omega_{\alpha}=(\Omega_{\alpha\beta}^{\gamma})_{\gamma,\beta}~.
\end{split}
\end{equation}
The induced connection $\boldsymbol{\nabla}^V$ is 
torsion free and flat.

Let us list a few properties of the connection matrix $\Omega_{\alpha}$.
Below, $M(n,\mathbb{C}[x])$ denotes the ring of $n\times n$ matrices 
with coefficients in $\mathbb{C}[x]$.
\begin{proposition} \label{AP}
\begin{eqnarray}
&&\label{AP1}
\Delta\Omega_{\alpha\beta}^{\gamma}\in \mathbb{C}[x]~.
\\
&& \label{AP2}
\Delta\cdot \det \Omega_{\alpha}\in \mathbb{C}[x].
\text{ Moreover }
 \det \Omega_{\alpha}=\frac{\text{const.}}{\Delta}
\text{ if }\mathrm{deg}\,\delta=nd_{\alpha}~.
\\
&&\label{AP3}
\Omega_{\mu}^{-1}\in M(n,\mathbb{C}[x])
\text{ if }\mathrm{deg}\,\delta=nd_{\mu}\text{ and  if }
\det \Omega_{\mu}\neq 0~.
\\
&& \label{AP4}
(\Omega_{\mu})^{-1}\Omega_{\alpha}
    \in M(n,(\mathbb{C}[x])
\text{ if }\mathrm{deg}\,\delta=n d_{\mu}\text{ and  if }
\det \Omega_{\mu}\neq 0~.
\end{eqnarray}
\end{proposition}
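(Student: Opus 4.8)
The plan is to regard each $\Omega^{\gamma}_{\alpha\beta}$ as an element of $\mathbb{C}(x)$ on $M\cong\mathbb{C}^{n}$ and to control its behaviour in codimension one. Since $\boldsymbol{\nabla}^{V}$ is the push-forward of the trivial connection along the orbit map $\omega$, which is étale over $M_{0}$, each $\Omega^{\gamma}_{\alpha\beta}$ is holomorphic on $M_{0}$, so its polar set is contained in $\{\Delta=0\}$. Because $M$ is smooth, hence normal, a rational function whose polar divisor is empty is automatically a polynomial; thus all four assertions reduce to estimating orders of vanishing along the irreducible components $\mathcal{D}_{i}$ of the discriminant, at their generic points.

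First I would set up a local model at a generic point $p$ of a reflection hyperplane $H$ of order $e=e_{H}$. There the orbit map factors as $v\mapsto w=((v^{1})^{e},v^{2},\dots,v^{n})$ followed by a local biholomorphism $w\mapsto x$, where $v^{1}=L_{H}$ and the pointwise stabiliser of $p$ is the cyclic group fixing $H$. A direct computation of the push-forward of the trivial connection along $v\mapsto w$ shows that the only nonzero Christoffel symbol in the $w$-coordinates is $\tilde{\Omega}^{1}_{11}=-\frac{e-1}{e}\,(w^{1})^{-1}$. Transforming by $w\mapsto x$, the inhomogeneous (second-derivative) term is holomorphic, so near $p$
\[
\Omega_{\alpha}=\frac{1}{w^{1}}A_{\alpha}+R_{\alpha},\qquad A_{\alpha}=-\tfrac{e-1}{e}\,q_{\alpha}\,p\,q^{\mathrm{T}},
\]
with $R_{\alpha}$ holomorphic, $p^{\gamma}=\partial x^{\gamma}/\partial w^{1}$, $q_{\beta}=\partial w^{1}/\partial x^{\beta}$, and $\Delta=w^{1}\cdot(\text{unit})$. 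The decisive features are that each singular part $A_{\alpha}$ has rank $\le 1$ and that $p,q$ do not depend on $\alpha$. In particular every $\Omega^{\gamma}_{\alpha\beta}$ has a pole of order at most one along each $\mathcal{D}_{i}$, which gives \eqref{AP1}.

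Assertion \eqref{AP2} then follows from the rank-one structure via the matrix determinant lemma $\det(R+t\,uv^{\mathrm{T}})=\det R+t\,v^{\mathrm{T}}\mathrm{adj}(R)\,u$: applied to $\Omega_{\alpha}$ and to each of its $(n-1)\times(n-1)$ submatrices, it shows that $\det\Omega_{\alpha}$, and likewise every entry of $\mathrm{adj}\,\Omega_{\alpha}$, has a pole of order at most one along each $\mathcal{D}_{i}$; hence $\Delta\det\Omega_{\alpha}\in\mathbb{C}[x]$. For the ``moreover'' part I would use weighted homogeneity: since $\Omega^{\gamma}_{\alpha\beta}$ is homogeneous of degree $d_{\gamma}-d_{\alpha}-d_{\beta}$, the determinant $\det\Omega_{\alpha}$ is homogeneous of degree $-n d_{\alpha}$, so when $\deg\delta=n d_{\alpha}$ the polynomial $\Delta\det\Omega_{\alpha}$ has degree $0$ and is therefore a constant.

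Finally, for \eqref{AP3} and \eqref{AP4} I would use $\det\Omega_{\mu}=c_{0}/\Delta$ with $c_{0}\neq0$. From $\Omega_{\mu}^{-1}=\frac{\Delta}{c_{0}}\,\mathrm{adj}\,\Omega_{\mu}$ and the bound just obtained on the entries of $\mathrm{adj}\,\Omega_{\mu}$, the product $\Delta\,\mathrm{adj}\,\Omega_{\mu}$ is polynomial, whence $\Omega_{\mu}^{-1}\in M(n,\mathbb{C}[x])$, proving \eqref{AP3}. For \eqref{AP4} the common factor $p\,q^{\mathrm{T}}$ is essential: it yields $\Omega_{\alpha}=f_{\alpha}\Omega_{\mu}+S_{\alpha}$ near the generic point of $\mathcal{D}_{i}$ with $f_{\alpha}=q_{\alpha}/q_{\mu}$ and $S_{\alpha}$ holomorphic, where $c_{0}\neq0$ forces $q_{\mu}\not\equiv0$ on $\mathcal{D}_{i}$ so that $f_{\alpha}$ is regular there; then $\Omega_{\mu}^{-1}\Omega_{\alpha}=f_{\alpha}I_{n}+\Omega_{\mu}^{-1}S_{\alpha}$ is regular in codimension one and hence lies in $M(n,\mathbb{C}[x])$. \emph{The main obstacle} is exactly the step that makes all of this work: the naive estimate from \eqref{AP1} only bounds the pole of an $n\times n$ determinant by $n$, which is far too weak for \eqref{AP2}--\eqref{AP4}. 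What saves the argument is the rank-one nature of the singular part of each $\Omega_{\alpha}$ together with the fact that all the $A_{\alpha}$ share the single rank-one factor $p\,q^{\mathrm{T}}$; establishing this local normal form carefully is the crux.
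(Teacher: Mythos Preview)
Your argument is correct, and it reaches the same conclusion as the paper by a genuinely different route. Both proofs localise at a generic point of a reflection hyperplane $H$ of order $e$ and reduce everything to pole-order bounds there, but the mechanisms differ.

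The paper works in eigen-coordinates $(v_{1},\dots,v_{n})$ for the cyclic stabiliser $G_{H}$ and introduces a divisibility relation $f\succ g\Leftrightarrow f/g\in\mathbb{C}(v_{2},\dots,v_{n})[v_{1}^{e}]$. It then tracks, entry by entry, the $v_{1}$-divisibility of the Jacobian, its inverse, and its partial derivatives (the matrices denoted $z_{\alpha}$), assembling these estimates into bounds on $\Omega_{\alpha\beta}^{\gamma}$, $\det\Omega_{\alpha}$, the entries of $\Omega_{\mu}^{-1}$, and those of $\Omega_{\mu}^{-1}\Omega_{\alpha}$ directly. No structural property of the residue is invoked; the proof is a systematic bookkeeping of exponents.

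Your approach instead extracts a single structural fact: in the local model the residue of $\Omega_{\alpha}$ along $\{w^{1}=0\}$ is the rank-one matrix $-\tfrac{e-1}{e}\,q_{\alpha}\,p\,q^{\mathrm{T}}$, and the rank-one factor $p\,q^{\mathrm{T}}$ is \emph{independent of $\alpha$}. From this, \eqref{AP1} is immediate, the matrix determinant lemma (applied to $\Omega_{\alpha}$ and to its minors) gives the simple-pole bound on $\det\Omega_{\alpha}$ and on every entry of $\mathrm{adj}\,\Omega_{\alpha}$ needed for \eqref{AP2}--\eqref{AP3}, and the shared factor yields the decomposition $\Omega_{\alpha}=f_{\alpha}\Omega_{\mu}+S_{\alpha}$ that handles \eqref{AP4}. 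Your observation that $c_{0}\neq 0$ forces $q_{\mu}\not\equiv 0$ on each component (else $\Omega_{\mu}$ would be regular there) is exactly what is needed to make $f_{\alpha}$ regular at the generic point.

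What each buys: the paper's proof is completely elementary and self-contained, at the cost of several auxiliary estimates. Your argument is shorter and more conceptual---it isolates the geometric reason why the naive $n$-th order pole bound on $\det\Omega_{\alpha}$ collapses to first order---and the same rank-one residue viewpoint would transfer to other pushed-forward flat connections. One small point worth making explicit in a write-up: the factorisation $v\mapsto w\mapsto x$ with $w\mapsto x$ a local biholomorphism across $\{w^{1}=0\}$ is a consequence of Chevalley/Steinberg (the local quotient at a generic point of $H$ is exactly the cyclic quotient), which you use implicitly.
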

\begin{lemma}\label{IP}
Let $$
{J^{\gamma}}_{i}=\frac{\partial x^{\gamma}}{\partial u^i}~.
$$
If $\det \Omega_{\mu}\neq 0$, then
$$
J^{-1}\Omega_{\mu}^{-1}\in M(n,\mathbb{C}[u])~.
$$
\end{lemma}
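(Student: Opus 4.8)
The plan is to convert the statement into a regularity question for a single explicit matrix and then settle it by a local computation along the reflection hyperplanes. First I would record a matrix identity that expresses everything through the Jacobian. Differentiating the relation $\sum_k \frac{\partial x^\gamma}{\partial u^k}\frac{\partial u^k}{\partial x^\beta}=\delta^\gamma_\beta$ by $\partial_{x^\alpha}$ shows that the connection coefficients of $\boldsymbol{\nabla}^V$ may also be written $\Omega_{\alpha\beta}^\gamma=\sum_k \frac{\partial x^\gamma}{\partial u^k}\frac{\partial^2 u^k}{\partial x^\alpha\partial x^\beta}$, i.e. $\Omega_\mu=J\,M_\mu$ with $(M_\mu)^k_\beta=\partial_{x^\mu}(J^{-1})^k_\beta$. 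Differentiating $J\,J^{-1}=I_n$ along the vector field $\partial_{x^\mu}=\sum_k\frac{\partial u^k}{\partial x^\mu}\partial_{u^k}$ then gives $\Omega_\mu=-(\partial_{x^\mu}J)\,J^{-1}$, hence $\Omega_\mu J=-\partial_{x^\mu}J$ and
\[
J^{-1}\Omega_\mu^{-1}=(\Omega_\mu J)^{-1}=-(\partial_{x^\mu}J)^{-1},
\]
where $\partial_{x^\mu}J$ is obtained by applying $\partial_{x^\mu}$ entrywise to $J$. So it suffices to prove that $(\partial_{x^\mu}J)^{-1}$ has entries in $\mathbb{C}[u]$.

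Since $\frac{\partial u^k}{\partial x^\mu}=(\mathrm{adj}\,J)^k_\mu/\det J$ and $\det J=\mathrm{const}\cdot\Pi$, the entries of $\partial_{x^\mu}J$ are holomorphic on $V\setminus\{\Pi=0\}$. Because $V=\mathbb{C}^n$ is smooth, a matrix of rational functions lies in $M(n,\mathbb{C}[u])$ as soon as it has no pole along any irreducible hypersurface, so I only need to control the poles of $(\partial_{x^\mu}J)^{-1}=\mathrm{adj}(\partial_{x^\mu}J)/\det(\partial_{x^\mu}J)$. From $\Omega_\mu J=-\partial_{x^\mu}J$ and $\det J=\mathrm{const}\cdot\Pi$ one computes $\det(\partial_{x^\mu}J)=(-1)^n\det\Omega_\mu\cdot\mathrm{const}\cdot\Pi$. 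The hypothesis enters precisely here through \eqref{AP2}: in the situation where the lemma is applied (namely $\mathrm{deg}\,\delta=nd_\mu$, as in \eqref{AP3} and \eqref{AP4}) one has $\det\Omega_\mu=\mathrm{const}/\Delta=\mathrm{const}/\delta$ with a nonzero constant, and with $\delta=\prod_H L_H^{e_H}$ and $\Pi=\prod_H L_H^{e_H-1}$ this gives $\det(\partial_{x^\mu}J)=\mathrm{const}\cdot\prod_{H\in\mathcal{A}}L_H^{-1}$. In particular $\det(\partial_{x^\mu}J)$ has no zero on $V\setminus\{\Pi=0\}$, so $(\partial_{x^\mu}J)^{-1}$ is holomorphic there, and the only possible polar divisors are the reflection hyperplanes $H$.

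It remains to check regularity at a generic point $p_0$ of each reflection hyperplane $H$. The plan is to use that near such $p_0$ the only nontrivial stabiliser is the cyclic group fixing $H$, of order $e_H$; choosing linear coordinates with $L_H=u^1$ on which it acts by $u^1\mapsto\zeta u^1$ and fixes $u^2,\dots,u^n$, the local invariants are functions of $w=(u^1)^{e_H}$ and $u^2,\dots,u^n$, and the induced map to $M$ is a local biholomorphism at $p_0$. Hence $\widetilde J=(\partial x^\gamma/\partial w,\partial x^\gamma/\partial u^2,\dots)$ is holomorphic and invertible at $p_0$ and $J=\widetilde J\cdot\mathrm{diag}(e_H(u^1)^{e_H-1},1,\dots,1)$. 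Substituting this factorisation into $\partial_{x^\mu}J$ and using $\partial_{x^\mu}u^1=\tfrac{1}{e_H}(u^1)^{1-e_H}\,\partial_{x^\mu}w$ with $\partial_{x^\mu}w$ holomorphic, one finds $\partial_{x^\mu}J=\widehat G\cdot\mathrm{diag}((u^1)^{-1},1,\dots,1)$ with $\widehat G$ holomorphic at $p_0$, so that $(\partial_{x^\mu}J)^{-1}=\mathrm{diag}(u^1,1,\dots,1)\,\widehat G^{-1}$. Finally $\det\widehat G=u^1\det(\partial_{x^\mu}J)=\mathrm{const}\cdot u^1\prod_H L_H^{-1}$ is a nonvanishing holomorphic function at $p_0$ (the denominator factor $L_H=u^1$ is cancelled), so $\widehat G$ is invertible there and $(\partial_{x^\mu}J)^{-1}$ is holomorphic at $p_0$. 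By the codimension-one criterion this yields $(\partial_{x^\mu}J)^{-1}\in M(n,\mathbb{C}[u])$, and therefore $J^{-1}\Omega_\mu^{-1}\in M(n,\mathbb{C}[u])$.

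I expect the main obstacle to be the local model along the reflection hyperplanes: one must justify the branched-covering normal form $J=\widetilde J\cdot\mathrm{diag}(e_H(u^1)^{e_H-1},1,\dots,1)$ at a generic point of $H$ (i.e. that passing to the cyclic quotient $w=(u^1)^{e_H}$ resolves the orbit map transversally) and then keep exact track of orders of vanishing, so that the simple pole produced by differentiating the branching factor $(u^1)^{e_H-1}$ is matched exactly by the zero coming from $\mathrm{diag}(u^1,1,\dots,1)$. The global fact that makes this balancing succeed is the sharp form $\det\Omega_\mu=\mathrm{const}/\Delta$ of \eqref{AP2}, which simultaneously rules out extraneous zeros of $\det\Omega_\mu$ on $M_0$ and forces its pole order along each $H$ to be exactly $e_H$.
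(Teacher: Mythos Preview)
Your argument is correct and follows the same overall route as the paper. Both proofs rest on the identity $\Omega_\mu J=-\partial_{x^\mu}J$ (in the paper's notation $\Omega_\mu J=-Jz_\mu$, up to a harmless sign, where $(z_\mu)^i_j=\partial_{v_j}(J^{-1})^i_\mu$), which reduces the question to showing that $(\partial_{x^\mu}J)^{-1}=-z_\mu^{-1}J^{-1}$ has polynomial entries; both then verify regularity one reflection hyperplane at a time in adapted coordinates. The paper packages the local computation through the $\succ$ bookkeeping of Lemma~\ref{AL3} and \eqref{AL4-4}, tracking the $v_1$-order of the entries of $z_\mu$, its adjugate, and $J^{-1}$. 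You instead factor $J=\tilde J\cdot\mathrm{diag}\bigl(e_H(u^1)^{e_H-1},1,\dots,1\bigr)$ through the cyclic quotient and read off the pole structure of $\partial_{x^\mu}J$ directly. These are two organizations of the same estimate: the paper's $\succ$ machinery has the virtue of being reused to prove Proposition~\ref{AP}, while your branched-covering factorization is more self-contained and makes the geometric reason for regularity transparent.

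One point worth flagging: you explicitly invoke the sharp form $\det\Omega_\mu=\mathrm{const}/\Delta$ from \eqref{AP2}, which requires $\deg\delta=nd_\mu$, whereas Lemma~\ref{IP} is stated only under $\det\Omega_\mu\neq 0$. In fact the paper's route uses this exact pole order as well: going from \eqref{AL3-5} to \eqref{AL4-4} one needs $\det z_\mu$ to have $v_1$-order \emph{exactly} $-e_H$ along $H$ (so that $1/\det z_\mu\succ v_1^{e_H}$), not merely at least $-e_H$ as \eqref{AL3-4} gives. So you have not weakened the conclusion relative to what the paper's argument actually establishes; you have made the operative hypothesis explicit. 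Since the only application (Proposition~\ref{prop:IP}) takes $\mu=1$ under assumption~(ii), where $\deg\delta=nd_1$, this is harmless either way.
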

Proposition \ref{AP} and Lemma \ref{IP} 
can be proven by studying pole orders
along each reflection hyperplane $H\in \mathcal{A}$. 
See \S \ref{appendix:proofs}.

\begin{lemma}
\begin{equation}\label{EOmega}
\sum_{\alpha=1}^n d_{\alpha}x^{\alpha}\Omega_{\alpha\beta}^{\gamma}
=\begin{cases}(1-d_{\gamma})&(\beta=\gamma)\\
0&(\beta\neq\gamma)
\end{cases}~~~.
\end{equation}
\end{lemma}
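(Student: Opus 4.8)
The plan is to contract the free index $\alpha$ in the explicit formula for $\Omega_{\alpha\beta}^{\gamma}$ against the coefficients $d_{\alpha}x^{\alpha}$ and to recognize the resulting first-order operator as the Euler vector field $E_{\mathrm{deg}}$ of \eqref{E-deg}, which acts on the coordinate functions $u^i$ in a completely transparent way. The whole computation is then just two applications of homogeneity together with the chain rule.

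First I would use the identity \eqref{degree}, which says precisely that, as a vector field on $M_0$, $E_{\mathrm{deg}}=\sum_{\alpha}d_{\alpha}x^{\alpha}\frac{\partial}{\partial x^{\alpha}}$ agrees with $\sum_{k}u^{k}\frac{\partial}{\partial u^{k}}$. Applying this field to the local function $u^{i}$, which is homogeneous of degree one in the $u$-coordinates, gives $\sum_{\alpha}d_{\alpha}x^{\alpha}\frac{\partial u^{i}}{\partial x^{\alpha}}=E_{\mathrm{deg}}(u^{i})=u^{i}$. Substituting this into the definition of $\Omega_{\alpha\beta}^{\gamma}$ collapses the sum over $\alpha$ and yields
$$
\sum_{\alpha=1}^{n}d_{\alpha}x^{\alpha}\,\Omega_{\alpha\beta}^{\gamma}
=-\sum_{i,j=1}^{n}u^{i}\,\frac{\partial u^{j}}{\partial x^{\beta}}\,\frac{\partial^{2}x^{\gamma}}{\partial u^{i}\partial u^{j}}~.
$$
Next I would invoke Euler's homogeneity relation a second time, now for the function $\frac{\partial x^{\gamma}}{\partial u^{j}}$, which is homogeneous of degree $d_{\gamma}-1$ in $u$; this gives $\sum_{i}u^{i}\frac{\partial^{2}x^{\gamma}}{\partial u^{i}\partial u^{j}}=(d_{\gamma}-1)\frac{\partial x^{\gamma}}{\partial u^{j}}$. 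Plugging this in and then using the chain rule $\sum_{j}\frac{\partial u^{j}}{\partial x^{\beta}}\frac{\partial x^{\gamma}}{\partial u^{j}}=\frac{\partial x^{\gamma}}{\partial x^{\beta}}=\delta_{\beta}^{\gamma}$ produces $-(d_{\gamma}-1)\delta_{\beta}^{\gamma}=(1-d_{\gamma})\delta_{\beta}^{\gamma}$, which is exactly the asserted formula \eqref{EOmega}.

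Since each step is a single application of homogeneity or of the chain rule, I do not expect any serious obstacle. The only point that deserves a word of care is the very first identity $\sum_{\alpha}d_{\alpha}x^{\alpha}\frac{\partial u^{i}}{\partial x^{\alpha}}=u^{i}$: although $u^{i}$ is not $G$-invariant, it is still a genuine (multivalued) local function on $M_{0}$ to which the globally defined field $E_{\mathrm{deg}}$ applies, and its value here is dictated by homogeneity rather than by invariance, so the formula \eqref{degree} remains applicable.
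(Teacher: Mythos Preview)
Your proof is correct and follows essentially the same route as the paper's own argument: contract the defining formula for $\Omega_{\alpha\beta}^{\gamma}$ against $d_{\alpha}x^{\alpha}$, use \eqref{degree} to replace $\sum_{\alpha}d_{\alpha}x^{\alpha}\partial u^{i}/\partial x^{\alpha}$ by $u^{i}$, apply Euler homogeneity to $\partial x^{\gamma}/\partial u^{j}$, and finish with the chain rule $\sum_{j}(\partial u^{j}/\partial x^{\beta})(\partial x^{\gamma}/\partial u^{j})=\delta_{\beta}^{\gamma}$. Your closing remark about the applicability of \eqref{degree} to the local (non-invariant) function $u^{i}$ is a welcome clarification that the paper leaves implicit.
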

\begin{proof}
By \eqref{degree},
\begin{equation}\nonumber
\begin{split}
-\sum_{\alpha=1}^n d_{\alpha}x^{\alpha}\Omega_{\alpha\beta}^{\gamma}
&=
\sum_{i,j=1}^n
\left(
\sum_{\alpha=1}^nd_{\alpha}x^{\alpha}
\frac{\partial u^i}{\partial x^{\alpha}}\right)
\frac{\partial u^j}{\partial x^{\beta}}\frac{\partial^2 x^{\gamma}}{\partial u^i\partial u^j}
\\
&=\sum_{j=1}^n
\frac{\partial u^j}{\partial x^{\beta}}\cdot
\sum_{i=1}^n u^i \frac{\partial}{\partial u^i}
\left(\frac{\partial x^{\gamma}}{\partial u^j}\right)
\\
&=(d_{\gamma}-1)\sum_{j=1}^n\frac{\partial u^j}{\partial x^{\beta}}
\frac{\partial x^{\gamma}}{\partial u^j}
\\
&=\eqref{EOmega}~.
\end{split}
\end{equation}
\end{proof}
Notice that \eqref{EOmega} implies that 
\begin{equation}\label{nablaE}
\boldsymbol{\nabla}^V_x\,E_{\mathrm{deg}}=
x\quad (x\in \mathcal{T}_{M_0})~.
\end{equation}

\subsection{The Natural (Almost) Saito Structure}
Let $\mathcal{X}_{-d_1}$ be the $\mathbb{C}$-vector space of polynomial vector fields on $M$
of degree $-d_1$. Here $d_1$ is the maximal degree of $G$.
Let $x_1,\ldots, x_n$ be a set of basic invariants of $G$.
Then the space $\mathcal{X}_{-d_1}$ is spanned by
$\partial_{x^{\alpha}}$ with $d_{\alpha}=d_1$.

\begin{definition}\label{def:natural}
(1)
An almost Saito structure $(\boldsymbol{\nabla},\star,e)$
on $M_0$ is a natural almost Saito structure for $G$ if 
it satisfies the following conditions.
\begin{itemize}
\item $\boldsymbol{\nabla}=\boldsymbol{\nabla}^V$.
\item A unit of $\star$ is $\frac{1}{d_1}E_{\mathrm{deg}}$
and hence the parameter is $r=\frac{1}{d_1}$ (by \eqref{nablaE}).
\item $e\neq 0$ and $e \in \mathcal{X}_{-d_1}$, i.e. $e$ is a nonzero vector field on $M$ of degree $-d_1$. 
\end{itemize}
(2) A Saito structure $(\nabla,\ast,E)$ on an open subset of $M$
is a natural Saito structure for $G$ if
its dual almost Saito structure with parameter 
$r=\frac{1}{d_1}$
is a natural almost Saito structure for $G$. 
(Especially, the Euler vector field $E$ must be $\frac{1}{d_1}E_{\mathrm{deg}}$.)
\\
(3)
A Saito structure $(\nabla,\ast,E)$ on $M$
is called a polynomial Saito structure 
if 
\begin{itemize}
\item there exist $\nabla$-flat coordinates $t^1,\ldots,t^n$
which form a set of basic invariants of $G$.
\item all entries of the matrix representations of $\partial_{t^{\alpha}}\ast$
($1\leq \alpha\leq n$) with respect to the basis
$\partial_{t^1},\ldots,\partial_{t^n}$ are polynomials in $t$.
\end{itemize}
\end{definition}

\begin{remark}
In the case when $G$ is a Coxeter group, 
the Saito structure considered in \cite{Saito1993},
\cite{SaitoSekiguchiYano} and \cite{Dubrovin1993}
can be characterized as follows.
\begin{itemize}
\item
The intersection form $g$ (i.e. the metric of the dual almost 
Frobenius structure)
is the complexification of the 
standard Euclidean metric.
\item The Euler vector field is $\frac{1}{d_1}E_{\mathrm{deg}}$.
\item  
The unit vector field $e$ is the 
vector field $\partial_{x^1}$ corresponding to a basic invariant $x^1$ of the maximal degree.
\end{itemize}
The Levi--Civita connection of the above intersection form  $g$ is the natural connection
$\nabla^V$. Therefore
the three conditions in Definition \ref{def:natural}-(1)
appear as a result of a straightforward generalization of
the case of the Coxeter groups.

Let us mention  that the Frobenius manifold structures for the Shephard groups
\cite[\S 5.3]{Dubrovin2004}
is a result of 
a generalization in a different direction,
in which one 
takes  $\frac{\partial x^n}{\partial u^i\partial u^j}$ $(1\leq i,j\leq n)$ as $g(\partial_{u^i},\partial_{u^j})$.
\end{remark}

\begin{lemma}\label{uniqueness2}
Let $e\in \mathcal{X}_{-d_1}$ be a nonzero  vector field 
such that the pair $(\boldsymbol{\nabla}^V,e)$ 
is regular on $M_0$.
Then the pair $(\boldsymbol{\nabla}^V,e)$ 
makes a natural 
almost Saito structure for $G$ 
if and only if it satisfies the conditions $(${\bf ASS1}$)$ and $(${\bf ASS2}$)$.
\end{lemma}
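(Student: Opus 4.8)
The plan is to leverage Proposition \ref{uniqueness1}, which already does most of the work. Since $(\boldsymbol{\nabla}^V,e)$ is assumed regular, Proposition \ref{uniqueness1}-(3) tells us that the multiplication $\star$ defined by \eqref{mult} makes $(\boldsymbol{\nabla}^V,\star,e)$ an almost Saito structure if and only if the three conditions $(\mathbf{ASS1})$, $(\mathbf{ASS2})$ and $(\mathbf{ASS3})$ hold. So the content of the lemma reduces to verifying that, for the specific data attached to the complex reflection group $G$, the remaining condition $(\mathbf{ASS3})$ is automatic and the unit of $\star$ is forced to be $\frac{1}{d_1}E_{\mathrm{deg}}$. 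The key observation is that these two facts follow from the homogeneity of the basic invariants, encapsulated in \eqref{nablaE}.

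First I would argue that $(\mathbf{ASS3})$ holds with parameter $r=\frac{1}{d_1}$. Recall from \eqref{nablaE} that $\boldsymbol{\nabla}^V_x\,E_{\mathrm{deg}}=x$ for all $x\in \mathcal{T}_{M_0}$; hence setting $E=\frac{1}{d_1}E_{\mathrm{deg}}$ gives $\boldsymbol{\nabla}^V_x E=\frac{1}{d_1}x$, which is exactly $(\mathbf{ASS3})$ with $r=\frac{1}{d_1}$. What remains is to check that this $E$ is genuinely the unit of the multiplication $\star$ produced by \eqref{mult}. This is where the regularity and the definition of the natural almost Saito structure in Definition \ref{def:natural}-(1) must be matched up: one shows that the identity for $\star$ coincides with $\frac{1}{d_1}E_{\mathrm{deg}}$.

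To see that $\frac{1}{d_1}E_{\mathrm{deg}}$ is the $\star$-unit, I would use the formula $\mathcal{Q}(E)=(r-1)e$ recorded in the Remark following Definition of regularity, read in reverse: the unit $E$ of any regular almost Saito structure satisfies $\mathcal{Q}(E)=\boldsymbol{\nabla}^V_E\,e=(r-1)e$. Since here $r=\frac{1}{d_1}$ and $e$ is the given degree $-d_1$ vector field, one verifies directly via the homogeneity relation \eqref{EOmega} that $\frac{1}{d_1}E_{\mathrm{deg}}$ satisfies $\boldsymbol{\nabla}^V_{\frac{1}{d_1}E_{\mathrm{deg}}}\,e=(\frac{1}{d_1}-1)e$; indeed, $E_{\mathrm{deg}}$ acts on $e\in \mathcal{X}_{-d_1}$ as multiplication by its degree, so the Lie-derivative computation reduces to a scalar. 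Because $\mathcal{Q}$ is an isomorphism by regularity, this pins down the unit uniquely, so the $\star$-unit is forced to be $\frac{1}{d_1}E_{\mathrm{deg}}$, matching Definition \ref{def:natural}-(1).

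Putting these together: given $(\mathbf{ASS1})$ and $(\mathbf{ASS2})$, Proposition \ref{uniqueness1}-(3) combined with the automatic verification of $(\mathbf{ASS3})$ yields an almost Saito structure, and the unit-identification shows it satisfies the three bullet conditions of Definition \ref{def:natural}-(1), i.e. it is a natural almost Saito structure for $G$; the converse is immediate since a natural almost Saito structure is in particular an almost Saito structure and hence satisfies $(\mathbf{ASS1})$ and $(\mathbf{ASS2})$. The main obstacle I anticipate is the careful bookkeeping in the unit-identification step, namely confirming that the scalar arising from $E_{\mathrm{deg}}$ acting on the degree $-d_1$ field $e$ is exactly $r-1=\frac{1}{d_1}-1$ and that this, via the invertibility of $\mathcal{Q}$, rules out any other candidate unit; the homogeneity relations \eqref{degree} and \eqref{EOmega} are the tools that make this precise.
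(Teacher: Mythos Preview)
Your overall plan matches the paper's: invoke Proposition~\ref{uniqueness1}-(3), verify that $(\mathbf{ASS3})$ with $r=\frac{1}{d_1}$ is automatic from \eqref{nablaE}, and check that $\frac{1}{d_1}E_{\mathrm{deg}}$ is the $\star$-unit. The first two steps are fine and identical to the paper.

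The gap is in your unit-identification. The formula $\mathcal{Q}(E)=(r-1)e$ from the Remark is a \emph{necessary} condition on the unit of an almost Saito structure; it is derived \emph{after} one already knows $E$ is the unit (via $(\mathbf{ASS3})$ and $[E,e]=-e$, the latter coming from $(\mathbf{ASS2})$ with $E$ as unit). Verifying that $\frac{1}{d_1}E_{\mathrm{deg}}$ satisfies this equation, together with injectivity of $\mathcal{Q}$, only tells you that \emph{if} $\star$ has a unit then it must be $\frac{1}{d_1}E_{\mathrm{deg}}$; it does not establish that $\frac{1}{d_1}E_{\mathrm{deg}}$ actually acts as a unit for the multiplication \eqref{mult}. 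You have shown uniqueness but not existence.

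The paper closes this gap by a direct computation: plug $y=E_{\mathrm{deg}}$ into \eqref{mult} and use $\boldsymbol{\nabla}^V_{E_{\mathrm{deg}}}e=(1-d_1)e$ (from torsion-freeness, \eqref{nablaE}, and $[E_{\mathrm{deg}},e]=-d_1 e$) to get
\[
x\star E_{\mathrm{deg}}
=-\mathcal{Q}^{-1}\bigl(\boldsymbol{\nabla}^V_x(1-d_1)e\bigr)+\boldsymbol{\nabla}^V_x E_{\mathrm{deg}}
=-(1-d_1)x+x=d_1 x,
\]
so $\frac{1}{d_1}E_{\mathrm{deg}}$ is genuinely the unit. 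This one-line verification is both simpler than your indirect route and logically complete; you should replace your unit argument with it.
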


\begin{proof}
By \eqref{nablaE}, it is clear that $\boldsymbol{\nabla}^V$ and $\frac{1}{d_1}E_{\mathrm{deg}}$ satisfy the condition ({\bf ASS3}).
If  the pair $(\boldsymbol{\nabla}^V,e)$  is regular,
the multiplication $\star$ 
obtained from ({\bf ASS4}) (or \eqref{mult}) 
has the unit $\frac{1}{d_1}E_{\mathrm{deg}}$
since
\begin{equation}\begin{split}\nonumber
x\star E_{\mathrm{deg}}&=
-\mathcal{Q}^{-1}(\boldsymbol{\nabla}^V_x\boldsymbol{\nabla}^V_{E_\mathrm{deg}}\,e)
+\boldsymbol{\nabla}^V_x E_{\mathrm{deg}}
\\&=
-\mathcal{Q}^{-1}(\boldsymbol{\nabla}^V_x (1-d_1)e
)+x=d_1x~.
\end{split}
\end{equation}
Therefore the statement follows from Proposition \ref{uniqueness1}.
\end{proof}

Lemma \ref{uniqueness2} can be used to show 
the nonexistence of the natural Saito structure for 
certain irreducible groups.  See \S \ref{example-nonexistence}.

\section{When the discriminant $\Delta$ is a monic of degree $n$ in $x^1$}
\label{monic-degreen}
\subsection{Assumptions}\label{sec:assumptions}
Let $G\subset GL(V)$ be a finite complex reflection group. 
In this section, we assume that $G$ satisfies the conditions 
(i) and (ii) below.
\begin{enumerate}
\item[ (i)] $d_{\alpha}>1$ $(1\leq \alpha\leq n=\dim V)$.
\item[(ii)] there exists a set of basic invariants
$x^1,\ldots,x^n$
such that
the discriminant $\Delta\in \mathbb{C}[x]$ is a monic polynomial 
of degree $n$ as a polynomial in $x^1$.
\end{enumerate}
The assumption (i) implies that $V$ does not contain
a trivial representation of $G$.
\begin{remark}
If $G$ is irreducible, then (i) (ii) are equivalent to 
the condition that $G$ is a duality group. See \S \ref{example-well-generated}.
\end{remark}

We will see that
the pair $(\boldsymbol{\nabla}^V,\partial_{x^1})$ is regular on $M_0$
(Corollary \ref{reg}).
Moreover we show that 
it makes 
a regular natural almost Saito structure for $G$
and that
its dual Saito structure is  
a polynomial Saito structure defined on the whole orbit space $M$
(Corollary \ref{main-corollary}).
We do this by first constructing 
a polynomial Saito structure on the whole orbit space $M$,
and show that its dual almost Saito structure is 
$(\boldsymbol{\nabla}^V,\partial_{x^1})$ (Theorem \ref{main-theorem}).

For the sake of simplicity, we write
$\mathbb{C}[x']=\mathbb{C}[x^2,\ldots,x^n]$.

\subsection{Constructing a polynomial Saito structure on $M$}
\label{sec:constructing}
By \eqref{AP1}, 
$\Delta \Omega_{\alpha\beta}^{\gamma}$ is 
a polynomial in $x$.
By assumption (i), $\mathrm{deg}\,\Delta=nd_1$. So
$$
\mathrm{deg}\,\Delta \Omega_{\alpha\beta}^{\gamma}=
d_{\gamma}-d_{\alpha}-d_{\beta}+nd_1<(n+1)d_1~.
$$
Therefore it is at most of degree $n$ 
as a polynomial in $x^1$. 
So we can write it in the following form:
\begin{equation}\label{def-Gamma}
\begin{split}
\Delta \Omega_{\alpha\beta}^{\gamma}&
=(x^1)^n\Gamma_{\alpha\beta}^{\gamma}+(x^1)^{n-1} D_{\alpha\beta}^{\gamma}
+\cdots,
\quad 
(\Gamma_{\alpha\beta}^{\gamma}~,~D_{\alpha\beta}^{\gamma}\in \mathbb{C}[x'])~.
\end{split}\end{equation}
Here $\cdots$ means terms of degree less than $n-1$ in $x^1$. 
We also set
\begin{equation}\label{def-Gamma2}
\Gamma_{\alpha}={(\Gamma_{\alpha\beta}^{\gamma})}_{\gamma,\beta}~,\quad
D_{\alpha}={(D_{\alpha\beta}^{\gamma})}_{\gamma,\beta}~.
\end{equation}

\begin{lemma}\label{BL2} 
Under the assumptions (i) and (ii), the following holds.
\begin{eqnarray}
&&\label{BL2-1}
\Gamma_{\alpha\beta}^{\gamma}=0 \text{ if } d_{\gamma}\leq d_{\beta}~,\quad \Gamma_{1}=O_n ~.
\\
&& \label{BL2-2}
D_1=
\frac{1}{d_1}\mathrm{diag}[1-d_1,\ldots,1-d_n]
-\sum_{\alpha=2}^n\frac{d_{\alpha}}{d_1}\,x^{\alpha}
\Gamma_{\alpha}~.
\\
&&\label{BL2-3} 
\Delta\det\Omega_1=\det D_1
=\prod_{\gamma=1}^n\frac{1-d_{\gamma}}{d_1}\neq 0~. 
\\
&&\label{BL2-4}
\Omega_1^{-1}
\in M(n,\mathbb{C}[x]) \text{ and  }
\Omega_1^{-1}-D_1^{-1}x^1\in M(n,\mathbb{C}[x'])~.
\\
&&\label{BL2-5}
\Omega_1^{-1}\Omega_{\alpha}\in 
M(n,\mathbb{C}[x])\text{ and }
\Omega_1^{-1}(\Omega_{\alpha}-\Gamma_{\alpha})\in 
M(n,\mathbb{C}[x'])~.
\end{eqnarray} 
\end{lemma}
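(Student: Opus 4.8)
The plan is to prove the five assertions \eqref{BL2-1}--\eqref{BL2-5} of Lemma \ref{BL2} in order, exploiting two structural facts: the homogeneity/degree bookkeeping forced by the grading $\deg x^\alpha = d_\alpha$, and the key identity \eqref{EOmega}, which controls the Euler-type contraction $\sum_\alpha d_\alpha x^\alpha \Omega_{\alpha\beta}^\gamma$. Throughout I would extract the coefficient of a given power of $x^1$ from an identity among the $\Delta\Omega_{\alpha\beta}^\gamma$, remembering that $\Delta$ itself is monic of degree $n$ in $x^1$ by assumption (ii) and that $\deg\Delta = nd_1$ by assumption (i).

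For \eqref{BL2-1}, I would start from the degree count $\deg \Delta\Omega_{\alpha\beta}^\gamma = d_\gamma - d_\alpha - d_\beta + nd_1$. Since $\Gamma_{\alpha\beta}^\gamma$ is the coefficient of $(x^1)^n$, its degree as an element of $\mathbb{C}[x']$ is $(d_\gamma - d_\alpha - d_\beta + nd_1) - nd_1 = d_\gamma - d_\alpha - d_\beta$; this is an \emph{actual} degree of a polynomial in $x^2,\dots,x^n$, so it must be nonnegative, and in fact because $x^1$ does not appear, the weighted degree available from $x^2,\dots,x^n$ is bounded by $d_2 \le d_1$ in each variable. Using $d_\alpha \ge d_\beta$ sorting and $d_\alpha = d_1$ being the top degree, I would argue $\Gamma_{\alpha\beta}^\gamma = 0$ whenever $d_\gamma \le d_\beta$ (the target weight is too small to be realized without $x^1$), and $\Gamma_1 = O_n$ because taking $\alpha = 1$ forces $d_\gamma - d_1 - d_\beta < 0$ unless the component vanishes. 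For \eqref{BL2-2}, I would substitute the expansion \eqref{def-Gamma} into the identity \eqref{EOmega}, i.e. $\sum_\alpha d_\alpha x^\alpha \Omega_{\alpha\beta}^\gamma = (1-d_\gamma)\delta_{\beta\gamma}$; multiplying by $\Delta$ and reading off the coefficient of $(x^1)^{n}$ (using that the $\alpha=1$ term contributes $d_1 x^1 \cdot \Delta\Omega_1$ whose top-in-$x^1$ part is governed by $\Gamma_1=O_n$ and $D_1$), I expect the diagonal term $(1-d_\gamma)/d_1$ together with the sum $-\sum_{\alpha\ge 2}(d_\alpha/d_1)x^\alpha\Gamma_\alpha$ to fall out after isolating $D_1$.

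Statement \eqref{BL2-3} then follows by evaluating $\det$: since $\Gamma_\alpha$ are strictly upper-triangular-type (nilpotent, by the vanishing pattern in \eqref{BL2-1}), the matrix $D_1$ is triangular with diagonal entries $(1-d_\gamma)/d_1$, so $\det D_1 = \prod_\gamma (1-d_\gamma)/d_1$, which is nonzero precisely because $d_\gamma > 1$ by assumption (i). The equality $\Delta\det\Omega_1 = \det D_1$ comes from comparing top-degree-in-$x^1$ coefficients: $\Delta\Omega_1 = (x^1)^n\Gamma_1 + (x^1)^{n-1}D_1 + \cdots = (x^1)^{n-1}D_1 + \cdots$ since $\Gamma_1 = O_n$, and $\Delta$ is monic of degree $n$, so $\det(\Delta\Omega_1) = \Delta^n \det\Omega_1$ has leading-in-$x^1$ behavior matching $\det D_1$ up to the appropriate power; I would make this matching precise by a leading-coefficient comparison. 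For \eqref{BL2-4} and \eqref{BL2-5}, the inputs from Proposition \ref{AP} (parts \eqref{AP3} and \eqref{AP4}) already give $\Omega_1^{-1} \in M(n,\mathbb{C}[x])$ and $\Omega_1^{-1}\Omega_\alpha \in M(n,\mathbb{C}[x])$ once $\det\Omega_1 \ne 0$ and $\deg\delta = nd_1$ are checked — both of which hold here. The refinements $\Omega_1^{-1} - D_1^{-1}x^1 \in M(n,\mathbb{C}[x'])$ and $\Omega_1^{-1}(\Omega_\alpha - \Gamma_\alpha) \in M(n,\mathbb{C}[x'])$ I would obtain by writing $\Omega_1 = \Delta^{-1}((x^1)^{n-1}D_1 + \cdots)$, inverting as a power series in $x^1$ (legitimate since $\Delta$ is monic in $x^1$, so $\Delta^{-1}$ is a Laurent-type expansion whose principal part interacts cleanly with the polynomial numerators), and isolating the $x^1$-linear and $x'$-constant parts.

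The main obstacle I anticipate is \eqref{BL2-4}--\eqref{BL2-5}, specifically controlling the \emph{exact} polynomial and $x'$-dependence after inverting $\Omega_1$. The issue is that $\Omega_1^{-1}$ involves dividing by $\det\Omega_1 = \det D_1/\Delta$, and while $\det D_1$ is a nonzero \emph{constant}, one must verify that the cofactor matrix of $\Delta\Omega_1$ combines with $\Delta$-powers so that all denominators cancel and the claimed membership in $M(n,\mathbb{C}[x])$ (resp.\ the $x^1$-linear correction lying in $M(n,\mathbb{C}[x'])$) holds on the nose rather than merely up to lower-order error. I would handle this by treating everything as a polynomial in $x^1$ with coefficients in $\mathbb{C}[x']$, performing the inversion order-by-order in $x^1$: the leading relation $\Delta\Omega_1 \equiv (x^1)^{n-1}D_1 \pmod{(x^1)^{n-2}}$ together with $\Delta \equiv (x^1)^n \pmod{(x^1)^{n-1}}$ yields $\Omega_1 \equiv (x^1)^{-1}D_1 \pmod{\text{const in } x^1}$, hence $\Omega_1^{-1} \equiv x^1 D_1^{-1}$ to leading order, pinning down the correction term; the finer claims then reduce to checking that the next coefficients are genuinely in $\mathbb{C}[x']$, which I expect to follow from the vanishing structure \eqref{BL2-1} and the degree constraints, though verifying no spurious $x^1$-dependence survives will require care.
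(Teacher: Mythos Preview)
Your proposal is correct and follows essentially the same route as the paper: degree counting for \eqref{BL2-1}, reading off the $(x^1)^n$ coefficient of \eqref{EOmega} for \eqref{BL2-2}, triangularity for $\det D_1$ in \eqref{BL2-3}, and invoking Proposition~\ref{AP} for the polynomiality statements in \eqref{BL2-4}--\eqref{BL2-5}. Two small points where the paper's execution is tighter than yours. First, for \eqref{BL2-3} you need one extra input: \eqref{AP2} says $\Delta\det\Omega_1$ is a \emph{constant} (since $\deg\delta=nd_1$), so your leading-coefficient comparison already gives the exact value, not just the top term. Second, your ``main obstacle'' in \eqref{BL2-4}--\eqref{BL2-5} dissolves once you note the weighted-degree bound $\deg(\Omega_1^{-1})^\gamma_{\ \beta}=d_1+d_\gamma-d_\beta<2d_1$: since $\Omega_1^{-1}$ is already known to be polynomial (by \eqref{AP3}), each entry is at most linear in $x^1$, so $\Omega_1^{-1}=Ax^1+B$ with $A,B\in M(n,\mathbb{C}[x'])$, and a single top-coefficient match in $(\Delta\Omega_1)\Omega_1^{-1}=\Delta\cdot I_n$ gives $A=D_1^{-1}$; no Laurent-series bookkeeping is needed. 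The same degree bound handles \eqref{BL2-5}.
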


\begin{proof}
(For \eqref{BL2-1} and \eqref{BL2-2}, 
the assumption (i) is not necessary.)
\\
\eqref{BL2-1}: let us compute the degree of $\Gamma_{\alpha\beta}^{\gamma}\in 
\mathbb{C}[x']$.
$$\mathrm{deg}\,\Gamma_{\alpha\beta}^{\gamma}=
\mathrm{deg}(\Delta \Omega_{\alpha\beta}^{\gamma})-nd_1
=d_{\gamma}-d_{\alpha}-d_{\beta}.
$$
Therefore
$\Gamma_{\alpha\beta}^{\gamma}=0$ if 
$d_{\gamma}-d_{\alpha}-d_{\beta}<0$.
A sufficient condition for this is $d_{\gamma}-d_{\beta}\leq 0$.
When $\alpha=1$,
this holds for any $(\beta,\gamma)$
since $d_1\geq d_{\gamma} $ and $d_{\beta}\geq 1$.
\\
\eqref{BL2-2} By the expansion \eqref{def-Gamma} and \eqref{BL2-1},
\begin{equation}\nonumber
\Delta\sum_{\alpha=1}^{n}\frac{d_{\alpha}}{d_1}\,x^{\alpha}
\Omega_{\alpha}
=\left(D_{1}
+\sum_{\alpha=2}^n\frac{d_{\alpha}}{d_1}\,x^{\alpha}
\Gamma_{\alpha}
\right) (x^1)^n+\cdots
\end{equation}
Here $\cdots$ means terms of smaller degree in $x^1$. On the other hand, by \eqref{EOmega},
$$
\Delta\sum_{\alpha=1}^{n}\frac{d_{\alpha}}{d_1}\,x^{\alpha}\Omega_{\alpha}
=\frac{\Delta}{d_1} \,\mathrm{diag}[1-d_1,\ldots,1-d_n]
=\frac{(x^1)^n}{d_1}\mathrm{diag}[1-d_1,\ldots,1-d_n]
+\cdots
~.
$$
The last equality follows from the assumption (ii).
Comparing the coefficients of $(x^1)^n$,  we obtain \eqref{BL2-2}.
\\
\eqref{BL2-3}: by the expansion \eqref{def-Gamma},
\begin{equation}\nonumber
\Delta\det\Omega_1=\frac{1}{\Delta^{n-1}}(
\det D_1 \cdot (x^1)^{n(n-1)}+\cdots)~.
\end{equation}
On the other hand, since $\mathrm{deg}\,\Delta=nd_1$,
$\Delta \det \Omega_1$ is a constant  by \eqref{AP2}.
Therefore in the RHS, the denominator must divide the 
numerator. Given their degrees, 
$
\Delta\det\Omega_1=\det D_1.
$
By \eqref{BL2-1} \eqref{BL2-2}, 
$D_1$ is triangular with diagonal entries
$\frac{1-d_{\gamma}}{d_1}$ ($1\leq \gamma \leq n$).
Therefore $$
\Delta \det \Omega_1=\det D_1=\prod_{\gamma=1}^n\frac{1-d_{\gamma}}{d_1}\neq 0~.
$$
The last inequality holds since we assumed $d_{\alpha}>1$.
\\
\eqref{BL2-4}:
since $\mathrm{deg}\,\Delta=nd_1$ and 
$\det \Omega_1\neq 0$ by \eqref{BL2-3}, 
$\Omega_1^{-1}\in M(n,\mathbb{C}[x])$ follows from \eqref{AP3}. 
Given that $$\mathrm{deg}\,{(\Omega_1^{-1})^{\gamma}}_{\beta}
=d_1+d_{\gamma}-d_{\beta}<2d_1~,$$
${(\Omega_1^{-1})^{\gamma}}_{\beta}$ is at most degree one 
as a polynomial in $x^1$. So
let us write $\Omega_1^{-1}=Ax^1+B$ ($A,B\in M(n,\mathbb{C}[x'])$). Then by the expansion \eqref{def-Gamma},
$$
\Delta\cdot I_n= (\Delta\Omega_1)\Omega_1^{-1}=
(D_1 (x^1)^{n-1}+\cdots) (Ax^1+B)
=(D_1 A (x^1)^n+\cdots ).
$$
Comparing the coefficients of $(x^1)^n$, $AD_1=I_n$.
\\
\eqref{BL2-5}:
by \eqref{BL2-4} and the expansion \eqref{def-Gamma},
\begin{equation}\nonumber
\begin{split}
\Omega_1^{-1}(\Omega_{\alpha}-\Gamma_{\alpha})&=
\frac{1}{\Delta}
 (x^1 D_1^{-1}+B)
((x^1)^{n-1} (D_{\alpha}-a\Gamma_{\alpha}) +\cdots)
\\
&=\frac{1}{\Delta}((x^1)^n D_1^{-1}(D_{\alpha}-a\Gamma_{\alpha})+\cdots)~.
\end{split}\end{equation}
Here $a\in \mathbb{C}[x']$ is the coefficient of $(x^1)^{n-1}$ in $\Delta$, i.e.,
$$\Delta=(x^1)^n+a(x^1)^{n-1}+\cdots.$$
On the other hand, 
$\Omega_1^{-1}, 
\Omega_1^{-1}\Omega_{\alpha}\in M(n,\mathbb{C}[x])$
by \eqref{BL2-4} and \eqref{AP4},
and $\Gamma_{\alpha}\in M(n,\mathbb{C}[x'])$.
So the LHS is a matrix with polynomial entries.
Therefore in the RHS, the denominator $\Delta$ must divide
the  numerator and its quotient is $D_1^{-1}(D_{\alpha}-a\Gamma_{\alpha})$.
Since $D_1,D_{\alpha}, a, \Gamma_{\alpha}$ do not depend on $x^1$,
$$
\Omega_1^{-1}(\Omega_{\alpha}-\Gamma_{\alpha})
=D_1^{-1}(D_{\alpha}-a\Gamma_{\alpha}) \in M(n,\mathbb{C}[x'])~.$$
The proof of Lemma \ref{BL2}  is finished.
\end{proof}

\begin{corollary}\label{reg}
The pair $(\boldsymbol{\nabla}^V,\partial_{x^1})$ is regular.
\end{corollary}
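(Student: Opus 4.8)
The plan is to recognize that the endomorphism $\mathcal{Q}(x)=\boldsymbol{\nabla}^V_x\,e$ associated to the pair $(\boldsymbol{\nabla}^V,\partial_{x^1})$ is, in the basis of basic invariants, represented by the single connection matrix $\Omega_1$, and then to read off its invertibility from the nonvanishing of $\det\Omega_1$ already established in Lemma \ref{BL2}.

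First I would compute $\mathcal{Q}$ in the basis $\partial_{x^1},\ldots,\partial_{x^n}$. Since $e=\partial_{x^1}$ has constant components $e^\mu=\delta^\mu_1$ with respect to the basic invariants, the contribution from derivatives of the components vanishes, and one obtains $\mathcal{Q}(\partial_\alpha)=\boldsymbol{\nabla}^V_{\partial_\alpha}\partial_{x^1}=\sum_\gamma \Omega^\gamma_{\alpha 1}\,\partial_\gamma$. Invoking the torsion freeness of $\boldsymbol{\nabla}^V$, which gives $\Omega^\gamma_{\alpha 1}=\Omega^\gamma_{1\alpha}=(\Omega_1)^\gamma{}_\alpha$, the matrix representing $\mathcal{Q}$ is precisely $\Omega_1$.

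Next, regularity means exactly that this matrix is invertible at every point of $M_0$, i.e. that $\det\Omega_1$ is nowhere zero on $M_0$. By \eqref{BL2-3} we have $\Delta\det\Omega_1=\det D_1=\prod_{\gamma=1}^n\frac{1-d_\gamma}{d_1}$, which is a nonzero constant because assumption (i) guarantees $d_\gamma>1$ for all $\gamma$. Hence $\det\Omega_1=\det D_1/\Delta$, and since $\Delta$ is nowhere zero on $M_0=M\setminus\{\Delta=0\}$, the determinant $\det\Omega_1$ never vanishes there. Thus $\mathcal{Q}$ is an isomorphism on $M_0$, which is the definition of regularity.

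There is no genuine obstacle here: the substantive work is packaged in Lemma \ref{BL2}, and what remains is the bookkeeping identification $\mathcal{Q}=\Omega_1$ together with the observation that regularity is just pointwise invertibility, immediate once $\det\Omega_1\neq 0$ off the discriminant is known. The only point deserving a moment of care is that the constant components of $e=\partial_{x^1}$ annihilate the $\partial_\alpha e^\gamma$ term, so that $\mathcal{Q}$ collapses to a single connection matrix rather than an Euler-weighted sum.
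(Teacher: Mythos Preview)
Your proof is correct and follows exactly the paper's approach: identify $\mathcal{Q}$ with the matrix $\Omega_1$ and invoke \eqref{BL2-3} to conclude invertibility on $M_0$. Your version simply spells out in more detail the identification step (constant components of $e$, torsion freeness) that the paper compresses into a single sentence.
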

\begin{proof}
The endomorphism $\mathcal{Q}=\boldsymbol{\nabla}^V(\partial_{x^1})\in \mathrm{Hom}(\mathcal{T}_{M_0},\mathcal{T}_{M_0})$ is represented by the matrix $\Omega_1$.
By \eqref{BL2-3}, it is invertible on $M_0$.
Thus $\mathcal{Q}$ is an isomorphism.
\end{proof}

\begin{lemma}\label{BL4}
Assume (i) and (ii).
Let 
\begin{equation}\nonumber 
\begin{split}
&C_{\alpha}=\Omega_1^{-1}(\Omega_{\alpha}-\Gamma_{\alpha})\in
M(n,\mathbb{C}[x'])~,
\quad C_{\alpha\beta}^{\gamma}=
{(C_{\alpha})^{\gamma}}_{\beta}~,
\quad
U=\sum_{\alpha=1}^n \frac{d_{\alpha}}{d_1} x^{\alpha}\,C_{\alpha}
~.
\end{split}
\end{equation}
Then the following holds.
\begin{eqnarray}
&&\label{BL4-0}
C_1=I_n~.
\\
&&\label{BL4-1}
\Gamma_{\alpha\beta}^{\gamma}=\Gamma_{\beta\alpha}^{\gamma}~,\quad
C_{\alpha\beta}^{\gamma}=C_{\beta\alpha}^{\gamma}~.
\\
&&\label{BL4-2}
\partial_{\alpha}\Gamma_{\beta}-
\partial_{\beta}\Gamma_{\alpha}+[\Gamma_{\alpha},\Gamma_{\beta}]=O_n~.
\\
&&\label{BL4-3}
 \partial_{\alpha}C_{\beta}+[\Gamma_{\alpha},C_{\beta}]=
\partial_{\beta}C_{\alpha}+[\Gamma_{\beta},C_{\alpha}]~.
\\
&& \label{BL4-4}
C_{\alpha}C_{\beta}=C_{\beta}C_{\alpha}~.
\\
&&\label{BL4-5}
U=\Omega_1^{-1}D_1~,\quad \det U=\Delta~.
\end{eqnarray}
\end{lemma}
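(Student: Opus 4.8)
The plan is to derive all six identities from the flatness and torsion-freeness of $\boldsymbol{\nabla}^V$ together with the facts already collected in Lemma \ref{BL2}, exploiting the expansion \eqref{def-Gamma}: it exhibits $\Gamma_{\alpha}$ as the behaviour of $\Omega_{\alpha}$ for large $x^1$, namely $\Omega_{\alpha}=\Gamma_{\alpha}+\frac{1}{x^1}F_{\alpha}+O((x^1)^{-2})$ with $F_{\alpha}=D_{\alpha}-a\Gamma_{\alpha}$, while $\Omega_1=\frac{1}{x^1}D_1+O((x^1)^{-2})$ since $\Gamma_1=O_n$; moreover $C_{\alpha}=\Omega_1^{-1}(\Omega_{\alpha}-\Gamma_{\alpha})=D_1^{-1}F_{\alpha}$ is $x^1$-independent by \eqref{BL2-5}. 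I would dispose of the three algebraic identities first. Identity \eqref{BL4-0} is immediate: since $\Gamma_1=O_n$ by \eqref{BL2-1}, $C_1=\Omega_1^{-1}\Omega_1=I_n$. For \eqref{BL4-1}, torsion-freeness gives $\Omega_{\alpha\beta}^{\gamma}=\Omega_{\beta\alpha}^{\gamma}$, so $\Delta\Omega_{\alpha\beta}^{\gamma}$ is symmetric in $\alpha,\beta$; comparing coefficients of $x^1$ in \eqref{def-Gamma} shows $\Gamma_{\alpha\beta}^{\gamma}$, $D_{\alpha\beta}^{\gamma}$, and hence $F_{\alpha\beta}^{\gamma}=D_{\alpha\beta}^{\gamma}-a\Gamma_{\alpha\beta}^{\gamma}$, are symmetric in $\alpha,\beta$. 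Writing $C_{\alpha}=D_1^{-1}F_{\alpha}$ entrywise as $C_{\alpha\beta}^{\gamma}=\sum_{\delta}{(D_1^{-1})^{\gamma}}_{\delta}F_{\alpha\beta}^{\delta}$ then yields $C_{\alpha\beta}^{\gamma}=C_{\beta\alpha}^{\gamma}$. Finally \eqref{BL4-5} is a short computation: by \eqref{EOmega}, $\sum_{\alpha}\frac{d_{\alpha}}{d_1}x^{\alpha}\Omega_{\alpha}=\frac{1}{d_1}\mathrm{diag}[1-d_1,\ldots,1-d_n]$, and by \eqref{BL2-2} this diagonal matrix equals $D_1+\sum_{\alpha\geq 2}\frac{d_{\alpha}}{d_1}x^{\alpha}\Gamma_{\alpha}$; since $\Gamma_1=O_n$, subtracting $\sum_{\alpha}\frac{d_{\alpha}}{d_1}x^{\alpha}\Gamma_{\alpha}$ leaves $D_1$, so $U=\Omega_1^{-1}D_1$, and $\det U=\det D_1/\det\Omega_1=\Delta$ by \eqref{BL2-3}.

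For \eqref{BL4-2} and \eqref{BL4-3} I would expand the flatness identity $\partial_{\alpha}\Omega_{\beta}-\partial_{\beta}\Omega_{\alpha}+[\Omega_{\alpha},\Omega_{\beta}]=O_n$ in powers of $1/x^1$. The leading $(x^1)^0$ order for $\alpha,\beta\geq 2$ is exactly \eqref{BL4-2} (equivalently, $\Gamma_{\alpha}$ is the limit of $\Omega_{\alpha}$ as $x^1\to\infty$ and flatness passes to the limit). The $(x^1)^{-1}$ order for $\alpha,\beta\geq 2$ reads $\partial_{\alpha}F_{\beta}-\partial_{\beta}F_{\alpha}+[\Gamma_{\alpha},F_{\beta}]-[\Gamma_{\beta},F_{\alpha}]=O_n$, while the $(x^1)^{-1}$ order of the same identity with $\beta=1$ gives the covariant constancy $\partial_{\alpha}D_1+[\Gamma_{\alpha},D_1]=O_n$. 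Substituting $F_{\alpha}=D_1C_{\alpha}$ into the first and eliminating $\partial_{\alpha}D_1$ via the second, the factor $D_1$ comes out on the left; being invertible by \eqref{BL2-3}, it yields \eqref{BL4-3}. The cases involving the index $1$ follow from \eqref{BL4-0} and $\Gamma_1=O_n$.

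The main work is \eqref{BL4-4}, and I expect it to be the principal obstacle. Here I would not use the asymptotic expansion but the exact decomposition $\Omega_{\alpha}=\Omega_1C_{\alpha}+\Gamma_{\alpha}$, in which all the $x^1$-dependence sits in $\Omega_1$. Substituting this into the flatness identity for $\alpha,\beta\geq 2$, replacing $\partial_{\alpha}\Omega_1$ by its value from the flatness identity with the index $1$ (which gives $\partial_{\alpha}\Omega_1=(\partial_1\Omega_1)C_{\alpha}-\Omega_1[C_{\alpha},\Omega_1]-[\Gamma_{\alpha},\Omega_1]$), and using \eqref{BL4-2} and \eqref{BL4-3} to cancel the $\Gamma$-linear terms and the $\partial C$-terms, I expect everything to collapse to the single identity
\[
(\partial_1\Omega_1+\Omega_1^2)\,[C_{\alpha},C_{\beta}]=O_n .
\]
The remaining point is that the prefactor is invertible over $\mathbb{C}(x)$. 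Expanding $\Omega_1=\frac{1}{x^1}D_1+O((x^1)^{-2})$ gives $\partial_1\Omega_1+\Omega_1^2=\frac{1}{(x^1)^2}D_1(D_1-I_n)+O((x^1)^{-3})$, and by \eqref{BL2-1} and \eqref{BL2-2} the matrix $D_1(D_1-I_n)$ is triangular with diagonal entries $\frac{1-d_{\gamma}}{d_1}\cdot\frac{1-d_{\gamma}-d_1}{d_1}$, which are nonzero because $d_{\gamma}>1$. Hence $\det(\partial_1\Omega_1+\Omega_1^2)\not\equiv 0$, so $[C_{\alpha},C_{\beta}]=O_n$ over $\mathbb{C}(x)$; as $C_{\alpha}\in M(n,\mathbb{C}[x'])$ this is an identity of polynomial matrices, giving \eqref{BL4-4}, the cases with an index $1$ being trivial since $C_1=I_n$.

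The delicate part is therefore the bookkeeping leading to the displayed identity: one must check that, after the substitutions above, the terms quadratic in $\Omega_1C_{\alpha}$ combine into $\Omega_1^2[C_{\alpha},C_{\beta}]$ and that every contribution containing $\Gamma_{\alpha}$, $\partial_{\alpha}\Gamma_{\beta}$, or $\partial_{\alpha}C_{\beta}$ cancels in pairs by \eqref{BL4-2} and \eqref{BL4-3}; once this cancellation is verified, the invertibility step is a short degree count in $x^1$.
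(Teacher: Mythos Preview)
Your proof is correct and follows essentially the same strategy as the paper: both exploit the exact decomposition $\Omega_{\alpha}=\Omega_1 C_{\alpha}+\Gamma_{\alpha}$ (equivalently, the Laurent expansion at $x^1=\infty$) and separate the flatness equation of $\boldsymbol{\nabla}^V$ according to $x^1$-behaviour. Your prefactor $\partial_1\Omega_1+\Omega_1^2$ is exactly the paper's $\Omega_1(I_n-D_1^{-1})\Omega_1$, since $\partial_1(\Omega_1^{-1})=D_1^{-1}$ by \eqref{BL2-4} forces $\partial_1\Omega_1=-\Omega_1 D_1^{-1}\Omega_1$.

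The only organizational difference is that the paper derives \eqref{BL4-2}, \eqref{BL4-3}, \eqref{BL4-4} simultaneously: substituting the decomposition into flatness yields $S_1+S_2+S_3=O_n$ with $S_1$ constant in $x^1$, $S_2$ linear in $\Omega_1$, and $S_3$ quadratic in $\Omega_1$; a single degree count in $x^1$ then forces each $S_i=O_n$. You instead proceed sequentially, first reading off \eqref{BL4-2} and \eqref{BL4-3} from the leading two Laurent coefficients (routing \eqref{BL4-3} through the auxiliary identity $\partial_{\alpha}D_1+[\Gamma_{\alpha},D_1]=O_n$, which the paper does not isolate) and then feeding these back into the exact identity to collapse it to the commutator statement. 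Both are equally valid; your version is perhaps more transparent about which input each identity consumes, while the paper's is more economical.
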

\begin{proof}
\eqref{BL4-0} holds since $\Gamma_1=O_n$.\\
\eqref{BL4-1} follows from the torsion freeness of $\boldsymbol{\nabla}^V$,
i.e.
$\Omega_{\alpha\beta}^{\gamma}=\Omega_{\beta\alpha}^{\gamma}$.
\\
\eqref{BL4-2} \eqref{BL4-3} \eqref{BL4-4} follow from the flatness of $\boldsymbol{\nabla}^V$
as follows.
Substituting $\Omega_{\alpha}=\Omega_1C_{\alpha}+\Gamma_{\alpha}$ into
\begin{equation}\nonumber
\partial_{\alpha}\Omega_{\beta}-\partial_{\beta}\Omega_{\alpha}+[\Omega_{\alpha},\Omega_{\beta}]=O_n \quad (1\leq \alpha,\beta\leq n)~,
\end{equation}
we obtain
\begin{equation}\nonumber
\begin{split}
O_n&=
\underbrace{\partial_{\alpha}\Gamma_{\beta}-\partial_{\beta}\Gamma_{\alpha}
+[\Gamma_{\alpha},\Gamma_{\beta}]}_{S_1}
\\
&+\underbrace{\Omega_1\big(\partial_{\alpha}C_{\beta}+[\Gamma_{\alpha},C_{\beta}]-\partial_{\beta}C_{\alpha}-[\Gamma_{\beta},C_{\alpha} ]\big)}_{S_2}
\\
&+\underbrace{(\partial_{\alpha}\Omega_1+[\Gamma_{\alpha},\Omega_1])C_{\beta}
-(\partial_{\beta}\Omega_1 +[\Gamma_{\beta},\Omega_1])C_{\alpha}
+[\Omega_1C_{\alpha},\Omega_1 C_{\beta}]}_{S_3}~.
\end{split}
\end{equation}
Let us simplify $S_3$.
Using $\partial_{\alpha}\Omega_1=\partial_1\Omega_{\alpha}-[\Omega_{\alpha},\Omega_1]$,
\begin{equation}\begin{split}\nonumber
S_3&=
(\partial_1 \Omega_{\alpha}-[\Omega_1 C_{\alpha},\Omega_1])C_{\beta}
-(\partial_1 \Omega_{\beta}-[\Omega_1 C_{\beta},\Omega_1])C_{\alpha}+[\Omega_1 C_{\alpha},\Omega_1C_{\beta}]
\\
&=\partial_1\Omega_{\alpha}C_{\beta}-\partial_1\Omega_{\beta}C_{\alpha}+\Omega_1^2[C_{\alpha},C_{\beta}]~.
\end{split}
\end{equation}
Noticing that
$$
O_n=\partial_1 C_{\alpha}=D_1^{-1}(\Omega_{\alpha}-\Gamma_{\alpha})
+\Omega_1^{-1}\partial_1\Omega_{\alpha}
~~\Rightarrow~~
\partial_1\Omega_{\alpha}=-\Omega_1 D_1^{-1}\Omega_1 C_{\alpha}~,
$$
we have
$$
S_3=\Omega_1(I_n-D_1^{-1})\Omega_1 [C_{\alpha},C_{\beta}]~.
$$
Now regard $S_1,S_2,S_3$ as 
functions in $x^1$.
Then $S_1$ is a  constant matrix.
$S_2$ and $S_3$ 
depend on $x^1$ only through the factor $\Omega_1$
and $\Omega_1(I_n-D_1^{-1})\Omega_1$.
Since entries of $\Omega_1$ 
are rational functions 
whose numerator and denominator have degrees $n-1$ and $n$,
entries of $S_2+S_3$ 
are rational functions 
whose numerator and denominator have degrees $2n-1$ and $2n$.
Therefore 
$O_n=S_1+S_2+S_3$ implies that
$S_1=O_n$ and $S_2+S_3=O_n$.
Applying a 
similar argument to $S_2+S_3=O_n$, we obtain 
$S_2=O_n$ and $S_3=O_n$.
\\
\eqref{BL4-5}: by \eqref{EOmega} and \eqref{BL2-2},
\begin{equation}\nonumber
U=
\sum_{\alpha=1}^n \frac{d_{\alpha}}{d_1}x^{\alpha}C_{\alpha}
=
\Omega_1^{-1}\sum_{\alpha=1}^n  
\frac{d_{\alpha}}{d_1}x^{\alpha}
(\Omega_{\alpha}-\Gamma_{\alpha})=
\Omega_1^{-1}D_1~.
\end{equation}
Therefore by \eqref{BL2-3},
$$
\det U=\frac{\det D_1}{\det \Omega_1}=\Delta~.
$$
\end{proof}

\begin{theorem} \label{main-theorem}
Assume that a finite complex reflection group $G$ satisfies the conditions 
(i) and (ii) in \S \ref{sec:assumptions}.
Let $\nabla$  and $\ast$
be a connection and a multiplication on 
the tangent bundle $TM$ of the orbit space $M$  of $G$ given by
\begin{equation}\nonumber
\begin{split}
\nabla_{\alpha}(\partial_{\beta})=\sum_{\gamma=1}^n\Gamma_{\alpha\beta}^{\gamma}\cdot \partial_{\gamma}~,
\quad 
\partial_{\alpha}\ast\partial_{\beta}=\sum_{\gamma=1}^n
C_{\alpha\beta}^{\gamma}\cdot\partial_{\gamma}~.
\end{split}
\end{equation}
Then\\
(1) $\nabla$ is  torsion-free and flat.
\\
(2) $\ast$ is commutative and associative
and its unit is $e=\partial_{x^1}$.
\\
(3) $(\nabla,\ast,\frac{1}{d_1}E_\mathrm{deg})$ is a polynomial 
Saito structure defined on the whole orbit space $M$.
\\
(4) Its dual almost Saito structure with the parameter $r=\frac{1}{d_1}$
is the regular natural almost Saito structure $(\boldsymbol{\nabla}^V,\partial_{x^1})$ for $G$.
\end{theorem}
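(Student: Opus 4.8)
The plan is to read off parts (1) and (2) directly from Lemma \ref{BL4}, to establish (3) by checking the four Saito axioms together with polynomiality, and to deduce (4) from a short matrix computation. Parts (1) and (2) are immediate: $\nabla$ is torsion-free and flat by \eqref{BL4-1} and \eqref{BL4-2}, while $\ast$ is commutative by \eqref{BL4-1}, associative by \eqref{BL4-4}, and has unit $\partial_{x^1}$ by \eqref{BL4-0}, since $e=\partial_{x^1}$ gives $\sum_\alpha e^\alpha C_\alpha = C_1 = I_n$.

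For the Saito axioms in (3) I would work with the matrix forms of Section \ref{matrix-rep}, taking $E=\frac{1}{d_1}E_{\mathrm{deg}}$, so that $E^\mu=\frac{d_\mu}{d_1}x^\mu$ and $W=\frac{1}{d_1}\mathrm{diag}[d_1,\dots,d_n]$. Then $({\bf SS1})$ is literally \eqref{BL4-3}, and $({\bf SS3})$, i.e. \eqref{ss3-2}, holds because $e=\partial_{x^1}$ has constant components, giving $Q=O_n$, while $e\cdot\Gamma=\Gamma_1=O_n$ by \eqref{BL2-1}. The remaining axioms $({\bf SS2})$ and $({\bf SS4})$ follow from weighted-homogeneity: $\Gamma_{\alpha\beta}^\gamma$ and $C_{\alpha\beta}^\gamma$ are weighted homogeneous of degrees $d_\gamma-d_\alpha-d_\beta$ and $d_1+d_\gamma-d_\alpha-d_\beta$ respectively (the latter because $C_\alpha=\Omega_1^{-1}(\Omega_\alpha-\Gamma_\alpha)$ and $\Omega_1^{-1}$ has entries of degree $d_1+d_\gamma-d_\beta$), so $E=\frac{1}{d_1}E_{\mathrm{deg}}$ multiplies them by $\frac{1}{d_1}$ times their degrees. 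Substituting this into \eqref{ss2-3} makes that equation an identity, and substituting into \eqref{ss4-2} — using the flatness \eqref{BL4-2} in the form $[\Gamma_\alpha,\Gamma_\mu]=\partial_\mu\Gamma_\alpha-\partial_\alpha\Gamma_\mu$ to handle $[\Gamma_\alpha,E\cdot\Gamma]$ — makes every term cancel.

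The polynomiality clause of (3) is where the real work lies. The structure constants $\Gamma_{\alpha\beta}^\gamma,C_{\alpha\beta}^\gamma$ already lie in $\mathbb{C}[x']\subset\mathbb{C}[x]$, so it remains to exhibit $\nabla$-flat coordinates forming a set of basic invariants. The key is the triangularity \eqref{BL2-1}: since $\Gamma_{\alpha\beta}^\gamma=0$ unless $d_\gamma>d_\alpha$ and $d_\gamma>d_\beta$, one already has $\nabla\,dx^1=0$, so $t^1=x^1$ is flat. In general I would construct, by descending induction on the degree, a flat one-form $dt^\gamma$ with $t^\gamma$ weighted homogeneous of degree $d_\gamma$: the flat-coordinate equation $\partial_\alpha\partial_\beta t^\gamma=\sum_\delta\Gamma_{\alpha\beta}^\delta\,\partial_\delta t^\gamma$ determines the component $\partial_\beta t^\gamma$ (of degree $d_\gamma-d_\beta$) from strictly higher-degree components, flatness \eqref{BL4-2} ensures integrability, and homogeneity fixes the polynomial primitive. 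The resulting $t^1,\dots,t^n$ have triangular Jacobian with nonzero diagonal, hence are functionally independent and form a set of basic invariants; in these coordinates $\ast$ still has polynomial structure constants, because $t$ and $x$ are two basic-invariant sets polynomially related in both directions. I expect this flat-coordinate construction to be the main obstacle, as it is the step that converts the purely algebraic identities of Lemmas \ref{BL2} and \ref{BL4} into genuine global coordinates and uses both \eqref{BL2-1} and weighted-homogeneity essentially.

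Finally, for (4) I would compute the dual almost Saito structure with parameter $r=\frac{1}{d_1}$ from \eqref{newmultiplication1} and \eqref{newconnection1} with $\lambda=0$. Its vector field is $e=\partial_{x^1}$, and since $E\cdot C=U$ its multiplication has matrices $B_\alpha(0)=C_\alpha U^{-1}$ and its connection has matrices $\Gamma_\alpha+(rI_n-W-E\cdot\Gamma)\,C_\alpha U^{-1}$. Now $rI_n-W-E\cdot\Gamma=D_1$ by \eqref{BL2-2}, $D_1=\Omega_1 U$ by \eqref{BL4-5}, and $U$ commutes with $C_\alpha$ by \eqref{BL4-4}; hence the connection matrix collapses to $\Gamma_\alpha+\Omega_1 U C_\alpha U^{-1}=\Gamma_\alpha+\Omega_1 C_\alpha=\Omega_\alpha$, so the connection is exactly $\boldsymbol{\nabla}^V$. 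Since $(\boldsymbol{\nabla}^V,\partial_{x^1})$ is regular by Corollary \ref{reg}, the multiplication of any almost Saito structure with this connection and this vector field is the unique one given by \eqref{mult}; therefore the dual almost Saito structure is precisely the regular natural almost Saito structure $(\boldsymbol{\nabla}^V,\partial_{x^1})$ for $G$, with unit $\frac{1}{d_1}E_{\mathrm{deg}}$ and parameter $r=\frac{1}{d_1}$, which also matches Theorem \ref{thm:duality}(i).
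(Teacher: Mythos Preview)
Your proposal is correct and follows essentially the same route as the paper: parts (1)--(2) are read off Lemma~\ref{BL4}, the Saito axioms in (3) are verified via the matrix forms \eqref{ss1-2}--\eqref{ss4-3} using weighted homogeneity of $\Gamma_{\alpha\beta}^\gamma$ and $C_{\alpha\beta}^\gamma$, flat basic-invariant coordinates are produced from the strict (block) upper-triangularity \eqref{BL2-1} of the $\Gamma_\alpha$, and (4) is the same short computation reducing the dual connection matrix to $\Omega_\alpha$ via \eqref{BL2-2} and \eqref{BL4-5}. One small slip: your motivating claim ``$\nabla\,dx^1=0$'' does not follow from \eqref{BL2-1} alone (for $\alpha,\beta$ with $d_\alpha,d_\beta<d_1$ and $d_1-d_\alpha-d_\beta\geq 0$ the entry $\Gamma^1_{\alpha\beta}$ need not vanish), but this does not affect your inductive construction, which---like the paper's gauge-matrix argument---only uses that nonzero $\Gamma_{\alpha\beta}^\gamma$ require $d_\gamma>d_\beta$.
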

\begin{proof}
We check the conditions by using the matrix representations 
in \S \ref{matrix-rep}.

(1) follows from  \eqref{BL4-1} \eqref{BL4-2}.

(2) follows from  \eqref{BL4-0} \eqref{BL4-1}  and \eqref{BL4-4}.

(3) Let us check the conditions \eqref{ss1-2},
\eqref{ss3-2},\eqref{ss2-3},\eqref{ss4-3}.
The condition \eqref{ss1-2} is equivalent to \eqref{BL4-3}.
The condition \eqref{ss3-2} holds 
because $Q=\Gamma_1=O_n$.  
\\
The condition \eqref{ss2-3} : computing the LHS 
by the degree formula \eqref{degree}, 
\begin{equation}\nonumber
\text{LHS}=\frac{1}{d_1}E_{\mathrm{deg}}(C_{\alpha\beta}^{\gamma})
=\frac{1}{d_1}(\mathrm{deg} \,C_{\alpha\beta}^{\gamma})C_{\alpha\beta}^{\gamma}
=\frac{d_1+d_{\gamma}-d_{\alpha}-d_{\beta}}{d_1}
C_{\alpha\beta}^{\gamma}~.
\end{equation}
On the other hand, since ${W^{\gamma}}_{\beta}=
\frac{d_{\beta}}{d_1}{\delta_{\gamma}}_{\beta}$,
\begin{equation}\nonumber
\text{RHS}=-\frac{d_{\alpha}}{d_1}C_{\alpha\beta}^{\gamma}
+\frac{d_{\gamma}-d_{\beta}}{d_1}C_{\alpha\beta}^{\gamma}
+C_{\alpha\beta}^{\gamma}=\text{LHS}~.
\end{equation}
The calculation of the condition \eqref{ss4-3} is similar:
\begin{equation}\nonumber
\text{LHS}=\frac{1}{d_1}E_{\mathrm{deg}}(\Gamma_{\alpha\beta}^{\gamma})=\frac{1}{d_1}\mathrm{deg}\, \Gamma_{\alpha\beta}^{\gamma}=\frac{d_{\gamma}-d_{\alpha}-d_{\beta}}{d_1}
\Gamma_{\alpha\beta}^{\gamma}~.
\end{equation}
On the other hand, 
\begin{equation}\nonumber
\text{RHS}=-\frac{d_{\alpha}}{d_1}\Gamma_{\alpha\beta}^{\gamma}
+\frac{d_{\gamma}-d_{\beta}}{d_1}\Gamma_{\alpha\beta}^{\gamma}
=\text{LHS}~.
\end{equation}
Next we show that there exist $\nabla$-flat coordinates 
$t^1,\ldots,t^n$ which are basic invariants.
Recall that the matrices $\Gamma_{\mu} $ ($1\leq \mu\leq n$)
are strictly upper triangular (see \eqref{BL2-1}).
Therefore there exists a unique upper triangular matrix 
$X\in M(n,\mathbb{C}[x])$
satisfying
\begin{equation}\nonumber
{X^{\gamma}}_{\gamma}=1~,\quad
\mathrm{deg}\,{X^{\gamma}}_{\beta}=d_{\gamma}-d_{\beta}~,\quad
\frac{\partial}{\partial x^{\mu}}  X+\Gamma_{\mu}X=O_n \quad (1\leq \mu\leq n)~.
\end{equation}
(One can solve the system of differential equations starting from 
$(\gamma,\gamma+1)$ entries and then 
moving to $(\gamma,\gamma+2)$ entries, and so on. 
The integrability of the equations follows from 
the flatness of $\nabla$.)  
It is clear that $X$ is invertible and that 
$X^{-1}\in M(n,\mathbb{C}[x])$.
Then unique homogeneous solutions 
$t^1,\ldots,t^n$ of 
the equations
$$
dt^{\alpha}=\sum_{\beta=1}^n{(X^{-1})^{\alpha}}_{\beta}\,dx^{\beta}
\quad (1\leq\alpha\leq n)
$$
are $\nabla$-flat coordinates.
They are of the form 
$t^{\alpha}=x^{\alpha}+\text{(a polynomial in $x^{\beta}$ ($\beta>\alpha$}))$
and have degrees $\mathrm{deg}\,t^{\alpha}=d_{\alpha}$.
Thus  they are basic invariants.
The matrix representations of $\partial_{t^{\alpha}}\ast$ 
with respect to the basis $\partial_{t^1},\ldots,\partial_{t^n}$ are 
given by $X^{-1}(\sum_{\mu=1}^{n} {X^{\mu}}_{\alpha}C_{\mu})X$.
It is clear that their entries are polynomials in $x$.
\\
(4)  
The  matrix representation of $\frac{1}{d_1}E_{\mathrm{deg}}\ast$ 
is given by $U$ in Lemma \ref{BL4}. 
Therefore the dual almost Saito structure 
$(\boldsymbol{\nabla}^{(0,\frac{1}{d_1})},\star,\partial_1)$ with the parameter 
$\frac{1}{d_1}$
is defined on the subset 
where $U$ has rank $n$, that is,
on $M_0=M\setminus \{\Delta=0\}$
by \eqref{BL4-5}.
By \eqref{BL4-5},
the matrix representation $B_{\alpha}$
of the multiplication $\partial_{\alpha}\star$ is given by
$$
B_{\alpha}=U^{-1}C_{\alpha}=U^{-1}\Omega_1^{-1}(\Omega_{\alpha}-\Gamma_{\alpha})=D_1^{-1}(\Omega_{\alpha}-\Gamma_{\alpha})~.
$$
Therefore
by \eqref{newconnection1}, 
the matrix representation of the dual connection
$\boldsymbol{\nabla}^{(0,\frac{1}{d_1})}_{\alpha}$ ($1\leq \alpha\leq n$) is given by
\begin{equation}\nonumber
\begin{split}
\Gamma_{\alpha}+ \frac{1}{d_1}(I_n-
\mathrm{diag}[d_1,\ldots,d_n]-
 E_{\mathrm{deg}}\cdot\Gamma)B_{\alpha}
\stackrel{\eqref{BL2-2}}{=}
\Gamma_{\alpha}+D_1 B_{\alpha}
=\Omega_{\alpha}~.
\end{split}
\end{equation}
Thus $\boldsymbol{\nabla}^{(0,\frac{1}{d_1})}=\boldsymbol{\nabla}^V$.
Regularity of the pair
$(\boldsymbol{\nabla}^V,\partial_{x^1})$ 
is already shown in Lemma \ref{reg}.
\end{proof}

Theorem \ref{main-theorem} can be restated in the following way.
\begin{corollary}\label{main-corollary}
If a finite complex reflection group $G$ satisfies
the assumptions (i) and (ii), 
then
$(\boldsymbol{\nabla}^V,\partial_{x^1})$
makes a regular natural almost Saito structure for $G$.
Its dual Saito structure can be extended to 
a polynomial Saito structure  on the whole orbit
space $M$.

Moreover if $d_1 >d_2$, then $(\boldsymbol{\nabla}^V,\partial_{x^1})$ is a unique natural almost Saito structure for $G$
up to equivalence.
\end{corollary}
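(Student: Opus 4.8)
The first two assertions are immediate from Theorem \ref{main-theorem} (the pair $(\boldsymbol{\nabla}^V,\partial_{x^1})$ is the regular natural almost Saito structure exhibited there, and its dual is the polynomial Saito structure on $M$), so the only new content is the uniqueness claim under the extra hypothesis $d_1>d_2$. The plan is to show that this hypothesis forces the candidate vector field $e$ to be a scalar multiple of $\partial_{x^1}$, and then to observe that the regularity machinery of \S\ref{subsec:regularity} pins down the multiplication completely, independently of that scalar.

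First I would invoke the description of $\mathcal{X}_{-d_1}$ recalled just before Definition \ref{def:natural}: it is spanned by those $\partial_{x^\alpha}$ with $d_\alpha=d_1$. Since the degrees satisfy $d_1>d_2\geq\cdots\geq d_n$, the only index with $d_\alpha=d_1$ is $\alpha=1$, so $\mathcal{X}_{-d_1}=\mathbb{C}\,\partial_{x^1}$ is one-dimensional. Hence the unit vector field $e$ of any natural almost Saito structure for $G$ must have the form $e=c\,\partial_{x^1}$ with $c\in\mathbb{C}^*$.

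Next I would promote this to a statement about the entire structure. Writing $\mathcal{Q}_e(x)=\boldsymbol{\nabla}^V_x e$, the fact that $c$ is a constant gives $\mathcal{Q}_e=c\,\mathcal{Q}_{\partial_{x^1}}$; since $(\boldsymbol{\nabla}^V,\partial_{x^1})$ is regular by Corollary \ref{reg}, so is $(\boldsymbol{\nabla}^V,e)$, because scaling by $c\neq 0$ preserves the invertibility of $\mathcal{Q}$. (In particular every natural almost Saito structure here is automatically regular, even though regularity is not part of Definition \ref{def:natural}.) Regularity makes the uniqueness lemma for the multiplication of \S\ref{subsec:regularity} applicable: the only multiplication compatible with $(\boldsymbol{\nabla}^V,e)$ through ({\bf ASS4}) is the one given by \eqref{mult}, namely $x\star y=-\mathcal{Q}_e^{-1}(\boldsymbol{\nabla}^V_x\boldsymbol{\nabla}^V_y e)+\boldsymbol{\nabla}^V_x y$. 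Substituting $e=c\,\partial_{x^1}$ together with $\mathcal{Q}_e^{-1}=c^{-1}\mathcal{Q}_{\partial_{x^1}}^{-1}$, the factor $c$ cancels, and this expression is identical to the multiplication of $(\boldsymbol{\nabla}^V,\partial_{x^1})$ constructed in Theorem \ref{main-theorem}.

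Consequently every natural almost Saito structure for $G$ carries the same connection $\boldsymbol{\nabla}^V$ and the same multiplication $\star$, and differs from $(\boldsymbol{\nabla}^V,\star,\partial_{x^1})$ only in that its unit is $c\,\partial_{x^1}$; this is precisely the equivalence relation of Definition \ref{def:equivalence-ASS}, so all such structures are equivalent to $(\boldsymbol{\nabla}^V,\partial_{x^1})$. I expect the only delicate point to be the clean cancellation of the scalar $c$ in formula \eqref{mult}, which simultaneously forces $\star$ to be independent of the choice of $e$ and confines the residual ambiguity in $e$ to exactly the scaling allowed by the equivalence relation; the remaining steps are the degree count for $\mathcal{X}_{-d_1}$ and the appeal to Corollary \ref{reg}.
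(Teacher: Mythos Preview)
Your proof is correct and follows the same line as the paper, which dispatches the uniqueness in a single sentence by noting that $\dim_{\mathbb{C}}\mathcal{X}_{-d_1}=1$ when $d_1>d_2$. You have spelled out what the paper leaves implicit: that regularity of $(\boldsymbol{\nabla}^V,c\,\partial_{x^1})$ then forces the multiplication to be given by \eqref{mult}, where the scalar $c$ cancels, so that only the scaling of $e$ survives---precisely the equivalence of Definition~\ref{def:equivalence-ASS}. (One terminological nit: in your last paragraph the vector $c\,\partial_{x^1}$ is the distinguished field $e$, not the ``unit'' of $\star$; the unit is $\frac{1}{d_1}E_{\mathrm{deg}}$ by Definition~\ref{def:natural}.)
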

\begin{proof}
The uniqueness follows since
$\dim_{\mathbb{C}}\mathcal{X}_{-d_1}=1$
if $d_1>d_2$.
\end{proof}

\subsection{Basic derivations and relationship to 
Kato--Mano--Sekiguchi's work \cite{KatoManoSekiguchi2015}}\label{sec:IP}
First let us recall the definitions of the basic derivations, the codegrees and the discriminant matrix
(see \cite[Chapter 6]{OrlikTerao}).
For a finite complex reflection group $G$,
the set of $G$-invariant polynomial vector fields on $V$ is 
a free $\mathbb{C}[V]^G$-module of rank $n=\dim V$ 
(see Lemma 6.48 of {\it loc.cit}).
A homogeneous basis of this module is called a set of basic derivations for $G$.
The degrees of basic derivations are called 
the codegrees of $G$ and denoted 
$$0=d_1^*\leq d_2^*\leq \ldots \leq d_n^*~.$$
\begin{definition}
The discriminant matrix of $G$ with respect to 
a set of basic invariants $x^1,\ldots, x^n$ and 
a set of basic derivations $\tilde{X}_1,\ldots, \tilde{X}_n$
is the matrix $M=({M^{\alpha}}_{\beta})$
 (${M^{\alpha}}_{\beta}\in \mathbb{C}[x]$) defined by
$$
{M^{\alpha}}_{\beta}=dx^{\alpha}(X_{\beta})=X_{\beta}(x^{\alpha})~.
$$
Here $X_{\beta}=\omega_* \tilde{X}_{\beta}$ is a polynomial vector field
on $M$ corresponding to $\tilde{X}_{\beta}$
and $\omega:V\to M$ is the orbit map.
\end{definition}

Now assume that $G$ is  a finite complex reflection group satisfying 
the assumptions (i) and (ii) in \S \ref{sec:assumptions}.
Take a set of basic invariants $x^1,\ldots, x^n$ satisfying the assumption (i).
Let $(\nabla,\ast,\frac{1}{d_1}E_{\mathrm{deg}})$ be the natural 
Saito structure on $M$ with the unit $\partial_{x^1}$,
given in Theorem \ref{main-theorem}.
Define vector fields $X_1,\ldots, X_n$ on $M$ by
\begin{equation}\label{basic-derivations}
X_{\alpha}=\frac{1}{d_1}E_{\mathrm{deg}}
\ast \partial_{x^{\alpha}}~.
\end{equation}
Notice that $\mathrm{deg}\,X_{\alpha}=d_1-d_{\alpha}$
and notice also that $X_1=\frac{1}{d_1}E_{\mathrm{deg}}$.
In this setting, the following proposition holds.
\begin{proposition}\label{prop:IP}
(1) For each $1\leq \alpha\leq n$, 
there exists  unique homogeneous $G$-invariant
polynomial vector field $\tilde{X}_{\alpha}$ of degree 
$d_1-d_{\alpha}$
on $V$  such that 
$\omega_* \tilde{X}_{\alpha}=X_{\alpha}$.
Here 
$\omega:V\to M$ is the orbit map.
\\
(2) Moreover if $d_{\alpha}+d_{\alpha}^*=d_1$ $(1\leq \alpha\leq n)$ holds,
$\tilde{X}_1,\ldots,\tilde{X}_n$ are basic derivations for $G$
and the matrix $U$ defined in Lemma \ref{BL4} is the discriminant matrix
with respect to $x^1,\ldots, x^n$ and $\tilde{X}_1,\ldots, \tilde{X}_n$.
\end{proposition}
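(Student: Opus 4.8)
The plan is to realize each $X_\alpha$ explicitly in the $x$-coordinates and then to lift it through the Jacobian $J$ of the orbit map $\omega$. First I would record, using \eqref{basic-derivations} together with the fact established in the proof of Theorem \ref{main-theorem} that the matrix representation of $\frac1{d_1}E_{\mathrm{deg}}\ast$ is the matrix $U$ of Lemma \ref{BL4}, that
\[
X_\alpha=\sum_{\gamma=1}^n {U^\gamma}_\alpha\,\partial_{x^\gamma}~,
\]
so that $X_\alpha$ is a polynomial vector field on $M$ of degree $d_1-d_\alpha$. A $G$-invariant lift $\tilde X_\alpha=\sum_i a^i_\alpha\,\partial_{u^i}$ must satisfy $\tilde X_\alpha(x^\gamma)={U^\gamma}_\alpha$, i.e. $\sum_i {J^\gamma}_i\,a^i_\alpha={U^\gamma}_\alpha$; since $J$ is invertible on the regular locus this simultaneously proves uniqueness and forces $a^i_\alpha=(J^{-1}U)^i{}_\alpha$.

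The crux of part (1) is that this lift is polynomial in $u$. Here I would use the factorization $U=\Omega_1^{-1}D_1$ from \eqref{BL4-5} to write $J^{-1}U=(J^{-1}\Omega_1^{-1})\,D_1$, and then invoke Lemma \ref{IP} with $\mu=1$ (legitimate because $\det\Omega_1\neq0$ by \eqref{BL2-3}), which gives $J^{-1}\Omega_1^{-1}\in M(n,\mathbb{C}[u])$; since $D_1\in M(n,\mathbb{C}[x'])\subseteq M(n,\mathbb{C}[u])$, the product lies in $M(n,\mathbb{C}[u])$, so each $a^i_\alpha\in\mathbb{C}[u]$. The $G$-invariance of $\tilde X_\alpha$ then follows formally from the $G$-invariance of its defining relations: for $g\in G$ both $x^\gamma$ and ${U^\gamma}_\alpha\in\mathbb{C}[x]$ are fixed, so $g\cdot\tilde X_\alpha$ solves the same system $\tilde X_\alpha(x^\gamma)={U^\gamma}_\alpha$ and hence equals $\tilde X_\alpha$ by the uniqueness just established. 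This proves (1).

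For part (2), I would first observe that the discriminant matrix is literally $U$: by definition its $(\alpha,\beta)$ entry is $X_\beta(x^\alpha)={U^\alpha}_\beta$. It therefore remains to show that $\tilde X_1,\dots,\tilde X_n$ is a set of basic derivations. Under the hypothesis $d_\alpha+d_\alpha^*=d_1$, the degree $d_1-d_\alpha$ of $\tilde X_\alpha$ equals the codegree $d_\alpha^*$, so the degrees of the $\tilde X_\alpha$ are exactly the codegrees. Recalling that the $G$-invariant polynomial vector fields form a free graded $\mathbb{C}[V]^G$-module of rank $n$ whose basis degrees are the codegrees, I would then invoke the standard principle that, in such a module, a homogeneous family whose degrees match the basis degrees is a basis as soon as it is linearly independent over $\mathbb{C}[V]^G$ (expanding in an actual basis, the transition matrix is homogeneous of total degree $0$ and nonsingular, hence a constant invertible matrix). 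Linear independence I would read off from the determinant: writing $A=(a^i_\alpha)=J^{-1}U$, one has $\det A=\det U/\det J=\Delta/\det J$, which is nonzero since $\det U=\Delta\neq0$ by \eqref{BL4-5} and $\det J=\mathrm{const}\cdot\Pi\neq0$; independence over $\mathbb{C}(u)$ a fortiori gives independence over $\mathbb{C}[V]^G$. This identifies the $\tilde X_\alpha$ as basic derivations and $U$ as the associated discriminant matrix.

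The coordinate bookkeeping and the degree counts are routine; the main obstacle is the polynomiality of the lift in (1). A priori $J^{-1}$ has poles along every reflection hyperplane $L_H=0$, and it is only the special structure of $U$, inherited from $\Omega_1$ via $U=\Omega_1^{-1}D_1$, that cancels them --- this is precisely the content of Lemma \ref{IP}. In (2) the one point requiring care is the basis criterion: I would state explicitly that matching degrees together with linear independence forces a basis in a free graded module, rather than relying on a bare rank count, since the latter alone would not rule out the family generating only a proper submodule.
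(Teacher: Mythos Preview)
Your proof is correct and follows essentially the same route as the paper: express $X_\alpha$ via the matrix $U$, lift through $J^{-1}$, and use the factorization $U=\Omega_1^{-1}D_1$ together with Lemma~\ref{IP} to obtain polynomiality; in part~(2), match degrees with codegrees and use independence (the paper records this at the end of~(1) as ``$\tilde X_\alpha$ are independent on the complement of reflection hyperplanes since $U$ is invertible on $M_0$''). Your write-up is in fact more careful than the paper's on two points the paper leaves implicit --- the $G$-invariance of the lift (which you deduce cleanly from uniqueness) and the reason why a homogeneous family with the correct degrees and linear independence must be a basis of the free graded module --- so nothing needs to be added.
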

\begin{proof} 
(1) Since the matrix representation of $\frac{1}{d_1}E_{\mathrm{deg}}\ast$ is given by $U$,
\begin{equation}\label{discriminant-matrix}
X_{\alpha}=
\sum_{\beta=1}^n{U^{\beta}}_{\alpha}\,\partial_{x^{\beta}}
\stackrel{\eqref{BL4-5}}{=}
\sum_{\beta=1}^n{(\Omega_1^{-1}D_1)^{\beta}}_{\alpha}\partial_{x^{\beta}}~.
\end{equation}
Therefore its (local) lift to $V$ is 
$$
\sum_{\beta,i=1}^n{(\Omega_1^{-1}D_1)^{\beta}}_{\alpha}
\frac{\partial u^i}{\partial x^{\beta}}
\partial_{u^i}
=
\sum_{i=1}^n{(J^{-1}\Omega_1^{-1}D_1)^i}_{\alpha} \partial_{u^i}~.
$$
By Lemma \ref{IP}, entries of 
$J^{-1}\Omega_1^{-1}$ are polynomials in $u$.
Entries of $D_1$ are polynomials in $x$ by definition, and hence 
they are polynomials in $u$.
Therefore this is a polynomial vector field globally defined on $V$.
This is the $\tilde{X}_{\alpha}$ in the proposition.
Notice that $\tilde{X}_{\alpha}$ are 
independent on the complement of 
reflection hyperplanes of $V$
since the matrix $U$ is invertible on $M_0$.
\\
(2) If $d_{\alpha}+d_{\alpha}^*=d_1$ holds, 
$\mathrm{deg}\,\tilde{X}_{\alpha}=d_{\alpha}^*$.
Therefore $\tilde{X}_1,\ldots, \tilde{X}_n$ are
a set of basic derivations for $G$. 
By \eqref{discriminant-matrix}, 
it is immediate to see that 
entries of 
the discriminant matrix with respect to $x^1,\ldots, x^n$
and $X_1,\ldots, X_n$ are given by
 $$
 dx^{\alpha}(X_{\beta})={U^{\alpha}}_{\beta}~.
 $$
Therefore the discriminant matrix is $U$.  
\end{proof}

Now we will explain the relationship to 
Kato--Mano--Sekiguchi's result \cite[\S 6]{KatoManoSekiguchi2015}.

In the rest of this subsection, 
we assume that  $G$ is a duality group 
(see \S \ref{example-well-generated}  and 
\cite[\S 12.6]{LehrerTaylor}
for the definition).
Notice that Proposition \ref{prop:IP} holds for $G$
since any duality group satisfies the assumptions (i) and (ii)
and satisfies also the requirement in Proposition \ref{prop:IP} (2).
Let $x^1,\ldots, x^n$ be a set of basic invariants satisfying the 
assumption (i)
 and let $(\nabla,\ast,\frac{1}{d_1}E_{\mathrm{deg}})$
be the natural Saito structure on $M$. 
(Recall that a set of $\nabla$-flat basic invariants $t^1,\ldots, t^n$ exists
and that 
$t^{\alpha}$ is of the form
$t^{\alpha}=x^{\alpha}+\text{(a polynomial in $x^{\beta}$, $\beta>\alpha)$}$. 
See the proof of Theorem \ref{main-theorem}-(3).)
We can assume that 
the basic invariants  $x^1,\ldots, x^n$ are 
$\nabla$-flat coordinates.

Denote by
$(\boldsymbol{\nabla}^V)^*$ the connection on 
the cotangent bundle $T^*M_0$
which is dual to
the natural connection $\boldsymbol{\nabla}^V$.
Given that $\boldsymbol{\nabla}^V$ is induced from 
the trivial connection on $TV$,
it is clear that
$(\boldsymbol{\nabla}^V)^*$
has local flat sections $du^1,\ldots, du^n$
where $u^1,\ldots, u^n$ are the standard coordinates of $V
=\mathbb{C}^n$.

Let $X_{\alpha}=\frac{1}{d_1}E_{\mathrm{deg}}\ast \partial_{\alpha}$ for $1\leq \alpha\leq n$
as in \eqref{basic-derivations}.
They form a basis of $\mathcal{T}_{M_0}$
since $\frac{1}{d_1}E_{\mathrm{deg}}\ast $ is invertible on $M_0\subset M$.
So denote the dual basis 
of $\mathcal{T}^*_{M_0}$ by $X^1,\ldots, X^n$. 
Let us write down the flatness equation for $du^i$
($1\leq i\leq n$) using this basis.
\begin{lemma} 
For $1\leq i\leq n$,
\begin{equation}\label{kms}
\begin{split}
&(\boldsymbol{\nabla}^V)^*(du^i)=0
\\
 \Leftrightarrow\quad &
\partial_{x^{\alpha}} y^i=y^i \,
U^{-1}C_{\alpha}
\left(\left(\frac{1}{d_1}+1\right)I_n-\frac{1}{d_1}\mathrm{diag}(d_1,\ldots, d_n)\right)
\quad (1\leq \alpha\leq n)~.
\end{split}
\end{equation}
Here
$y^i$
is the row vector $(X_1(u^i),\cdots,X_n(u^i))$,
$C_{\alpha}$ $(1\leq \alpha\leq n)$
and $U=\frac{1}{d_1}\sum_{\alpha=1}^n d_{\alpha}x_{\alpha}C_{\alpha}$ 
are the matrix representations of 
the multiplications $\partial_{\alpha}\ast$ 
and $\frac{1}{d_1}E_{\mathrm{deg}}\ast$
with respect to the basis $\partial_{x^1},\ldots, \partial_{x^n}$.
\end{lemma}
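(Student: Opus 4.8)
The plan is to rewrite the flatness equation $(\boldsymbol{\nabla}^V)^*(du^i)=0$ in the coframe $X^1,\dots,X^n$ dual to $X_1,\dots,X_n$, and to recognize the resulting first order system as the displayed ODE for $y^i$. First I would record the change of frame. Since $X_\alpha=\tfrac1{d_1}E_{\mathrm{deg}}\ast\partial_{x^\alpha}=\sum_\beta U^\beta{}_\alpha\,\partial_{x^\beta}$ by \eqref{BL4-5}, and $U$ is invertible on $M_0$ because $\det U=\Delta$, the dual coframe is $X^\alpha=\sum_\beta (U^{-1})^\alpha{}_\beta\,dx^\beta$. Writing $du^i=\sum_\gamma w^i_\gamma\,dx^\gamma$ with $w^i_\gamma=\partial u^i/\partial x^\gamma$, the components $y^i_\alpha=X_\alpha(u^i)$ satisfy $y^i=w^iU$ as row vectors, hence $w^i=y^iU^{-1}$.

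Next I would expand the flatness condition in the $dx$-coframe. Using the dual connection $(\boldsymbol{\nabla}^V)^*_{\partial_{x^\alpha}}dx^\gamma=-\sum_\delta\Omega^\gamma_{\alpha\delta}\,dx^\delta$, the equation $(\boldsymbol{\nabla}^V)^*(du^i)=0$ is equivalent, for every $\alpha$, to the row-vector identity $\partial_{x^\alpha}w^i=w^i\Omega_\alpha$. Substituting $w^i=y^iU^{-1}$ and using $\partial_{x^\alpha}(U^{-1})\,U=-U^{-1}\,\partial_{x^\alpha}U$, this becomes $\partial_{x^\alpha}y^i=y^iU^{-1}\bigl(\Omega_\alpha U+\partial_{x^\alpha}U\bigr)$. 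Since every step (duality of frames, right multiplication by the invertible $U$) is reversible, this is genuinely equivalent to the original flatness equation, which is the content of the asserted ``$\Leftrightarrow$''.

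The crux is the matrix identity $\Omega_\alpha U+\partial_{x^\alpha}U=C_\alpha R$, where $R=(1+\tfrac1{d_1})I_n-\tfrac1{d_1}\mathrm{diag}(d_1,\dots,d_n)$. I would prove it invariantly by computing $\boldsymbol{\nabla}^V_{\partial_{x^\alpha}}X_\beta$ in two ways: its $\partial_{x^\gamma}$-component is the $(\gamma,\beta)$ entry of $\Omega_\alpha U+\partial_{x^\alpha}U$. On the other hand, since $\boldsymbol{\nabla}^V=\boldsymbol{\nabla}^{(0,1/d_1)}$ by Theorem \ref{main-theorem}(4) and $x\star_0\bigl(\tfrac1{d_1}E_{\mathrm{deg}}\ast z\bigr)=\mathcal{U}_0^{-1}\bigl(\tfrac1{d_1}E_{\mathrm{deg}}\ast(x\ast z)\bigr)=x\ast z$, formula \eqref{eq:ss2ass} gives $\boldsymbol{\nabla}^V_x(\tfrac1{d_1}E_{\mathrm{deg}}\ast z)=\nabla_x(\tfrac1{d_1}E_{\mathrm{deg}}\ast z)+\tfrac1{d_1}\,x\ast z-\nabla_{x\ast z}(\tfrac1{d_1}E_{\mathrm{deg}})$. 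Feeding in \eqref{SS3} to eliminate $\nabla_x(\tfrac1{d_1}E_{\mathrm{deg}}\ast z)$ makes the two $\nabla_{x\ast z}(\tfrac1{d_1}E_{\mathrm{deg}})$ terms cancel, leaving $\boldsymbol{\nabla}^V_x(\tfrac1{d_1}E_{\mathrm{deg}}\ast z)=-\,x\ast\nabla_z(\tfrac1{d_1}E_{\mathrm{deg}})+\tfrac1{d_1}E_{\mathrm{deg}}\ast\nabla_x z+(1+\tfrac1{d_1})\,x\ast z$. Finally I would use that the $x^\alpha$ are $\nabla$-flat, so $\nabla_{\partial_{x^\alpha}}\partial_{x^\beta}=0$ and $\nabla_{\partial_{x^\beta}}(\tfrac1{d_1}E_{\mathrm{deg}})=\tfrac{d_\beta}{d_1}\partial_{x^\beta}$ (the latter being the diagonal form of $W$, consistent with \eqref{EOmega}); this collapses the expression to $\boldsymbol{\nabla}^V_{\partial_{x^\alpha}}X_\beta=(1+\tfrac1{d_1}-\tfrac{d_\beta}{d_1})\,\partial_{x^\alpha}\ast\partial_{x^\beta}$. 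Reading off the $\partial_{x^\gamma}$-component gives $(1+\tfrac1{d_1}-\tfrac{d_\beta}{d_1})C^\gamma_{\alpha\beta}=(C_\alpha R)^\gamma{}_\beta$, since $1+\tfrac1{d_1}-\tfrac{d_\beta}{d_1}$ is exactly the $\beta$-th diagonal entry $R^\beta{}_\beta$; this is the identity.

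I expect the main obstacle to be this last identity, specifically the bookkeeping in applying \eqref{SS3} with the substitution $z\mapsto\partial_{x^\beta}$ and verifying that the $\nabla_{x\ast z}(\tfrac1{d_1}E_{\mathrm{deg}})$ terms cancel, together with the observation that it is precisely the $\nabla$-flatness of the coordinates $x^\alpha$ that kills the connection coefficients $\Gamma_\alpha$ and reduces $\boldsymbol{\nabla}^V X_\beta$ to a pure scalar multiple of $\partial_{x^\alpha}\ast\partial_{x^\beta}$ (matching the $\Gamma$-free right-hand side of the claim). Once the identity is in hand, substituting $\Omega_\alpha U+\partial_{x^\alpha}U=C_\alpha R$ into the reduced flatness equation from the second paragraph yields $\partial_{x^\alpha}y^i=y^iU^{-1}C_\alpha R$, which is \eqref{kms}, completing the equivalence.
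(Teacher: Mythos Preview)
Your proposal is correct and follows essentially the same route as the paper: both reduce the equivalence to computing the connection matrix of $\boldsymbol{\nabla}^V$ in the frame $X_1,\dots,X_n$ via the change-of-basis formula $\Omega_\alpha^X=U^{-1}(\Omega_\alpha U+\partial_{x^\alpha}U)$ and identifying it with $U^{-1}C_\alpha R$. The only cosmetic difference is in how the key identity $\Omega_\alpha U+\partial_{x^\alpha}U=C_\alpha R$ is verified: the paper quotes the matrix formula \eqref{newconnection1} for $\Omega_\alpha$ (in the $\nabla$-flat coordinates, where $\Gamma_\alpha=0$) together with \eqref{ss2-2} for $\partial_{x^\alpha}U$, whereas you compute $\boldsymbol{\nabla}^V_{\partial_{x^\alpha}}X_\beta$ directly from the invariant identities \eqref{eq:ss2ass} and \eqref{SS3}; these are the same calculation in matrix versus tensor notation.
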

\begin{proof}
It is enough to show that 
the connection matrix  of 
$\boldsymbol{\nabla}^V_{\partial_{\alpha}}$ 
with respect to the basis $X_1,\ldots, X_n$
is the matrix appearing in the RHS of \eqref{kms}.

Since $\boldsymbol{\nabla}^V$ 
is the connection of the dual 
almost Saito structure with parameter $r=\frac{1}{d_1}$,
the connection matrix $\Omega_{\alpha}$ of 
$\boldsymbol{\nabla}^V_{\partial_{\alpha}}$ 
with respect to $\partial_{x^1},\ldots,\partial_{x^n}$ is given by \eqref{newconnection1}:
\begin{equation}\nonumber
\Omega_{\alpha}=\left(
\frac{1}{d_1}I_n-
\frac{1}{d_1} \mathrm{diag}(d_1,\ldots, d_n)
\right)C_{\alpha}U^{-1}~.
\end{equation}
Then 
the connection matrix $\Omega_{\alpha}^X$ with respect to the basis
$X_1,\ldots, X_n$
is given by
\begin{equation}\nonumber
\begin{split}
\Omega_{\alpha}^X&=
U^{-1}\Omega_{\alpha}U+U^{-1}\partial_{\alpha}U
\\&
\stackrel{\eqref{ss2-2}}{=}
U^{-1}C_{\alpha}
\left(\left(\frac{1}{d_1}+1\right)I_n-\frac{1}{d_1} \mathrm{diag}(d_1,\ldots, d_n)\right)~.
\end{split}
\end{equation}
\end{proof}

It is not difficult to see that 
\eqref{kms} is nothing but
 the Okubo type equation  (102)
in 
\cite[Theorem 6.1]{KatoManoSekiguchi2015}
and that
$\nabla$-flat basic invariants
$x^{\alpha}$ ($1\leq \alpha\leq n$) here 
correspond to ``uniquely defined special $G$-invariant
homogeneous polynomials $F'_{n-\alpha}(u)$'' there.
Thus the polynomial Saito structure
for the duality group $G$ in {\it loc.cit.}
agrees with
the natural Saito structure obtained in this section .
\subsection{Relationship to Arsie--Lorenzoni's standard bi-flat structure}
In this subsection, 
we explain that Arsie--Lorenzoni's standard bi-flat structure 
obtained in \cite[\S 5]{Arsie-Lorenzoni2016} is the same as  
the natural Saito structure.

As in the previous section, 
we assume that $G$ is  a finite complex reflection group satisfying 
the assumptions (i) and (ii) in \S \ref{sec:assumptions}.
Take a set of basic invariants $x^1,\ldots, x^n$ satisfying the assumption (i).
Let $(\nabla,\ast,\frac{1}{d_1}E_{\mathrm{deg}})$ be the natural 
Saito structure on $M$ with the unit $\partial_{x^1}$,
given in Theorem \ref{main-theorem}.
Modifying if necessary, we assume that the basic invariants 
$x^1,\ldots, x^n$ are $\nabla$-flat coordinates.

By \eqref{newconnection1}, 
the matrix representation $B_{\alpha}$ 
of $\partial_{x^{\alpha}}\star$
and the connection matrix $\Omega_{\alpha}$ of 
the dual conenction $\boldsymbol{\nabla}^V$
with respect to the basis $\partial_{x^1},\ldots,\partial_{x^n}$
satisfies the relation
\begin{equation}\nonumber
\begin{split}
\Omega_{\alpha\beta}^{\gamma}
&=
{\left(\frac{1}{d_1}\big(I_n-
\mathrm{diag}(d_1,\ldots, d_n)
\big)B_{\alpha} \right)^{\gamma}}_{\beta} 
~.
\end{split}
\end{equation}
Therefore
\begin{equation}\nonumber
\begin{split}
B_{\alpha\beta}^{\gamma}&=
{\left(d_1\big(I_n-\mathrm{diag}(d_1,\ldots, d_n)\big)^{-1}\Omega_{\alpha} \right)^{\gamma}}_{\beta} 
\\
&=\frac{d_1}{d_{\gamma}-1}
\sum_{i,j=1}^n \frac{\partial u^i}{\partial x^{\alpha}}
\frac{\partial u^j}{\partial x^{\beta}}
\frac{\partial^2 x^{\gamma}}{\partial u^i \partial u^j}~,
\end{split}
\end{equation}
and 
\begin{equation}\nonumber
\begin{split}
\partial_{u^i}\star \partial_{u^j}
&=\sum_{\alpha,\beta,\gamma=1}^n
\sum_{k=1}^n
\frac{\partial u^i}{\partial x^{\alpha}}
\frac{\partial u^j}{\partial x^{\beta}}
B_{\alpha\beta}^{\gamma}
\frac{\partial u^k}{\partial x^{\gamma}}
\partial_{u^k}
\\
&=
\frac{d_1}{d_{\gamma}-1}
\sum_{k=1}^n
\frac{\partial^2 x^{\gamma}}{\partial u^i \partial u^j}
\frac{\partial u^k}{\partial x^{\gamma}}
\partial_{u^k}~.
\end{split}
\end{equation}
This agrees with the formula of the structure constants
in \cite[Theorem 5.3]{Arsie-Lorenzoni2016}. 
\section{The Case of branched covering}
\label{covering-case}
\subsection{Setting}
In this section,
$G$ is a finite complex reflection group acting on 
an $n$-dimensional complex vector spaces $V=\mathbb{C}^n$
and $K$ is a normal reflection subgroup of $G$.
Denote the degrees of $K$ and $G$ by $d_1^K\geq d_2^K\geq \ldots \geq d_n^K$
and $d_1^G\geq d_2^G\geq \ldots \geq d_n^G$ respectively.
Since $K$ is a normal subgroup,
the action of $G$ on $\mathbb{C}[V]$ preserves the
subring $\mathbb{C}[V]^K$ of
$K$-invariant polynomials
and $\mathbb{C}[V]^G=\left( \mathbb{C}[V]^K\right)^{G/K}$.
Hence $G/K$ acts on the orbit space $M_K=\mathrm{Spec}\,\mathbb{C}[V]^K$
as automorphisms of the branched covering map $\pi:M_K\rightarrow M_G=\mathrm{Spec}\, \mathbb{C}[V]^G$.
We write $\Delta_K$ (resp. $\Delta_G$) for the discriminant of $K$ (resp. $G$). 
Let $\Delta_{G/K}:=\Delta_G/\Delta_K\in \mathbb{C}[V]^K$.
\begin{lemma}
The ramification locus of $\pi$ is $\{\Delta_{G/K}=0\}$. 
\end{lemma}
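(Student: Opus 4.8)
The plan is to realize the ramification locus as the zero set of the Jacobian of $\pi$ and then to identify that zero set with $\{\Delta_{G/K}=0\}$ by factoring both polynomials into products of the linear forms $L_H$ attached to the reflection hyperplanes.

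First I would fix basic invariants $x^1,\dots,x^n$ of $G$ and $y^1,\dots,y^n$ of $K$, which give coordinates on $M_G$ and $M_K$. Since $\mathbb{C}[V]^G\subset\mathbb{C}[V]^K$, each $x^i$ is a polynomial in the $y^j$, so $\pi$ is the polynomial map $(y^j)\mapsto(x^i(y))$ and its ramification locus is $\{J_\pi=0\}\subset M_K$ with $J_\pi=\det(\partial x^i/\partial y^j)$. Writing $\omega_K\colon V\to M_K$ and $\omega_G=\pi\circ\omega_K\colon V\to M_G$ for the orbit maps, the chain rule $\partial x^i/\partial u^k=\sum_j(\partial x^i/\partial y^j)(\partial y^j/\partial u^k)$ gives, after taking determinants, $J_G=(J_\pi\circ\omega_K)\,J_K$, where $J_G=\det(\partial x^i/\partial u^k)$ and $J_K=\det(\partial y^j/\partial u^k)$ are the Jacobians of the two orbit maps.

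Next I would apply the Jacobian formula of \S\ref{CRGs} to $G$ and to $K$ separately, $J_G=\mathrm{const}\cdot\Pi_G$ and $J_K=\mathrm{const}\cdot\Pi_K$, with $\Pi_G,\Pi_K$ the skew-invariants of \eqref{def:delta} for $G$ and $K$. Because every reflection of $K$ is a reflection of $G$ we have $\mathcal{A}_K\subseteq\mathcal{A}_G$, and the pointwise stabilizer of $H$ in $K$ is a subgroup of the cyclic pointwise stabilizer in $G$, so $e_H^K\mid e_H^G$ and $e_H^K\le e_H^G$; in particular $e_H^K=1$ for $H\in\mathcal{A}_G\setminus\mathcal{A}_K$. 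Hence $\Pi_K$ divides $\Pi_G$, the quotient is a polynomial, and
\begin{equation}\nonumber
J_\pi\circ\omega_K=\mathrm{const}\cdot\frac{\Pi_G}{\Pi_K}=\mathrm{const}\cdot\prod_{H\in\mathcal{A}_K}L_H^{\,e_H^G-e_H^K}\prod_{H\in\mathcal{A}_G\setminus\mathcal{A}_K}L_H^{\,e_H^G-1},
\end{equation}
whereas $\Delta_{G/K}=\delta_G/\delta_K$ equals
\begin{equation}\nonumber
\prod_{H\in\mathcal{A}_K}L_H^{\,e_H^G-e_H^K}\prod_{H\in\mathcal{A}_G\setminus\mathcal{A}_K}L_H^{\,e_H^G}.
\end{equation}

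Finally I would compare these two products factor by factor. On $H\in\mathcal{A}_K$ both carry the exponent $e_H^G-e_H^K$, which is positive exactly when $e_H^G>e_H^K$ and absent in both when $e_H^G=e_H^K$; on $H\in\mathcal{A}_G\setminus\mathcal{A}_K$ they carry $e_H^G-1$ and $e_H^G$, both positive since $e_H^G\ge 2$. Thus the two polynomials have the same set of irreducible factors $L_H$ occurring with positive exponent, hence the same radical and the same zero set in $V$. Since $\omega_K$ is a surjective quotient map and both $J_\pi\circ\omega_K$ and $\omega_K^*\Delta_{G/K}$ are $K$-invariant, pushing this equality of zero sets forward along $\omega_K$ yields $\{J_\pi=0\}=\{\Delta_{G/K}=0\}$ in $M_K$, which is the claim. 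The point requiring care is that the two defining polynomials are genuinely different — they differ by the factor $\prod_{H\in\mathcal{A}_G\setminus\mathcal{A}_K}L_H$ — so the identity of loci must be argued at the level of radicals rather than of the polynomials themselves; the case analysis above, in particular the case $e_H^G=e_H^K$ where $L_H$ is absent from both, is precisely what ensures that no hyperplane appears in one product but is missing from the other.
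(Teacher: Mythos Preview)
Your proof is correct and takes a genuinely different route from the paper's. The paper argues pointwise via stabilizers: $\omega_K(p)$ lies in the ramification locus of $\pi$ if and only if $K_p\subsetneq G_p$, and then invokes Steinberg's theorem (that stabilizers in a reflection group are generated by the reflections they contain) to conclude this happens exactly when $(\delta_G/\delta_K)(p)=0$. Your approach instead computes the Jacobian of $\pi$ explicitly by the chain rule and the formula $J=\mathrm{const}\cdot\Pi$ for orbit maps, then compares the factorizations of $\Pi_G/\Pi_K$ and $\delta_G/\delta_K$ hyperplane by hyperplane to see that they share the same radical. Your argument is more elementary in that it avoids Steinberg's theorem entirely, relying only on the Jacobian formula already quoted in \S\ref{CRGs}; the paper's argument is more conceptual, tying ramification directly to the jump in isotropy. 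Both routes ultimately reduce to the same combinatorial fact, that the hyperplanes $H$ with $e_H^G>e_H^K$ are exactly those contributing a positive exponent to $\delta_G/\delta_K$.
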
\label{lem:covering}
\begin{proof}
Let $p\in V$
and let $K_p=\{g\in K\mid g(p)=p\}$, 
$G_p=\{g\in G\mid g(p)=p\}$ be the stabilizer subgroups.
Then $\omega_K(p)\in M_K$ is contained in the ramification locus
of $\pi$ if and only if $K_p\subsetneq G_p$. 
Here $\omega_K:V\to M_K$ is the orbit map. 
By the theorem of Steinberg (see \cite[\S 9.7]{LehrerTaylor}),
$K_p$ (resp. $G_p$) is generated by reflections 
of $K$ (resp. $G$) which fixes $p$.
Therefore $K_p\subsetneq G_p\Leftrightarrow 
(\delta_G/\delta_K)(p)=0$ (see \eqref{def:delta}).
\end{proof}
Let us set $M_K'=M_K\setminus \{\Delta_{G/K}=0\}$
and $M_G'=\mathrm{Im}\, \pi|_{M'_K}$.
Therefore $\pi|_{M_K'}:M_{K}'\to M_{G}'$  is 
an unramified Galois covering with the Galois group $G/K$.
Moreover, let us set
$M_{K,0}'=M_K\setminus \{\Delta_G=0\}$ and
$M_{G,0}=M_G\setminus\{\Delta_G=0\}$. 
Then  it follows that
$\pi|_{M_{K,0}'}:
M_{K,0}'\to M_{G,0}$
is an unramified Galois covering 
with the Galois group $G/K$.
In the next Definition \ref{def:invariance},
either $(U_K,U_G)=(M_K',M_G')$ or $(M_{K,0}',M_{G,0})$.

\begin{definition} \label{def:invariance}
(1)
A connection $\nabla$ on $TU_K$
is $G/K$-invariant if 
$$
\nabla_{{g}_{\ast}x}({g}_{\ast}y)
={g}_{\ast} (\nabla_{x}\,y)
\quad (g\in G/K~,~~x,y\in \mathcal{T}_{U_K})~.
$$
(2)
A multiplication $\ast$ on $TU_K$
is $G/K$-invariant if 
$$
({g}_{\ast}x)\ast ({g}_{\ast}y)=
{g}_{\ast}(x\ast y)\quad 
(g\in G/K~,~~x,y\in \mathcal{T}_{U_K})~.
$$
(3) 
We say that
a Saito structure $(\nabla,\ast,E)$ on $U_K$ is 
$G/K$-invariant if 
$\nabla,\ast,E$ are $G/K$-invariant.
We also say that an almost Saito structure $(\boldsymbol{\nabla},\star,e)$ on $U_K$ is $G/K$-invariant if
$\boldsymbol{\nabla},\star,e$ are $G/K$-invariant. 
\end{definition}
If $\nabla$ is a $G/K$-invariant connection on $TU_K$,
then the covering map $\pi$ induces
a connection $\nabla^{\pi}$ on $TU_G$ as follows.
Let $p\in U_G$ and take an open neighborhood
$U$ of $p$
such that
$\pi^{-1}(U)$ consists of $|G/K|$ disjoint open sets,
say $U_1,\ldots, U_{|G/K|}$. 
Then $\pi:U_i\to U$ 
is an isomorphism ($1\leq i\leq |G/K|$).
For $x,y\in \Gamma(U,\mathcal{T}_{U_G})$,
take their lifts $x_1,y_1\in \Gamma(U_1,\mathcal{T}_{U_K})$
to $U_1$ and define ${\nabla^{\pi}}$
by
\begin{equation}\nonumber
{\nabla}^{\pi}_{x}\,y=
\pi_{\ast} ({\nabla}_{x_1}\,y_1)~.
\end{equation}
This definition does not depend on the 
choice of the lifts. For, if $x_2,\, y_2\in \Gamma(U_2,T_{U_K})$ 
are the lifts of $x,y$ to $U_2$,
then there exists ${g}\in G/K$ such that 
${g}_{\ast} x_1=x_2$, ${g}_{\ast}y_1=y_2$.
Then $
{\nabla}_{x_2}\,y_2
=g_{\ast} ( {\nabla}_{x_1}\,y_1)$ follows from
the $G/K$-invariance.
Similarly, if $\ast$ is a $G/K$-invariant multiplication on $TU_K$,
$\pi$ induces a multiplication $\ast_{\pi}$ on $TU_G$ by
$$
x \ast_{\pi} y=\pi_*(x_1\ast y_1)~.
$$
Therefore,
given that the conditions for the Saito structures  and almost Saito structures are local conditions, it is easy to see the following statements.
\begin{enumerate}
\item
 If $(\nabla,\ast,E)$  is a $G/K$-invariant Saito structure on $U_K$,
then $\pi$ induces a Saito structure
$(\nabla^{\pi},\ast_{\pi},\pi_{*}E)$ on $U_G$.
\item 
If $(\boldsymbol{\nabla},\star,e)$ is a $G/K$-invariant almost Saito structure
on $U_K$,
then $\pi$ induces an almost Saito structure $(\boldsymbol{\nabla}^{\pi},\star_{\pi},\pi_{*}e)$  on $U_G$.
\item If the above $(\nabla,\ast,E)$ and $(\boldsymbol{\nabla},\star,e)$
are dual to each other,
then $(\nabla^{\pi},\ast_{\pi},\pi_{*}E)$ and
$(\boldsymbol{\nabla}^{\pi},\star_{\pi},\pi_{*}e)$ are dual to each
other.
\end{enumerate}

\subsection{Natural (almost) Saito structure via covering}
We first give criterions for a natural almost Saito structure
for $K$ to be $G/K$ invariant.

Denote $\boldsymbol{\nabla}^{V,K}$ (resp. $\boldsymbol{\nabla}^{V,G}$)
the natural connection 
on $TM_{K,0}$ (resp. $TM_{G,0}$)
induced from
the trivial connection  $d$ on $TV$  by the orbit maps $\omega_K,\omega_G$.
It is clear that $\boldsymbol{\nabla}^{V,K}$ is $G/K$-invariant
and that the $\pi$-induced connection 
$(\boldsymbol{\nabla}^{V,K})^{\pi}$ is nothing but 
$\boldsymbol{\nabla}^{V,G}$.
Notice also that $E_{\mathrm{deg}}^K
=\sum_{\alpha=1}^n d_{\alpha}^K y^{\alpha}\frac{\partial}{\partial y^{\alpha}}$ is $G/K$-invariant and 
$$
\pi_*E_{\mathrm{deg}}^K=\sum_{\alpha=1}^n d_{\alpha}^G x^{\alpha}\frac{\partial}{\partial x^{\alpha}}=E_{\mathrm{deg}}^G
~.
$$

\begin{lemma}
Assume that a vector field $e$ on $M_K$ is $G/K$-invariant.
Assume moreover that
the pair $(\boldsymbol{\nabla}^{V,K},e)$ is regular.
If $\star$ is the multiplication determined by $(${\bf ASS4}$)$ (or \eqref{mult}),
then $\star$ is $G/K$-invariant.
\end{lemma}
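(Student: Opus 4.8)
The plan is to exploit the explicit formula \eqref{mult} for the multiplication determined by ({\bf ASS4}). Under the regularity hypothesis one has
$$
x\star y=-\mathcal{Q}^{-1}(\boldsymbol{\nabla}^{V,K}_x\boldsymbol{\nabla}^{V,K}_y\,e)+\boldsymbol{\nabla}^{V,K}_x\,y~,\qquad \mathcal{Q}(x)=\boldsymbol{\nabla}^{V,K}_x\,e~,
$$
so it suffices to check that each building block on the right-hand side commutes with the pushforward $g_{\ast}$ for every $g\in G/K$. Here I would use two facts already at hand: the natural connection $\boldsymbol{\nabla}^{V,K}$ is $G/K$-invariant (noted just above), and $e$ is $G/K$-invariant by hypothesis, i.e. $g_{\ast}e=e$.

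First I would show that $\mathcal{Q}$ is $G/K$-equivariant. Using $g_{\ast}e=e$ together with the invariance of $\boldsymbol{\nabla}^{V,K}$,
$$
\mathcal{Q}(g_{\ast}x)=\boldsymbol{\nabla}^{V,K}_{g_{\ast}x}\,e=\boldsymbol{\nabla}^{V,K}_{g_{\ast}x}(g_{\ast}e)=g_{\ast}(\boldsymbol{\nabla}^{V,K}_x\,e)=g_{\ast}\big(\mathcal{Q}(x)\big)~.
$$
Hence $\mathcal{Q}\circ g_{\ast}=g_{\ast}\circ\mathcal{Q}$; since $g_{\ast}$ and $\mathcal{Q}$ are invertible (regularity), also $\mathcal{Q}^{-1}\circ g_{\ast}=g_{\ast}\circ\mathcal{Q}^{-1}$.

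The key step is the analogous identity for the iterated covariant derivative. Setting $w=\boldsymbol{\nabla}^{V,K}_y\,e$ and applying the invariance of $\boldsymbol{\nabla}^{V,K}$ twice, again with $g_{\ast}e=e$,
$$
\boldsymbol{\nabla}^{V,K}_{g_{\ast}y}\,e=\boldsymbol{\nabla}^{V,K}_{g_{\ast}y}(g_{\ast}e)=g_{\ast}w~,\qquad \boldsymbol{\nabla}^{V,K}_{g_{\ast}x}(g_{\ast}w)=g_{\ast}(\boldsymbol{\nabla}^{V,K}_x\,w)~,
$$
so that $\boldsymbol{\nabla}^{V,K}_{g_{\ast}x}\boldsymbol{\nabla}^{V,K}_{g_{\ast}y}\,e=g_{\ast}(\boldsymbol{\nabla}^{V,K}_x\boldsymbol{\nabla}^{V,K}_y\,e)$. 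This is the only place where care is needed, since the invariance of the connection is a first-order statement that must be iterated correctly through the nested derivative; I expect this to be the main (though mild) obstacle.

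Combining these displays with $\boldsymbol{\nabla}^{V,K}_{g_{\ast}x}(g_{\ast}y)=g_{\ast}(\boldsymbol{\nabla}^{V,K}_x\,y)$ and substituting into \eqref{mult} yields
$$
(g_{\ast}x)\star(g_{\ast}y)=-g_{\ast}\big(\mathcal{Q}^{-1}(\boldsymbol{\nabla}^{V,K}_x\boldsymbol{\nabla}^{V,K}_y\,e)\big)+g_{\ast}(\boldsymbol{\nabla}^{V,K}_x\,y)=g_{\ast}(x\star y)~,
$$
which is precisely the $G/K$-invariance of $\star$. As an alternative one could instead invoke uniqueness: define $x\star'y=g_{\ast}^{-1}\big((g_{\ast}x)\star(g_{\ast}y)\big)$, check by the same equivariances that $\star'$ again satisfies ({\bf ASS4}) relative to $(\boldsymbol{\nabla}^{V,K},e)$, and conclude $\star'=\star$ from the uniqueness statement that yields \eqref{mult}; but the direct substitution above is shorter.
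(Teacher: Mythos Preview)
Your argument is correct and is exactly the content the paper intends: the paper's own proof is the single sentence ``The $G/K$-invariance of $\star$ easily follows from those of $\boldsymbol{\nabla}^{V,K}$ and $e$,'' and you have simply unpacked this by showing that $\mathcal{Q}$, $\mathcal{Q}^{-1}$, and the iterated covariant derivative all commute with $g_{\ast}$, then plugging into \eqref{mult}. There is nothing to add.
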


\begin{proof}
The $G/K$-invariance of $\star$ easily follows from 
those of $\boldsymbol{\nabla}^{V,K}$ and $e$.
\end{proof}

Let $k_K$ be the number of 
the degrees of $K$ which are equal to $d_1^K$:
$$d_1^K=\cdots=d_{k_K}^K>d_{k_K+1}^K\geq \ldots \geq d_n^K~.
$$
We put the following assumptions on the pair $(G,K)$.
\begin{enumerate}
\item[(iii)] either $k_K=1$ (i.e. $d_1^K>d_2^K$) or 
$G/K$ is abelian.
\item[(iv)]  $d_1^K=d_1^G$
\end{enumerate}
\begin{lemma}\label{lemma-semi-invariants}
If the assumption (iii) holds, 
there exists a set of basic invariants $y^1,\ldots, y^n$of $K$
such that $y^1,\ldots,y^{k_K}$ are  semi-invariants of $G$.
Moreover if the assumption (iv) holds, such $y^1,\ldots, y^{k_K}$ are 
$G$-invariant.
\end{lemma}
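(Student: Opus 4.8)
The plan is to lift the question to the graded $G/K$-action on the polynomial ring $R=\mathbb{C}[V]^K$ and to analyse only its top-degree piece. Since $K$ is normal, $G$ preserves $R$ and $K$ acts trivially on it, so $G/K$ acts on $R$ by graded algebra automorphisms. Write $d=d_1^K$ and let $R_d$ be the degree-$d$ component; the space of decomposables $(R_+^2)_d$ is a $G/K$-subrepresentation of $R_d$, and the quotient $(R_+/R_+^2)_d$ has dimension $k_K$ (the number of basic invariants of $K$ of degree $d$). First I would fix, using that $G/K$ is finite and the characteristic is $0$, a $G/K$-stable complement $W$ of $(R_+^2)_d$ inside $R_d$; then $\dim W=k_K$ and $W$ maps isomorphically onto the indecomposables in degree $d$. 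By graded Nakayama any basis of $W$ completes to a set of basic invariants of $K$ upon adjoining homogeneous generators in the remaining degrees, so it suffices to choose the top generators inside the $G/K$-module $W$.

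Under assumption (iii) the module $W$ splits into lines: if $k_K=1$ this is automatic, and if $G/K$ is abelian then every irreducible complex representation of $G/K$ is one-dimensional, so again $W=\bigoplus_{i=1}^{k_K}\mathbb{C}\,y^i$. For each such $y^i$ the line $\mathbb{C}\,y^i$ is $G/K$-stable, which is exactly the statement that $g(y^i)=\chi_i(g)\,y^i$ for a linear character $\chi_i$ of $G$ that is trivial on $K$; that is, $y^1,\dots,y^{k_K}$ are semi-invariants of $G$. This already gives the first assertion.

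For the ``moreover'' part I would show that, when $d_1^K=d_1^G$, every character $\chi_i$ occurring in $W$ is trivial. Fix $y=y^i$ with nontrivial character $\chi=\chi_i$ and aim for a contradiction. By the Stanley--Springer description of relative invariants (see \cite[Ch.~6]{OrlikTerao}) the $\chi$-semi-invariants of $G$ form a free rank-one module over $\mathbb{C}[V]^G$ generated by $j_\chi=\prod_{H\in\mathcal{A}}L_H^{a_H}$ with $0\le a_H\le e_H-1$, so $y=j_\chi\,h$ for some $h\in\mathbb{C}[V]^G$, with $j_\chi$ nonconstant because $\chi\ne 1$. Since $y$ and $h$ are $K$-invariant and $\mathbb{C}[V]$ is a UFD, $j_\chi$ is $K$-invariant as well. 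If $h$ is nonconstant, then $y=j_\chi\,h$ writes $y$ as a product of two positive-degree $K$-invariants, contradicting that $y\in W$ is $K$-indecomposable.

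The main obstacle is the remaining case $h=\text{const}$, i.e. $\deg j_\chi=d_1^G=d_1^K$. Here I would use that $\chi|_K=1$ forces, for every reflecting hyperplane $H$ of $K$, the divisibility $e_H^K\mid a_H$ (the restriction of $\chi$ to the cyclic stabiliser $C_H^K$ is trivial), together with the standard factorisation of the discriminant $\delta_K=\prod_H L_H^{e_H^K}$ into $K$-invariant pieces $\delta_{K,\mathcal{O}}=\prod_{H\in\mathcal{O}}L_H^{e_H^K}$ indexed by the $K$-orbits $\mathcal{O}$ of hyperplanes. This rewrites the part of $j_\chi$ coming from hyperplanes of $K$ as a monomial in the $K$-invariants $\delta_{K,\mathcal{O}}$ and, by quotienting, the complementary part as a $K$-invariant too, so $j_\chi$ becomes a product of lower-degree $K$-invariants unless it collapses to a single $\delta_{K,\mathcal{O}}$ of degree $d_1^K$. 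This last possibility is then to be excluded by the degree bookkeeping at the top degree that is forced by $d_1^K=d_1^G$, yielding the contradiction and hence $\chi=1$, so that each $y^i$ is in fact $G$-invariant. I expect this top-degree case to be where essentially all of the difficulty, and the need for hypothesis (iv), is concentrated.
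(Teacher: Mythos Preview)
Your argument for the first assertion is correct and is essentially a cleaner repackaging of the paper's proof: the paper also passes to the $G/K$-action on the degree-$d_1^K$ generators modulo lower-degree terms, diagonalises (using abelianness or one-dimensionality), and then averages to obtain genuine semi-invariants. Phrasing this via Maschke's theorem and the complement $W$ to $(R_+^2)_d$ is a nice way to say the same thing.

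The gap is in the ``moreover'' part. Your Stanley--Springer reduction correctly disposes of the case where $h$ is nonconstant, but it does not close the remaining case. You arrive at $y=\mathrm{const}\cdot j_\chi$ with $j_\chi$ a $K$-invariant product of linear forms of degree exactly $d_1^K$, and you then need $j_\chi$ to be $K$-decomposable. Your orbit-factorisation shows $j_\chi$ is a product of $K$-invariant pieces (powers of $\delta_{K,\mathcal{O}}$ and $K$-orbit products over hyperplanes not in $K$), but nothing prevents this product from consisting of a \emph{single} such piece of degree $d_1^K$; in that situation you have simply re-expressed the original $y$ and gained nothing. You acknowledge this yourself, but the appeal to ``degree bookkeeping at the top degree'' is not an argument, and I do not see how hypothesis (iv) enters your approach beyond the equality $\deg j_\chi=d_1^K$, which is not by itself a contradiction.

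The paper's route here is quite different and much shorter: it never touches relative invariants. Instead it uses (iv) directly by taking the basic invariants $x^1,\dots,x^{k_G}$ of $G$ of degree $d_1^G=d_1^K$, writing each as
\[
x^\alpha=\sum_{\beta\le k_K} X_{\alpha\beta}\,y^\beta + P_\alpha(y^{k_K+1},\dots,y^n),
\]
and arguing that the matrix $(X_{\alpha\beta})$ has full column rank $k_K$. After a linear change of the $x^\alpha$ one may assume $x^\alpha=y^\alpha+(\text{polynomial in }y^{\beta},\ \beta>\alpha)$ for $\alpha\le k_K$; applying $g\in G/K$ to both sides and comparing the $y^\alpha$-coefficients (using that $y^\alpha$ is a semi-invariant and the $y$'s are algebraically independent) forces $\chi_\alpha(g)=1$. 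So the essential use of (iv) is the existence of enough $G$-invariants in degree $d_1^K$, not any structural fact about $j_\chi$.
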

The proof of Lemma \ref{lemma-semi-invariants} 
is given in \S \ref{appendix-C}.
Notice that under the assumptions (iii) (iv),
any vector field $e$ on $M_K$ of degree $-d_1^K$
is $G/K$-invariant.

We arrive at the following 
\begin{proposition}\label{prop:covering-case}
Assume that $K$ satisfies the assumptions (i) and (ii)  in \S \ref{sec:assumptions}
and let $y^1,\ldots,y^n$ be a set of basic invariants  satisfying (ii). 
Denote $(\boldsymbol{\nabla}^{V,K},\partial_{y^1})$
the associated 
regular natural Saito structure.
Assume also that $(G,K)$ satisfies the assumptions (iii)(iv).
\\
(1) The projection $\pi:M_K\to M_G$ induces 
a regular natural almost Saito structure 
$(\boldsymbol{\nabla}^{V,G},\pi_*(\partial_{y^1}))$
for $G$.
Its parameter is $r=\frac{1}{d_1^G}$
and a unit is $E=\frac{1}{d_1^G}E_{\mathrm{deg}}^G$.
\\
(2) Denote by $(\nabla,\ast, \frac{1}{d_1}E_{\mathrm{deg}}^K)$
the natural Saito structure 
on $M_K$  which is dual to $(\boldsymbol{\nabla}^{V,K},\partial_{y^1})$.
Then $\pi$ induces a natural Saito structure 
$(\nabla^{\pi},\ast_{\pi},\frac{1}{d_1}E_{\mathrm{deg}}^G)$ 
on $M_G'$.
Moreover it
is dual to $(\boldsymbol{\nabla}^{V,G},\pi_*(\partial_{y^1}))$.
\end{proposition}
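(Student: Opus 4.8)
The plan is to prove both parts by a single mechanism: show that every piece of the almost Saito structure on $M_K$ and of its dual Saito structure is $G/K$-invariant, transport it through the unramified covering $\pi$ by the construction of §\ref{covering-case}, and then read off the parameter, the unit and the regularity from assumptions (iii) and (iv). Since the Saito and almost Saito axioms are local and $\pi$ is a local isomorphism, the axioms for the induced structures are automatic from §\ref{covering-case}; the real work is in the invariance and in matching the $K$-data to the $G$-data.

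For (1), Corollary \ref{reg} gives regularity of $(\boldsymbol{\nabla}^{V,K},\partial_{y^1})$, and Corollary \ref{main-corollary} (through Theorem \ref{main-theorem}) gives that this pair carries a regular natural almost Saito structure $(\boldsymbol{\nabla}^{V,K},\star_K,\partial_{y^1})$ for $K$ with parameter $\tfrac{1}{d_1^K}$ and unit $\tfrac{1}{d_1^K}E_{\mathrm{deg}}^K$. The connection $\boldsymbol{\nabla}^{V,K}$ is $G/K$-invariant (noted in §\ref{covering-case}); the field $\partial_{y^1}$ has degree $-d_1^K$, hence is $G/K$-invariant by the remark following Lemma \ref{lemma-semi-invariants}; and then $\star_K$, being the multiplication fixed by ({\bf ASS4}), is $G/K$-invariant by the lemma immediately preceding this proposition. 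Thus the whole structure is $G/K$-invariant, so §\ref{covering-case} produces on $M_{G,0}$ the almost Saito structure $\big((\boldsymbol{\nabla}^{V,K})^{\pi},(\star_K)_{\pi},\pi_*\partial_{y^1}\big)=\big(\boldsymbol{\nabla}^{V,G},(\star_K)_{\pi},\pi_*\partial_{y^1}\big)$, using $(\boldsymbol{\nabla}^{V,K})^{\pi}=\boldsymbol{\nabla}^{V,G}$. To see it is natural for $G$ (Definition \ref{def:natural}-(1)) I would check the three conditions: the connection is $\boldsymbol{\nabla}^{V,G}$; its unit is $\pi_*\big(\tfrac{1}{d_1^K}E_{\mathrm{deg}}^K\big)=\tfrac{1}{d_1^K}E_{\mathrm{deg}}^G=\tfrac{1}{d_1^G}E_{\mathrm{deg}}^G$ by (iv), which forces the parameter $r=\tfrac{1}{d_1^G}$ via \eqref{nablaE}; and $\pi_*\partial_{y^1}$ is a nonzero field of degree $-d_1^K=-d_1^G$. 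Regularity is inherited: locally $\mathcal{Q}^G=\boldsymbol{\nabla}^{V,G}(\pi_*\partial_{y^1})$ is the $\pi_*$-transport of the isomorphism $\mathcal{Q}^K=\boldsymbol{\nabla}^{V,K}(\partial_{y^1})$, hence an isomorphism.

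For (2), I would first note that the dual Saito structure $(\nabla,\ast,\tfrac{1}{d_1}E_{\mathrm{deg}}^K)$ is itself $G/K$-invariant: the formulas of Proposition \ref{ass2ss}, namely $x\ast y=x\star_K\mathcal{P}^{-1}(y)$ and $\nabla_x y=\boldsymbol{\nabla}^{V,K}_x y-\boldsymbol{\nabla}^{V,K}_{x\ast y}\partial_{y^1}$ with $\mathcal{P}=\partial_{y^1}\star_K$, are assembled from the $G/K$-invariant data $\boldsymbol{\nabla}^{V,K},\star_K,\partial_{y^1}$, so $\mathcal{P}$, $\mathcal{P}^{-1}$, $\ast$ and $\nabla$ all commute with each $g_*$. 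By Theorem \ref{main-theorem} this structure is polynomial on all of $M_K$, and since $G/K$ acts by polynomial automorphisms, its invariance on the dense open $M_{K,0}'$ extends to $M_K'$. Transporting through $\pi|_{M_K'}$ then yields the Saito structure $\big(\nabla^{\pi},\ast_{\pi},\pi_*(\tfrac{1}{d_1}E_{\mathrm{deg}}^K)\big)=\big(\nabla^{\pi},\ast_{\pi},\tfrac{1}{d_1}E_{\mathrm{deg}}^G\big)$ on $M_G'$. Finally, because the covering construction of §\ref{covering-case} carries duality to duality, and $(\nabla,\ast,\cdot)$ is dual to $(\boldsymbol{\nabla}^{V,K},\star_K,\partial_{y^1})$ by Theorem \ref{thm:duality}, the induced Saito structure is dual to the induced almost Saito structure of (1); as the latter is natural for $G$, Definition \ref{def:natural}-(2) shows the former is a natural Saito structure for $G$.

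I expect the genuine difficulty to lie not in the axiom checks but in the bookkeeping that welds the $K$-side to the $G$-side. The crucial input is the $G/K$-invariance of $\partial_{y^1}$, which is precisely where Lemma \ref{lemma-semi-invariants} and thereby the hypotheses (iii) and (iv) enter; assumption (iv) is what makes the parameter and unit of the pushed-forward structure come out as $\tfrac{1}{d_1^G}$ and $\tfrac{1}{d_1^G}E_{\mathrm{deg}}^G$ rather than their $K$-versions. One must also keep careful track of the nested domains $M_{K,0}'\subset M_K'$ and $M_{G,0}\subset M_G'$, so that each induced object is asserted on the correct open set: the almost Saito structure and the duality of (1) live on $M_{G,0}$, while the polynomial Saito structure of (2), via its extension over $M_K$, is asserted on the larger set $M_G'$.
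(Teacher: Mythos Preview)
Your argument is correct and follows the paper's own route: the paper does not give a separate proof of this proposition but rather assembles all the ingredients in the paragraphs preceding it (the $G/K$-invariance of $\boldsymbol{\nabla}^{V,K}$ and $E_{\mathrm{deg}}^K$, the lemma on invariance of $\star$, Lemma \ref{lemma-semi-invariants} together with the remark that any degree $-d_1^K$ field is $G/K$-invariant, and the enumerated items (1)--(3) at the end of \S\ref{covering-case} about transport of (almost) Saito structures and their duality through $\pi$), so that the proposition is meant to be read as an immediate consequence. Your write-up makes those implicit steps explicit, including the extension of invariance from $M_{K,0}'$ to $M_K'$ via polynomiality, which the paper does not spell out.
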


\section{Irreducible finite complex reflection groups}
In this section, notations are the same as in \S \ref{orbit-space} and \S \ref{covering-case}.
\subsection{Classification}
Let $m, n \in \mathbb{Z}_{>0}$ and let $p>0$ be a divisor of $m$.
Recall that the monomial group $G(m,p,n)$ 
is defined as
the semidirect product of 
$$
A(m,p,n)=\{(\theta_1,\ldots,\theta_n)\in 
\underbrace{\mu_m\times \ldots\times\mu_m}_{\text{$n$-times}}
\mid
(\theta_1\theta_2\ldots \theta_n)^{m/p}=1\},
$$
with the symmetric group $\mathfrak{S}_n$ 
acting by permutations of the factors.
Here $\mu_m$ is the cyclic group of $m$-th roots of unity.
The irreducible finite complex reflection groups 
are classified by Shephard--Todd \cite{ShephardTodd}:
\begin{itemize}
\item $G(1,1,n)=\mathfrak{S}_n$ for $n\geq 2$ 
regarded as acting on the $(n-1)$-dimensional invariant subspace
 i.e. the Weyl group of type $A_{n-1}$, 
\item $G(m,p,n)$ for $m>1$, $n>1$ and $(m,p,n)\neq (2,2,2)$,
\item $G(m,p,1)=G(m/p,1,1)=\mu_{m/p}$,  
\item 34 exceptional cases named 
$G_4,\ldots, G_{37}$.
\end{itemize}
$G(2,2,2)$ does not appear in the list
since $G(2,2,2)=A_1\times A_1$.

\subsection{Duality groups}\label{example-well-generated}
For an irreducible finite complex reflection group $G$,
the following conditions are equivalent 
\cite{Bessis2006} \cite{OrlikSolomon}.
\begin{itemize}
\item $G$ satisfies the assumptions (i) and (ii) in \S \ref{sec:assumptions}.
\item 
$ d_{\alpha}+d_{\alpha}^*=d_1$ ($1\leq \alpha\leq n$).
\item $G$ is generated by $n$ reflections.
\end{itemize}
Such $G$ is said to be well-generated, or $G$ is called a
duality group.
The duality groups are 
the monomial groups
$G(m,1,1)\cong \mu_m$ $(m\geq 1)$,
$G(1,1,n)=\mathfrak{S}_n\cong A_{n-1}$ ($n\geq 2$),
$G(m,1,n)$ ($m,n\geq 2$) , 
$G(m,m,n)$ ($m\geq 3$ and $n\geq 2$ or $m=2$ and $n\geq 3$),
and exceptional groups 
$G_4$ to $G_{37}$ except for
$G_7,G_{11},G_{12},G_{13},G_{15},G_{19},G_{22},G_{31}$.

If $G$ is one of these groups,  $d_1>d_2$. 
So $\dim \mathcal{X}_{-d_1}=1$ and 
a choice of the nonzero vector field $e\in \mathcal{X}_{-d_1}$
is unique up to scalar multiplication.
Moreover the pair
$(\boldsymbol{\nabla}^V,e)$ is regular. 
Therefore
a natural almost Saito structure (and hence a natural Saito structure)
exists uniquely up to equivalence by Corollary \ref{main-corollary}.
This was already proved by Kato, Mano and Sekiguchi \cite{KatoManoSekiguchi2015}.
See \S \ref{sec:IP} for how their result and our formulation
are related.

In \cite[\S 5]{Arsie-Lorenzoni2016},
Arsie and Lorenzoni computed
the bi-flat $F$-manifold structures
equivalent to our natural (almost) Saito structures
when $G$ is a duality groups of rank $\leq 3$.
They also found a universal formula 
\cite[Theorem 6.1]{Arsie-Lorenzoni2016}
for the multiplication  $\star$,
which is a generalization of the formula for 
the Coxeter case (see \cite[Eq. (5.21)]{Dubrovin2004}).

For the duality groups of rank two, 
we list the natural Saito structure  in 
Tables \ref{Gmpn}, \ref{typeT}, \ref{typeO}, \ref{typeI}~.
These results agree with the computations in \cite{Arsie-Lorenzoni2016}.
From these tables, we can see that some admit
a Frobenius structure, but some do not.

\subsection{$G(m,p,n)$ which are not duality groups}
The degrees of $G(m,p,n)$ are
$$m,\, 2m,\,\ldots,\, (n-1)m,\, \frac{nm}{p}.$$
In fact, $\sigma_i:=e_{i}\left((u^1)^m,\ldots, (u^n)^m\right)$ ($1\leq i\leq n-1$) 
together with $\sigma_{n}^{m,p}:=e_n(u_1,\ldots, u_n)^{m/p}$, where $e_k$ denotes the $k$-th elementary 
symmetric polynomial, 
form a set of basic invariants.   
If $p>1$ and $n>1$, the maximal degree is $(n-1)m$
which is of multiplicity one if $(p,n)\neq (2,2)$.
In the following, we consider the cases of irreducible $G(m,p,n)$'s which are not duality groups.

\subsubsection{The case when the maximal degree has multiplicity one}\label{sec:monomial-groups}
First, we assume that either $1<p<m$ and $n\geq 3$
or $2<p<m$ and $n=2$.
In these cases, 
a natural (almost) Saito structure  is unique if exists
since the maximal degree has multiplicity one.
The group $K=G(m,m,n)$ is a normal reflection 
subgroup of $G=G(m,p,n)$ and $G/K=\mu_{m/p}$.
If $p> 1$, they satisfy the assumptions (i)--(iv).  
We apply the construction of Proposition \ref{prop:covering-case}
to $(G, K)=(G(m,p,n), G(m,m,n))$. 
Let us set 
$$x^i:=
\begin{cases}
\sigma_{n-i} \quad (1\leq i \leq n-1)\\
\sigma_n^{m, p} \quad (i=n)
\end{cases},
\qquad 
y^i:=
\begin{cases}
\sigma_{n-i} \quad (1\leq i \leq n-1) \\
\sigma_n^{m,m}\qquad (i=n)
\end{cases}.
$$
We put $d_i^G:=\mathrm{deg}\,x^i$ and $d_i^K:=\mathrm{deg}\,y^i$.
The map 
$$\pi: M_K=\mathrm{Spec}\, \mathbb{C}[y_1,\ldots, y_{n}]
\to M_G=\mathrm{Spec}\, \mathbb{C}[x_1,\ldots, x_{n}]$$
defined by $x^i=y^i$ ($1\leq i \leq n-1$) and $x^n=(y^n)^{m/p}$ 
is a $\mu_{m/p}$-covering branched at $\{x^n=0\}$.
Let $(\nabla,\ast, \frac{1}{d_1}E_{\mathrm{deg}}^K)$
be the natural polynomial Saito structure 
on $M_K$  and 
$(\nabla^{\pi},\ast_{\pi},\frac{1}{d_1}E_{\mathrm{deg}}^G)$ 
be the natural Saito structure on $M_G'$ induced by $\pi$.
Then we have the following.\\

\begin{proposition}
(i) The multiplication $\ast_{\pi}$ is defined on the whole orbit space $M_G$.\\
(ii) The connection $\nabla^{\pi}$ has a logarithmic pole along $\{x^n=0\}$.
\end{proposition}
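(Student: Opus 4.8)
The plan is to work locally near the branch divisor $\{x^n=0\}$, where $\pi$ is simply $x^i=y^i$ for $i<n$ and $x^n=(y^n)^k$ with $k:=m/p\ge 2$, so that its Jacobian is the diagonal matrix $J_\pi=\mathrm{diag}(1,\dots,1,k(y^n)^{k-1})$. Upstairs the natural Saito structure $(\nabla,\ast,\tfrac{1}{d_1}E^K_{\mathrm{deg}})$ is polynomial on all of $M_K$ by Theorem \ref{main-theorem}. First I would record the push-forward formulas: writing the $K$-structure constants as $C_{\alpha\beta}^\gamma$ and the Christoffel symbols of $\nabla$ in the $y$-basis as $\Gamma_{\alpha\beta}^\gamma$, the induced data on $M_G'$ read, in the $x$-coordinates,
\begin{equation}\nonumber
(C_\pi)_{\alpha\beta}^\gamma=(J_\pi^{-1})^{\alpha}_{\alpha}(J_\pi)^{\gamma}_{\gamma}(J_\pi^{-1})^{\beta}_{\beta}\,C_{\alpha\beta}^\gamma,\qquad
(\Gamma_\pi)_{\alpha\beta}^\gamma=(J_\pi^{-1})^{\alpha}_{\alpha}(J_\pi)^{\gamma}_{\gamma}(J_\pi^{-1})^{\beta}_{\beta}\,\Gamma_{\alpha\beta}^\gamma+I_{\alpha\beta}^\gamma,
\end{equation}
where the inhomogeneous term $I_{\alpha\beta}^\gamma$ (coming from $\partial^2 y^\gamma/\partial x^\alpha\partial x^\beta$) is nonzero only for $\alpha=\beta=\gamma=n$, for which a direct computation gives $I_{nn}^n=\tfrac{1-k}{k}(x^n)^{-1}$.

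The crucial point is an extra symmetry of the $K$-structure. The diagonal matrix $\phi=\mathrm{diag}(\zeta_m,1,\dots,1)\in G(m,1,n)$ normalises $K=G(m,m,n)$ and hence descends to an automorphism of $M_K$, acting by $y^n\mapsto\zeta_m y^n$ and fixing $y^1,\dots,y^{n-1}$. Since $\phi$ preserves the trivial connection on $TV$ it preserves $\boldsymbol{\nabla}^{V,K}$, and it fixes $\partial_{y^1}$; as $d_1^K>d_2^K$ the natural Saito structure is unique up to equivalence (Corollary \ref{main-corollary}), so $\phi$ preserves $(\nabla,\ast)$ exactly. Consequently, setting $\varepsilon_i=1$ if $i=n$ and $\varepsilon_i=0$ otherwise and $s:=\varepsilon_\gamma-\varepsilon_\alpha-\varepsilon_\beta$, every monomial of $C_{\alpha\beta}^\gamma$ and of $\Gamma_{\alpha\beta}^\gamma$ has $y^n$-degree congruent to $s$ modulo $m$.

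Now I would carry out the pole bookkeeping. A monomial of $y^n$-degree $j$ contributes to $(C_\pi)_{\alpha\beta}^\gamma$ a term of $y^n$-exponent $j+(k-1)s$; since $k\mid m$ and $j\equiv s\ (\mathrm{mod}\ m)$, this exponent is divisible by $k$, so the term is an integral power $(j+(k-1)s)/k$ of $x^n=(y^n)^k$ times a polynomial in $x^1,\dots,x^{n-1}$. For $s\ge 0$ the exponent is non-negative; for $s\in\{-1,-2\}$ the smallest admissible value is $j=m+s$, for which the $y^n$-exponent equals $m+s+(k-1)s=k(p+s)$, i.e. the $x^n$-exponent is $p+s\ge p-2\ge 0$ because $p\ge 2$. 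Hence $(C_\pi)_{\alpha\beta}^\gamma\in\mathbb{C}[x]$ for all $\alpha,\beta,\gamma$, so $\ast_\pi$ extends holomorphically across $\{x^n=0\}$ and is defined on all of $M_G$, proving (i). The identical estimate shows that the homogeneous part of $(\Gamma_\pi)_{\alpha\beta}^\gamma$ is regular, so the only singularity of $\nabla^\pi$ is the inhomogeneous term $I_{nn}^n=\tfrac{1-k}{k}(x^n)^{-1}\neq 0$, a first-order pole along $\{x^n=0\}$; thus $\nabla^\pi$ is logarithmic, proving (ii).

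The main obstacle is the middle step: recognising and justifying the $\mu_m$-symmetry $\phi$ and the resulting congruence $j\equiv s\ (\mathrm{mod}\ m)$. The naive $\mu_{m/p}=G/K$-weight (congruence modulo $k$) is too weak — it allows a monomial of $y^n$-degree $k-2$ in $C_{nn}^\gamma$ for $\gamma<n$, which would produce a simple pole of $\ast_\pi$ — and it is precisely the uniqueness argument promoting this to a congruence modulo $m=kp$ that forces the bound $p+s\ge 0$, making the multiplication genuinely regular while leaving the connection with its unavoidable logarithmic pole.
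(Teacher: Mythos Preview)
Your proof is correct and takes a genuinely different route from the paper's.

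The paper works only with the $\mu_{m/p}=G/K$-invariance, which gives the weaker congruence $j\equiv s\pmod{k}$. As you yourself note, that congruence is too weak in the case $s=-2$, so the paper has to supply two separate patches: for the multiplication it uses a direct degree estimate $\deg C_{nn}^{\gamma}=d_1^G+d_\gamma^G-2d_n^G>0$, and for the connection it proves the stronger vanishing $\Gamma_{nn}^{\gamma}=0$ via an auxiliary lemma (Lemma~\ref{lem:vanishing}) that compares the discriminants of $G(m,m,n)$ and $G(m,1,n)$ and appeals to Proposition~\ref{AP}. Your single observation---that $\phi=\mathrm{diag}(\zeta_m,1,\dots,1)\in G(m,1,n)$ normalises $K$, fixes $\partial_{y^1}$, and hence preserves the natural Saito structure exactly---upgrades the congruence to $j\equiv s\pmod{m}$ and handles both the multiplication and the homogeneous part of the connection uniformly, with the single bound $p+s\ge p-2\ge 0$. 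This is more conceptual and avoids Lemma~\ref{lem:vanishing} entirely; the price is that you do not obtain the sharper statement $\Gamma_{nn}^{\gamma}=0$ that the paper records.

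Two minor remarks. First, your appeal to $d_1^K>d_2^K$ and Corollary~\ref{main-corollary} is more than you need: since $\phi$ preserves $\boldsymbol{\nabla}^{V,K}$ and $e=\partial_{y^1}$, regularity alone (the multiplication $\star$ is determined by \eqref{mult}) already forces $\phi$-invariance of the almost Saito structure and hence of its dual $(\nabla,\ast)$. Second, your constant $\tfrac{1-k}{k}$ in $I_{nn}^n$ is in fact the correct value; the paper's displayed $\tfrac{1}{x^n}$ omits this harmless factor.
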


\begin{proof}
(i) Define $C_{i j}^{k}$ (resp. $(C_{\pi})_{\alpha \beta}^{\gamma}$) by
$$\partial_{y^{i}} \ast \partial_{y^{j}} =\sum_{k} C_{ij}^k \partial_{y^{k}}
\quad
\left( 
\mathrm{resp.}~~
\partial_{x^{\alpha}} \ast \partial_{x^{\beta}} =\sum_{\gamma} (C_{\pi})_{\alpha \beta}^{\gamma} \partial_{x^{\gamma}}
\right)~.
$$
We show that $(C_{\pi})_{\alpha \beta}^{\gamma}$ is a polynomial 
in $x$. It is easy to see that
$$
(C_{\pi})_{\alpha \beta}^{\gamma} 
=
\sum_{i,j,k} 
C_{ij}^k
\dfrac{\partial y^i} {\partial x^{\alpha}} \dfrac{\partial y^j}{\partial x^{\beta}} \dfrac{\partial x^{\gamma}}{\partial y^k}
=\left\{
\begin{array}{ll}
C_{\alpha \beta}^{\gamma} & \mathrm{if}~~\alpha \neq n,~\beta\neq n,~\gamma\neq n
\\
C_{\alpha \beta}^{n} \frac{m}{p} (y^n)^{m/p-1} & \mathrm{if}~~\alpha \neq n,~\beta\neq n,~\gamma= n
\\
C_{n \beta}^{\gamma} \frac{1}{m/p} (y^n)^{1-m/p} & \mathrm{if}~~\alpha = n,~\beta\neq n,~\gamma\neq n
\\
C_{n \beta}^{n}  & \mathrm{if}~~\alpha = n,~\beta\neq n,~\gamma= n
\\
C_{n n}^{\gamma} \frac{1}{(m/p)^2} (y^n)^{2(1-m/p)} & \mathrm{if}~~\alpha = n,~\beta= n,~\gamma\neq n
\\
C_{n n}^{n} \frac{1}{m/p} (y^n)^{1-m/p} & \mathrm{if}~~\alpha = n,~\beta = n,~\gamma= n
\end{array}\right.
~.
$$
The omitted cases are equivalent to one of the above cases under the symmetry of $\alpha$ and $\beta$.
On the other hand, $\mu_{m/p}$-invariance of the multiplication $\ast$ implies that 
$(y^n)^k$ appears in the polynomial $C_{\alpha \beta}^{\gamma}$ only if 
$$k\equiv 
\left\{
\begin{array}{ll}
0& \mathrm{if}~~\alpha \neq n,~\beta\neq n,~\gamma\neq n
\\
1& \mathrm{if}~~\alpha \neq n,~\beta\neq n,~\gamma= n
\\
\frac{m}{p}-1& \mathrm{if}~~\alpha = n,~\beta\neq n,~\gamma\neq n
\\
0& \mathrm{if}~~\alpha = n,~\beta\neq n,~\gamma= n
\\
\frac{m}{p}-2& \mathrm{if}~~\alpha = n,~\beta= n,~\gamma\neq n
\\
\frac{m}{p}-1& \mathrm{if}~~\alpha = n,~\beta = n,~\gamma= n
\end{array}\pmod{\frac{m}{p}}~. \right.
$$
Comparing the above two equations, it is clear that $C_{\alpha \beta}^{\gamma}$ is a
polynomial in $x^n$ except for the case $\alpha = n,~\beta= n,~\gamma\neq n$. 
To see that $C_{nn}^{\gamma}$ ($\gamma\neq n$) is a polynomial, it is enough to 
show that $\mathrm{deg}~C_{nn}^{\gamma}>(\frac{m}{p}-2)d_n^K$.
This holds since
$$\mathrm{deg}~C_{nn}^{\gamma}-(\frac{m}{p}-2)d_n^K
=\left(d_1^K + d_{\gamma}^K -2d_n^K\right)-(\frac{m}{p}-2)d_n^K
=d_1^G + d_{\gamma}^G-d_n^G>0~.$$
Note that the last inequality holds for any $\gamma \neq n$ if and only if $p>1$ .
\\
(ii) Let $\Gamma_{i j}^{k}$ (resp. $(\Gamma^{\pi})_{\alpha \beta}^{\gamma}$)
be the Christoffel symbol of $\nabla$ (resp. $\nabla^{\pi}$) defined by
$$\nabla_{\partial_{y^{i}}} \partial_{y^{j}} =\sum_{k} \Gamma_{ij}^k \partial_{y^{k}}
\quad
\left( 
\mathrm{resp.}~~
\nabla^{\pi}_{\partial_{x^{\alpha}}} \partial_{x^{\beta}} =\sum_{\gamma} (\Gamma^{\pi})_{\alpha \beta}^{\gamma} \partial_{x^{\gamma}}
\right)~.
$$
We have
$$
(\Gamma^{\pi})_{\alpha \beta}^{\gamma} 
=
\sum_{i,j,k} 
\Gamma_{ij}^k
\dfrac{\partial y^i} {\partial x^{\alpha}} \dfrac{\partial y^j}{\partial x^{\beta}} \dfrac{\partial x^{\gamma}}{\partial y^k}
+\sum_{l} \dfrac{\partial^2y^l}{\partial x^{\alpha} \partial x^{\beta}} \dfrac{\partial x^{\gamma}}{\partial y^l}
=\Gamma_{\alpha \beta}^{\gamma}
\dfrac{\partial y^{\alpha}}{\partial x^{\alpha}} \dfrac{\partial y^{\beta}}{\partial x^{\beta}} \dfrac{\partial x^{\gamma}}{\partial y^{\gamma}}
+\dfrac{\partial^2y^{\gamma}}{\partial x^{\alpha} \partial x^{\beta}} \dfrac{\partial x^{\gamma}}{\partial y^{\gamma}}~.
$$
By the same argument as above, 
the $\mu_{m/p}$-invariance of $\nabla$ implies that the first term 
of the above equation is a polynomial except for the case $\alpha = n,~\beta= n,~\gamma\neq n$. 
But in fact one can show that $\Gamma_{nn}^{\gamma}=0$ for any $\gamma$, 
see Lemma \ref{lem:vanishing} below. 
So the first term is regular for any $\alpha$, $\beta$, $\gamma$.  
For the second term,  it is clear that the term is zero unless 
$\alpha=\beta=\gamma=n$ and
$$
\dfrac{\partial^2y^{n}}{\partial x^{n} \partial x^{n}} \dfrac{\partial x^{n}}{\partial y^{n}}=\frac{1}{x^n}~.
$$
Hence $(\Gamma^{\pi})_{\alpha \beta}^{\gamma}$ is regular unless
$\alpha=\beta=\gamma=n$ and $(\Gamma^{\pi})_{n n}^{n}$ has 
a logarithmic pole along $\{x^n=0\}$.
\end{proof}

\begin{lemma}\label{lem:vanishing}
Under the same notation as in the above proof, 
we have $\Gamma_{nn}^{\gamma}=0$ for any $\gamma$. 
\end{lemma}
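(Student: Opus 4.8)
The plan is to exploit a symmetry of $M_K$ larger than the $\mu_{m/p}=G/K$ action, namely the $\mu_m$ action coming from the inclusion $K=G(m,m,n)\triangleleft G(m,1,n)$ with $G(m,1,n)/K\cong \mu_m$. A generator of this $\mu_m$ is realized by $\mathrm{diag}(\zeta_m,1,\dots,1)\in G(m,1,n)$, which fixes $(u^i)^m$ for all $i$ and sends $u^1\cdots u^n\mapsto \zeta_m\,u^1\cdots u^n$; hence on $M_K$ it fixes $y^1,\dots,y^{n-1}$ and multiplies $y^n=e_n(u)$ by $\zeta_m$. First I would check that the natural Saito connection $\nabla$ is $\mu_m$-invariant: the connection $\boldsymbol{\nabla}^{V,K}$ is induced from the trivial connection on $TV$ and is therefore invariant under every element of $G(m,1,n)$, while $e=\partial_{y^1}$ is $\mu_m$-invariant because $y^1$ is $G(m,1,n)$-invariant. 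Since $(\nabla,\ast,\tfrac{1}{d_1}E_{\mathrm{deg}}^K)$ is built canonically from $(\boldsymbol{\nabla}^{V,K},e)$ through Proposition \ref{ass2ss}, it inherits $\mu_m$-invariance, and consequently each Christoffel symbol $\Gamma_{ij}^k$, defined by $\nabla_{\partial_{y^i}}\partial_{y^j}=\sum_k\Gamma_{ij}^k\partial_{y^k}$, carries a definite $\mu_m$-weight.

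Next I would combine two constraints on $\Gamma_{nn}^\gamma$. By \eqref{def-Gamma} and Lemma \ref{BL2}, the $\Gamma_{ij}^k$ are polynomials in $y^2,\dots,y^n$ with no dependence on $y^1$, and they are weighted homogeneous of degree $d_k^K-d_i^K-d_j^K$; thus $\deg\Gamma_{nn}^\gamma=d_\gamma^K-2d_n^K=d_\gamma^K-2n$. For $\gamma=n$ this degree equals $-n<0$, so $\Gamma_{nn}^n=0$ (equivalently by \eqref{BL2-1}). For the $\mu_m$-weight, assigning weight $1$ to $y^n$ and $0$ to $y^1,\dots,y^{n-1}$ gives $w(\partial_{y^n})=-1$ and $w(\partial_{y^\gamma})=0$ for $\gamma\neq n$; reading off weights in $\nabla_{\partial_{y^n}}\partial_{y^n}=\sum_\gamma\Gamma_{nn}^\gamma\partial_{y^\gamma}$, the polynomial $\Gamma_{nn}^\gamma$ must have weight $-2$, i.e. in every monomial the exponent $a_n$ of $y^n$ satisfies $a_n\equiv m-2\pmod m$, forcing $a_n\ge m-2$.

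Finally, for $\gamma<n$ I would derive a contradiction from these two constraints. Since $d_\gamma^K=(n-\gamma)m$ and all other exponents are non-negative, the contribution $a_n\cdot n$ cannot exceed the total weighted degree, so $a_n\le \frac{(n-\gamma)m-2n}{n}=(n-\gamma)\tfrac{m}{n}-2<m-2$, using $\gamma\ge 1$. This contradicts $a_n\ge m-2$, so no monomial survives and $\Gamma_{nn}^\gamma=0$. The argument is non-vacuous because the hypotheses of \S\ref{sec:monomial-groups} (either $1<p<m$ with $p\mid m$, or $2<p<m$) force $m$ to be composite, hence $m\ge 4$ and $m-2\ge 2>0$.

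The main obstacle is the symmetry input: the weaker $\mu_{m/p}$-invariance used elsewhere in this subsection is insufficient here (for example, with $m/p=2$ it only forces $a_n$ to be even, which does not exclude $a_n=2$), so the key point is to promote the symmetry to the full $\mu_m$ coming from $G(m,1,n)$ and to verify carefully that the dual connection $\nabla$—and not merely $\boldsymbol{\nabla}^{V,K}$—really inherits this larger invariance.
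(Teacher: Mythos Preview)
Your proof is correct, and its key idea coincides with the paper's: both exploit the extra $\mu_m$ symmetry coming from the inclusion $K=G(m,m,n)\triangleleft G(m,1,n)$ (not just the smaller $\mu_{m/p}$ from $G(m,p,n)$), together with the degree bound $\deg\Gamma_{nn}^{\gamma}=d_{\gamma}^K-2d_n^K$, to force $\Gamma_{nn}^{\gamma}=0$.

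The implementations differ. The paper works at the level of the \emph{natural} connection $\boldsymbol{\nabla}^{V}$: it relates $(\Omega^G)_{nn}^{\gamma}$ for $G=G(m,1,n)$ to $(\Omega^K)_{nn}^{\gamma}$ through the covering $x^n=(y^n)^m$, invokes Proposition~\ref{AP} to get $\Delta_G(\Omega^G)_{nn}^{\gamma}\in\mathbb{C}[x]$, and reads off that $\Delta_K(\Omega^K)_{nn}^{\gamma}$ (and hence its top $y^1$-coefficient $\Gamma_{nn}^{\gamma}$) is divisible by $(y^n)^{m-2}$; the degree bound then finishes. You instead work directly at the level of the \emph{Saito} connection $\nabla$: you argue that the invariance of $\boldsymbol{\nabla}^{V,K}$ and of $e=\partial_{y^1}$ under the full $\mu_m$ passes through the duality (regularity plus Proposition~\ref{ass2ss}) to $\nabla$, and then read off the congruence $a_n\equiv m-2\pmod m$ on the $y^n$-exponent in $\Gamma_{nn}^{\gamma}$.

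What each buys: your route is more conceptual and avoids the explicit manipulation of $\Omega$ and $\Delta$; the paper's route is purely computational and does not need to verify that the dual construction preserves the enlarged symmetry. Both land on the same constraint ($y^n$-exponent $\ge m-2$) and the same degree contradiction. Your closing remark that $m\ge 4$ in the context of \S\ref{sec:monomial-groups} is correct, though in fact the combined weight/degree argument already works for all $m\ge 2$ (for $m=2$ the degree alone is negative).
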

\begin{proof}
Let $K=G(m,m,n)$ and $G=G(m,1,n)$.
Note that their discriminants are related as
$\Delta_G=x^n\Delta_K$
under $x^i=y^i$ ($1\leq i\leq n-1$) and $x^n=(y^n)^m$.
Let $(\Omega^K)_{\alpha \beta}^{\gamma}$
(resp. $(\Omega^G)_{\alpha \beta}^{\gamma}$)
be the Christoffel symbol of the natural connection 
$\boldsymbol{\nabla}^{V,K}$ (resp. $\boldsymbol{\nabla}^{V,G}$)
on $M_{K,0}$ (resp. $M_{G,0}$).
For $\gamma\neq n$, we have
$$(\Omega^G)_{n n}^{\gamma}
=\frac{1}{m^2} (y^n)^{2(1-m)}
(\Omega^K)_{n n}^{\gamma}.$$
So we have
$$
\Delta_G(\Omega^G)_{n n}^{\gamma}
=\frac{1}{m^2} (y^n)^{2-m}
\Delta_K
(\Omega^K)_{n n}^{\gamma}.$$
By Proposition \ref{AP}, we have $\Delta_G(\Omega^G)_{n n}^{\gamma}\in \mathbb{C}[x]$.
It follows that $\Delta_K(\Omega^K)_{n n}^{\gamma}$
is divisible by $(y^n)^{m-2}$.
It then follows that $\Gamma_{nn}^{\gamma}$ is also 
divisible by $(y^n)^{m-2}$, see \eqref{def-Gamma}.
In particular, if 
$\mathrm{deg}\, \Gamma_{nn}^{\gamma}=d_{\gamma}^K-2d_n^K<(m-2)d_n^K$
then $\Gamma_{nn}^{\gamma}=0$.
The above inequality certainly holds for $\gamma\neq n$
since it is equivalent to $d_{\gamma}^K<md_n^K=mn$.
Thus we have $\Gamma_{nn}^{\gamma}=0$ for $\gamma\neq n$.
By the degree reason, we also have $\Gamma_{nn}^{n}=0$.
\end{proof}

\begin{remark}
One can also consider the case $p=1$, i.e. the case
when $G=G(m,1,n)$ and  $K=G(m,m,n)$.
In this case too, the $\mu_{m}$-covering 
$\pi: M_K=\mathrm{Spec}\, \mathbb{C}[y_1,\ldots, y_{n}]
\to M_G=\mathrm{Spec}\, \mathbb{C}[x_1,\ldots, x_{n}]$
branched along $\{x^n=0\}$
induces a Saito structure on $M_G'=M_G\setminus \{x^n=0\}$. However this is not a 
natural Saito structure for $G$ since
the maximal degrees of $G$ and $K$ are different.
Moreover both the multiplication and the connection have a
logarithmic pole along  $\{x^n=0\}$.
\end{remark}
\subsubsection{The case when the maximal degree has multiplicity two: 
$G(2k,2,2)$ with $k>1$}\label{seq:G2k22}
The degrees of $G(2k, 2, 2)$ satisfy $d_1=d_2$,
so the space of  the vector fields $\mathcal{X}_{-d_1}$ 
of degree $-d_1$  is two-dimensional.
We can see that $(\boldsymbol{\nabla}^V,e)$
is regular for any nonzero $e\in \mathcal{X}_{-d_1}$.
We can also see that 
there exist only three lines $l_1,l_2,l_3\subset \mathcal{X}_{-d_1}$
such that $e\in l_i$ ($e\neq 0$) and $\boldsymbol{\nabla}^V$ make 
a natural almost Saito structure. 
They are the ones induced by 
branched covering maps $\pi:M_K\to M_{G(2k,2,2)}$
from the orbit spaces of certain duality groups $K$.
See Table \ref{table:2k-2-2}
for the covering maps.
The connections of the natural Saito structures are logarithmic along the 
ramification loci of the branched covering maps.
However,  one can check that all the multiplications are 
defined over the whole space $M_{G(2k,2,2)}$.

\subsection{Exceptional groups which are not duality groups}
\subsubsection{Nonexistence of natural Saito structures for
$G_{12},G_{13},G_{22}$ and $G_{31}$}
\label{example-nonexistence}
Let $G$ be one of 
$G_{12},G_{13},G_{22}$ (of rank $n=2$)
and $G_{31}$ (of rank $n=4$). 
The degrees satisfy $d_1>d_2$.
So a choice of 
a nonzero vector field $e=\partial_{x^1}$ 
of degree $-d_1$ is unique up to
scalar multiplication.
The discriminant $\Delta$ of $G$ is monic of degree $n+1$ in $x^1$ (and hence $G$ is not a duality group).

The pair
$(\boldsymbol{\nabla}^V,e) $ is regular
since 
the matrix representation $\Omega_1$ 
of $\boldsymbol{\nabla}^V e$ is invertible on $M_0$.
Then the multiplication $\star$ on $TM_0$ defined by ({\bf ASS4}) or \eqref{mult}
is represented by matrices (see \eqref{ass4-3})
\begin{equation}\nonumber
B_{\alpha}=-\Omega_1^{-1}\partial_{x^1} \Omega_{\alpha}
\quad (1\leq \alpha\leq n)~.
\end{equation}
We can check that these $B_{\alpha}$'s  do not satisfy
({\bf ASS2}) (or \eqref{ass2-3}).
Thus by Lemma \ref{uniqueness2},
$G$ does not admit any natural almost Saito structure,
hence it does not admit any natural Saito structure.

\subsubsection{The case of $G_{15}$}
\label{example-covering1}
The group $G_{15}$ satisfies the condition $d_1>d_2$.
Therefore a natural almost Saito structure  is unique if exists.
It contains $G_{14}$ as a normal subgroup
and the pair $(G,K)=(G_{15},G_{14})$ satisfy  the assumptions (i)--(iv).
See Table \ref{table:G15}.
Therefore by Proposition \ref{prop:covering-case},
the branched covering map $\pi:M_{G_{14}}\to M_{G_{15}}$ induces a unique natural 
almost Saito structure and a unique natural Saito structure for $G_{15}$
from that for $G_{14}$.
So the situation is similar to the cases in \S \ref{sec:monomial-groups}.
One can check that the connection of 
the natural Saito structure for $G_{15}$ is logarithmic along the 
branch locus of $\pi$ but
the multiplication is defined all over the orbit space $M_{G_{15}}$.

\subsubsection{The cases of $G_7,G_{11}$ and $G_{19}$}\label{example-covering2}
These rank $2$ groups are not duality groups.
Let $G$ be one of the three groups.
The degrees of $G$ satisfy $d_1=d_2$. 
We can see that $(\boldsymbol{\nabla}^V,e)$
is regular for any nonzero $e\in \mathcal{X}_{-d_1}$.
So the situation is completely analogous to 
that of $G(2k,2,2)$ with $k>1$ (cf. \S \ref{seq:G2k22}).
One can show that 
there exist only three lines $l_1,l_2,l_3$
in the $2$-dimensional space  $\mathcal{X}_{-d_1}$
such that $e\in l_i$ ($e\neq 0$) and $\boldsymbol{\nabla}^V$ make 
a natural almost Saito structure. 
They are the ones induced by 
branched covering maps $\pi:M_K\to M_G$
from the orbit spaces of certain duality groups $K$.
See Tables \ref{table:G7}, \ref{table:G11},
\ref{table:G19}. 
The connections of the natural Saito structures 
are logarithmic along the 
branch loci but the multiplications are
defined over the whole space $M_G$.

\appendix
\section{Proofs of Proposition \ref{AP} and Lemma \ref{IP}}
\label{appendix:proofs}
In this section, notations are the same as in \S \ref{CRGs}.

Let us fix a reflection hyperplane of $G$, say, $H_1\in \mathcal{A}$
and set $e_1=e_{H_1}$, the order of the cyclic subgroup  $G_{H_1}$
preserving $H_1$ pointwise.
Let us fix a basis $\bm{e}_1,\ldots,\bm{e}_n$ of $V$
as follows: $\bm{e}_1$ is an eigenvector of $g\in G_{H_1}$ ($g\neq \mathrm{Id}$)
whose eigenvalue is not one; $\bm{e}_2,\ldots,\bm{e}_n$ form a basis
of $H_1$ which is the eigenspace  of $g$ with eigenvalue one.
Denote by  $(v_1,\ldots,v_n)$
the associated complex coordinates of $V$. 
It is clear that $L_{H_1}=\text{const.}\, v_1$.

\subsection{Lemmas}
Let us set
$$
\Pi'=\prod_{H\in \mathcal{A}\setminus\{H_1\}} L_H^{e_H-1}
=\frac{\Pi}{L_{H_1}^{e_1-1}}~.
$$
\begin{lemma}\label{AL1}
$\Pi'$ is $G_{H_1}$-invariant. 
\end{lemma}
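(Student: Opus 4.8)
The plan is to show that $\Pi'$ is a semi-invariant for the cyclic group $G_{H_1}$ and that the associated character is trivial. Let $g$ be a generator of $G_{H_1}$; by the choice of basis it fixes $H_1$ pointwise and has $\bm{e}_1$ as eigenvector with eigenvalue $\lambda$, an $e_1$-th root of unity, so that $\det(g)=\lambda$ (the remaining eigenvalues, on $H_1$, are all $1$). Since $L_{H_1}=\mathrm{const.}\,v_1$ and $v_1$ is the coordinate dual to $\bm{e}_1$, the contragredient action gives $g(L_{H_1})=\lambda^{-1}L_{H_1}$; in particular $L_{H_1}^{e_1-1}$ is a $G_{H_1}$-semi-invariant. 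It suffices to prove $g(\Pi')=\Pi'$, since $g$ generates $G_{H_1}$.

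First I would exploit the factorization $\Pi=L_{H_1}^{e_1-1}\,\Pi'$ together with the skew-invariance $g(\Pi)=\det(g)\,\Pi$ recorded after \eqref{def:delta}. Both $\Pi$ and $L_{H_1}^{e_1-1}$ are eigenvectors for the $G_{H_1}$-action on the field of rational functions, hence so is their quotient $\Pi'$, and its character can be computed directly:
\begin{equation}\nonumber
g(\Pi')=\frac{g(\Pi)}{g(L_{H_1})^{e_1-1}}
=\frac{\det(g)\,\Pi}{\lambda^{-(e_1-1)}L_{H_1}^{e_1-1}}
=\lambda\cdot\lambda^{\,e_1-1}\,\frac{\Pi}{L_{H_1}^{e_1-1}}
=\lambda^{e_1}\,\Pi'.
\end{equation}
As $\lambda^{e_1}=1$, this yields $g(\Pi')=\Pi'$, which is the assertion of the lemma.

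As a coordinate-free confirmation of the last step, once one knows $\Pi'$ is a $G_{H_1}$-semi-invariant, say $g(\Pi')=\chi\,\Pi'$, the value $\chi=1$ can also be read off by restricting to $H_1$: since $g$ acts as the identity on $H_1$, one has $(g(\Pi'))|_{H_1}=\Pi'|_{H_1}$, while $\Pi'|_{H_1}\neq 0$ because no factor $L_H$ with $H\neq H_1$ vanishes identically on $H_1$ (two distinct hyperplanes cannot contain one another). Comparing forces $\chi=1$. I record this as an alternative mainly because it explains why semi-invariance is the essential structural input.

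The computation is routine, and I do not expect a genuine obstacle. The one point demanding care is the eigenvalue bookkeeping: keeping the conventions for the $G$-action on $\mathbb{C}[V]$ consistent so that $\det(g)=\lambda$ and $g(L_{H_1})=\lambda^{-1}L_{H_1}$ combine to exactly the power $\lambda^{e_1}$. The more naive route — arguing directly that $g$ permutes the forms $L_H$ ($H\neq H_1$) up to scalars while preserving the exponents $e_H$ — would require checking that those permutation scalars contribute no net factor; the quotient argument above sidesteps this entirely by invoking the already-established skew-invariance of $\Pi$.
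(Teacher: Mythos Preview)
Your proof is correct and is essentially identical to the paper's own argument: both compute $g(\Pi')=g(\Pi)/g(L_{H_1})^{e_1-1}$ using the skew-invariance $g(\Pi)=\det(g)\,\Pi$ together with $g(L_{H_1})=\lambda^{-1}L_{H_1}$, and observe that the resulting scalar is $\lambda^{e_1}=1$. Your restriction-to-$H_1$ remark is a nice additional sanity check not present in the paper, but the core computation is the same.
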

\begin{proof}
The action of  $g\in G_{H_1}$ can be written, with
some $e_1$-th root of unity $\mu$,  as 
$g \bm{e}_1=\mu \bm{e}_1$ and 
$g\bm{e}_i= \bm{e}_i$ $ (i\neq 1)$.
Therefore $g(v_1)=\mu^{-1} v_1$, $g(v_i)=v_i$ ($i\neq 1$).
Since $\Pi$ is skew-invariant, $g(\Pi)=\mu \Pi$. 
Therefore $g(\Pi')=g(\Pi)/g(L_{H_1})^{e_1-1}=g(\Pi')$.
\end{proof}

For the sake of convenience, 
we introduce the notation
\begin{equation}\nonumber
f\succ g\stackrel{\mathrm{def.}}{\Longleftrightarrow}
\frac{f}{g}\in \mathbb{C}(v_2,\ldots,v_n)[v_1^{e_1}]
\quad \big(f,g\in \mathbb{C}(v_1,\ldots,v_n)\big)~.
\end{equation}
Note the following facts.
\begin{itemize}
\item If $f_1\succ g_1$ and $ f_2\succ g_2$
then $ f_1 f_2\succ g_1g_2$.
\item If $f\in \mathbb{C}[v]$ is a $G_{H_1}$-invariant polynomial,
then $f\succ 1$.
\item If  $f\succ 1$ then
\begin{equation}\nonumber
\frac{\partial f}{\partial v_1}\succ v_1^{e_1-1}~,\quad
\frac{\partial f}{\partial v_i}\succ 1\quad (i\neq 1)~.
\end{equation}
\item If $f\succ v^p$ ($p\neq 0$), then
\begin{equation}\nonumber
\frac{\partial f}{\partial v_1}\succ v_1^{p-1}~,\quad
\frac{\partial f}{\partial v_i}\succ v^p \quad (i\neq 1)~.
\end{equation}
\end{itemize} 

For $1\leq \alpha\leq n$ and $1\leq i\leq n$, we put
\begin{equation}\nonumber
w_{\alpha}^i=\frac{\partial v^i}{\partial x^{\alpha}}~,
\quad
z_{\alpha,j}^i=\frac{\partial w_{\alpha}^i}{\partial v^j}~,
\quad
z_{\alpha}=(z_{\alpha,j}^i)_{i,j}~.
\end{equation}
\begin{lemma}\label{AL3}
\begin{eqnarray} 
\label{AL3-1}
\frac{\partial x^{\alpha}}{\partial v_i}&\succ&
\begin{cases}
v_1^{e_1-1}&(i=1)\\1&(i\neq 1)
\end{cases}~.
\\\label{AL3-2}
w_{\alpha}^i &\succ&
 \begin{cases}v_1^{1-e_1}&(i=1)
\\1&(i\neq 1)
\end{cases}~.
\\\label{AL3-3}
z_{\alpha,j}^i &\succ& 
\begin{cases}
v_1^{-e_1}&(i=j=1)\\
v_1^{1-e_1}&(i=1,j\neq 1)\\
v_1^{e_1-1}&(i\neq 1,j=1)\\
1&(i\neq 1,j\neq 1)
\end{cases}~.
\\\label{AL3-4}
\det z_{\alpha} &\succ& v_1^{-e_1} ~.
\\\label{AL3-5}
{\left(z_{\alpha}^{-1}\right)^j}_i &\succ&
\frac{1}{\det z_{\alpha}}\times
\begin{cases}
1&(i=j=1)\\
v_1^{e_1-1}&(i=1,j\neq 1)\\
v_1^{1-e_1}&(i\neq 1,j=1)\\
v_1^{-e_1}&(i,j\neq 1)
\end{cases}
\end{eqnarray}
In \eqref{AL3-5}, we assume $\det z_{\alpha}\neq 0$.
\end{lemma}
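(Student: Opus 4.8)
The plan is to read the symbol $\succ$ as a bookkeeping device for the character of the cyclic group $G_{H_1}$. Writing the generator of $G_{H_1}$ as $v_1\mapsto \mu^{-1}v_1$, $v_i\mapsto v_i$ $(i\neq 1)$ exactly as in the proof of Lemma \ref{AL1}, the condition $f\succ v_1^p$ says that $f/v_1^p$ lies in $\mathbb{C}(v_2,\ldots,v_n)[v_1^{e_1}]$, i.e. that $f/v_1^p$ is $G_{H_1}$-invariant, equivalently that $f$ is a $G_{H_1}$-eigenfunction carrying the same character as $v_1^p$; and in general $f\succ g$ means $f/g$ is $G_{H_1}$-invariant. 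In this language the four bullet facts preceding the statement are precisely the rules of eigencharacter arithmetic: characters add under multiplication, an invariant polynomial has trivial character, $\partial_{v_1}$ sends an $f\succ v_1^p$ to $\partial_{v_1}f\succ v_1^{p-1}$, and $\partial_{v_i}$ $(i\neq 1)$ preserves the class. With this dictionary \eqref{AL3-1} is immediate: each $x^\alpha$ is $G$-invariant, hence $G_{H_1}$-invariant, so $x^\alpha\succ 1$, and differentiating gives $\partial x^\alpha/\partial v_1\succ v_1^{e_1-1}$ and $\partial x^\alpha/\partial v_i\succ 1$ for $i\neq 1$.

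For \eqref{AL3-2} I would use that the matrix $(w_\alpha^i)$ is the inverse of the Jacobian matrix $(\partial x^\alpha/\partial v_i)$ and express each entry through the adjugate, $w_\alpha^i=\pm\, M_{\alpha i}/\det(\partial x^\beta/\partial v_j)$, where $M_{\alpha i}$ is the minor obtained by deleting row $\alpha$ and column $i$. The denominator is controlled by the identity $\det(\partial x^\beta/\partial v_j)=\mathrm{const.}\cdot\Pi=\mathrm{const.}\cdot v_1^{e_1-1}\Pi'$ together with the $G_{H_1}$-invariance of $\Pi'$ (Lemma \ref{AL1}), which gives $\det(\partial x^\beta/\partial v_j)\succ v_1^{e_1-1}$. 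For the numerator, each factor of a term of $M_{\alpha i}$ is an entry $\partial x^\beta/\partial v_j$, carrying character $v_1^{e_1-1}$ if $j=1$ and $1$ otherwise by \eqref{AL3-1}; since column $j=1$ survives in the minor precisely when $i\neq 1$, every term of $M_{\alpha i}$ has character $v_1^{e_1-1}$ when $i\neq 1$ and character $1$ when $i=1$. Dividing out the character of the determinant yields $w_\alpha^1\succ v_1^{1-e_1}$ and $w_\alpha^i\succ 1$ $(i\neq 1)$, which is \eqref{AL3-2}.

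Then \eqref{AL3-3} follows by differentiating the entries of \eqref{AL3-2} and applying the two differentiation rules in each of the four cases $(i=1$ or not, $j=1$ or not$)$. For \eqref{AL3-4} and \eqref{AL3-5} I would expand $\det z_\alpha=\sum_\sigma \mathrm{sgn}(\sigma)\prod_i z^i_{\alpha,\sigma(i)}$ and the adjugate entries of $z_\alpha^{-1}$, and add up characters term by term using \eqref{AL3-3}: an entry $z^a_{\alpha,b}$ contributes character exponent $+1$ when $a=1,b\neq 1$, $-1$ when $a\neq 1,b=1$, and $0$ otherwise. The crucial observation is that in any transversal exactly one factor comes from each row and each column, so whenever row $1$ is matched to a column $\neq 1$ (contributing $+1$) the column $1$ is matched to a row $\neq 1$ (contributing $-1$), and these cancel; consequently every term of $\det z_\alpha$ has character $\equiv 0\equiv v_1^{-e_1}$, giving \eqref{AL3-4}. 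The same count applied to the minor of $z_\alpha$ obtained by deleting row $i$ and column $j$ gives character $1,\ v_1^{e_1-1},\ v_1^{1-e_1},\ v_1^{-e_1}$ in the cases $(i=j=1)$, $(i=1,j\neq 1)$, $(i\neq 1,j=1)$, $(i\neq 1,j\neq 1)$ respectively, and since the statement \eqref{AL3-5} retains the explicit factor $1/\det z_\alpha$, dividing the minor by $\det z_\alpha$ produces exactly the claimed classes.

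The main obstacle is the character bookkeeping for the determinant and the adjugate in \eqref{AL3-4} and \eqref{AL3-5}: one must fix the index conventions precisely and verify that the $+1$ contributions from row $1$ and the $-1$ contributions from column $1$ always pair up across an arbitrary permutation, so that all terms share a single character and their sum inherits it. Once this cancellation is confirmed, the remaining steps reduce to routine applications of the four eigencharacter rules.
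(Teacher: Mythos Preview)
Your overall route matches the paper's: invariance gives \eqref{AL3-1}, the cofactor formula together with $\det(\partial x^{\beta}/\partial v_j)=\mathrm{const}\cdot\Pi$ gives \eqref{AL3-2}, differentiation gives \eqref{AL3-3}, and \eqref{AL3-4}--\eqref{AL3-5} come from expanding the determinant and adjugate term by term. But your stated equivalence ``$f\succ v_1^p$ iff $f$ carries the same $G_{H_1}$-character as $v_1^p$'' is only one implication. The defining condition $f/v_1^p\in\mathbb C(v_2,\dots,v_n)[v_1^{e_1}]$ also bounds the order of pole of $f$ at $v_1=0$; in particular $f\succ v_1^p$ and $f\succ v_1^{p-e_1}$ are \emph{not} equivalent, and a ``character exponent modulo $e_1$'' bookkeeping cannot distinguish them.

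This bites precisely in your treatment of \eqref{AL3-4}--\eqref{AL3-5}. Assigning the entry $z^a_{\alpha,b}$ the exponent $+1,-1,0$ according as $(a{=}1,b{\neq}1)$, $(a{\neq}1,b{=}1)$, or ``otherwise'' collapses the two genuinely different cases $z^1_{\alpha,1}\succ v_1^{-e_1}$ and $z^i_{\alpha,j}\succ v_1^{0}$ $(i,j\neq1)$ into a single class. Your cancellation argument then yields only ``each term has character $\equiv 0$'', which does not select $v_1^{-e_1}$ over $v_1^{0}$ on the right of $\succ$. The fix is to keep the \emph{actual} exponents from \eqref{AL3-3}: if $\sigma(1)=1$ the factor $z^1_{\alpha,1}$ contributes $v_1^{-e_1}$ and all others $v_1^{0}$, so the product $\succ v_1^{-e_1}$; if $\sigma(1)\neq1$ the row-$1$ and column-$1$ factors contribute $v_1^{1-e_1}\cdot v_1^{e_1-1}=v_1^{0}$ and the rest $v_1^0$, so the product $\succ 1\succ v_1^{-e_1}$. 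Summing gives \eqref{AL3-4}, and the identical count on the $(i,j)$-minor yields the four cases of \eqref{AL3-5}. With this small adjustment your argument is complete and coincides with the paper's.
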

\begin{proof}
\eqref{AL3-1} holds since $x^{\alpha}$ is a $G$-invariant 
polynomial.
\\
\eqref{AL3-2}: 
since the determinant of the Jacobian matrix is
proportional to $\Pi$, 
\begin{equation}\nonumber
\begin{split}
\frac{\partial v^i}{\partial x^{\alpha}}&=
\det\left(\frac{\partial x^{\alpha}}{\partial v^i}\right)^{-1}
\times \text{the $(\alpha,i)$ cofactor of }
 \left(\frac{\partial x^{\alpha}}{\partial v^i}\right)
\\&\succ 
\frac{1}{\Pi}\times \begin{cases} 1&(i=1)\\v_1^{e_1-1}&(i\neq 1)
\end{cases}
\\&\succ
\begin{cases}
{v_1^{1-e_1}}&(i=1)\\
1&(i\neq 1)
\end{cases}~~.
\end{split}
\end{equation}
\eqref{AL3-3} follows from \eqref{AL3-2}.
\eqref{AL3-4} and \eqref{AL3-5} follow from \eqref{AL3-3}.
\end{proof}

\begin{corollary} \label{AL4}
For $1\leq \alpha,\beta,\mu, \gamma\leq n$ and $1\leq i,j\leq n$,
\begin{eqnarray}
\label{AL4-1}
\frac{\partial x^{\gamma}}{\partial v^i}z_{\alpha,j}^i w_{\beta}^j 
&\succ &
\begin{cases}
{v_1^{-e_1}}&(i=j=1)\\
1&((i,j)\neq (1,1))
\end{cases}~.
\\\label{AL4-2}
\frac{\partial x^{\gamma}}{\partial v^j}{(z_{\mu}^{-1})^j}_i w_{\beta}^i
&\succ &
\frac{1}{\det z_{\mu}}\times
\begin{cases}
1&(i=1 \text{ or }j=1)\\
{v_1^{-e_1}}&(i\neq 1\text{ and }j\neq 1)
\end{cases}
\\\label{AL4-3}
\frac{\partial x^{\gamma}}{\partial v^j}
{(z_{\mu}^{-1})^j}_iz_{\alpha,k}^i w_{\beta}^k 
&\succ& \frac{1}{\det z_{\mu}}\times 
\begin{cases}
{v_1^{-e_1}}&(i=k=1\text{ or } i\neq 1 \text{ and } j\neq 1)
\\
1&(\text{else})
\end{cases}~
\\\label{AL4-4}
{(z_{\mu}^{-1})^j}_i {w^i}_{\gamma}
&\succ&
\begin{cases}
v_1&(j=1)\\1&(j\neq 1)
\end{cases}
\end{eqnarray}
In \eqref{AL4-2} \eqref{AL4-3} \eqref{AL4-4},
 we assume $\det z_{\mu}\not \equiv 0$.
\end{corollary}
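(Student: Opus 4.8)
The plan is to reduce all four estimates to Lemma \ref{AL3}, using only two formal properties of $\succ$ that are immediate from its definition $f\succ g\Leftrightarrow f/g\in\mathbb{C}(v_2,\ldots,v_n)[v_1^{e_1}]$: it is multiplicative ($f_1\succ g_1$ and $f_2\succ g_2$ give $f_1f_2\succ g_1g_2$) and, for a fixed right-hand side, additive ($f_1\succ g$ and $f_2\succ g$ give $f_1+f_2\succ g$), the target being a ring.

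First I would dispose of \eqref{AL4-1}, \eqref{AL4-2} and \eqref{AL4-3}, in which the indices $i,j$ (and $k$) appearing in the case distinctions are free and no summation takes place. Each left-hand side is a product of three or four factors whose $\succ$-orders are exactly those recorded in \eqref{AL3-1}, \eqref{AL3-2}, \eqref{AL3-3} and \eqref{AL3-5}. By multiplicativity the product is $\succ$ the product of the corresponding powers of $v_1$, the common factor $1/\det z_\mu$ (coming from $(z_\mu^{-1})^{j}{}_{i}$) being carried along unchanged in \eqref{AL4-2} and \eqref{AL4-3}. What remains is to add the exponents of $v_1$ in each of the $2^2$, $2^2$ and $2^3$ cases and to check that the total matches the exponent on the right; for example, in \eqref{AL4-1} with $i=j=1$ the factors contribute $v_1^{e_1-1}$, $v_1^{-e_1}$ and $v_1^{1-e_1}$, with product $v_1^{-e_1}$, while every other choice of $(i,j)$ totals $v_1^{0}$. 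These are bookkeeping verifications.

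The genuinely different case is \eqref{AL4-4}, where $i$ is summed (it is the only internal index missing from the case distinction). Substituting \eqref{AL3-5} and \eqref{AL3-2}, every summand carries the common factor $1/\det z_\mu$, and relative to it the $v_1$-exponents are $1-e_1$ for $j=1$ and $-e_1$ for $j\neq1$. The hard part is that this factor cannot be removed termwise: although $\det z_\mu\succ v_1^{-e_1}$ by \eqref{AL3-4}, the reciprocal of a nonconstant element of $\mathbb{C}(v_2,\ldots,v_n)[v_1^{e_1}]$ again lies in that ring only if the element is a unit, so $1/\det z_\mu\succ v_1^{e_1}$ is false in general. Hence $\det z_\mu$ must truly cancel in the summed expression, and producing that cancellation is the main obstacle.

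To exhibit it I would recognise $\sum_i(z_\mu^{-1})^{j}{}_{i}\,w^{i}_{\gamma}$ as the $(j,\gamma)$-entry of $z_\mu^{-1}J^{-1}$, where $J=(\partial x^{\gamma}/\partial v^{i})$ and $w^{i}_{\gamma}=(J^{-1})^{i}_{\gamma}$. From $z^{i}_{\mu,j}=\partial_{v^{j}}(J^{-1})^{i}_{\mu}$ and $\partial_{v^{j}}J^{-1}=-J^{-1}(\partial_{v^{j}}J)J^{-1}$ one reads off the identity $z_\mu=-J^{-1}(\partial_{x^{\mu}}J)$, whence $z_\mu^{-1}J^{-1}=-(\partial_{x^{\mu}}J)^{-1}$, an expression in which $\det z_\mu$ no longer occurs. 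The orders of Lemma \ref{AL3} show that $\partial_{x^{\mu}}J$ has its first column of order $v_1^{-1}$ and all other columns regular along $H_1$; writing $\partial_{x^{\mu}}J=\tilde K\,\mathrm{diag}(v_1^{-1},1,\ldots,1)$ with $\tilde K$ regular and inverting gives $(\partial_{x^{\mu}}J)^{-1}=\mathrm{diag}(v_1,1,\ldots,1)\,\tilde K^{-1}$, so the first row is divisible by $v_1$ and the remaining rows are regular. This is precisely $\succ v_1$ for $j=1$ and $\succ1$ for $j\neq1$. The one delicate point left is to guarantee that $\tilde K^{-1}$ stays inside $\mathbb{C}(v_2,\ldots,v_n)[v_1^{e_1}]$, i.e. that $\det\tilde K$ is a $v_1$-unit; this is where the sharp pole order of \eqref{AL3-4} together with the $G_{H_1}$-invariance of Lemma \ref{AL1} must be invoked to finish. (The same computation identifies the summed quantity with a column of $J^{-1}\Omega_\mu^{-1}$ via $\Omega_\mu=Jz_\mu J^{-1}$, which is the form in which \eqref{AL4-4} feeds into Lemma \ref{IP}.)
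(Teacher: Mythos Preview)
Your treatment of \eqref{AL4-1}, \eqref{AL4-2}, \eqref{AL4-3} is exactly what the paper intends: these are termwise estimates obtained by multiplying the orders recorded in Lemma~\ref{AL3}, and the paper states the corollary without proof for precisely this reason.

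For \eqref{AL4-4} you have misread the statement. The preamble ``For $1\le\alpha,\beta,\mu,\gamma\le n$ and $1\le i,j\le n$'' declares $i$ as a \emph{free} index, just as in the other three items (note that \eqref{AL4-3} also carries a free index $k$ omitted from the preamble, with a case distinction on it). The paper intends \eqref{AL4-4} as the same kind of termwise estimate: multiply \eqref{AL3-5} by \eqref{AL3-2} and replace $1/\det z_\mu$ by $v_1^{e_1}$ using \eqref{AL3-4}. You are quite right that this last replacement is not formally licensed by the definition of $\succ$ (the relation does not invert: $\det z_\mu\succ v_1^{-e_1}$ does not yield $1/\det z_\mu\succ v_1^{e_1}$ in general), and the paper glosses over this. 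However, in the only place \eqref{AL4-4} is invoked---the proof of Lemma~\ref{IP}, which in turn is applied only in Proposition~\ref{prop:IP} with $\mu=1$ under assumptions (i), (ii)---one has $\det z_\mu=\det\Omega_\mu=\mathrm{const.}/\delta$ by \eqref{AP2} (or \eqref{BL2-3}), whence $1/\det z_\mu=\mathrm{const.}\cdot\delta\succ v_1^{e_1}$ genuinely holds and the termwise argument goes through.

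Your alternative route---rewriting the summed expression as an entry of $-(\partial_{x^\mu}J)^{-1}$---is a correct identity, but it addresses a different statement (the sum over $i$ rather than the individual terms), and your own closing step (that $\det\tilde K$ is a unit in $\mathbb{C}(v_2,\dots,v_n)[v_1^{e_1}]$) is left unproved. Since $\det\tilde K=\pm v_1\det J\det z_\mu$, that step again comes down to controlling $1/\det z_\mu$, so the detour gains nothing over the direct termwise argument.
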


\subsection{Proof of Proposition \ref{AP}}
Notice that 
$\Omega_{\alpha\beta}^{\gamma}$
can have poles only along reflection hyperplanes and that
\begin{equation}\nonumber
\Omega_{\alpha\beta}^{\gamma}=\sum_{i,j=1}^n
\frac{\partial x^{\gamma}}{\partial v^i}z_{\alpha,j}^i w_{\beta}^j~,
\quad 
\det \Omega_{\alpha}=\det z_{\alpha}~,\quad
\delta\succ v_1^{e_1}~.
\end{equation}
\eqref{AP1}: 
by \eqref{AL4-1}, 
$\delta\cdot \Omega_{\alpha\beta}^{\gamma}$ does not have a pole along
the hyperplane ${H_1}$.  Given that $H_1\in \mathcal{A}$ can be taken arbitrarily,   
$\delta\cdot \Omega_{\alpha\beta}^{\gamma}$ does not have a pole along any reflection hyperplanes.
This implies that $\delta\cdot\Omega_{\alpha\beta}^{\gamma}$ 
is a polynomial in $v$. Since both $\delta$ and $\Omega_{\alpha\beta}^{\gamma}$ are $G$-invariant,
so is
$\delta \cdot \Omega_{\alpha\beta}^{\gamma}$.
\\
\eqref{AP2}:  by \eqref{AL3-4},  
$\delta \det \Omega_{\alpha}=\delta\det z_{\alpha}$
does not have a pole along $H_1$. 
Given that $H_1$ is taken arbitrary,
$\delta \det \Omega_{\alpha}$ is a polynomial in $v$.
Since both $\delta$ and $\det\Omega_{\alpha}$ are $G$-invariant,
so is $\delta \det\Omega_{\alpha}$. 
Since
$\mathrm{deg}\,(\delta\cdot \det \Omega_{\alpha})=
\mathrm{deg}\,\delta-nd_{\alpha}
$,
$\delta\cdot \det \Omega_{\alpha}$ is a constant
if $\mathrm{deg}\,\delta=nd_{\alpha}$.
 \\
\eqref{AP3} \eqref{AP4}:  notice that
$$
\sum_{i,j=1}^n\frac{\partial x^{\gamma}}{\partial v^j}{(z_{\mu}^{-1})^j}_iw_{\beta}^i
={(\Omega_{\mu}^{-1})^{\gamma}}_{\beta}~,
\quad
\sum_{i,j,k=1}^n
\frac{\partial x^{\gamma}}{\partial v^j}
{(z_{\mu}^{-1})^j}_iz_{\alpha,k}^i w_{\beta}^k 
={(\Omega_{\mu}^{-1}\Omega_{\alpha})^{\gamma}}_{\beta}~~.
$$
If $\mathrm{deg}\,\delta=nd_{\mu}$ and $\det \Omega_{\mu}\neq 0$,
then by \eqref{AP2},
$$
\det z_{\mu}=\det \Omega_{\mu}=
\frac{\text{nonzero const.}}{\delta}~.
$$
Then  \eqref{AL4-2} \eqref{AL4-3} imply that
${(\Omega_{\mu}^{-1})^{\gamma}}_{\beta}$ and 
${(\Omega_{\mu}^{-1}\Omega_{\alpha})^{\gamma}}_{\beta}$
do not have poles along $H_1$.
So they do not have poles  along any reflection hyperplanes.
Therefore they are $G$-invariant polynomials.
This completes the proof of Proposition \ref{AP}.

\subsection{Proof of Lemma \ref{IP}}
Notice that 
$
\Omega_{\mu}J
=-Jz_{\mu}.
$
Therefore by Corollary \ref{AL4-4},
each entry of $J^{-1}\Omega_{\mu}^{-1}=-z_{\mu}^{-1}J^{-1}$
does not have pole along any hyperplane.
Thus it is a polynomial on $V$.
 
\section{Proof of Lemma \ref{lemma-semi-invariants}}
\label{appendix-C}
We use the same notation as \S \ref{covering-case}.
For the case when $G/K$ is abelian, 
let us show the next sub-lemma.
\begin{lemma}\label{sub-lemma1}
If $G/K$ is abelian, then 
there exists a set of basic invariants $y^1,\ldots, y^n\in \mathbb{C}[V]^K$ of $K$
such that each $y^{\alpha}$ is a semi-invariant of $G/K$.
\end{lemma}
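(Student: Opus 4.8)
The plan is to exploit the two facts that make the abelian hypothesis so strong: over $\mathbb{C}$ a finite abelian group is linearly reductive, and all of its irreducible representations are one-dimensional. A one-dimensional $G/K$-subrepresentation of $\mathbb{C}[V]^K$ is exactly a semi-invariant, so it suffices to produce a homogeneous minimal generating set of $R:=\mathbb{C}[V]^K$ consisting of $G/K$-eigenvectors. Note that $R$ is a graded polynomial ring because $K$, being a reflection subgroup, is itself a complex reflection group.

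First I would pass to the cotangent space at the origin. Write $\mathfrak{m}=\bigoplus_{d>0}R_d$ for the irrelevant ideal of $R$ (so $R_0=\mathbb{C}$). By the graded Nakayama lemma, homogeneous elements of $\mathfrak{m}$ generate $R$ as a $\mathbb{C}$-algebra precisely when their classes span the graded vector space $\mathfrak{m}/\mathfrak{m}^2$, and a set of basic invariants corresponds exactly to a homogeneous basis of $\mathfrak{m}/\mathfrak{m}^2$ (whose total dimension is $n$). Since $G/K$ acts on $R$ by grading-preserving algebra automorphisms, it preserves $\mathfrak{m}$ and $\mathfrak{m}^2$, hence acts on each graded piece $(\mathfrak{m}/\mathfrak{m}^2)_d$. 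Each such piece is a finite-dimensional representation of the finite abelian group $G/K$, so it decomposes into characters; choosing a character basis in every degree yields a homogeneous eigenbasis $\overline{y}^{1},\dots,\overline{y}^{n}$ of $\mathfrak{m}/\mathfrak{m}^2$, with $\overline{y}^{\alpha}$ of some character $\chi_\alpha$.

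The key, and essentially only delicate, step is to lift these classes to honest semi-invariants without destroying the generating property. For each $\alpha$ I would pick an arbitrary homogeneous preimage $\widetilde{y}^{\alpha}\in R_{d_\alpha^K}$ and apply the $\chi_\alpha$-isotypic projector,
\begin{equation}\nonumber
y^{\alpha}=\frac{1}{|G/K|}\sum_{g\in G/K}\overline{\chi_\alpha(g)}\,\bigl(g\cdot\widetilde{y}^{\alpha}\bigr)~.
\end{equation}
A direct check shows $g\cdot y^{\alpha}=\chi_\alpha(g)\,y^{\alpha}$, so $y^{\alpha}$ is a homogeneous $\chi_\alpha$-semi-invariant. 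Because the quotient map $R_{d_\alpha^K}\to(\mathfrak{m}/\mathfrak{m}^2)_{d_\alpha^K}$ is $G/K$-equivariant and $\overline{y}^{\alpha}$ already lies in the $\chi_\alpha$-eigenspace (on which the projector is the identity), $y^{\alpha}$ still maps to $\overline{y}^{\alpha}$; in particular $y^{\alpha}\neq 0$.

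Finally, since the classes $\overline{y}^{1},\dots,\overline{y}^{n}$ form a basis of $\mathfrak{m}/\mathfrak{m}^2$, the graded Nakayama lemma shows that $y^{1},\dots,y^{n}$ is a set of basic invariants of $K$, now consisting of $G/K$-semi-invariants, which is the assertion. I expect the main obstacle to be stating the lifting step cleanly rather than any real difficulty: everything rests on the standard dictionary between minimal generators and $\mathfrak{m}/\mathfrak{m}^2$ together with complete reducibility, and the averaging operator above settles the lift directly because $G/K$ is finite and we work over $\mathbb{C}$.
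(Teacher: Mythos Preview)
Your proof is correct and follows essentially the same approach as the paper: both diagonalize the $G/K$-action on the ``linear part'' of $\mathbb{C}[V]^K$ (you via $\mathfrak{m}/\mathfrak{m}^2$ and graded Nakayama, the paper via the span of degree-$d$ basic invariants modulo polynomials in lower-degree ones), then apply the same isotypic projector $y^{\alpha}=\frac{1}{|G/K|}\sum_{g}\chi_{\alpha}(g)^{-1}g(\widetilde{y}^{\alpha})$ to lift to genuine semi-invariants and check they still form a generating set. Your $\mathfrak{m}/\mathfrak{m}^2$ formulation is a bit more conceptual, but the two arguments are the same in content.
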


\begin{proof}
First, we claim that 
there exists a set of basic invariants 
$z^1,\ldots, z^n$ of $K$ of $\mathrm{deg}\,z^{\alpha}=d_{\alpha}^K$
such that 
\begin{equation}
{g}(z^{\alpha})=\chi_{\alpha}(g) z^{\alpha}+\text{ a polynomial in $z^{\alpha+1},\ldots, z^n$}
\end{equation}
holds for some characters $\chi_{\alpha}$ of $G/K$.

Let $d$ be one of the degrees of $K$
and denote by $I_d\subset \{1,\ldots,n\}$  the set of integers $\alpha$
such that $\mathrm{deg}\,z^{\alpha}=d$.
Set $R_{d}:=\bigoplus_{\alpha\in I_{d}}\mathbb{C}z^{\alpha}$.
Since the action of $G$ preserves the grading on
$\mathbb{C}[V]^K=\mathbb{C}[z^1,\ldots,z^n]$, the action of 
${g}\in G/K$ 
on $z^{\alpha}$ ($\alpha\in I_d$)
can be  written as 
\begin{equation}\label{eq:action}
{g}(z^{\alpha})=\sum_{\beta\in I_d} A^{{g}}_{\alpha\beta}z^{\beta}+
\text{a polynomial in $z^{\gamma}$'s with $\mathrm{deg}\,z^{\gamma}<d$},
\quad (A^{{g}}_{\alpha\beta}\in \mathbb{C}).
\end{equation}
By the algebraic independence of $z^{\alpha}$, 
it follows that $R_d$ is a representation of $G/K$ via $A^g=(A^g_{\alpha\beta})$.
Since the group $G/K$ is finite and abelian,
we may assume that $z^{\alpha}$ ($\alpha\in I_{d}$)
are eigenvectors of $A^g$ for any $g\in G_K$. 
The claim follows from this.

Next, we put
\begin{equation}\label{eq:semi-inv}
y^{\alpha}=\frac{1}{|G/K|}\sum_{{g}\in G/K} \chi_{\alpha}({g})^{-1} {g}(z^{\alpha})~.
\end{equation}
Then it is immediate to show that 
${g}(y^{\alpha})=\chi_{\alpha}(g) y^{\alpha}$.
Moreover, we have
$$
y^{\alpha}=z^{\alpha}+\text{ a polynomial in $z^{\alpha+1},\ldots, z^n$}~.
$$
So $y^1,\ldots, y^n$ are algebraically independent and form
a set of basic invariants of $K$.
\end{proof}
\begin{lemma}\label{sub-lemma2}
If $d_1^K>d_2^K$, then there exists a set of basic invariants
$y^1,\ldots,y^n$ of $K$ of $\mathrm{deg}\,y^{\alpha}=d_{\alpha}^K$
such that 
$y^1$ is a semi-invariant of $G$.
\end{lemma}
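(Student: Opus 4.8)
The plan is to run the same averaging procedure as in the proof of Lemma \ref{sub-lemma1}, replacing the use of commutativity of $G/K$ (which there guaranteed a decomposition of each graded piece into one-dimensional representations) by the hypothesis $d_1^K > d_2^K$, which by itself forces the representation in play to be one-dimensional. First I would fix an arbitrary set of basic invariants $z^1,\ldots,z^n$ of $K$ with $\mathrm{deg}\,z^\alpha=d_\alpha^K$ and examine the homogeneous component of top degree $d_1^K$ in $\mathbb{C}[V]^K=\mathbb{C}[z^1,\ldots,z^n]$. Since $d_1^K>d_2^K$, the only generator of degree $d_1^K$ is $z^1$, and every monomial of degree $d_1^K$ not proportional to $z^1$ is a product of at least two of $z^2,\ldots,z^n$. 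Hence this component splits as $\mathbb{C}z^1\oplus N$, where $N=\mathbb{C}[z^2,\ldots,z^n]_{d_1^K}$ is the space of decomposable elements of degree $d_1^K$.

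Next I would observe that $G/K$ preserves this splitting modulo $N$. For $\alpha\geq 2$ the polynomial ${g}(z^\alpha)$ is $K$-invariant of degree $d_\alpha^K<d_1^K$, so it cannot involve $z^1$ and therefore lies in $\mathbb{C}[z^2,\ldots,z^n]$; consequently ${g}(N)\subseteq N$. Since the $G/K$-action preserves the grading and maps products of invariants to products of invariants, it descends to the one-dimensional quotient $\bigl(\mathbb{C}[V]^K\bigr)_{d_1^K}/N\cong\mathbb{C}z^1$, acting there by a character $\chi_1$ of $G/K$. Multiplicativity $\chi_1({g}{h})=\chi_1({g})\chi_1({h})$ is immediate from ${g}(N)\subseteq N$. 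In other words,
$$
{g}(z^1)=\chi_1({g})\,z^1+P_{g},\qquad P_{g}\in N=\mathbb{C}[z^2,\ldots,z^n]_{d_1^K}\quad({g}\in G/K).
$$
I would then average against $\chi_1$ exactly as in \eqref{eq:semi-inv}: put
$$
y^1=\frac{1}{|G/K|}\sum_{{g}\in G/K}\chi_1({g})^{-1}\,{g}(z^1),\qquad y^\alpha=z^\alpha\ (2\leq\alpha\leq n).
$$
A reindexing of the sum gives ${h}(y^1)=\chi_1({h})\,y^1$ for all ${h}\in G/K$, so $y^1$ is a semi-invariant of $G$, while substituting the displayed expression for ${g}(z^1)$ yields $y^1=z^1+Q$ with $Q\in\mathbb{C}[z^2,\ldots,z^n]$. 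Therefore $y^1,z^2,\ldots,z^n$ remain algebraically independent and homogeneous of the required degrees, hence form a set of basic invariants of $K$.

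The one genuinely load-bearing step — the analogue of the obstacle that abelianness removed in Lemma \ref{sub-lemma1} — is establishing that the top-degree indecomposable quotient is one-dimensional and $G/K$-stable, so that $\chi_1$ is a well-defined character; this is precisely the place where the hypothesis $d_1^K>d_2^K$ is used, since without it the top-degree indecomposables would form a higher-dimensional representation for which no single semi-invariant generator need exist. Everything after that is the same formal averaging as in the abelian case, and I expect the verification of $y^1=z^1+Q$ with $Q\in\mathbb{C}[z^2,\ldots,z^n]$ (which ensures the $y^\alpha$ are still basic invariants) to be routine.
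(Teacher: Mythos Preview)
Your proof is correct and follows essentially the same approach as the paper: both observe that $d_1^K>d_2^K$ forces ${g}(z^1)=\chi_1({g})\,z^1+(\text{polynomial in }z^2,\ldots,z^n)$ with $\chi_1$ a character of $G/K$, then average against $\chi_1$ to produce a semi-invariant $y^1$ differing from $z^1$ by an element of $\mathbb{C}[z^2,\ldots,z^n]$. Your justification that $\chi_1$ is a character via the one-dimensional quotient $(\mathbb{C}[V]^K)_{d_1^K}/N$ with $N$ shown to be $G/K$-stable is slightly more explicit than the paper's, which simply refers back to the argument of Lemma~\ref{sub-lemma1}.
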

\begin{proof}
If 
$z^1,\ldots, z^n$ are 
a set of basic invariants 
of $K$ of $\mathrm{deg}\,z^{\alpha}=d_{\alpha}^K$,
then 
\begin{equation}
{g}(z^1)=A^{{g}}z^{1}+\text{ a polynomial in $z^{2},\ldots, z^n$},
\quad (A^{{g}}\in \mathbb{C})
\end{equation}
holds for any ${g}\in G/K$
because the action of $G/K$ preserves the grading.
Then by the same argument as in Lemma \ref{sub-lemma1},
we can show that 
$\chi_1:G/K\to \mathbb{C}^{\ast}$ given by
$\chi_1({g})=A^{{g}}$ 
is a character of $G/K$.
Therefore if we take $y^1$
as in \eqref{eq:semi-inv},
then ${g}(y^1)=\chi_1({g})y^1$
and $y^1,z^2,\ldots,z^n$ form a set of basic invariants of $K$. 
\end{proof}

Now we prove Lemma \ref{lemma-semi-invariants}.
By the above lemmas, the first part was already proved.
Therefore assume $d_1^K=d_1^G$.
Let $k_G$ be the number of degrees of $G$
which are equal to $d_1^G$. 
Note that $k_G\geq k_K$ since $\mathbb{C}[V]^G\subset \mathbb{C}[V]^K
=\mathbb{C}[y^1,\ldots, y^n]$, where $y^1,\ldots,y^{k_K}$ are $G$-semi-invariants 
with characters $\chi_1,\ldots,\chi_{k_K}$.
Take a set of basic invariants $x^1,\ldots, x^n$
of $G$ with $\mathrm{deg}\,x^{\alpha}=d_{\alpha}^G$.
Express $x^1,\ldots, x^{k_G}$
as polynomials in $y^1,\ldots, y^n$:
\begin{equation}\nonumber
x^{\alpha}=\sum_{\beta=1}^{k_K} X_{\alpha\beta}y^{\beta} 
+\text{a polynomial in $y^{k_K+1},\ldots, y^n$},
\quad (X_{\alpha\beta}\in\mathbb{C})~.
\end{equation}
If the rank of 
the $k_G\times k_K$ matrix $X=(X_{\alpha\beta})$
is smaller than $k_K$,
then this contradicts the algebraic independence of $x$ and $y$.
Therefore $\mathrm{rank}\, X=k_K$ and
we may assume that $x^1,\ldots, x^{k_G}$ are chosen 
so that
 \begin{equation}\nonumber
 x^{\alpha}=
 \begin{cases}
y^{\alpha}+
\text{a polynomial in $y^{\alpha+1},\ldots, y^n$}
&(\text{if } 1\leq \alpha\leq k_K)~
\\
\text{a polynomial in $y^{k_K+1},\ldots, y^n$}
&(\text{if } \alpha> k_K)
\end{cases}~~~.
\end{equation}
Now look at the action of ${g}\in G/K$ on $x^{\alpha}$ ($1\leq \alpha\leq k_K$): 
\begin{equation}
x^{\alpha}={g}(x^{\alpha})=\chi_{\alpha}({g}) y^{\alpha}+
\text{a polynomial in $y^{\alpha+1},\ldots, y^n$}~.
\end{equation}
Then $\chi_{\alpha}({g})=1$ follows from
the algebraic independence of $y$.
Thus $y^1,\ldots, y^{k_K}$ are $G/K$-invariant.
\section{Rank two examples}\label{appendix:tables}
This section contains tables for
the irreducible finite complex reflection groups of rank two.
In Tables \ref{table:degrees-1}, \ref{table:degrees-2}, 
\ref{table:degrees-3}, \ref{table:degrees-4},
the degrees $d_1,d_2$,  the discriminant $\Delta$, a set of basic invariants $x,y$ are 
shown for each $G$.

For the duality groups,
we list a unique (up to equivalence) natural Saito structure
in Tables \ref{Gmpn}, \ref{typeT}, \ref{typeO}, \ref{typeI}.
Flat coordinates are denoted $t^1,t^2$ and 
$\partial_{t^1}$ is assumed to be a unit of  the multiplication
$\ast$. For the remaining groups
$G=G(m,p,2), G(2k,2,2)$ and $G_7,G_{11},G_{15},G_{19}$,
we list covering maps $\pi:M_K\to M_G$ which induce
natural (almost) Saito structures.
In the tables, $(x,y)$ denotes the set of basic invariants for $G$
listed in Tables \ref{table:degrees-1}, \ref{table:degrees-2}, 
\ref{table:degrees-3}, \ref{table:degrees-4}
and $(x',y')$ denotes the set of basic invariants for the normal
subgroup $K$. 

Let $i=\sqrt{-1}$,
$\zeta_m=e^{\frac{2\pi i}{m}}$, 
$\tau=\zeta_5+\zeta_5^{-1}+1$ and
\begin{equation}\nonumber
\begin{split}
\rho&=\begin{pmatrix}0&1\\1&0\end{pmatrix}
~,\quad
\tau_{m}=\begin{pmatrix}\zeta_{m}&0\\0&1\end{pmatrix}~,
\quad
\sigma_{m}=\tau_{m}^{-1}\rho\,\tau_{m}~,
\quad
r=\begin{pmatrix}1&0\\0&-1\end{pmatrix}~,
\\
r_1&=\frac{\zeta_3}{2}\begin{pmatrix}-1-i&1-i\\-1-i&-1+i \end{pmatrix}
,~~
r_2=\frac{\zeta_3}{2}\begin{pmatrix}-1+i&-1+i\\1+i&-1-i\end{pmatrix}
,\quad
s=\begin{pmatrix}\zeta_8^{-1}&0\\0&\zeta_8\end{pmatrix}
\\
r_3&=\frac{1}{\sqrt{2}}\begin{pmatrix}1&-1\\-1&-1\end{pmatrix}
, \quad 
r_4=\begin{pmatrix}1&0\\0&i \end{pmatrix}~,\quad
r_5=\frac{\zeta_5^2}{\sqrt{2}}
\begin{pmatrix}-\tau+i&-\tau+1\\\tau-1&-\tau-i\end{pmatrix}~.
\end{split}
\end{equation}

\begin{table}[h]
\begin{equation}\nonumber
\begin{array}{|cc| c|c| c|c| c |  c|}\hline
G&&\text{degrees}&\Delta&x&y&\text{duality group?}
\\\hline
 G(m,1,2)&(m\geq 2)&2m,m &u^mv^m(u^m-v^m)^2
 &u^mv^m& u^m+ v^m
&\text{yes}
\\\hline
G(m,m,2)&(m\geq 3)&m,2&(u^m-v^m)^2
&u^m+v^m&uv
&\text{yes}
\\\hline
G(kp,p,2)&(p>2,k>1)&kp,2k&u^kv^k(u^{kp}-v^{kp})^2
&u^{kp}+v^{kp}&u^kv^k
&\text{no}
\\\hline
G(2k,2,2)&(k>1)&2k,2k&u^kv^k(u^{2k}-v^{2k})^2
&u^{2k}+v^{2k}&u^kv^k
&\text{no}
\\\hline
\end{array}
\end{equation}
\caption{The monomial groups of rank two.}\label{table:degrees-1}
\end{table}
\begin{table}[h]
\begin{equation}\nonumber
\begin{array}{|c|c|c|c|c| c|}\hline
 G&\text{degrees}&x&y&\Delta
 &\text{duality group?} \\\hline
G_4
&6~,~4
&t_{\mathcal{T}} &f_{\mathcal{T}}
&h_{\mathcal{T}}^3 &\text{yes}
\\\hline
G_5
&12~,~6
&f_{\mathcal{T}}^3&t_{\mathcal{T}}
&  f_{\mathcal{T}}^3 ~h_{\mathcal{T}}^3 &\text{yes}
\\\hline
G_6
&12~,~4
&t_{\mathcal{T}}^2& f_{\mathcal{T}}
&h_{\mathcal{T}}^3~ t_{\mathcal{T}}^2 &\text{yes}
 \\\hline
G_7&12~,~~12
& f_{\mathcal{T}}^3&t_{\mathcal{T}}^2
&f_{\mathcal{T}}^3~h_{\mathcal{T}}^3~t_{\mathcal{T}}^2&\text{no}
 \\\hline
\end{array}
\end{equation}
\begin{equation}\nonumber
\begin{split}
f_{\mathcal{T}}(u,v)&={u^4+2i\sqrt{3}\,u^2 v^2+v^4}~,
\\
h_{\mathcal{T}}(u,v)&={u^4-2i\sqrt{3}\, u^2 v^2+v^4}~,
\\
t_{\mathcal{T}}(u,v)&=u^5 v-u v^5 ~.
\end{split}
\end{equation}
\caption{The exceptional groups of type $\mathcal{T}$.
Note 
$f_{\mathcal{T}}^3-h_{\mathcal{T}}^3=12i\sqrt{3} \,t_{\mathcal{T}}^2.$}\label{table:degrees-2}
\end{table}
\begin{table}[h]
\begin{equation}\nonumber
\begin{array}{|c| c| c| c| c| c| }\hline
G&\text{degrees}&x&y
&\Delta&\text{duality group?}
\\\hline
G_8&12~,~8
&t_{\mathcal{O}}
&h_{\mathcal{O}}
&f_{\mathcal{O}}^4
&\text{yes} 
\\\hline
G_9&24~,~8
&t_{\mathcal{O}}^2
&h_{\mathcal{O}}
&f_{\mathcal{O}}^4 ~t_{\mathcal{O}}^2
&\text{yes}
\\\hline
G_{10}&24~,~12
&h_{\mathcal{O}}^3
&t_{\mathcal{O}}
&f_{\mathcal{O}}^4~h_{\mathcal{O}}^3
&\text{yes}
\\\hline
G_{11}&24~,~24
&h_{\mathcal{O}}^3
&t_{\mathcal{O}}^2
&f_{\mathcal{O}}^4~h_{\mathcal{O}}^3~t_{\mathcal{O}}^2
&\text{no}
\\\hline
G_{12}&8~,~6
&h_{\mathcal{O}}
&f_{\mathcal{O}}
&t_{\mathcal{O}}^2
&\text{no}
\\\hline
G_{13}&12~,~8
&f_{\mathcal{O}}^2
&h_{\mathcal{O}}
&f_{\mathcal{O}}^2~t_{\mathcal{O}}^2
&\text{no}
\\\hline
G_{14}&24~,~6
&t_{\mathcal{O}}^2
&f_{\mathcal{O}}
&h_{\mathcal{O}}^3~t_{\mathcal{O}}^2
&\text{yes}
\\\hline
G_{15}&24~,~12
&t_{\mathcal{O}}^2
&f_{\mathcal{O}}^2
&f_{\mathcal{O}}^2~ h_{\mathcal{O}}^3 ~t_{\mathcal{O}}^2
&\text{no}
\\\hline
\end{array}
\end{equation}
\begin{equation}\nonumber
\begin{split}
f_{\mathcal{O}}&=u^5 v-u v^5~,\\
h_{\mathcal{O}}&=u^8+14u^4v^4+v^8~,\\
t_{\mathcal{O}}&=u^{12}-33u^8v^4-33u^4v^8+v^{12}~.
\end{split}
\end{equation}
\caption{The exceptional groups of type $\mathcal{O}$.
Note 
$h_{\mathcal{O}}^3-t_{\mathcal{O}}^2=108 f_{\mathcal{O}}^4~.$}
\label{table:degrees-3}
\end{table}

\begin{table}[h] 
\begin{equation}\nonumber
\begin{array}{|c|c|c|c|c|c|}\hline
G&\text{degrees}&x&y
& \Delta&\text{duality group?}
 \\\hline
G_{16}
&30~,~20
&t_{\mathcal{I}}
&h_{\mathcal{I}}
&f_{\mathcal{I}}^5
&\text{yes}
\\\hline
G_{17}
&60~,~20
&t_{\mathcal{I}}^2
&h_{\mathcal{I}}
&f_\mathcal{I}^5~t_\mathcal{I}^2
&\text{yes}
\\\hline
G_{18}
&60~,~30
&h_{\mathcal{I}}^3
&t_{\mathcal{I}}
&f_{\mathcal{I}}^5~h_{\mathcal{I}}^3
&\text{yes}
\\\hline
G_{19}
&60~,~60
&h_{\mathcal{I}}^3
&t_{\mathcal{I}}^2
&f_{\mathcal{I}}^5~h_{\mathcal{I}}^3~t_{\mathcal{I}}^2
&\text{no}
\\\hline
G_{20}
&30~,~12
&t_{\mathcal{I}}
&f_{\mathcal{I}}
&h_{\mathcal{I}}^3
&\text{yes}
\\\hline
G_{21}
&60~,~12
&t_{\mathcal{I}}^2 
&f_{\mathcal{I}}
&h_{\mathcal{I}}^3~t_{\mathcal{I}}^2
&\text{yes}
\\\hline
G_{22}
&20~,~12
&h_{\mathcal{I}}
&f_{\mathcal{I}}
&t_{\mathcal{I}}^2
&\text{no}
\\\hline
\end{array}
\end{equation}

\begin{equation}\nonumber
\begin{split}
f_{\mathcal{I}}&=
u^{12}+\frac{22 u^{10} v^2}{\sqrt{5}}-33 u^8 v^4-\frac{44 u^6 v^6}{\sqrt{5}}-33
   u^4 v^8+\frac{22 u^2 v^{10}}{\sqrt{5}}+v^{12}~,
\\
h_{\mathcal{I}}&=\frac{\sqrt{5}}{5808}\mathrm{hess}\,(f_{\mathcal{I}})=u^{20}+\cdots~,
\\
t_{\mathcal{I}}&=-\frac{1}{480\sqrt{5}}\frac{\partial(f_{\mathcal{I}},h_{\mathcal{I}})}{\partial(u,v)}
=u^{29} v+\cdots~,
\end{split}
\end{equation}
\caption{The exceptional groups of type $\mathcal{I}$.
Note that $ f_{\mathcal{I}}^5=h_{\mathcal{I}}^3+60\sqrt{5}\,t_{\mathcal{I}}^2$.}
\label{table:degrees-4}
\end{table}

\begin{table}[h]
\begin{equation}\nonumber
\begin{array}{|c|c|c|c|c|}\hline
G&t^1&t^2& \partial_{t^2}\ast\partial_{t^2}&\text{Frobenius manifold?}
\\\hline
G(m,1,2)
&u^mv^m-\frac{(u^m+v^m)^2}{4m}&u^m+v^m
&\frac{m-1}{4m^2}(t^2)^2\partial_{t^1}-\frac{m-2}{2m}t^2\partial_{t^2}
&\text{no if $m\neq 2$}
\\\hline
G(m,m,2)
&u^m+v^m&uv
&m^2(t^2)^{m-2}\partial_{t^1}
&\text{yes}
\\\hline
\end{array}
\end{equation}
\caption{
Natural Saito structures for  $G(m,1,2)$ and $G(m,m,2)$.
}
\label{Gmpn}
\end{table}

\begin{table}[h]
\begin{equation}\nonumber
\begin{array}{|c|c|c|c|c|}\hline
G&t^1&t^2&\partial_{t^2}\ast \partial_{t^2}&\text{Frobenius manifold?}
\\\hline
G_4& t_{\mathcal{T}}&f_{\mathcal{T}}
&-\frac{i\sqrt{3}}{16}t^2\,\partial_{t^1}
&\text{yes} 
\\\hline
G_5
&f_{\mathcal{T}}^3-6i\sqrt{3}t_{\mathcal{T}}^2
&t_{\mathcal{T}}
& -432 (t^2)^2\,\partial_{t^1}
&\text{yes}
\\\hline
G_6
&t_{\mathcal{T}}^2+\frac{5i}{96\sqrt{3}}{f_{\mathcal{T}}}^3
&f_{\mathcal{T}}
&-\frac{5 }{2^{10}}(t^2)^4\partial_{t^1}
-\frac{i}{16\sqrt{3}}(t^2)^2\partial_{t^2}
 &\text{no}
\\\hline
\end{array}
\end{equation}
\caption{
Natural Saito structure for the duality groups of type $\mathcal{T}$.
}\label{typeT}
\end{table}

\begin{table}
\begin{equation}\nonumber
\begin{array}{|c|c|c|c|c|}\hline
G&t^1&t^2&\partial_{t^2}\ast \partial_{t^2}&\text{Frobenius manifold?}\\\hline
G_8
& t_{\mathcal{O}}&h_{\mathcal{O}}
&\frac{9}{4}t^2\,\partial_{t^1}
&\text{yes}
\\\hline
G_9
& t_{\mathcal{O}}^2-\frac{11}{16}h_{\mathcal{O}}^3
&h_{\mathcal{O}}
&\frac{495}{2^8}(t^2)^4\partial_{t^1}+\frac{9}{8}(t^2)^2\partial_{t^2}
&\text{no}
\\\hline
G_{10}
&h_{\mathcal{O}}^3-\frac{7}{12}t_{\mathcal{O}}^2
&t_{\mathcal{O}}
&\frac{35}{36}(t^2)^2\partial_{t^1}+\frac{1}{3}t^2\partial_{t^2}
&\text{no}
\\\hline
G_{14}
&t_{\mathcal{O}}^2+66f_{\mathcal{O}}^4
&f_{\mathcal{O}}
&44352(t^2)^6\partial_{t^1}-96 (t^2)^3\partial_{t^2}
&\text{no}
\\\hline
\end{array}
\end{equation}
\caption{
Natural Saito structures for duality groups of type $\mathcal{O}$. 
}\label{typeO}
\end{table}

\begin{table}[h]
\begin{equation}\nonumber
\begin{array}{|c|c|c|c|c|}\hline
G&t^1&t^2&\partial_{t^2}\ast \partial_{t^2}&\text{Frobenius manifold?}\\\hline
G_{16}
&t_{\mathcal{I}}&h_{\mathcal{I}}
&-\frac{3}{80\sqrt{5}}t^2\,\partial_{t^1}
&\text{yes}
\\\hline
G_{17}
&t_{\mathcal{I}}^2+\frac{29}{2400\sqrt{5}}h_{\mathcal{I}}^3
&h_{\mathcal{I}}
&\frac{319}{32\cdot 10^5} (t^2)^4\partial_{t^1}-
\frac{9}{400\sqrt{5}}(t^2)^2\partial_{t^2}
&\text{no}
\\\hline
G_{18}
&h_{\mathcal{I}}^3+38\sqrt{5}t_{\mathcal{I}}^2
&t_{\mathcal{I}}
&16720(t^2)^2\partial_{t^1}-32\sqrt{5}t^2\partial_{t^2}
&\text{no}
\\\hline
G_{20}
&t_{\mathcal{I}}&f_{\mathcal{I}}
&\frac{\sqrt{5}}{48}(t^2)^3\partial_{t^1}
&\text{yes}
\\\hline
G_{21}
&t_{\mathcal{I}}^2-\frac{29}{2880\sqrt{5}}f_{\mathcal{I}}^5
&f_{\mathcal{I}}
&\frac{551}{2^{12}\cdot 3^4\cdot 5}(t^2)^8\partial_{t^1}
+\frac{\sqrt{5}}{288}(t^2)^4\partial_{t^2}
&\text{no}
\\\hline
\end{array}
\end{equation}
\caption{Natural Saito structures for the duality groups of type $\mathcal{I}$.}
\label{typeI}
\end{table}

\begin{table}[h]
\begin{equation}\nonumber
\begin{array}{|c|c|c|c|}\hline
e&K&K\rightarrow G(kp,p,2)&M_{K}\rightarrow M_{G(kp,p,2)}
\\\hline
\partial_x
&G(kp,kp,2)=\langle \sigma_{kp},\rho \rangle
&g\mapsto g
&(x',y')\mapsto(x',(y')^k)
\\\hline
\end{array}
\end{equation}
\caption{A covering map for
$G(kp,p,2)=\langle \sigma_{kp},\tau_{kp}^{p},\rho\rangle$ 
($p>2,k>1$) }\label{table:m-p-2}
\end{table}

\begin{table}[h]
\begin{equation}\nonumber
\begin{array}{|c|c|c|c|}\hline
e&K& K\rightarrow G(2k,2,2)
&M_{K}\rightarrow M_{G(2k,2,2)}
\\\hline
\partial_x
&G(2k,2k,2)=\langle \sigma_{2k},\rho \rangle
&g\mapsto g
&(x',y')\mapsto (x',(y')^k)
\\\hline
-2\partial_x+\partial_y
&G(k,1,2)=\langle \tau_{2k}^2,\rho \rangle
&g\mapsto g
&(x',y')\mapsto ((y')^2-2x',x')
\\\hline
-2\partial_x-\partial_y
&G(k,1,2)=\langle \tau_{2k}^2,\rho\rangle
&g\mapsto \tau^{-1}g\tau
&(x',y')\mapsto((y')^2-2x',-x')
\\\hline
\end{array}
\end{equation}
\caption{
Covering maps for $G(2k,2,2)=\langle \sigma_{2k},\tau_{2k}^{2},\rho\rangle$ ($k>1$).}\label{table:2k-2-2}
\end{table}

 \begin{table}[h]
\begin{equation}\nonumber
\begin{array}{|c||c|c|c|}\hline
e&K& K\rightarrow G_7&M_{K}\rightarrow M_{G_7}
\\\hline
\partial_x
&G_5=\langle r_1,rr_2r\rangle
&g\mapsto g
&(x',y')\mapsto (x',(y')^2)
\\\hline
\partial_y
&G_6=\langle r,r_1\rangle
&g\mapsto g
&(x',y')\mapsto ((y')^3,x')
\\\hline
12i\sqrt{3}\partial_x+\partial_y
&G_6=\langle r,r_1\rangle
&g\mapsto s^{-1}gs
&(x',y')\mapsto (-(y')^3+12i\sqrt{3}x',x')
\\\hline
\end{array}
\end{equation}
\caption{Covering maps for
$G_7=\langle r,r_1,r_2\rangle$. }\label{table:G7}
\end{table}

\begin{table}[h]
\begin{equation}\nonumber
\begin{array}{|c||c|c|c|}\hline
e&K&K\to G_{11}&M_{K}\rightarrow M_{G_{11}}
\\\hline
\partial_x
&G_{10}=\langle r_1,r_3^{-1}r_4r_3\rangle
&g\mapsto g
&(x',y')\mapsto (x',(y')^2)
\\\hline
\partial_y
&G_9=\langle r_3,r_4\rangle
&g\mapsto g
&(x',y')\mapsto ((y')^3,x')
\\\hline
\partial_x+\partial_y
&G_{14}=\langle r_1,r_4^{^2}r_3r_4^2\rangle
&g\mapsto g
&(x',y')\mapsto(x'+108(y')^4, x')
\\\hline
\end{array}
\end{equation}
\caption{Covering maps for
$G_{11}=\langle r_1,r_3,r_4\rangle$.
}\label{table:G11}
\end{table}
\begin{table}[h]
\begin{equation}\nonumber
\begin{array}{|c||c|c|c|}\hline
e&K&K\to G_{15}&M_{K}\rightarrow M_{G_{15}}
\\\hline
\partial_x
&G_{14}=\langle r_1,rr_3r\rangle&
g\mapsto g
&(x',y')\mapsto(x',(y')^2)
\\\hline
\end{array}
\end{equation}
\caption{A covering map for $G_{15}=\langle r, r_1,r_3 \rangle$.
}\label{table:G15}
\end{table}
\begin{table}[h]
\begin{equation}
\begin{array}{|c||c|c|c|}\hline
e&K&K\to G_{19}&M_{K}\rightarrow M_{G_{19}}
\\\hline
\partial_x
&G_{18}=\langle r_1^2,r_5\rangle
&g\mapsto g
&(x',y')\mapsto (x',(y')^2)
\\\hline
\partial_y
&G_{17}=\langle r,r_5\rangle
&g\mapsto g
&(x',y')\mapsto ((y')^3,x')
\\\hline
-60\sqrt{5}\partial_x+\partial_y
&G_{21}=\langle r,r_5 r_1 r_5^{-1}\rangle
&g\mapsto g
&(x',y')\mapsto(-60\sqrt{5} x'+(y')^5, x')
\\\hline
\end{array}\nonumber
\end{equation}
\caption{Covering maps for 
$G_{19}=\langle r,r_1,r_5\rangle$. }\label{table:G19}
\end{table}


\end{document}